\newcommand{\colorname}[1]{%
  \ifnum#1=1 red\else%
    \ifnum#1=2 blue\else%
      \ifnum#1=3 green\else%
        \ifnum#1=4 magenta\else%
          \ifnum#1=5 orange\else%
            black%
          \fi%
        \fi%
      \fi%
    \fi%
  \fi%
}
\colorlet{myGreen}{green!50!black}
\colorlet{myLightgreen}{green}
\colorlet{myRed}{red!90!black}
\definecolor{myBlue}{rgb}{0.25, 0.0, 1.0}
\definecolor{myLightBlue}{rgb}{0.39, 0.58, 0.93}
\colorlet{myViolet}{myBlue!55!myRed}
\definecolor{myOrange}{rgb}{1.0, 0.66, 0.07}
\definecolor{CornflowerBlue}{rgb}{0.39, 0.58, 0.93}
\definecolor{DarkGoldenrod}{rgb}{0.72, 0.53, 0.04}
\definecolor{BritishRacingGreen}{rgb}{0.0, 0.26, 0.15}
\definecolor{DarkMagenta}{rgb}{0.55, 0.0, 0.55}
\definecolor{AO}{rgb}{0.0, 0.5, 0.0}
\definecolor{BostonUniversityRed}{rgb}{0.8, 0.0, 0.0}
\definecolor{myRed}{rgb}{0.8, 0.0, 0.0}
\definecolor{DarkMidnightBlue}{rgb}{0.0, 0.2, 0.4}
\definecolor{DarkTangerine}{rgb}{1.0, 0.66, 0.07}
\definecolor{AppleGreen}{rgb}{0.55, 0.71, 0.0}
\definecolor{BrightUbe}{rgb}{0.82, 0.62, 0.91}
\definecolor{Amethyst}{rgb}{0.6, 0.4, 0.8}
\definecolor{DarkGray}{rgb}{0.52, 0.52, 0.51}
\definecolor{Gray}{rgb}{0.66, 0.66, 0.66}
\definecolor{BananaYellow}{rgb}{1.0, 0.88, 0.21}
\definecolor{Amber}{rgb}{1.0, 0.75, 0.0}
\definecolor{LightGray}{rgb}{0.83, 0.83, 0.83}
\definecolor{PrincetonOrange}{rgb}{1.0, 0.56, 0.0}
\definecolor{DeepCarrotOrange}{rgb}{0.91, 0.41, 0.17}
\definecolor{CarrotOrange}{rgb}{0.93, 0.57, 0.13}
\definecolor{MidnightBlue}{rgb}{0.1, 0.1, 0.44}
\definecolor{Magenta}{rgb}{0.50, 0.0, 0.50}
\definecolor{BrightPink}{rgb}{1.0, 0.0, 0.5}
\definecolor{BrilliantRose}{rgb}{1.0, 0.33, 0.64}
\definecolor{ChromeYellow}{rgb}{1.0, 0.65, 0.0}
\definecolor{HotMagenta}{rgb}{1.0, 0.11, 0.81}
\definecolor{DarkTangerine}{rgb}{1.0, 0.66, 0.07}
\definecolor{darkyellow}{rgb}{.7, .6, 0.0}
\definecolor{CornflowerBlue}{rgb}{0.39, 0.58, 0.93}
\definecolor{DarkGoldenrod}{rgb}{0.72, 0.53, 0.04}
\definecolor{BritishRacingGreen}{rgb}{0.0, 0.26, 0.15}
\definecolor{AO}{rgb}{0.0, 0.5, 0.0}
\definecolor{MidnightBlack}{rgb}{0.1,0.1,.34}
\definecolor{MidnightBlue}{rgb}{0.1,0.1,0.43}
\definecolor{Black}{rgb}{0,0, 0}
\definecolor{Blue}{rgb}{0, 0 ,1}
\definecolor{Red}{rgb}{1, 0 ,0}
\definecolor{White}{rgb}{1, 1, 1}
\definecolor{DeepMagenta}{rgb}{0.8, 0.0, 0.8}
\definecolor{grey}{rgb}{.6, .6, .6}
\definecolor{darkgrey}{rgb}{.33, .33, .33}
\definecolor{Mygreen}{rgb}{.0, .7, .0}
\definecolor{Yellow}{rgb}{.55,.55,0}
\definecolor{Mustard}{rgb}{1.0, 0.86, 0.35}
\definecolor{applegreen}{rgb}{0.55, 0.71, 0.0}
\definecolor{darkturquoise}{rgb}{0.0, 0.81, 0.82}
\definecolor{celestialblue}{rgb}{0.29, 0.59, 0.82}
\definecolor{green_yellow}{rgb}{0.68, 1.0, 0.18}
\definecolor{crimsonglory}{rgb}{0.75, 0.0, 0.2}
\definecolor{darkmagenta}{rgb}{0.30, 0.0, 0.30}
\definecolor{magenta}{rgb}{0.50, 0.0, 0.50}
\definecolor{internationalorange}{rgb}{1.0, 0.31, 0.0}
\definecolor{darkorange}{rgb}{1.0, 0.55, 0.0}
\definecolor{ao}{rgb}{0.0, 0.5, 0.0}
\definecolor{awesome}{rgb}{1.0, 0.13, 0.32}
\definecolor{darkcyan}{rgb}{0.0, 0.50, 0.50}
\definecolor{violet}{rgb}{0.93, 0.51, 0.93}
\definecolor{brown}{rgb}{0.65, 0.16, 0.16}
\definecolor{orange}{rgb}{1.0, 0.65, 0.0}
\definecolor{DarkGreen}{rgb}{0,.5,0}
\definecolor{BostonUniversityRed}{rgb}{0.8, 0.0, 0.0}
\setlist[itemize]{topsep=0pt,partopsep=0pt,itemsep=0pt,parsep=0pt}
\setlist[itemize,1]{label={\small\textbullet}}
\setlist[itemize,2]{label={\tiny\textbullet}}
\setlist[itemize,3]{label=$\cdot$}
\setlist[enumerate]{topsep=0pt,partopsep=0pt,itemsep=0pt,parsep=0pt}
\setlist[enumerate,1]{label=\roman*)}
\setlist[enumerate,2]{label=\alph*)}
\setlist[enumerate,3]{label=\arabic*)}
\newcommand{\DeclareCleverTheorem}[4]{%
  \newaliascnt{#1}{#2}%
  \newtheorem{#1}[#1]{#3}%
  \aliascntresetthe{#1}
  \crefname{#1}{#3}{#4}%
  \crefformat{#1}{##2#3~##1##3}%
  \Crefformat{#1}{##2#3~##1##3}%
  \newtheorem*{#1*}{#3}
  \crefname{#1*}{#3}{#4}%
  \crefformat{#1*}{##2~#1##3}%
  \Crefformat{#1*}{##2~#1##3}%
}
\crefname{figure}{figure}{figures}
\crefname{equation}{equation}{Equations}
\crefname{chapter}{chapter}{chapters}
\crefname{section}{section}{sections}
\crefname{algorithm}{algorithm}{algorithms}
\newtheorem{claim}{Claim}
\crefname{claim}{claim}{claims}
\tikzset{
	position/.style args={#1:#2 from #3}{
		at=($(#3)+(#1:#2)$)
	}
}
\tikzset{
  v:main/.style = {draw, circle, scale=0.8, thick,fill=black,inner sep=0.7mm},
  v:ghost/.style = {inner sep=0pt,scale=1},
  >={latex},
  e:marker/.style = {line width=8.5pt,line cap=round,opacity=0.35,color=DarkGoldenrod},
  e:main/.style = {line width=1pt},
}
\newcommand{\Bcal}{\mathcal{B}}
\newcommand{\Ccal}{\mathcal{C}}
\newcommand{\Dcal}{\mathcal{D}}
\newcommand{\Ecal}{\mathcal{E}}
\newcommand{\Fcal}{\mathcal{F}}
\newcommand{\Hcal}{\mathcal{H}}
\newcommand{\Lcal}{\mathcal{L}}
\newcommand{\Ocal}{\mathcal{O}}
\newcommand{\Pcal}{\mathcal{P}}
\newcommand{\Qcal}{\mathcal{Q}}
\newcommand\Rcal{\mathcal{R}}
\newcommand{\Scal}{\mathcal{S}}
\newcommand{\Tcal}{\mathcal{T}}
\newcommand{\Vcal}{\mathcal{V}}
\newcommand{\Ycal}{\mathcal{Y}}
\newcommand{\Nbbb}{\mathbb{N}}
\newcommand{\eqdef}{\stackrel{{\scriptsize\rm def}}{=}}
\newcommand{\poly}{\mathsf{poly}\xspace}
\newcommand{\bd}{{\sf bd}\xspace}
\newcommand{\remove}[1]{}
\newcommand{\lst}{\mathsf{lst}\xspace}
\newcommand{\trace}{\mathsf{trace}\xspace}
\newcommand{\cupall}{{\pmb{\bigcup}}}
\newenvironment{cproof}{\proof[Proof of claim]}{\endproof}
\newcommand{\lin}[1]{\langle #1\rangle}
  \newcommand{\SW}[1]{%
    \todo[linecolor=red,bordercolor=white,color=Magenta,
    backgroundcolor=CornflowerBlue!40,textcolor=BrightPink,
    size=\footnotesize]{\textsf{SW: #1}}}
  \newcommand{\SWin}[1]{%
    \todo[inline,bordercolor=white,color=Magenta,
    backgroundcolor=CornflowerBlue!40,textcolor=BrightPink,
    size=\footnotesize]{\textsf{SW: #1}}}
  \newcommand{\EP}[1]{%
    \todo[linecolor=red,bordercolor=white,color=DeepCarrotOrange,
    backgroundcolor=BananaYellow!40,textcolor=DeepCarrotOrange,
    size=\footnotesize]{\textsf{EP: #1}}}
  \newcommand{\EPin}[1]{%
    \todo[inline,bordercolor=white,color=DeepCarrotOrange,
    backgroundcolor=BananaYellow!40,textcolor=DeepCarrotOrange,
    size=\footnotesize]{\textsf{EP: #1}}}
  \newcommand{\sed}[1]{%
    \todo[linecolor=red,bordercolor=white,color=BrilliantRose,
    backgroundcolor=BrilliantRose!50,size=\footnotesize]{%
    \textsf{\footnotesize SED: #1}}}
  \newcommand{\VP}[1]{%
    \todo[linecolor=red,bordercolor=white,color=purple!40,
    backgroundcolor=cyan!20,textcolor=magenta,
    size=\footnotesize]{\sf Vagelis: #1}}
  \newcommand{\VPin}[1]{%
    \todo[inline,bordercolor=white,color=purple!40,
    backgroundcolor=cyan!20,textcolor=magenta,
    size=\footnotesize]{\sf Vagelis: #1}}
  \newcommand{\SW}[1]{}
  \newcommand{\SWin}[1]{}
  \newcommand{\EP}[1]{}
  \newcommand{\EPin}[1]{}
  \newcommand{\sed}[1]{}
  \newcommand{\VP}[1]{}
  \newcommand{\VPin}[1]{}
\title{The Local Structure Theorem for Graph Minors with finite index\thanks{The first, second, and third author were supported by the French-German Collaboration ANR/DFG Project UTMA (ANR-20-CE92-0027). The first and third author were also supported by the ANR project GODASse ANR-24-CE48-4377. The second author was also supported by the ERC project BUKA (n°\! 101126229). The third author was also supported by the Franco-Norwegian project PHC AURORA 2024-2025 (Projet n°\! 51260WL).}}
\author{
Christophe Paul\\{\small LIRMM, Univ Montpellier, CNRS, Montpellier, France}\thanks{\href{mailto:paul@lirmm.fr}{paul@lirmm.fr}}
\and
Evangelos Protopapas\\{\small Faculty of Mathematics, Informatics and Mechanics, University of Warsaw, Poland}\thanks{\href{mailto:eprotopapas@mimuw.edu.pl}{eprotopapas@mimuw.edu.pl}}
\and
Dimitrios M. Thilikos\\{\small LIRMM, Univ Montpellier, CNRS, Montpellier, France}\thanks{\href{mailto:sedthilk@thilikos.info}{sedthilk@thilikos.info}}
\and
Sebastian Wiederrecht\\{\small KAIST, School of Computing, Daejeon, South Korea}\thanks{\href{mailto:wiederrecht@kaist.ac.kr}{wiederrecht@kaist.ac.kr}}
}
\date{}
\begin{document}

\maketitle
\vspace{-15mm}

\begin{abstract}
\noindent The \textsl{Local Structure Theorem} (LST) for Graph Minors roughly states that for every $H$-minor-free graph $G$ that contains a sufficiently large wall $W$, there is a small vertex subset $A,$ whose removal yields a graph that admits an ``almost embedding'' $\delta$ on a surface $\Sigma$ on which $H$ does not embed.
By \emph{almost embedding}, we mean that there exists a hypergraph $\mathcal{H}$ whose vertex set is a subset of the vertex set of $G - A$ and an embedding of $\Hcal$ on $\Sigma$ such that the drawing of each hyperedge of $\Hcal$ corresponds to a cell of $\delta,$ the boundary of each cell intersects only the vertices of the corresponding hyperedge, and all remaining vertices and edges of $G - A$ are drawn in the interior of cells.
The cells corresponding to hyperedges of arity at least $4$ -- called \emph{vortices} -- are few in number and have small ``depth'', while ``most'' of the wall $W$ is disjoint from the vortices and is ``grounded'' in the embedding $\delta$.

Suppose that the subgraphs drawn inside each of the non-vortex cells are equipped with some finite index, i.e., each such cell is assigned a \emph{color} from a finite set. 
We prove a version of the LST in which the set $C$ of colors assigned to the non-vortex cells exhibits ``large'' \emph{bidimensionality}: $G - A$ contains a minor model of a large grid $\Gamma$ such that, for every color $\alpha \in C,$ the model of \textsl{each} vertex of $\Gamma$ contains the subgraph drawn within an $\alpha$-colored cell.
Moreover, $\Gamma$ can be chosen in a way that is ``well-connected'' to the original wall $W$.
\end{abstract}

\medskip
\noindent\textbf{Keywords:} Graph Minors; Tangles; Graph Structure Theorems; Finite Index, Almost Embeddings; Bidimensionality; Boundaried graphs.

\thispagestyle{empty}

\newpage
\thispagestyle{empty}

\tableofcontents
\thispagestyle{empty}
\newpage

\setcounter{page}{1}

\section{Introduction}\label{sec_intro}

A graph $H$ is a \emph{minor} of a graph $G$ if it can be obtained from a subgraph of $G$ by \textsl{contracting} edges.
A central result, emerging from the highly influential Graph Minors Series by Robertson and Seymour, is the \textsl{Graph Minor Structure Theorem (GMST)}, which describes the structure of graphs that exclude a non-planar graph as a minor.
Considered one of the deepest results in discrete mathematics, the GMST has since found countless applications in algorithmic and structural graph theory, and has significantly contributed to the development of parameterized complexity \cite{Thilikos2012GraphMinors,FellowsL94Nonconstructive,LokshtanovSZ20Efficient}.

At the heart of the proof of the GMST -- and perhaps one of the most important result in the Graph Minors series -- lies the so-called \textsl{Local Structure Theorem (LST)} \cite{RobertsonS03GraphMinorsXVI}, which describes the structure of graphs excluding a graph $H$ as a minor ``locally’’.
The original proof by Robertson and Seymour, which appears as Theorem 3.1 in \cite{RobertsonS03GraphMinorsXVI}, does not provide explicit bounds for the functions involved.
A second group of authors -- Kawarabayashi, Thomas, and Wollan \cite{KawarabayashiTW18anew,KawarabayashiTW2021Quickly} -- offered a new proof of the LST and GMST, introducing the first explicit bounds, which are single-exponential in the size of the excluded minor. They also conjectured that polynomial bounds should be achievable.
These two papers by KTW also greatly simplified the proof of the GMST. Recently, Gorsky, Seweryn, and Wiederrecht \cite{GorskySW2025Polynomial} confirmed this conjecture, providing the first polynomial bounds for the LST and, consequently, for the GMST.
Meanwhile, numerous other variants and refinements of the LST and GMST have been developed in various settings (see \cite{MorellePTW25Excluding,ThilikosW25TheGraph,Chuzhoy15Improved,DvorakT14List} for a small sample).

\medskip
In this work we prove a stronger version of the LST by extracting additional structure provided by a finite index on boundaried graphs.
To explain the LST, as well as our results, we introduce a few key concepts.

\paragraph{Tangles.}

Informally, the LST decomposes an $H$-minor-free graph into parts that are ‘highly linked''.
In the original formulation of the LST by Robertson and Seymour, ``high linkedness'' is captured via the notion of ``tangles''.

A \emph{separation} in a graph $G$ is a pair $(A, B)$ such that $A \cup B = V(G)$ and there is no edge in $G$ with one endpoint in $A \setminus B$ and the other in $B \setminus A$.
The \emph{order} of $(A, B)$ is $|A \cap B|$.
A \emph{tangle} of order $k$ in $G$ is a subset $\Tcal$ of the set of all separations of order $< k$ in $G$ such that 
\begin{enumerate}
\item for every such separation $(A, B)$ exactly one of $(A, B)$ and $(B, A)$ belongs to $\Tcal$ and 
\item for all $(A_{1}, B_{1}), (A_{2}, B_{2}), (A_{3}, B_{3}) \in \Tcal$ it holds that $G[A_{1}] \cup G[A_{2}] \cup G[A_{3}] \neq G$.
\end{enumerate}
Given a tangle $\Tcal$ and $(A, B) \in \Tcal$, we call $A$ the \emph{small side} and $B$ the \emph{big side} of $(A, B)$.
Tangles were introduced by Robertson and Seymour in \cite{RobertsonS1991GraphMinorsX}, have since been extensively studied in combinatorics, and have applications in other fields (see the recent book by Diestel \cite{Diestel24Tangles}).

In order to present the LST, we first need to understand how a tangle in a graph $G$ may \textsl{control} a minor $H$ of $G$.
For this, we first need a concrete object to model $H$ as a minor of $G$.
It is well-known that $H$ is a minor of $G$ if and only if there exists a function $\mu$ that maps vertices of $H$ to pairwise disjoint connected subsets of $V(G)$ such that for each edge $uv \in E(H),$ there exists an edge $xy \in E(G)$ with $x \in \mu(u)$ and $y \in \mu(v)$.
We call $\mu$ a \emph{model} of $H$ in $G$ and the sets $\mu(u)$ the \emph{branch sets} of $\mu$.
If $\Tcal$ is a tangle in $G$, we say that a model $\mu$ of $H$ in $G$ is \emph{controlled} by $\Tcal$ if for every separation $(A, B) \in \Tcal$ of order $< |V(H)|$ all branch sets of $\mu$ intersect the big side of $(A, B)$.

In their version of the LST, Kawarabayashi, Thomas, and Wollan \cite{KawarabayashiTW2021Quickly} use \textsl{walls} instead of tangles (for a wall, see \cref{fig_grid_wall}).
One reason for this is that walls are concrete objects that witness ``well linkedness'' and as such are easier to work with without sacrificing any generality as walls naturally induce tangles.
To formalize this, let $G$ be a graph and $W$ be a $k$-wall in $G$ for some $k \geq 3$.
Notice that for every separation $(A, B)$ of order $< k$ in $G$, there is a unique side, say $B$, that contains the vertex set of both a horizontal and a vertical path of $W$.
Then, the set $\Tcal_{W}$ of all such separations oriented towards $B$ defines a tangle of order $k$ in $G$.
Now, given a tangle $\Tcal$ in $G$, we say that $\Tcal$ \emph{controls} $W$ if $\Tcal_{W} \subseteq \Tcal$.

\paragraph{The local structure theorem.}

The next step is to understand the notion of \textsl{almost embedding} a graph in a \textsl{surface}.
A \emph{boundaried subgraph} of $G$ is a pair $(H, B)$ where $H$
is a subgraph of $G$ and $B$ is a set of distinguished vertices 
of $H$ such that all vertices of $H$ that are adjacent to vertices not in $H$ belong to $B$.
A rough way to describe a \emph{$\Sigma$-decomposition} of $G$ is a collection $\delta = \{(F_{1},B_{1}), \ldots, (F_{\ell},B_{\ell})\}$
of boundaried subgraphs of $G$ such that 1) $G = F_{1} \cup \ldots \cup F_{\ell}$ and 2) there exists an embedding of a hypergraph with vertex set $B_{1} \cup \ldots \cup B_{\ell}$ and hyperedge set $\{ B_{1}, \ldots, B_{\ell}\}$ in $\Sigma$ such that a) each hyperedge $B_{i}$ is drawn as a closed curve whose boundary intersects only the vertices in $B_{i}$ and b) the drawing of any two hyperedges can intersect only in vertices.
Note that the embedding of $\Hcal$ in $\Sigma$ defines a cyclic ordering of each boundary $B_{i}$, say $\Omega_{i}$.
We call each boundaried subgraph $(F_{i}, B_{i})$ with $|B_{i}| \leq 3$ a \emph{cell} of $\delta$ and with $|B_{i}| \geq 4$ a \emph{vortex} of $\delta$.
The \emph{depth} of a vortex $(F_{i},B_{i})$ is the maximum number 
of vertex-disjoint paths between a subset $S \subseteq B$ appearing consecutively on $\Omega_{i}$ to its complement $B \setminus S$.
The \emph{breadth} of $\delta$ is the total number of its vortices while its \emph{depth} is the maximum depth of its vortices.
A $k$-wall $W \subseteq G$ is \emph{grounded} in $\delta$ if every cycle of $W$ uses edges from at least two different cells of $\delta$ and none of its edges belongs to a vortex. 

With these definitions the LST can be stated as follows.

\begin{proposition}[\!\!\cite{GorskySW2025Polynomial}]\label{prop_whatisknown}
There exist functions $\lst^{1} \colon \Nbbb \to \Nbbb$ and 
$\mathsf{wall}^{1} \colon \Nbbb^{2} \to \Nbbb$ with $\lst^{1}(t) \in \mathsf{poly}(k)$ and $\mathsf{wall}^{1}(k, r) \in \mathsf{poly}(k) \cdot r$, such that for every positive integer $r$, every $k$-vertex graph $H$, and every graph $G$ with a $\mathsf{wall}^{1}(k, r)$-wall subgraph $W$, one of the following holds.
\begin{enumerate}
    \item there exists a model of $H$ in $G$ controlled by $\Tcal_{W}$, or 
    \item there exists a surface $\Sigma$ where $H$ does not embed, a set $A \subseteq V(G)$ with $|A| \leq \lst^{1}(k)$, and an $r$-wall $W'$ that is a subwall of $W$ such that $G - A$ has a $\Sigma$-decomposition $\delta$ of breadth at most $ \nicefrac{1}{2}(k - 3)(k - 4)$ and depth at most $\lst^{1}(k)$ and where $W'$ is grounded.
\end{enumerate}
\end{proposition}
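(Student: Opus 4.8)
The plan is to reprove the Local Structure Theorem along the now-standard route through the \emph{Flat Wall Theorem}, upgraded with the quantitative machinery of \cite{GorskySW2025Polynomial} (building on \cite{KawarabayashiTW18anew,kawarabayashi2020quickly}) that is needed for polynomial bounds. \emph{Stage~1: the clique-minor dichotomy.} Apply the polynomial Flat Wall Theorem to the given $\wall^{1}(k,r)$-wall $W$. Either $G$ contains a model of $K_{k}$ grasped by a subwall $W'$ of $W$ --- and since a wall-grasped clique minor meets the big side of every separation oriented by the wall, this model is controlled by $\Tcal_{W'}\subseteq\Tcal_{W}$, whence, as $H$ is a minor of $K_{k}$, we obtain a model of $H$ controlled by $\Tcal_{W}$, i.e.\ the first alternative --- or there is a set $A_{0}$ with $|A_{0}|\in\mathsf{poly}(k)$ and a subwall $W_{0}$ of $W$, of order still polynomial in $k$ times $r$, that is \emph{flat} in $G-A_{0}$. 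A flat wall comes with a disk decomposition (a $\Sigma_{0}$-decomposition) of the part of $G-A_{0}$ it grasps, the seed of the $\Sigma$-decomposition we want.

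\emph{Stage~2: globalising the embedding.} The task is to extend this disk decomposition to a $\Sigma$-decomposition of $G-A$ for a suitable surface $\Sigma$ and a suitable $A\supseteq A_{0}$ with $|A|\le\lst^{1}(k)$. One processes the bridges of $G-A_{0}$ attached to the boundary cycles of the flat region: using the structure theory of graphs drawn in a surface with a society, each bridge is either redrawn inside an existing cell, or forces the addition of a handle or a crosscap to the current surface, or attaches in a ``linear'', high-multiplicity pattern and is absorbed into a vortex, or attaches ``densely'' and is charged to one new apex vertex placed into $A$. Iterating this and keeping a decomposition that is \emph{maximal} for its embeddable part (equivalently, with least leftover outside cells and vortices) produces a $\Sigma$-decomposition $\delta$ of $G-A$; taking $W'$ to be a subwall of $W_{0}$ lying deep inside the flat region, away from the boundedly many vortices and from the disk boundary, makes $W'$ grounded in $\delta$.

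\emph{Stage~3: bounding the parameters by extremality.} This is the heart of the argument. By the extremal choice of $\delta$, none of its parameters can be large, since otherwise the grounded wall $W'$ would itself exhibit a $K_{k}$-minor grasped by $W'$, hence controlled by $\Tcal_{W'}\subseteq\Tcal_{W}$, returning us to the first alternative. Concretely: an Euler genus exceeding $\mathsf{poly}(k)$ lets the wall alone route such a minor through the topology of $\Sigma$; a breadth exceeding $\tfrac12(k-3)(k-4)=\binom{k-3}{2}$ lets one combine $W'$ with the paths through the vortices into such a minor --- beyond a ``base'' of three wall-sides, each extra vortex contributes essentially one new pair of branch sets, which is exactly why the threshold is a binomial coefficient in $k-3$; and a vortex of depth exceeding $\lst^{1}(k)$ either yields such a minor outright or can be flattened into $O(k)$ extra apices. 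Finally, the embeddable part being grasped by $W'$, if $H$ embedded in $\Sigma$ we could read off a model of $H$ in $G-A\subseteq G$ controlled by $\Tcal_{W'}$; so in the remaining case $\Sigma$ is a surface in which $H$ does not embed, the breadth is at most $\binom{k-3}{2}$, and the depth and $|A|$ are at most $\lst^{1}(k)$ --- the second alternative.

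\emph{Stage~4: polynomial bookkeeping, and the main obstacle.} To obtain the stated bounds, every qualitative ingredient is replaced by its polynomial counterpart: the polynomial Flat Wall Theorem; the polynomial relation between Euler genus (respectively the number of vortices, respectively the number of apices) and the size of an extractable wall-grasped clique minor; and polynomial bounds on how much the wall shrinks per redrawing step together with a polynomial bound on the number of iterations. Tracking these yields $\wall^{1}(k,r)\in\mathsf{poly}(k)\cdot r$ and $\lst^{1}(t)\in\mathsf{poly}(k)$. I expect Stage~2 combined with this bookkeeping to be the genuine difficulty: promoting the purely local disk decomposition of a flat wall to a global $\Sigma$-decomposition of $G-A$ while keeping Euler genus, breadth, depth, apex number and wall loss all polynomial in $k$ simultaneously is precisely the technical core of \cite{GorskySW2025Polynomial}.
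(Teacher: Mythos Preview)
The paper does not prove this proposition at all: it is stated with the citation \cite{GorskySW2025Polynomial} and used as a black box (later restated in the schema language as \autoref{prop_lst}, again with an explicit reference to Theorem~15.1 of \cite{GorskySW2025Polynomial}). There is therefore no ``paper's own proof'' to compare against; the intended proof is simply the citation.

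Your proposal is not so much a proof as a narrative summary of how \cite{GorskySW2025Polynomial} (and before it \cite{kawarabayashi2020quickly}) proceeds, and you acknowledge this yourself in Stage~4 when you say the technical core is ``precisely'' the content of \cite{GorskySW2025Polynomial}. As a roadmap it is broadly accurate, but several of the steps you describe are substantially harder or structured differently than your sketch suggests. In particular, Stage~2 is not really a matter of ``processing bridges'' one by one and charging each to a handle, crosscap, vortex, or apex; the actual argument proceeds by repeatedly finding large crooked transactions in societies and using them either to exhibit the clique minor or to refine the rendition, with a carefully controlled inductive invariant on genus, breadth, depth, and remaining wall size. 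Your explanation of the breadth bound $\binom{k-3}{2}$ (``each extra vortex contributes essentially one new pair of branch sets'') is not how that number arises; it comes from the fact that $K_{k}$ embeds in any surface of Euler genus at least $\binom{k-3}{2}$, and the vortex count is tied to the genus of a surface in which $H$ cannot embed. And the claim in Stage~3 that a deep vortex ``can be flattened into $O(k)$ extra apices'' skips over exactly the delicate part of bounding depth. So while the outline is in the right spirit, it is not a proof and does not stand independently of the cited reference --- which is consistent with how the present paper treats the statement.
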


Note that the fact that $W'$ is a subwall of $W$ in the statement above implies that the tangle $\Tcal_{W}$ controls $W'$ and combined with the fact that $W'$ is grounded in $\delta$ witness the fact that the decomposition is local: It describes the structure of $G$ with respect to $W$, i.e., the tangle $\Tcal_{W}$.

\paragraph{Boundaried subgraphs with respect to a fixed tangle.}

Let us pick apart and analyze the LST.
Let $G$ be an $H$-minor-free graph and $\Tcal$ be a tangle in $G$ of large order.
As we have already discussed, the LST describes the structure of $G$ locally with respect to $\Tcal$.
After possibly removing a bounded (in the size of $H$) number of the so-called \textsl{apex vertices} in $A$, $G' \coloneqq G - A$ admits a $\Sigma$-decomposition $\delta = \{ (F_{1}, B_{1}), \ldots, (F_{\ell}, B_{\ell})\}$ into boundaried subgraphs of $G'$ in a surface $\Sigma$ of bounded Euler genus.
An important thing to note is that the boundaried graphs in the $\Sigma$-decomposition $\delta$ \textit{depend} on the choice of tangle $\Tcal$, hence the locality of the structure (relatively to $\Tcal$).
Moreover, the boundaried graphs in $\delta$ may be partitioned into the cells $\Ccal$ of $\delta$ and the vortices $\Vcal$ of $\delta$.
Notice that, even though vortices may have arbitrarily large boundary, they are however restricted to be few and of bounded depth.
On the other hand, the decomposition does not retain any information or impose any condition regarding cells, which moreover may be arbitrarily many.
Hence, a natural attempt for a stronger structure would be to capture some finite-index property of the boundaried graphs in $\Ccal$.
Towards this, we introduce \textsl{boundaried indices}.
In essence, a \emph{boundaried index} $\iota$ is a function mapping each boundaried subgraph $(F, B)$ with $|B| \leq 3$ to a positive integer.
What is important is that the number of distinct indices assigned to by $\iota$ is \textsl{finite}.
For this we define the \emph{capacity} of $\iota$ as the maximum of its images. 
The question that naturally appears is the following:
\begin{quote}
\textsl{Can we extract some additional information regarding the different indices associated by some boundaried index $\iota$ to the cells controlled by a tangle $\Tcal$?}
\end{quote}

\paragraph{An indexed extension of the LST.}

In this work we prove a strengthened version of the LST, which roughly states that the indices associated to the cells of $\delta$ are ``\textsl{bidimensionally}'' dispersed across the embedding associated to $\delta$ in a way that is ``controlled'' by the tangle $\Tcal$.
We formalize this property in terms of yet another witness of ``high linkedness'': \textsl{Grids} (see \cref{fig_grid_wall}).
Grids and walls are closely related in the sense that large grids contain large walls as subgraphs, while large walls contain large grids as minors.

Let $\iota$ be a boundaried index, $G$ be a graph, and $\delta$ be a $\Sigma$-decomposition of $G$.
Given $t \geq 3$, we say that a minor model $\mu$ of a $(t \times t)$-grid in $G$ \emph{represents} $\iota$ in $\delta$ if $\mu(V(\Gamma))$ is disjoint from any non-boundary vertex of the vortices of $\delta$ and, for \textsl{every} vertex $u \in V(\Gamma)$ and  for \textsl{every} index $\alpha$ assigned to some cell of $\delta$, there is an equally indexed  cell $(F, B)$ of $\delta$ (i.e., $\iota(F, B) = \alpha$) such that $F$ is a subgraph of $\mu(u)$.
Notice that the above notion of representation is quite strong.
It demands that representatives of \textsl{all} cells of $\delta$ appear in \textsl{all} branch sets of the $(t \times t)$-grid model $\mu$.

Moreover, if $\Tcal$ is a tangle of $G$, we say that a minor model $\mu$ of a $(t \times t)$-grid $\Gamma$ in $G$ is \emph{controlled} by $\Tcal$ if there do not exist $(A, B) \in \Tcal$ of order $< t$ and $X \subseteq V(\Gamma)$ where $\Gamma$ is the vertex set of a row or column of $\Gamma$ such that $\mu(X) \subseteq A \setminus B$.
 
Our version of the LST extends \cref{prop_whatisknown} as follows.

\begin{theorem}\label{our_result_first}
There exist functions $\lst^{2} \colon \Nbbb^3 \to \Nbbb$ and 
$\mathsf{wall}^{2} \colon \Nbbb^{4} \to \Nbbb$ such that, for every boundaried index $\iota$ of capacity at most $\ell$, every two positive integers $r$ and $t$, every $k$-vertex graph $H$, and every graph $G$ with a $\mathsf{wall}(r, k, t, \ell)$-wall $W$ subgraph, one of the following holds.
\begin{enumerate}
\item there is a minor model of $H$ in $G$ controlled by $\Tcal_{W}$, or
\item there is a surface $\Sigma$ where $H$ does not embed, a set $A \subseteq V(G)$ with $|A| \leq \lst^{2}(k, t, \ell)$, and an $(r + t)$-wall $W'$ that is a subwall of $W$, such that $G - A$ has a $\Sigma$-decomposition $\delta$ of breadth at most $k^{2} + t^{2} \cdot 2^{\Ocal(\ell)}$ and depth at most $\lst^{2}(k, t, \ell)$ and where $W'$ is grounded.

Moreover, there is a minor model of a $(t \times t)$-grid in $G - A$ that is controlled by $\Tcal_{W'}$ and represents the boundaried index $\iota$ in $\delta$.
\end{enumerate}
\end{theorem}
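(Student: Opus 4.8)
\emph{Overall strategy.} The plan is to derive \autoref{our_result_first} from \autoref{prop_whatisknown} by first running the ordinary LST and then \emph{refining} the resulting $\Sigma$-decomposition until a colored grid minor is visible, paying for the refinement with a controlled number of extra vortices. Concretely, set $\wall^{2}(r,k,t,\ell)\coloneqq \wall^{1}(k,R)$ for a value $R=R(r,k,t,\ell)$ fixed at the end, and apply \autoref{prop_whatisknown} to $H$, $G$, $W$. If its first outcome occurs we are in outcome~1 of \autoref{our_result_first}. Otherwise we receive a surface $\Sigma$ in which $H$ does not embed, a set $A$ with $|A|\le\lst^{1}(k)$, an $R$-subwall $W^{\star}$ of $W$, and a $\Sigma$-decomposition $\delta_{0}$ of $G-A$ of breadth at most $\tfrac12(k-3)(k-4)\le k^{2}$ and depth at most $\lst^{1}(k)$ in which $W^{\star}$ is grounded. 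From now on we work inside $G-A$, and every modification will consist only of replacing a bounded-depth, tangle-localized union of cells of the current decomposition by one new vortex; thus $\Sigma$ and $A$ never change, the depth never worsens beyond a bound we track, and the breadth only grows by the number of new vortices. It therefore suffices to end with a decomposition $\delta$, at most $t^{2}\cdot 2^{\Ocal(\ell)}$ new vortices, an $(r+t)$-subwall $W'$ of $W^{\star}$ grounded in $\delta$, and a $(t\times t)$-grid minor model of $G-A$ that avoids the interiors of all vortices of $\delta$, is controlled by $\Tcal_{W'}$, and represents $\iota$ in $\delta$.

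\emph{Homogenizing the wall.} Since $W^{\star}$ is grounded in $\delta_{0}$, the portion of $\delta_{0}$ traced out by $W^{\star}$ behaves like an essentially planar arrangement of colored cells carrying the tangle $\Tcal_{W^{\star}}$. Cut $W^{\star}$ into an $(N\times N)$ array of pairwise ``nested'' sub-walls (\emph{tiles}) $W_{i,j}$ of side $s$, inheriting a grid adjacency; each tile bounds a region $\Theta_{i,j}$, and we assign to it the \emph{profile} $\sigma(i,j)\subseteq[\ell]$ of those indices $\alpha$ for which some cell $(F,B)$ of $\delta_{0}$ with $\iota(F,B)=\alpha$ satisfies $F\subseteq\Theta_{i,j}$. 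An iterated-pigeonhole (grid-Ramsey) argument on the $(N\times N)$ array with at most $2^{\ell}$ colors yields, for $N$ large in terms of $t$ and $\ell$, a $t\times t$ sub-array of tiles all carrying a common profile $S\subseteq[\ell]$, still nested inside $W^{\star}$ and hence controlled by $\Tcal_{W^{\star}}$. If $S=[\ell]$ we are done: using one grid vertex per tile we route a $(t\times t)$-grid minor model whose branch sets follow the tiles, absorbing into each branch set one full cell $F$ of every index $\alpha\in[\ell]$ (available since the tile's profile is $[\ell]$); since $\delta_{0}$ was left untouched the model avoids all vortex interiors and represents $\iota$, the $(r+t)$-subwall $W'$ carved from the unused part of $W^{\star}$ is grounded in $\delta_{0}=\delta$, and a standard argument (rows and columns of the model descend from horizontal and vertical path families of $W^{\star}$) shows the model is controlled by $\Tcal_{W'}$.

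\emph{Localizing a missing color and recursing.} If instead every $t\times t$ monochromatic sub-array has profile $S\subsetneq[\ell]$, fix such a sub-array and an index $\alpha\in[\ell]\setminus S$. Then no tile of the (huge) sub-array contains a color-$\alpha$ cell, so every color-$\alpha$ cell that could interfere with the sub-array lies ``shallow'' with respect to $\Tcal_{W^{\star}}$; using the tangle together with the bound $|B|\le 3$ on cell boundaries, we capture all such cells inside at most $\Ocal(t^{2})$ disk-like regions, each bounded-order separated from the sub-array, and replace each by a new vortex of depth at most $\lst^{2}(k,t,\ell)$. Afterwards $\alpha$ is assigned to no cell of the new decomposition, $W^{\star}$ is still grounded and still controls the relevant tangle, and $\Sigma$ and $A$ are unchanged; we recurse with one fewer active index. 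Charging the $\Ocal(t^{2})$ vortices of a round to the profile $S$ that triggered it, noting that there are at most $2^{\ell}$ possible profiles and that the recursion strictly shrinks the active index set (the $\ell=0$ base being trivial), the total number of new vortices is $t^{2}\cdot 2^{\Ocal(\ell)}$ and the process terminates in an ``$S=[\ell]$'' outcome. Finally $R$ is chosen large enough to accommodate the Ramsey blow-up, the at most $\ell$ recursion levels, a tile side $s$ sufficient to route a grid vertex and absorb $\ell$ cells, and an extra $(r+t)$-subwall; this completes the argument.

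\emph{Main obstacle.} The delicate point is the localization step: turning the combinatorial statement ``no tile of a huge monochromatic sub-array contains a color-$\alpha$ cell'' into genuine bounded-order separations whose small sides (i) can be realized as small-depth vortices compatible with the ambient $\Sigma$-decomposition, (ii) preserve both the groundedness of $W^{\star}$ and the control by $\Tcal_{W^{\star}}$, and (iii) are covered by only $\Ocal(t^{2})$ vortices per profile, so that the global count stays at $t^{2}\cdot 2^{\Ocal(\ell)}$. Getting this amortization and the interaction of successive vortex-creations right is the crux; by comparison the grid-Ramsey step and the routing of the colored grid model — with the only genuine checks being that the model avoids all old and new vortex interiors and is controlled by the chosen $(r+t)$-subwall — are routine.
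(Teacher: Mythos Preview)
Your localization step is a genuine gap, and it is not merely ``delicate'' --- the mechanism you sketch does not work. The statement ``no tile of the huge sub-array contains a color-$\alpha$ cell, so every color-$\alpha$ cell lies shallow with respect to $\Tcal_{W^{\star}}$'' is false. The tiles $\Theta_{i,j}$ cover only the disk-like region of $\Sigma$ traced out by $W^{\star}$; any cell carrying color~$\alpha$ that sits in a handle or crosscap region of $\Sigma$, or in the exceptional region containing the original vortices, lies on the \emph{big} side of every low-order separation in $\Tcal_{W^{\star}}$ and is not captured by any tile. Such cells are not shallow, there is no reason for there to be only $\Ocal(t^{2})$ of them, and --- crucially --- a handle or crosscap region is not homeomorphic to a disk, so you cannot absorb it into a vortex without changing $\Sigma$. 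The claim ``afterwards $\alpha$ is assigned to no cell of the new decomposition'' therefore cannot be achieved by the operation you describe.

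The paper handles exactly this by working not with a flat wall but with a $\Sigma$-\emph{walloid} that carries dedicated handle and crosscap segments, one for each topological feature of $\Sigma$. Homogenization (\autoref{lem:homogenization}) is performed separately on each enclosure (big, handle, crosscap), and the Representation step (\autoref{lem:representation}) extracts flap segments for \emph{every} color that appears in \emph{any} enclosure --- so colors living in handles are represented, not hidden in vortices. What remains unrepresented then lies only in the single exceptional region; the Coarsening Lemma (\autoref{lem:coarsening}) processes that region by iteratively finding large proper transactions, homogenizing them, and either splitting the vortex or extracting new flap segments --- an argument of the same order of difficulty as the one you declared the crux but did not carry out. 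Your grid-Ramsey homogenization is in the right spirit for the flat part, but the proposal has no mechanism for colors outside the flat part and no substitute for the transaction-based coarsening.
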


{A theorem very similar to \cref{our_result_first} was originally proven by Robertson and Seymour themselves \cite{RobertsonS1995GMXIII}.
In volume XIII of the Graph Minors Series, Robertson and Seymour introduce their celebrated algorithm for the $k$-\textsc{Disjoint Paths} problem.
The core of this algorithm is the so-called \textsl{irrelevant vertex technique} and one of the two core results here says that in any large enough wall there exists a vertex whose deletion does not influence the outcome of $k$-\textsc{Disjoint Paths}.
In order to find such a vertex, Robertson and Seymour -- besides the introduction of the \textsl{linkage function} \cite{RobertsonS2009GMXXI} -- prove an indexed version of the so-called \emph{Flat Wall Theorem} (see Section 10 in \cite{RobertsonS1995GMXIII}).
In this context, one may understand \cref{our_result_first} as a generalization of this original indexed Flat Wall Theorem, similar to how the LST is a strict generalization of the non-indexed version of the Flat Wall Theorem.}

It should be pointed out that \cref{our_result_first} is a mere corollary which follows from a much more general version of the LST, namely \cref{thm_local_structure} proved in \cref{sec_proving_lst}, which equips the $\Sigma$-decomposition with a lot of ``canonically'' defined wall-like infrastructure that represents the given boundaried index $\iota$.
Due to the technical nature of these definitions, we choose not to present this theorem in this brief introduction.

Let us remark that all our results are constructive, providing polynomial (in the size of $G$) time algorithms.
Moreover, in the full statement of \cref{our_result_first}, namely \cref{cor_bidim_structure}, we give explicit estimations for the functions $\lst$ and $\mathsf{wall}$   as follows:
$$\lst(k,t,\ell) = 2^{(k^{2} + t^{2}) \cdot \log t \cdot 2^{\Ocal(\ell)}} \ \text{ and }\  \mathsf{wall}(r, k, t, \ell) = (r + \lst(k, t, \ell))^{2^{\Ocal(\ell)}} \cdot k^{2^{\Ocal(\ell)}}.$$

\paragraph{Brief outline.}

\cref{thm_local_structure} is the result of our main technical tool \cref{main_lemma} presented in \cref{sec_main_lemma}, and whose proof spans \cref{sec_homogeneous_walloid}, \cref{sec_representation}, and \cref{sec_splitting_extracting}.
The proof of \cref{main_lemma} proceeds in three main steps each corresponding to one of the sections above, an \textsl{homogenization} step (\cref{sec_homogeneous_walloid}), a \textsl{representation} step (\cref{sec_representation}), and a \textsl{coarsening} step (\cref{sec_splitting_extracting}), which start from a $\Sigma$-decomposition $\delta$ as the one produced by \cref{prop_whatisknown}, and progressively produce a \emph{coarser} version $\delta'$ of $\delta$, i.e., such that every cell of $\delta'$ is a cell of $\delta$ and all such cells are ``represented'' by a lot of wall-like infrastructure accompanying the $\Sigma$-decomposition at each step, and such that every cell of $\delta$ whose index is not represented is ``cornered'' within the vortices of $\delta'$.
After the last step, $\delta'$ along with the accompanying wall-like structure yields the second outcome of \cref{our_result_first}.

\paragraph{Future work.}

We believe that the proof techniques used in this paper are unlikely to yield polynomial bounds.
There are two main sources of exponential blow-up in our arguments.
The first arises from the homogenization step, which would require techniques capable of homogenizing polynomially large walls with respect to a finite number of colors.
The second source is the coarsening step, which would demand a deep understanding and careful adaptation of the ideas and techniques developed in \cite{GorskySW2025Polynomial}.

We expect that our local structure theorem will serve as a foundation for numerous applications in the area of algorithmic graph minors. {Such an application has already appeared in \cite{protopapas2025colorfulminors} which considers minors of colorful graphs, where each vertex of a colorful graph may carry some (possibly empty) set of a bounded number of colors.
Using the results of this work as a starting point, a structural theory of ``colorful minors'' is developed, with various combinatorial and algorithmic applications.}
{Moreover, a preliminary version of this theorem has been used as an integral building block in the recently announced full classification of the \textsl{Erd\H{o}s-P{\'o}sa property} in minor-closed graph classes \cite{PaulPTW2024Obstructions}.}
Our results are presented in a form that facilitates the transition from the local to a global structure theorem, which can be adapted depending on the intended application.

\section{Preliminaries}\label{sec_preliminaries}

In this first preliminary section we introduce notions around graphs and their minors, as well as separations in graphs, walls, and tangles.

\medskip
We use $\Nbbb$ to denote the set of non-negative integers.
Given a positive integer $c \in \Nbbb$, we denote by $\Nbbb_{\geq c}$ the set $\{ x \in \Nbbb \mid c \leq x\}$.
Given two integers $a, b \in \Nbbb$, we denote by $[a, b]$ the set $\{ x \in \Nbbb \mid a \leq x \leq b\}$.
Notice that $[a, b]$ is empty whenever $b < a$.
Given a positive integer $c \in \Nbbb$, we denote by $[c]$ the set $[1, c]$.

\paragraph{Minors.}

Given a graph $G$ and an edge $e = uv$, the \emph{contraction} of $e$ is the operation which yields a new graph in which $u$ and $v$ are identified and any loops or parallel edges that arise are deleted.
We say that $H$ is a \emph{minor} of $G$ or $G$ has an \emph{$H$-minor} if a graph isomorphic to $G$ can be obtained from a subgraph of $G$ by a sequence of edge contractions.

\paragraph{Separations.}

A \emph{separation} in a graph $G$ is a pair $(A, B)$ such that $A \cup B = V(G)$ and there is no edge in $G$ with one endpoint in $A \setminus B$ and the other in $B \setminus A$.
The \emph{order} of $(A, B)$ is $|A \cap B|$.

\paragraph{Paths and linkages.}

A \emph{linkage} $\Lcal$ in a graph $G$ is a set of pairwise vertex-disjoint paths.
In a slight abuse of notation, we use $V(\mathcal{L})$ and $E(\mathcal{L})$ to denote the sets $\bigcup_{L \in \mathcal{L}} V(L)$ and $\bigcup_{L \in \mathcal{L}} E(L)$ respectively.
We say that a path $P$ in $G$ is \emph{internally disjoint} from a set $X \subseteq V(G)$ if $V(P) \cap X$ does not contain any vertex of $P$ that is not an endpoint vertex.
Given a graph $G$ and two subsets $A, B \subseteq V(G)$, an \emph{$A$-path} in $G$ is a path with both endpoints in $A$ and internally disjoint from $A$, and an \emph{$A$-$B$-path} is a path with one endpoint in $A$, the other in $B$, and internally disjoint from $A \cup B.$
An \emph{$A$-$B$ linkage} in $G$ is a linkage consisting of $A$-$B$ paths.
If $H$ is a subgraph of $G$, an \emph{$H$-path} is a $V(H)$-path of length at least one with no edge in $E(H)$.

More often that not we are interested in minors which are \textsl{subcubic}\footnote{A graph of maximum degree at most $3$ is called \emph{subcubic}.} graphs.
In this case we shall avoid speaking of minors directly as we may equivalently find a \textsl{subdivision} of it as a subgraph.
We say that a graph $G'$ is a \emph{subdivision} of a graph $G$ if it can be obtained from $G$ by replacing any edge $uv \in E(G)$ by a $u$-$v$ path whose internal vertices are disjoint from $G$.

\paragraph{Extended graphs.}

To model certain structures in our graphs we employ an intermediate notion between a graph and a hypergraph which we call an \textsl{extended} graph.

\medskip
An extended graph is a triple $H = (V, E, \mathcal{E})$ where $(V, E)$ is a graph and $\mathcal{E}$ is a set of subsets of $V$.
We refer to the elements of $E$ as the \emph{edges} of $H$ and to the elements of $\Ecal$ as the \emph{hyperedges} of $H$.
Note that any set of two vertices which constitute an edge of $G$ can be present in $\mathcal{E}$ as well.
Also, any graph can be seen as an extended graph where the set of its hyperedges is empty.

\subsection{Grids and walls}\label{sec_walls}

Let us now introduce two natural candidates which witness ``well linkedness'' in a graph.

\medskip
We first define grids.
An \emph{$(n \times m)$-grid} is the graph with vertex set $[n] \times [m]$ and edge set
$$\{ (i,j)(i,j+1) \mid i \in [n], j \in [m-1] \} \cup \{ (i,j)(i+1,j) \mid i \in [n-1], j \in [m] \}.$$
We call the path where vertices appear as $(i,1), (i,2), \ldots, (i,m)$ the \emph{$i$-th row} and the path where vertices appear as $(1,j), (2,j), \ldots, (n,j)$ the \emph{$j$-th column} of the grid.
See \cref{fig_grid_wall} for an illustration of the $(5 \times 10)$ grid.

\medskip
We now move on to the definition of walls.
Let  $t, z \in \Nbbb_{\geq 3}.$
An \emph{elementary $(t \times z)$-wall} is obtained from the $(t \times 2z)$-grid by removing a matching $M$ which contains all odd numbered edges of its odd numbered columns and all even numbered edges of its even numbered columns, and then deleting all vertices of degree one.
A \emph{$(t \times z)$-wall} is a subdivision of an elementary $(t \times z)$-wall.
See \cref{fig_grid_wall} for an illustration of an elementary $(5 \times 5)$-wall and how it is obtained from a $(5 \times 10)$-grid.

\begin{figure}[h]
\centering
\includegraphics{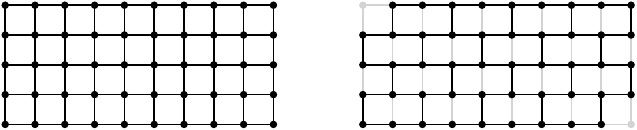}
\caption{\label{fig_grid_wall}A $(5 \times 10)$-grid on the left and an elementary $(5 \times 5)$-wall on the right.}
\end{figure}

Notice that large walls still contain large grids as minors.
Moreover, since they are subcubic, they are easier to handle as it suffices to find the as subgraphs instead of minors.

\subsection{Tangles and the minors they control}

We continue with the more abstract notion we use to model ``well linkedness'', that of a tangle.

\paragraph{Tangles.}
Let $G$ be a graph and $k \in \Nbbb_{\geq 1}$.
We denote by $\Scal_{k}(G)$ the set of all separations of order $< k$ in $G$.
An \emph{orientation} of $\Scal_{k}(G)$ is a set $\Ocal$ such that for all $(A, B) \in \Scal_{k}(G)$ exactly one of $(A, B)$ and $(B, A)$ belongs to $\Ocal$.
A \emph{tangle} of order $k$ in $G$ is an orientation $\Tcal$ of $\Scal_{k}(G)$ such that for all $(A_{1}, B_{1}), (A_{2}, B_{2}), (A_{3}, B_{3}) \in \Tcal$ it holds that $G[A_{1}] \cup G[A_{2}] \cup G[A_{3}] \neq G$.
Given a tangle $\Tcal$ and $(A, B) \in \Tcal$, we call $A$ the \emph{small side} and $B$ the \emph{big side} of $(A, B)$.

Let $G$ be a graph and $\Tcal$ and $\Dcal$ be tangles of $G$.
We say that $\Dcal$ is a \emph{truncation} of $\Tcal$ if $\Dcal \subseteq \Tcal$.

\paragraph{Minors controlled by tangles.}

Let $t, z \in \Nbbb_{\geq 3}$, $G$ be a graph, and $W$ be a $(t \times z)$-wall in $G$.
Let $\Tcal_{W}$ be the orientation of $\Scal_{\min\{ t, z \}}$ such that for every $(A, B) \in \Tcal_{W}$, the set $B \setminus A$ contains the vertex set of both a horizontal and a vertical path of $W$.
Then $\Tcal_{W}$ is the tangle \emph{induced} by $W$.
If $\Tcal$ is a tangle in $G$, we say that $W$ is \emph{controlled} by $\Tcal$ if $\Tcal_{W}$ is a truncation of $\Tcal$.

To formalize how an arbitrary minor $H$ of a graph $G$ is controlled by some tangle $\Tcal$ of $G$ we first need a concrete object to model $H$ as a minor of $G$.
For this we introduce \textsl{minor models}.
A function $\mu \colon V(H) \to 2^{V(G)}$ is called a \emph{(minor) model} of $H$ in $G$ if $\mu(V(H))$ is a set of pairwise disjoint vertex subsets, each inducing a connected subgraph of $G$, and for each edge $uv \in E(H)$ there exists an edge $xy \in E(G)$ with $x \in \mu(u)$ and $b \in \mu(v)$.
Observe that $H$ is a minor of $G$ if and only if there exists a minor model of $H$ in $G$.

If $\Tcal$ is a tangle of $G$, we say that a minor model $\mu$ of $H$ in $G$ is \emph{controlled} by $\Tcal$ if there do not exist $(A, B) \in \Tcal$ of order $< |V(H)|$ and $x \in V(H)$ such that $\mu(x) \subseteq A \setminus B$.

In the special case that $H$ is the $(t \times t)$-grid, for some $t \in \Nbbb_{\geq 3}$, we say that a minor model $\mu$ of $H$ in $G$ is \emph{controlled} by $\Tcal$ if there do not exist $(A, B) \in \Tcal$ of order $< t$ and $X \subseteq V(H)$ where $X$ is the vertex set of a row or column of $H$ such that $\mu(X) \subseteq A \setminus B$.

\section{The Graph Minors toolbox}

In our proofs we closely follow the framework introduced by KTW \cite{KawarabayashiTW2021Quickly} for their proof of the GMST.
In this second preliminary section we provide most of the technical definitions that we require.

\subsection{Paintings, embeddings, and $\Sigma$-decompositions}

We first introduce definitions around almost embedding graphs in surfaces.

\paragraph{Surfaces.}

By \emph{surface} we mean a two-dimensional manifold, possibly with boundary.

Given a pair $(\mathsf{h},\mathsf{c}) \in \mathbb{N} \times [0,2]$, we define $\Sigma^{(\mathsf{h}, \mathsf{c})}$ to be the surface without boundary created from the sphere by adding $\mathsf{h}$ handles and $\mathsf{c}$ crosscaps (for more on the topology of 2-dimensional surfaces, see~\cite{MoharT01Graphs}).
If $\mathsf{c} = 0,$ the surface $\Sigma^{(\mathsf{h},\mathsf{c})}$ is \emph{orientable}, otherwise it is \emph{non-orientable}.
By Dyck's theorem \cite{Dyck1888Beitrage,Francis99ConwayZIP}, two crosscaps are equivalent to a handle in the presence of a (third) crosscap.
This implies that the notation $\Sigma^{(\mathsf{h}, \mathsf{c})}$ is sufficient to denote all surfaces without boundary.

A \emph{closed} (resp. \emph{open}) \emph{disk} of a surface $\Sigma$ is any closed (resp. open) disk that is a subset of $\Sigma.$
For brevity, whenever we use the term \emph{disk} we mean a closed disk.

\paragraph{Painting an extended graph in a surface.}

Let $\Sigma$ be a surface.
A \emph{$\Sigma$-painting} is a triple $\Gamma = (U, V, E)$ with a partition of $E$ into two sets $E_{1}$ and $E_{2}$ such that
\begin{enumerate}
\item $V$ and $E$ are finite,
\item $V \subseteq U \subseteq \Sigma$ and $\bd(\Sigma) \cap U \subseteq V,$
\item $V \cup \bigcup_{e \in E_{1}} e = U,$ $V \cap (\bigcup_{e \in E_{1}} e) = \emptyset,$ and $U \cap (\bigcup_{d \in E_{2}} \bd(d)) \subseteq V,$
\item for every $e \in E_{1},$ $e = h((0,1)),$ where $h \colon [0,1]_{\mathbb{R}} \to U$ is a homeomorphism onto its image with $h(0), h(1) \in V,$
\item for distinct $e, e' \in E_{1},$ $|e \cap e'|$ is finite, and
\item for every $d \in E_{2},$ $d$ is an open disk of $\Sigma$ where $d \cap (\bigcup_{d' \in E_{2} \setminus \{ d \}} d') = \emptyset.$
\end{enumerate}

We call the elements of $V$ the \emph{points}, the elements of $E_{1}$ the \emph{arcs}, and the elements of $E_{2}$ the \emph{disks} of $\Gamma$ respectively.
If $H$ is an extended graph and $\Gamma = (U, V, E)$ is a $\Sigma$-painting whose points, arcs, and disks correspond to the vertices, edges, and hyperedges of $H$ respectively, we say that $\Gamma$ is a \emph{$\Sigma$-painting} of $H$.
Two distinct arcs in $E$ \emph{cross} if they have a common point.
If no two arcs in $E$ cross, we say that $\Gamma$ is a \emph{$\Sigma$-embedding} of $H$.
In that case, the connected components of $\Sigma \setminus U,$ are the \emph{faces} of $\Gamma.$
Given a graph $G,$ we refer to a $\Sigma$-painting of $(V(G), E(G), \emptyset)$ as a \emph{$\Sigma$-drawing} of $G$.
   
\paragraph{$\Sigma$-decompositions.}

Let $\Sigma$ be a surface.
A \emph{$\Sigma$-decomposition} of a graph $G$ is a pair $\delta = (\Gamma, \mathcal{D}),$ where $\Gamma = (U, V, E)$ is a $\Sigma$-drawing of $G$ and $\mathcal{D}$ is a collection of disks of $\Sigma$ such that
\begin{enumerate}
\item the disks in $\mathcal{D}$ have pairwise disjoint interiors,
\item for every disk $\Delta \in \Dcal,$ $\bd(\Delta) \cap U \subseteq V,$ i.e., the boundary of $\Sigma$ and of each disk in $\mathcal{D}$ intersects $U$ only in vertices,
\item if $\Delta_1, \Delta_2 \in \mathcal{D}$ are distinct, then $\Delta_1 \cap \Delta_2 \subseteq V,$ and
\item every edge of $\Gamma$ is a subset of the interior of one of the disks in $\mathcal{D}.$
\end{enumerate}

Let $N$ be the set of all points of $\Gamma$ that do not belong to the interior of any disk in $\mathcal{D}.$
We refer to the elements of $N$ as the \emph{nodes} of $\delta.$
If $\Delta \in \mathcal{D},$ then we refer to the set $\Delta - N$ as a \emph{cell} of $\delta.$
We denote the set of nodes of $\delta$ by $N(\delta)$ and the set of cells of $\delta$ by $C(\delta).$
Given a cell $c \in C(\delta),$ we define the \emph{disk of $c$} as the disk $\Delta_{c} \coloneqq \bd(c) \cup c.$
For a cell $c \in C(\delta),$ the set of nodes of $\delta$ that belong to $\Delta_{c}$ is denoted by $\tilde{c}$.
Moreover, we define the graph $\sigma_{\delta}(c)$ to be the subgraph of $G$ consisting of all vertices and edges that are drawn in $\Delta_{c}.$
We define $\pi_{\delta} \colon N(\delta) \to V(G)$ to be the mapping that assigns to every node in $N(\delta)$ the corresponding vertex of $G.$
We call every vertex in $\pi_{\delta}(N(\delta))$ a \emph{ground vertex} in $\delta$.
A cell $c \in C(\delta)$ is a \emph{vortex cell} if $|\tilde{c}| \geq 4$, otherwise it is a \emph{simple cell}.
We partition $C(\delta)$ into the sets $C_{\mathsf{s}}(\delta)$ and $C_{\mathsf{v}}(\delta),$ containing the simple and vortex cells of $\delta$ respectively.
We say that $\delta$ is \emph{vortex-free} if $C_{\mathsf{v}}(\delta) = \emptyset,$ i.e., if no cell in $C(\delta)$ is a vortex.

Given two $\Sigma$-decompositions $\delta$ and $\delta'$ of a graph $G$, we write
$\delta' \sqsubseteq \delta$ whenever every simple cell of $\delta'$ is a simple cell of $\delta$. When $\delta' \sqsubseteq \delta$, we say that $\delta'$ is a \emph{coarsening} of $\delta$. 
Notice that when $\delta'$ is a coarsening of $\delta$, it means that the subgraphs of $G$ that are drawn in the interior of cells of $\delta$ that are not cells of $\delta'$ are drawn in the interior of vortices of $\delta'$.

\subsection{Societies and more}

We next introduce societies which will allow us to ``localize'' arguments in our proofs, whenever we want to speak about a part of our $\Sigma$-decomposition that is drawn in the interior of some disk of $\Sigma$.

\paragraph{Societies.}

Let $\Omega$ be a cyclic ordering of the elements of some set, which we denote by $V(\Omega)$.
A \emph{society} is a pair $(G, \Omega)$ where $G$ is a graph and $\Omega$ is a cyclic ordering with $V(\Omega) \subseteq V(G)$.

A \emph{cross} in a society $(G, \Omega)$ is a pair $(P_{1}, P_{2})$ of vertex-disjoint paths in $G$ such that $P_{i}$ has endpoints $s_{i}, t_{i} \in V(\Omega)$ and is otherwise disjoint from $V(\Omega)$, and the vertices $s_{1}, s_{2}, t_{1}, t_{2}$ occur in $\Omega$ in the order listed.

Let $(G, \Omega)$ be a society.
A \emph{segment} of $\Omega$ is a set $S \subseteq V(\Omega)$ such that there do not exist $s_1, s_2 \in S$ and $t_1, t_2 \in V(\Omega) \setminus S$ such that $s_1, t_1, s_2, t_2$ occur in $\Omega$ in that order.
A vertex $s \in S$ is an \emph{endpoint} of the segment $S$ if there is a vertex $t \in V(\Omega) \setminus S$ which immediately precedes or immediately succeeds $s$ in $\Omega$.
For vertices $x, y \in V(\Omega)$, we denote by $x \Omega y$ the uniquely determined segment of $\Omega$ with first vertex $x$ and last vertex $y$.

\paragraph{Renditions.}

Let $(G, \Omega)$ be a society and $\Sigma$ be a surface with one boundary component $B$ homeomorphic to the unit circle.
A \emph{rendition} of $(G, \Omega)$ in $\Sigma$ is a $\Sigma$-decomposition $\rho$ of $G$ such that the image under $\pi_{\rho}$ of $N(\rho) \cap B = V(\Omega)$ and $\Omega$ is one of the two cyclic orderings of $V(\Omega)$ defined by the way the points of $\pi_{\delta}(V(\Omega))$ are arranged on the boundary $B$.
A \emph{cylindrical rendition} $(\Gamma, \Dcal, c_{0})$ of a society $(G, \Omega)$ around a cell $c_{0}$ is a rendition $\rho$ of $G$ in a disk such that all cells of $\rho$ that are different from $c_{0}$ are simple cells.

\medskip
The next proposition is a restatement of the celebrated ``\textsl{Two Paths Theorem}'' that gives a characterization of a cross-free society in terms of vortex-free rendition in a disk.
Several proofs of this result have appeared, see \cite{Jung70Verallgemeinerung,RobertsonS1990GraphMinorsIX,shiloach1980polynomial,thomassen1980linked}.
For a recent proof see \cite[Theorem 1.3]{KawarabayashiTW18anew}.

\begin{proposition}
A society has no cross if and only if it has a vortex-free rendition in a disk.
\end{proposition}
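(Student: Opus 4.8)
We first argue that a vortex-free rendition precludes a cross, the idea being to upgrade the rendition to an honest planar drawing. Let $\rho=(\Gamma,\mathcal{D})$ be a vortex-free rendition of $(G,\Omega)$ in a disk $\Sigma$ with boundary circle $B$, so that the vertices of $V(\Omega)$ are the only vertices drawn on $B$ and appear there in the cyclic order $\Omega$. Suppose toward a contradiction that $(P_1,P_2)$ is a cross with terminals $s_1,s_2,t_1,t_2$ in this cyclic order. Every edge of $\Gamma$ lies in the interior of a single cell disk $\Delta_c$, distinct cell disks have disjoint interiors, and each cell is simple, so $|\tilde c|\le 3$; consequently, inside a fixed cell $c$ the paths $P_1$ and $P_2$ meet $\sigma_\rho(c)$ only through the at most three nodes of $\tilde c$, each maximal subpath of $P_i$ using only edges drawn inside $\Delta_c$ has both endpoints in $\tilde c$, and since $P_1,P_2$ are vertex-disjoint at most one of the two can use an edge strictly inside $\Delta_c$. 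Rerouting, in each cell, the (at most one) such subpath to a simple chord of $\Delta_c$ and discarding everything else drawn inside the cell, we obtain a genuinely planar drawing in $\Sigma$ of a graph that still contains vertex-disjoint $s_1$--$t_1$ and $s_2$--$t_2$ paths with $s_1,s_2,t_1,t_2$ still on $B$ in this cyclic order. Capping $B$ with a disk containing a new vertex $v_\infty$ joined to $V(\Omega)$ along $B$ produces a planar drawing on the sphere; the cycle $v_\infty s_1 P_1 t_1 v_\infty$ is then a simple closed curve meeting $B$ exactly at $s_1,t_1$, hence separates $s_2$ from $t_2$, and the rerouted $P_2$, being vertex-disjoint from it, cannot cross it --- a contradiction.

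The converse is the substantive direction and is precisely the rendition form of the \emph{Two Paths Theorem}. The plan is induction on $|V(G)|+|E(G)|$, peeling off reductions each of which preserves the ``no cross'' hypothesis and each of which lets one reassemble a vortex-free rendition of $(G,\Omega)$ in a disk from renditions of the smaller pieces: if $G$ is disconnected, the absence of a cross forces its components to be laminar with respect to $\Omega$, and one nests their disk renditions; a vertex of degree at most $2$ outside $V(\Omega)$ is suppressed and absorbed into a simple cell; and a separation $(A,B)$ of order at most $3$ with $A\setminus B$ disjoint from $V(\Omega)$ lets one replace $G[A]$, whose society inherits the no-cross property and is strictly smaller, by a single simple cell of a rendition of $(G[B],\Omega)$. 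Once no such reduction applies, $G$ is essentially $4$-connected and (after a further routine reduction of the society to four distinguished terminals) what remains is to show that a graph with terminals $s_1,s_2,t_1,t_2$ and no two vertex-disjoint paths linking $\{s_1,t_1\}$ and $\{s_2,t_2\}$ is planar with a single face whose boundary meets these vertices in the order $\Omega$. Cutting that face open and fattening each edge of the planar drawing into a thin disk then exhibits the desired vortex-free rendition in a disk.

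The heart of the argument --- and the step I expect to be the main obstacle --- is exactly this $4$-connected core. It is the classical theorem of Jung, Seymour, Shiloach and Thomassen, proved either by producing, whenever planarity with the correct face fails, a $K_5$- or $K_{3,3}$-subdivision entangled with the terminals in a way that directly yields a cross, or by a direct augmentation argument (add the edges $s_1t_1$ and $s_2t_2$ and a vertex adjacent to all four terminals, then invoke planarity of the result). Since the excerpt records this as a known proposition with several available proofs, in the paper I would invoke it as a black box; the only genuine remaining work is the bookkeeping that the reductions above glue the disk renditions together correctly, which is routine but tedious.
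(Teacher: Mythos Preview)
The paper does not prove this proposition at all: it is stated as a known restatement of the Two Paths Theorem, with citations to Jung, Robertson--Seymour, Shiloach, Thomassen, and Kawarabayashi--Thomas--Wollan, and no argument is given. You correctly anticipated this when you wrote that ``in the paper I would invoke it as a black box.''

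Your sketch is a sound outline of the standard proof. The forward direction is handled correctly: the point that in a simple cell with $|\tilde c|\le 3$ at most one of two vertex-disjoint paths can own a nontrivial subpath through the cell is exactly what makes the rerouting to an honest planar drawing work, and the Jordan-curve finish is the right conclusion. For the converse you identify the right inductive reductions (components, low-degree vertices, small separations absorbed into simple cells) and correctly isolate the essentially $4$-connected core as the substantive content, namely the classical Two Paths Theorem. One small caution: the phrase ``routine reduction of the society to four distinguished terminals'' hides real work in some presentations, since one has to be careful that the reductions do not create a cross; but this is indeed how the cited proofs proceed, so it is fair to call it known.
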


\subsection{Transactions and depth}

Due to their large boundary, vortices represent those parts of the $\Sigma$-decomposition that cannot be easily separated from the rest as well as introduce crossings that affect almost embeddability.
However, in the LST vortices are still sufficiently tame as the number of pairwise-disjoint paths that are allowed to traverse through them is bounded.
In our proofs we need tools that allow us to process societies -- eventually turned vortices -- in terms of such sets of paths.

\paragraph{Transactions.}

Let $(G, \Omega)$ be a society. 
A \emph{transaction} in $(G, \Omega)$ is an $A$-$B$-linkage for disjoint segments $A, B$ of $\Omega$ consisting of $V(\Omega)$-paths. 
We define the \emph{depth} of $(G, \Omega)$ as the maximum order of a transaction in $(G, \Omega)$.

Let $\mathcal{T}$ be a transaction in a society $(G, \Omega)$. 
We say that $\mathcal{T}$ is \emph{planar} if no two members of $\mathcal{T}$ form a cross in $(G, \Omega)$.

\paragraph{Aligned disks.}

Let $\delta = (\Gamma = (U, V, E), \mathcal{D})$ be a $\Sigma$-decomposition of a graph $G.$
We call a disk $\Delta$ of $\Sigma$ $\delta$-\emph{aligned} if $U \cap \bd(\Delta) \subseteq N(\delta).$
Clearly, for each $c \in C(\delta),$ $\Delta_c$ is a $\delta$-aligned disk.
Given an arcwise connected set $\Delta \subseteq \Sigma$ such that $U \cap \Delta \subseteq N,$ we use $G \cap \Delta$ to denote the subgraph of $G$ consisting of the vertices and edges of $G$ that are drawn in $\Delta.$
Note that, in particular, the above definition applies when $\Delta$ is a $\delta$-aligned disk of $\Sigma.$
We also define $\Gamma \cap \Delta \coloneqq (U \cap \Delta, V \cap \Delta, \{ e \in E \mid e \subseteq \Delta \})$ and observe that $\Gamma \cap \Delta$ is a $\Delta$-drawing of $G \cap \Delta$.
Given a $\delta$-aligned disk $\Delta$ we define a cyclic ordering $\Omega(\Delta)$ such that $V(\Omega(\Delta)) = \pi_{\delta}(\bd(\Delta) \cap N(\delta)))$, by traversing along the boundary of $\Delta$ in one of the two directions.
Moreover, for a vortex cell $c$ of $\delta$, the \emph{vortex society} of $c$ in $\delta$ is the society $(G \cap \Delta_{c}, \Omega(\Delta_{c}))$.

Given a $\Sigma$-decomposition $\delta = (\Gamma, \Dcal)$ of a graph $G$ and a $\delta$-aligned disk $\Delta$ of $\Sigma,$ we denote by $\delta \cap \Delta$ the rendition $(\Gamma \cap \Delta, \{ \Delta_{c} \in \mathcal{D} \mid c \subseteq \Delta \})$ of the society $(G \cap \Delta, \Omega(\Delta))$ in $\Delta.$

\paragraph{Breadth and depth of a $\Sigma$-decomposition.}

Let $\delta$ be a $\Sigma$-decomposition of a graph $G$ in a surface $\Sigma$.
We define the \emph{breadth} of $\delta$ to be the number of vortex cells of $\delta$ and the \emph{depth} of $\delta$ to be the maximum depth over all vortex societies of $\delta$.

\paragraph{Grounded graphs.}

Let $\delta$ be a $\Sigma$-decomposition of a graph $G.$ 
Let $Q \subseteq G$ be either a cycle or a path that uses no edge of $\cupall \{ \sigma_{\delta}(c) \mid c \in C_{\mathsf{v}}(\delta) \}.$
We say that $Q$ is $\delta$-\emph{grounded} if either $Q$ is a non-trivial path with both endpoints in $\pi_{\delta}(N(\delta))$ or $Q$ is a cycle that contains edges of $\sigma_{\delta}(c_{1})$ and $\sigma_{\delta}(c_{2})$ for at least two distinct cells $c_{1}, c_{2} \in C(\delta).$
A $2$-connected subgraph $H$ of $G$ is said to be \emph{$\delta$-grounded} if every cycle in $H$ is grounded in $\delta.$

\paragraph{Traces.}

Let $\delta$ be a $\Sigma$-decomposition of a graph $G$ in a surface $\Sigma.$ 
For every cell $c \in C(\delta)$ with $|\tilde{c}| = 2$ we select one of the components of $\bd(c) - \tilde{c}.$
This selection is called a \emph{tie-breaker} in $\delta,$ and we assume every $\Sigma$-decomposition to come equipped with a tie-breaker.
If $Q$ is $\delta$-grounded, we define the \emph{trace} of $Q$ as follows.
Let $P_{1}, \dots, P_{t}$ be distinct maximal subpaths of $Q$ such that $P_{i}$ is a subgraph of $\sigma(c)$ for some cell $c.$
Fix an index $i.$
The maximality of $P_{i}$ implies that its endpoints are $\pi_\delta(n_{1})$ and $\pi_\delta(n_{2})$ for distinct nodes $n_{1}, n_{2} \in N(\delta).$
If $|\tilde{c}| = 2,$ define $L_{i}$ to be the component of $\bd(c) - \{ n_{1}, n_{2} \}$ selected by the tie-breaker, and if $|\tilde{c}| = 3,$ define $L_{i}$ to be the component of $\bd(c) - \{ n_{1}, n_{2} \}$ that is disjoint from $\tilde{c}.$
Finally, we define $L'_{i}$ by slightly pushing $L_{i}$ to make it disjoint from all cells in $C(\delta).$
We define such a curve $L'_{i}$ for all $i$ while ensuring that the curves intersect only at a common endpoint.
The \emph{trace} of $Q,$ denoted by $\trace_{\delta}(Q)$ or simply $\trace(Q)$ when $\delta$ is clear from the context, is defined to be $\bigcup_{i \in [t]} L'_{i}.$
So the trace of a cycle is the homeomorphic image of the unit circle and the trace of a path is an arc in $\Sigma$ with both endpoints in $N(\delta).$

We say that a cycle $Q$ of $G$ is \emph{$\delta$-contractible} if $\trace_{\delta}(Q)$ bounds a disk in $\Sigma$.
Given a $\delta$-contractible cycle $C$ of $G$ we define $\mathsf{nodes}_{\delta}(C) \coloneqq \trace_{\delta}(C) \cap N(\delta)$ to be the set of nodes of $\delta$ intersected by $\trace_{\delta}(C).$
Moreover, given a set $B \subseteq N(\delta)$ such that $\mathsf{nodes}_{\delta}(C) \cap B = \emptyset$, we call the disk of $\Sigma$ that is bounded by $\trace_{\delta}(C)$ and is disjoint from $B,$ the \emph{$B$-avoiding disk} of $C$.
Note that the $B$-avoiding disk of $C$ does not always exist, but if it exists, it is unique.

\paragraph{Nests and linkages.}

Let $\rho$ be a rendition of a society $(G, \Omega)$ in a disk $\Delta$ and let $\Delta^* \subseteq \Delta$ be an arcwise connected set.
A \emph{nest} in $\rho$ around $\Delta^*$ of \emph{order} $s \in \Nbbb_{\geq 1}$ is a set $\mathcal{C} = \{C_1, \dots, C_s \}$ of pairwise disjoint $\rho$-grounded cycles where, for $i \in [s],$ the trace of $C_i$ bounds a disk $\Delta_i$ in such a way that $\Delta^* \subseteq \Delta_1 \subseteq \Delta_2 \subseteq \dots \subseteq \Delta_s \subseteq \Delta,$ $\bd(\Delta_1) \cap \Delta^* = \emptyset,$ and $\bd(\Delta_s) \cap \Delta = \emptyset.$
The last two conditions and the fact that cycles are pairwise disjoint imply that the boundaries of all the disks are also pairwise disjoint.

Let $\rho$ be a rendition of a society $(G, \Omega)$ in a disk $\Delta,$ let $\Delta^* \subseteq \Delta$ be an arcwise connected set and let $\mathcal{C} = \{C_1, \dots, C_s\}$ be a nest in $\rho$ around $\Delta^*.$
We say that a family of pairwise vertex-disjoint paths $\Pcal$ in $G$ is a \emph{radial linkage} if each path in $\mathcal{P}$ has one endpoint in $V(\Omega)$, the other drawn in $\Delta^*$, and is otherwise disjoint from $V(\Omega).$
Moreover, we say that $\mathcal{P}$ is \emph{orthogonal} to $\mathcal{C}$ if for every $P \in \mathcal{P}$ and every $i \in [s],$ $C_i \cap P$ consists of a single component.
Similarly, if $\mathcal{P}$ is a transaction in $(G, \Omega)$ then $\mathcal{P}$ is said to be \emph{orthogonal} to $\mathcal{C}$ if for every $i \in [s]$ and every $P \in \mathcal{P},$ $C_i \cap P$ consists of exactly two components.

\section{$\Sigma$-schemata}

In this final preliminary section, we introduce the concept of a $\Sigma$-schema, which will serve as the primary object of reference throughout our proofs.
The purpose of $\Sigma$-schemas is to provide a unified framework that maintains a $\Sigma$-decomposition of a graph while incorporating additional properties required in our arguments.
It also incorporates a generalized wall-like structure designed to streamline and make our arguments more cohesive and structured.

\subsection{Segments and walloids}

We first revisit the definition of walls in order to define a slightly more general notion that will be one of the building blocks towards the generalized structure that can appropriately model the infrastructure we need for our $\Sigma$-decompositions.

\paragraph{Wall segments.}

Let $r, t \in \Nbbb_{\geq 3}$.
An \emph{elementary $(r \times t)$-wall segment} is obtained from the $(r \times 2t)$-grid by removing a matching $M$ which contains all odd numbered edges of its odd numbered columns and all even numbered edges of its even numbered columns.
An elementary $(r \times t)$-wall segment $W_{1}$ can be seen as the union of $r$ horizontal paths and $t$ vertical paths that are defined as follows: The $r$ \emph{horizontal paths} $P_{1}, \dots, P_{r},$ ordered (and visualized) from top to bottom are the $r$ (subdivided) rows of the original $(r \times 2t)$-grid.
We refer to $P_{1}$ as the \emph{top} and to $P_{t}$ as the bottom horizontal path of $W_{1}$ respectively.
The \emph{top} and \emph{bottom boundary vertices} of $W$ are vertices of the original $(r \times 2t)$-grid that are incident to edges of $M$ and belong to the top and bottom path of $W_{1}$ respectively.
The $t$ \emph{vertical paths} $Q_{1}, \dots, Q_{t},$ ordered (and visualized) from left to right are the pairwise disjoint paths whose union meets all vertices of the original $(r \times 2t)$-grid, and where each $Q_{i}$ has one endpoint that is a top boundary vertex and the other endpoints is a bottom boundary vertex of $W_{1}$.
For $i \in [t]$, the $i$-th \emph{top boundary vertex} of $W_{0}$ is the single vertex in $P_{1} \cap Q_{i}$ that belongs to $M$ and the $i$-th \emph{bottom boundary vertex} of $W_{1}$ is the single vertex in $P_{r} \cap Q_{i}$ that belongs to $M$.
For $i \in [r]$, the $i$-th \emph{left boundary vertex} of $W_{1}$ is the vertex $(i, 1)$ and the $i$-th \emph{right boundary vertex} of $W_{1}$ is the vertex $(i, 2t)$.
An \emph{$(r \times t)$-wall segment} is a subdivision of an elementary $(r \times t)$-wall segment.

\begin{figure}[h]
\centering
\includegraphics{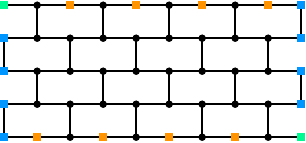}
\caption{\label{fig_wall_segment}An elementary $(5 \times 5)$-wall segment $W_{1}$. The top/bottom boundary vertices of $W_{1}$ are depicted in either green or orange while the left/right boundary vertices of $W_{2}$ are depicted in either green or blue. Note that vertices in green are both top/bottom and left/right boundary vertices.}
\end{figure}

Notice that, by definition a $(r \times t)$-wall $W$ is a $(r \times t)$-wall segment $W_{1}$ with all vertices of degree one removed.
A horizontal or vertical path of $W$ is defined as a horizontal or vertical path of $W_{1}$ after removing any vertex non in $W$.
The cycle of $W$ defined as the union of the top and bottom path of $W$ as well as the leftmost and rightmost vertical path of $W$ is called the \emph{perimeter} of $W$.
Let $W$ be a wall and $W'$ be a subgraph of $W$ that is a wall.
We say that $W'$ is a \emph{subwall} of $W$ if every horizontal path of $W'$ is a subpath of a horizontal path of $W$ and every vertical path of $W'$ is a subpath of a vertical path of $W$.

\begin{figure}[h]
\centering
\includegraphics{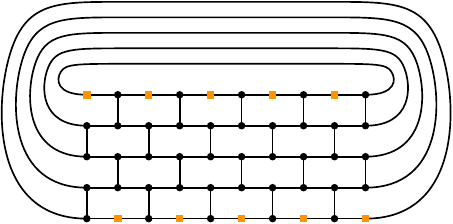}
\caption{\label{fig_annulus_wall}An elementary $(5 \times 5)$-annulus wall.}
\end{figure}

\paragraph{Annulus walls.}

Let $r, t \in \Nbbb_{\geq 3}$.
An \emph{elementary $(r \times t)$-annulus wall} $W$ is obtained from an elementary $(r \times t)$-wall segment $W_{1}$ by adding an edge incident with the $i$-th left and the $i$-th right boundary vertex of $W_{1}$, for each $i \in [r]$.
Notice that the top and bottom boundary vertices of $W$ are the top and bottom boundary vertices of $W_{1}$.
Moreover, the $t$ horizontal paths $P_{1}, \ldots, P_{r}$ of $W_{1}$ are completed into $r$ cycles $C_{1}, \ldots, C_{r}$ of $W$ such that $V(P_{i}) = V(C_{i})$, for each $i \in [r]$.
We call these cycles the \emph{base cycles} of $W$.
We call the cycle $C_{1}$ the \emph{inner cycle} and the cycle $C_{r}$ the \emph{outer cycle} of $W$ respectively and they are visualized as such.
An \emph{$(r \times t)$-annulus wall} is a subdivision of an elementary $(r \times t)$-annulus wall.

Let $W$ be a annulus wall and $W'$ be a subgraph of $W$ that is a wall.
We say that $W'$ is a \emph{subwall} of $W$ if every horizontal path of $W'$ is a subgraph of a base cycle of $W$ and every vertical path of $W'$ is a subpath of a vertical path of $W$.

\paragraph{Handle segments.}

Let $r, t \in \Nbbb_{\geq 3}$.
An \emph{elementary $(r \times t)$-handle segment} is obtained from an elementary $(r, 4t)$-wall segment $W_{1}$ with top boundary vertices $v_{1}, \ldots, v_{t}$, $v'_{1}, \ldots, v_{t}'$, $u_{1}, \ldots, u_{t}$, $u'_{1}, \ldots, u_{t}'$ in left to right order by adding the edges in $\{ v_i u_{t-i+1}, v'_i u'_{t-i+1} \mid i \in [t] \}$.
An \emph{$(r, t)$-handle segment} $W$ is a subdivision of an elementary $(r \times t)$-handle segment.

\begin{figure}[h]
\centering
\includegraphics{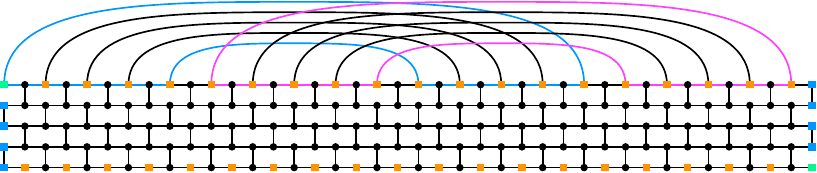}
\caption{\label{fig_handle_segment}An elementary $(5, 5)$-handle segment $W$. The left enclosure of $W$ bounds the left rainbow of $W$ and is depicted in blue while the right enclosure of $W$ bounds the right rainbow of $W$ and is depicted in magenta.}
\end{figure}

Notice that the added edges can be partitioned into a $\{v_{1}, \ldots, v_{t}\}$-$\{ u_{1},\ldots, u_{t}\}$-linkage in $W$, which we call the \emph{left rainbow} of $W$, and a $\{v_{1}', \ldots, v_{t}'\}$-$\{u_{1}', \ldots, u_{t}\}'$-linkage in $W$ respectively, which we call the \emph{right rainbow} of $W$.
Moreover, we define the \emph{left enclosure} of $W$ to be the cycle of $W$ whose degree-$3$ vertices appear in cyclic ordering as $v_{1}, \ldots, v_{t}, u_{1}, \ldots, u_{t}, v_{1}$ and the \emph{right enclosure} of $W$ to be the cycle of $W$ whose degree-$3$ vertices appear in cyclic order as $v_{1}', \ldots, v_{t}', u_{1}', \ldots, u_{t}' v_{1}'$ respectively.

\paragraph{Crosscap segments.}

Let $r, t \in \Nbbb_{\geq 3}$.
An \emph{elementary $(r \times t)$-crosscap segment} is obtained from an elementary $(r \times 4t)$-wall segment $W_{1}$ with top boundary vertices $v_{1}, \ldots, v_{2t}$, $u_{1}, \ldots, u_{2t}$ in left to right order by adding the edges in $\{v_i u_{2t+i} \mid i \in [2t] \}$.
An \emph{$(r \times t)$-crosscap segment} $W$ is a subdivision of an elementary $(r \times t)$-crosscap segment.

\begin{figure}[h]
\centering
\includegraphics{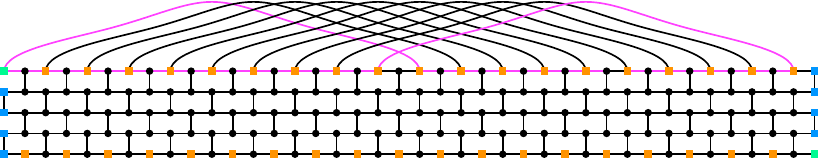}
\caption{\label{fig_crosscap_segment}An elementary $(5 \times 5)$-crosscap segment $W$. The enclosure of $W$ bounds the rainbow of $W$ and is depicted in magenta.}
\end{figure}

Notice that the added edges form a $\{v_{1}, \ldots, v_{2t}\}$-$\{u_{1}, \ldots, u_{2t}\}$-linkage in $W$ which we refer to as the \emph{rainbow} of $W$.
Moreover, we define the \emph{enclosure} of $W$ to be the cycle of $W$ whose degree-$3$ vertices appear in cyclic ordering as $v_{1}, \ldots, v_{2t}, u_{2t}, \ldots, u_{1}, v_{1}$.

\paragraph{Flap segments.}

Let $r, t \in \Nbbb_{\geq 3}$.
An \emph{elementary $(r \times t)$-flap segment} of \emph{arity} $q \in [3]$ is an extended graph obtained from an elementary $(r \times (2t + q))$-wall segment $W_{1}$ with top boundary vertices $v_{1}, \ldots, v_{t}$, $t_{1}, \ldots, t_{q}$, $u_{1}, \ldots, u_{t}$ in left to right order by adding the edges in $\{ v_i u_{t-i+1} \}$, adding a fresh hyperedge with endpoints $z_{1}, \ldots, z_{q}$, and adding the edges in $\{ t_{i} z_{i} \mid i \in [q] \}$.
An \emph{$(r, t)$-flap segment} $W$ is a subdivision of an elementary $(r, t)$-flap segment.

\begin{figure}[h]
\centering
\includegraphics{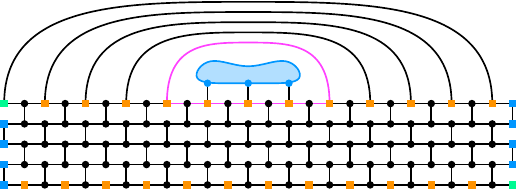}
\caption{\label{fig_flap_segment}An elementary $(5, 5)$-flap segment $W$ of arity $3$. The inner cycle of $W$ is depicted in magenta.}
\end{figure}

Notice that the added edges form a $\{v_{1}, \ldots, v_{t}\}$-$\{u_{1}, \ldots, u_{t}\}$-linkage in $W$ which we call the \emph{rainbow} of $W$.
Moreover, we define the \emph{inner cycle} of $W$ to be the cycle of $W$ that consists of the $v_t$-$u_1$-path that arises after subdividing the edge $v_{t} u_{1}$, along with the $v_t$-$u_1$-path that is a subpath of the top path of $W_{1}$.

\paragraph{Vortex segments.}

Let $r \in \Nbbb_{\geq 3}$, $t \in \Nbbb_{\geq 4}$, and $s \in \Nbbb_{\geq 1}$.
An \emph{elementary $(r \times t, s)$-vortex segment} $W$ is an extended graph obtained from the disjoint union of an elementary $(r \times t)$-wall segment $W_{1}$ with top boundary vertices $v_{1}, \ldots, v_{t}$ in left to right order, a set $\Ccal = \{ C_{0}, \ldots, C_{s + 1} \}$ of $s + 2$ pairwise disjoint cycles, and if $V(C_{0}) = \{ w_{1}, \ldots, w_{t} \}$, a $\{ v_{1}, \ldots, v_{t} \}$-$\{ w_{1}, \ldots, w_{t}\}$-linkage $\Rcal$ that is orthogonal to $\Ccal$, such that there are no vertices of degree $\leq 2$ that are not vertices of $W_{1}$, and then by adding a hyperedge $F = V(C_{0})$.
An \emph{$(r \times t, s)$-vortex segment} $W$ is a subdivision of an elementary $(r \times t, s)$-vortex segment.
Note that in a (non-elementary) $(r \times t, s)$-vortex segment it holds that $\{ w_{1}, \ldots, w_{t}\} \subseteq F \subseteq V(C_{0})$.

\begin{figure}[h]
\centering
\includegraphics{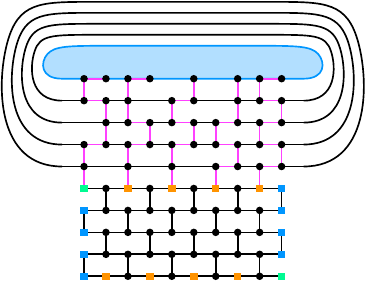}
\caption{\label{fig_vortex_segment}An elementary $(5 \times 5, 3)$-vortex segment $W$. The radial linkage of $W$ is depicted in magenta. The inner cycle of $W$ as well as its hyperedge is depicted in blue.}
\end{figure}

We refer to $\Ccal$ as the \emph{nest} of $W$, to $C_{0}$ as the \emph{inner cycle} of $W$, to $C_{s+1}$ as the \emph{outer cycle} of $W$, and to $\Rcal$ as the \emph{radial linkage} of $W$.

We collectively refer to (elementary) wall, handle, crosscap, and vortex segments as \emph{(elementary) $(r \times t)$-segments} \emph{segments}.
In any segment, we refer to the starting wall segment $W_{1}$ as its \emph{base}.

\paragraph{Concatenation and cylindrical closure of segments.}

Let $r, t \in \Nbbb_{\geq 3}$ and $\ell \in \Nbbb_{\geq 1}$.
Given a sequence $W_{1}, \ldots, W_{\ell}$ of $(r \times t)$-segments, their \emph{concatenation} is obtained from the disjoint union of $W_{1}, \ldots, W_{\ell}$ by adding an edge between the $j$-th right boundary vertex of $W_{i}$ and the $j$-th left boundary vertex of $W_{i+1}$, for $i \in [\ell - 1]$ and $j \in [t]$.
Moreover, their \emph{cylindrical closure} is obtained from their concatenation by adding an edge between $j$-th left boundary vertex of $W_{1}$ and the $j$-th right boundary vertex of $W_{\ell}$, for $j \in [t]$.
Notice that $W$ contains as a subgraph both the concatenation as well as the cylindrical closure of the bases of all these segments.
We refer to this annulus wall as the \emph{base annulus} of $W$ and we denote it as $\widetilde{W}$.
We extend these definitions to non-elementary segments in the natural way.

\paragraph{Surface walls.}

Let $t \in \Nbbb_{\geq 3}$ and $r, \mathsf{h}, \mathsf{c} \in \Nbbb$.
An \emph{elementary $(r, t, \mathsf{h}, \mathsf{c})$-surface wall} is the concatenation of $\mathsf{h} + \mathsf{c} + 1$ many segments such that exactly one is an elementary $((r + t) \times (r + t))$-wall segment, $\mathsf{h}$ are $((r + t) \times t)$-handle segments and $\mathsf{c}$ are $((r + t) \times t)$-crosscap segments.
An \emph{$(r, t, \mathsf{h}, \mathsf{c})$-surface wall} is a subdivision of an elementary $(r, t, \mathsf{h}, \mathsf{c})$-surface wall.

\paragraph{Walloids.}

Let $t \in \Nbbb_{\geq 5}$, $s \in \Nbbb_{\geq 1}$, $r, \mathsf{h}, \mathsf{c}, \ell, \ell' \in \Nbbb$ and $\Sigma$ be a surface of Euler genus $2 \mathsf{h} + \mathsf{c}$.
An \emph{elementary $(r, t, s)$-$\Sigma$-walloid} is the cylindrical closure of $W_{0}, \ldots, W_{\ell + \ell}$, where $W_{0}$ is an elementary $(r, t, \mathsf{h}, \mathsf{c})$-surface wall, $W_{1}, \ldots, W_{\ell}$ are elementary $((r + t) \times t)$-flap segments, and $W_{\ell + 1, \ldots, W_{\ell + \ell'}}$ are elementary $((r + t) \times t, s)$-vortex segments.
An \emph{$(r, t, s)$-$\Sigma$-walloid} is a subdivision of an elementary $(r, t, s)$-$\Sigma$-walloid.

We refer to the these segments as the \emph{segments} of $W$.
We say that a set of hyperedges of $W$ is \emph{consecutive} in $W$ if they belong to segments that appear consecutively in the above order.
For $i \in [\ell]$, we refer to $W_{i}$ as the \emph{$i$-th flap segment} of $W$ and for $i \in [\ell']$, we refer to $W_{\ell + i}$ as the \emph{$i$-th vortex segment} of $W$ respectively.
In the case that a $(r, t)$-$\Sigma$-walloid does not have any flap or vortex segments, we call it a \emph{$(r, t)$-$\Sigma$-annulus wall}.
Moreover, when the parameters $r$, $t$, and $s$ are irrelevant to make a statement about an $(r, t, s)$-$\Sigma$-walloid we may omit specifying any one of them.

\paragraph{Facial cycles of a walloid.}

Let $\mathsf{h}, \mathsf{c}, \ell, \ell' \in \Nbbb$, $\Sigma$ be a surface of Euler genus $2\mathsf{h} + \mathsf{c}$ and $W$ be a $\Sigma$-walloid that is the cylindrical closure of an $(\text{-}, \text{-}, \mathsf{h}, \mathsf{c})$-surface wall, $\ell$ many flap segments and $\ell'$ many vortex segments.
By definition, $W$ is the subdivision of a 3-regular $\Sigma$-embeddable extended graph.
Consider a $\Sigma$-embedding $\Gamma$ of $W$.\footnote{Assuming that $t$ is larger than some modest function depending on the Euler genus of $\Sigma$ (which is, by far, the case in all our results) we can deduce from the classic results of \cite{Thomassen1990Embeddings,Mohar95Uniqueness,SeymourT1996Uniqueness,RobertsonV1990Embeddings} that this embedding is unique.}
Moreover, let $\overline{W}$ to be the graph obtained from $W$ by removing its hyperedges and consequently removing all degree one vertices that arise, and $\Gamma'$ to be the obvious $\Sigma$-embedding of $\overline{W}$ obtained from $\Gamma$ by deleting its disks (which correspond to the hyperedges of $W$) and its nodes and arcs that correspond to the deleted degree one vertices (and their incident edges).

A \emph{brick} of $W$ is any facial cycle of $\overline{W}$ in $\Gamma'$ that contains at most $6$ degree-$3$ vertices of $\overline{W}$.
Among the facial cycles of $\overline{W}$ in $\Gamma'$ that are not bricks, $2 \mathsf{h}$ many of them correspond to the enclosures of the handle segments of $W$, $\mathsf{c}$ of them to the enclosures of the crosscap segments of $W$, $\ell$ many of them correspond to the inner cycles of the flap segments of $W$, and finally $\ell'$ of them correspond to the inner cycles of the vortex segments of $W$.

Two facial cycles of $\overline{W}$ in $\Gamma'$ remain unclassified.
One of the two corresponds to the outer cycle of the base annulus of $W$ which we call the \emph{simple cycle} of $W$, denoted by $C^{\mathsf{si}}(W)$, while the only remaining unclassified facial cycle, we call the \emph{exceptional cycle} of $W$, denoted by $C^{\mathsf{ex}}(W)$, which is the only facial cycle that contains the endpoints of all top paths of the bases of the segments of $W$.

A cycle of $\overline{W}$ is an \emph{enclosure} of $W$ if it is an enclosure of one of its handle or crosscap segments or if it is the inner cycle of the base annulus of $W$, which we call the \emph{big enclosure} of $W$. 
By definition, $W$ has $2\mathsf{h} + \mathsf{c} + 1$ enclosures.

A cycle $F$ of $W$ is a \emph{fence} if it is either a facial cycle of $\overline{W}$ in $\Gamma'$ or if there exists an edge separation $(E_1, E_2)$ of $\overline{W}$, such that
\begin{itemize}
\item $V(F)$ is the set of vertices that are endpoints of edges in both $E_{1}$ and $E_{2}$,  
\item $E_2 \setminus E_1 \neq \emptyset$, $E_1 \setminus E_2 \neq \emptyset$, and  
\item $E(C^{\mathsf{ex}}(W)) \subseteq E_2$.
\end{itemize}
Notice that for every fence $F$ such an edge separation $(E_{1}, E_{2})$ is uniquely defined.
We say that a fence $F'$ of $W$ is \emph{inside} the fence $F$ if $E(F') \subseteq E_{1}$.
A brick of $W$ is a \emph{brick} of a fence $F$ if it is inside $F$.

The $\delta$-\emph{influence} of a fence $F$ of $W$, denoted by $\delta$-$\mathsf{influence}(F)$, is the set of all simple cells of $\delta$ that contain at least one edge of $F$ or whose disk is a subset of the $\mathsf{nodes}_{\delta}(C^{\mathsf{si}}(W))$-avoiding disk of the trace of $F$.
Note that a cell can belong to the influence of at most three of the fences of $W$.
Moreover, disjoint fences have disjoint influences.

\subsection{$\Sigma$-schemata and their cell-colorings}\label{sec_schemata}

In our proofs we require our $\Sigma$-decompositions to satisfy some additional connectivity properties which we introduce here.

\paragraph{Tight $\Sigma$-decompositions.}

Let $G$ be a graph, $\Sigma$ be a surface, and $\delta = (\Gamma, \mathcal{D})$ be a $\Sigma$-decomposition of $G$.
We define the \emph{$\delta$-torsoid} of $G$ as the graph $T$ with vertex set being the union of the ground vertices of $\delta$ along with a fresh vertex $u_{c}$ for every vortex cell $c \in C(\delta).$
As for the edge set of $T,$ for every cell $c \in C(\delta),$ if $(x_{1}, \ldots, x_{\ell}, x_{\ell + 1} = x_{1})$ is the cyclic ordering of the vertices in $\pi_{\delta}(\tilde{c})$ induced by traversing along the boundary of $\Delta_{c}$ in counter-clockwise direction starting from an arbitrarily chosen vertex $x_{1} \in \pi_{\delta}(\tilde{c}),$ then if $\ell = 2,$ $x_{1}x_{2} \in E(T),$ and if $\ell \geq 3,$ for every $i \in [\ell],$ $x_{i}x_{i+1} \in E(T).$
Moreover, if $c$ is a vortex cell of $\delta,$ then $x_{i}u_{c} \in E(T),$ that is every vortex cell induces a wheel subgraph of $T$ with center $u_{c}.$

With this definition at hand, we say that $\delta$ is \emph{tight} if the following two conditions are met:
\begin{enumerate}
\item For every simple cell $c \in C(\delta),$ there is an $x$-$y$-path $P$ in $\sigma_{\delta}(c)$ between any two distinct vertices $x, y \in \pi_{\delta}(\tilde{c}).$
In case $|\tilde{c}| = 3,$ then $P$ is required to avoid $z \in \pi_{\delta}(\tilde{c})$ where $x \neq z \neq y.$
\item The $\delta$-torsoid of $G$ is 3-connected.
\end{enumerate}

Since the definition of tightness is a relatively standard notion when working with $\Sigma$-decompositions and renditions (see e.g.~\cite{BasteST19HittingMinors}), we only briefly sketch how every $\Sigma$-decomposition can be transformed into a tight $\Sigma$-decomposition in linear time.

\begin{observation}\label{lem:tight_renditions} For every non-negative integer $g$ there exists a non-negative integer $c \geq 3$ such that the following holds.

For every graph $G$ that admits a $\Sigma$-decomposition $\delta$ in a surface $\Sigma$ of Euler-genus $g$ with a $\delta$-grounded $(r, c)$-$\Sigma$-annulus wall $W \subseteq G,$ for some $r \in \Nbbb,$ there exists a tight $\Sigma$-decomposition $\delta'$ of $G$ such that $W$ is $\delta'$-grounded.

Moreover, there is an algorithm that computes $\delta'$ in time $\Ocal(|V(G)| + |E(G)|).$
\end{observation}
\begin{proof}[Proof sketch] First note that we may assume that every ground vertex of $\delta$ belongs to the same connected component of $G$ and that every other connected component (if any) is contained in a special cell of $\delta$ with boundary size $0.$

Meeting the first condition is relatively straightforward and can be accomplished by locally repairing cells that do not satisfy it as follows: Let $c \in C(\delta)$ be such a cell.
Now look at the connected components of $\sigma_{\delta}(c)$ which must be at least $2$, and by our former assumption rooted at distinct vertices of $\pi_{\delta}(\tilde{c}).$
For each such connected component $C,$ we define a disk $\Delta_{C} \subseteq \Delta_{c}$ that contains $C$ and whose boundary intersects only the vertices in $\pi_{\delta}(\tilde{c}) \cap V(C).$
Note that we may choose these disk so that they are pairwise disjoint.
Now, we define $\delta'$ by replacing $\Delta_{c}$ with the union of all disks $\Delta_{C}$ as above.
It is now straightforward to verify that $\delta$ satisfies the first condition.

Meeting the second condition is slightly more involved.
First note that our former assumption implies that the $\delta'$-torsoid $T$ of $G$ is connected.
Moreover, $T$ is a $\Sigma$-embeddable graph and we consider the obvious embedding $\Gamma'$ induced by $\delta'.$
Additionally, since $W$ is grounded in $\delta,$ by property i), and the definition of $T,$ there is an $(r, c)$-$\Sigma$-annulus wall\footnote{$\hat{W}$ is in fact is mesh-like due to the replacement of cells with triangles in the definition of $\delta$-torsoid. This means that horizontal and vertical paths of $\hat{W}$ orthogonally cross but their intersection may be a single degree-$4$ vertex.} subgraph $\hat{W}$ of $T$ with the same ground vertices as $W.$

It is relatively easy to show that $\hat{W}$ has a $\Sigma$-embedding with \emph{representativity} at least $c.$
Moreover, if we choose $c$ to be some modest function of the Euler-genus $g$ of $\Sigma,$ it follows by the classic results of \cite{Thomassen1990Embeddings,Mohar95Uniqueness,SeymourT1996Uniqueness,RobertsonV1990Embeddings} that this embedding is unique.
Therefore, we may assume that the representativity of $\Gamma'$ is at least $c,$ i.e., there is no non-contractible curve on $\Sigma$ that intersects $T$ only in vertices and in at most $c \geq 3$ of them.
Therefore, we may assume that any minimal separator $S$ of $T$ with $|S| \leq 2,$ induces a contractible curve on $\Sigma$ that intersects $\Gamma'$ only in the points associated to the drawing of the vertices in $S.$

Now assume that $T$ is not $2$-connected.
We may compute the set $\mathcal{B}$ of blocks ($2$-connected components) of $T$ using the classic linear-time algorithm of Tarjan~\cite{Tarjan1971Depth}.
Since $\hat{W}$ is a subdivision of a $3$-connected graph it follows that there exists a unique block $B \in \mathcal{B}$ that contains $\hat{W}.$
Let $u$ be a cut-vertex of $T$ that belongs to $B$ and $(X, Y)$ be a separation of $T$ such that $X \cap Y = \{ u \},$ $X \setminus \{ u \}$ contains the vertex set of the component of $T - u$ that contains $V(B) \setminus \{ u \}$ while $Y \setminus \{ u \}$ contains the rest.
Let $J$ be a contractible closed curve that intersects $\Gamma'$ only at $u,$ bounds a a $\delta'$-aligned disk $\Delta_{J}$ internally disjoint from $X$ and such that every vertex of $Y$ is drawn in $\Delta_{J}.$
We may define $\delta''$ by replacing all cells contained in $\Delta_{J}$ by $\Delta_{J}$ for all such cut-vertices $u$ and contractible curves $J$ defined as above.

We may now assume that the $\delta''$-torsoid $T'$ is $2$-connected but not $3$-connected.
We essentially can repeat the same process by computing the set of $3$-connected components $\mathcal{C}$ of $T'$ using the classic linear-time algorithm of Hopcroft and Tarjan~\cite{HopcroftT1973Dividing}.
Similarly to before, we identify the unique $3$-connected component $C \in \mathcal{C}$ that contains all degree-$(\geq 3)$ vertices of $\hat{W}$ (note that paths of $\hat{W}$ may invade other parts of $T$ via the $2$-separators of $T$ incident to $C$).
Any minimal separator $S$ of size $2$ attaching other parts of $T'$ to $C$ corresponds again, by representativity, to a contractible closed curve bounding a disk internally disjoint from $C$ and containing all other components attaching to $C$ via $S.$
We absorb the cells contained in such disks into single cells as before.
Iterating this procedure yields a $\Sigma$-decomposition $\delta'',$ whose torsoid is $3$-connected.

It is important to note that any vortex cell of $\delta'''$ is one of the original vortices of $\delta$.
In the torsoid, vortices are replaced by wheels, which are $3$-connected.
Hence they cannot be separated by the $1$- or $2$-separators considered above and are therefore unaffected.
\end{proof}

We may proceed with the formal definition of $\Sigma$-schemas.

\medskip
Let $G$ be a graph, $\Sigma$ be a surface, and $\delta = (\Gamma, \mathcal{D})$ be a $\Sigma$-decomposition of $G$.
Also, let $r, x, y \in \Nbbb$, $t \in \mathbb{N}_{\geq 3}$, $s \in \Nbbb_{\geq 1}$, and $W$ be an $(r, t, s)$-$\Sigma$-walloid whose vertices and edges are vertices and edges of $G$.
We say that the triple $(G, \delta, W)$ is an \emph{$(r, t, s, x, y)$-$\Sigma$-schema} if
\begin{itemize}
    \item[$\mathbf{S}_{1}$:] $\delta$ is tight;
    \item[$\mathbf{S}_{2}$:] $\overline{W}$ is grounded in $\delta$;
    \item[$\mathbf{S}_{3}$:] $\delta$ has breadth $b \leq x$ and depth at most $y$; and moreover
\end{itemize}
there is a $\Sigma$-embedding $\Gamma'$ of $W$ such that
\begin{itemize}
\item[$\mathbf{S_{4}}$:] every arc of $\Gamma'$ is an arc of $\Gamma$,
\item[$\mathbf{S_{5}}$:] the closure of every disk of $\Gamma'$ (which corresponds to a hyperedge of a flap or vortex segment of $W$) is identical to the disk of a simple cell of $\delta$ in the case of a flap segment and to a vortex cell of $\delta$ otherwise, and
\item[$\mathbf{S_{6}}$:] if $W$ has no vortex segments, every vortex of $\delta$ is contained in the interior of the $\mathsf{nodes}_{\delta}(C^{\mathsf{si}}(W))$-avoiding disk of $\mathsf{trace}_{\delta}(C^{\mathsf{ex}}(W))$. Otherwise, $W$ has $b$ vortex segments.
\end{itemize}

Whenever the parameters $r$, $t$, $s$, $x$, or $y$ are irrelevant to make a statement about an $(r, t, s, x, y)$-$\Sigma$-schema we may omit specifying any one of them to simplify notation.
E.g. we may write $(\text{-}, t, \text{-}, x, y)$-$\Sigma$-schema if the statement does not concern values $r$ and $s$.

\medskip
In a similar manner as to walls we can define the tangle induced by a walloid.
Let $r \in \Nbbb$, $t \in \Nbbb_{\geq 3}$, and $(G, \delta, W)$ be an $(r, t, \text{-}, \text{-}, \text{-})$-$\Sigma$-schema.
Let $\Tcal_{W}$ be the orientation of $\Scal_{t + r}$ such that for every $(A, B) \in \Tcal_{W}$, the set $B \setminus A$ contains the vertex set of both a cycle and a vertical path of the base annulus of $W$.
Then $\Tcal_{W}$ is the tangle \emph{induced} by $W$.
If $\Tcal$ is a tangle in $G$, we say that $(G, \delta, W)$ is \emph{controlled} by $\Tcal$ if $W$ is controlled by $\Tcal$.

\medskip
For the proof of \cref{main_lemma} we do not yet require boundaried indices to abstractly capture finite index properties of boundaried subgraphs.
Instead, we equip the simple cells of our $\Sigma$-decomposition with \textsl{colors} from a finite set.

\paragraph{Cell-colorings.}

Let $\delta$ be a $\Sigma$-decomposition of a graph $G$ in a surface $\Sigma$.
A \emph{cell-coloring} of $\delta$ is a function $\chi$ mapping simple cells of $\delta$ to positive integers.
We define $\chi$-$\mathsf{col}(\delta) \coloneqq \{ \chi(c) \mid c \in C_{\mathsf{s}}(\delta) \}$.
The maximum number in $\chi$-$\mathsf{col}(\delta)$ is the \emph{capacity} of $\chi$.

Let $(G, \delta', W')$ be a $\Sigma$-schema such that $\delta' \sqsubseteq \delta$.
Let $S \subseteq \chi\text{-}\mathsf{col}(\delta')$ and $b \in \mathbb{N}_{\geq_{1}}$.
We say that (the corresponding $\Sigma$-embedding $\Gamma'$ of) $W'$ \emph{$b$-represents $S$ in $\delta'$} if for every $\alpha \in S$ there are $b$ simple cells $c_{1}, \ldots, c_{b}$ of $\delta'$ whose disks are identical to the closures of $b$ many disks of $\Gamma'$, which correspond to consecutive hyperedges of flap segments of $W'$, and such that for every $i \in [b]$, $\chi(c_i) = \alpha$.
We also say that $W'$ \emph{$b$-represents $\chi$ in $\delta'$} if for every $\alpha \in \chi\text{-}\mathsf{col}(\delta')$, $W'$ $b$-represents $\alpha$ in $\delta'$.

Given a fence $F$ of $W'$, we define the \emph{$\chi$-coloring of $F$ in $\delta'$} as the set $\{ \chi(c) \mid c \in \delta'\text{-}\mathsf{influence}(F) \}$.
We say that $F$ is \emph{$\chi$-homogeneous in $\delta'$} if all bricks of $W'$ in $F$ have the same $\chi$-coloring in $\delta'$ (which is equal to the $\chi$-coloring of $F$ in $\delta'$).
We also say that $W'$ is \emph{$\chi$-homogeneous in $\delta'$} if every enclosure of $W'$ is $\chi$-homogeneous in $\delta'$.

\subsection{The local structure theorem for graph minors}

As we have discussed \cref{thm_local_structure} builds on the LST of Robertson and Seymour \cite{RobertsonS03GraphMinorsXVI}.
We present here an abbreviated version of the recent result of Gorsky, Seweryn, and Wiederrecht \cite{GorskySW2025Polynomial} which will serve as our starting point.

\begin{proposition}[Local Structure Theorem \cite{GorskySW2025Polynomial}, Theorem 15.1] \label{prop_lst}
There exist functions $\mathsf{wall}_{\ref{prop_lst}} \colon \Nbbb^{3} \to \Nbbb$,
$\mathsf{apex}_{\ref{prop_lst}}$, $\mathsf{depth}_{\ref{prop_lst}} \colon \Nbbb^{2} \to \Nbbb$, such that for every $k, t \in \Nbbb_{\geq 5}$, $r \in \Nbbb$, every graph $H$ on $k$ vertices, every graph $G$, and every $\mathsf{wall}_{\ref{prop_lst}}(k, t, r)$-wall $W \subseteq G$, one of the following holds. 
\begin{enumerate}
\item there is a minor model of $H$ in $G$ controlled by $\Tcal_{W}$, or
\item there is a set $A \subseteq V(G)$ and an $(r, t, \text{-}, \nicefrac{1}{2}(k - 3)(k - 4), \mathsf{depth}_{\ref{prop_lst}}(k, t))$-$\Sigma$-schema $(G - A, \delta, W')$ controlled by $\Tcal_{W}$ such that $|A| \leq \mathsf{apex}_{\ref{prop_lst}}(k, t)$, $\Sigma$ is a surface where $H$ does not embed, and $W'$ is an $(r, t)$-$\Sigma$-annulus wall.
\end{enumerate}

Moreover, it holds that
$$\mathsf{apex}_{\ref{prop_lst}}(k, t)\text{, }\mathsf{depth}_{\ref{prop_lst}}(k, t) \in \Ocal((k + t)^{112})\text{, and }\mathsf{wall}_{\ref{prop_lst}}(k, t, r) \in \Ocal((k + t)^{115} \cdot r).$$
There also exists an algorithm that, given $k, r, t$, a graph $H$, a graph $G$, and a wall $W$ as above as input, finds one of these outcomes in time $\poly(k + t + r) \cdot |E(G)||V(G)|^{2}$.
\end{proposition}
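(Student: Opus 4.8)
Since \autoref{prop_lst} is attributed to Theorem 15.1 of \cite{GorskySW2025Polynomial}, the plan is to invoke that theorem as a black box and then verify that its structural output is, up to terminology, exactly an $(r, t, \text{-}, \nicefrac{1}{2}(k-3)(k-4), \mathsf{depth}_{\ref{prop_lst}}(k,t))$-$\Sigma$-schema whose walloid $W'$ is an $(r,t)$-$\Sigma$-annulus wall. First I would align the parameters: set $\wall_{\ref{prop_lst}}(k,t,r)$ to be the wall size required by \cite{GorskySW2025Polynomial} plus whatever polynomial slack the bookkeeping below consumes, apply their theorem, and in the first outcome return the minor model of $H$ controlled by $\Tcal_W$. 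In the second outcome we receive a set $A$ with $|A|$ bounded by their apex function, a surface $\Sigma$ in which $H$ does not embed, a $\Sigma$-decomposition $\delta_0$ of $G-A$ of breadth at most $\nicefrac{1}{2}(k-3)(k-4)$ and depth at most their depth function, a wall in $G-A$ grounded in $\delta_0$ and controlling $\Tcal_W$, with the flat part of $\delta_0$ organised cylindrically around the vortices.

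The remaining work is to certify $\mathbf{S_1}$--$\mathbf{S_8}$ for a suitably massaged triple $(G-A,\delta,W')$. I would first clean $\delta_0$ into $\delta$ by the usual moves: dissolve cells with at most two nodes and empty interior, split cells until no $\delta$-aligned disk with at most three boundary nodes carries nontrivial structure on both sides (yielding $\mathbf{S_2}$), and absorb every connected component of $G-A$ disjoint from the grounded wall into a boundaryless cell (yielding $\mathbf{S_3}$ and $\mathbf{S_4}$); these moves change neither breadth, depth, nor groundedness, so $\mathbf{S_1}$ and $\mathbf{S_5}$ persist. Next I would extract from the cylindrical flat part an $(r,t)$-$\Sigma$-annulus wall $W'$ whose base annulus is a concentric family of cycles and verticals witnessing $\Tcal_W$, and whose $\mathsf{h}$ handle segments and $\mathsf{c}$ crosscap segments run through the $\mathsf{h}$ handles and $\mathsf{c}$ crosscaps of $\Sigma$; a grounded wall controlling a tangle in a $\Sigma$-decomposition can be rerouted into this canonical surface-wall shape at the cost of a polynomial shrinkage, which is what the slack in $\wall_{\ref{prop_lst}}$ pays for. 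Fixing the $\Sigma$-embedding $\Gamma'$ of $W'$ (essentially unique for large $t$ by \cite{Thomassen1990Embeddings, Mohar95Uniqueness, SeymourT1996Uniqueness, RobertsonV1990Embeddings}) with all arcs taken from $\Gamma$ gives $\mathbf{S_6}$; as $W'$ has neither flap nor vortex segments, $\mathbf{S_7}$ is vacuous, and $\mathbf{S_8}$ reduces to the claim that every vortex of $\delta$ lies in the interior of the $\mathsf{nodes}_\delta(C^{\mathsf{si}}(W'))$-avoiding disk of $\mathsf{trace}_\delta(C^{\mathsf{ex}}(W'))$, which holds because the vortices are few and of bounded depth and can therefore be pushed past the annulus to the exceptional side. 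Finally, arranging the construction so that the tangle induced by the base annulus of $W'$ is a truncation of the one induced by the grounded wall, which is itself a truncation of $\Tcal_W$, gives that $(G-A,\delta,W')$ is controlled by $\Tcal_W$.

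The numerical bounds on $\mathsf{apex}_{\ref{prop_lst}}$, $\mathsf{depth}_{\ref{prop_lst}}$ and $\wall_{\ref{prop_lst}}$ come from composing the polynomial bounds of \cite{GorskySW2025Polynomial} with the polynomial overhead of the rerouting, and the running time stays $\poly(k+t+r)|E(G)||V(G)|^{2}$ since the cleaning and the rerouting add only polynomial-time routines. I expect the rerouting step to be the main obstacle: turning a bare grounded wall into the rigid $\Sigma$-annulus wall, locating its handle and crosscap segments, and certifying simultaneously that $\Gamma'$ is compatible with the ambient drawing $\Gamma$ and that every vortex ends up on the exceptional side. That said, since the $\Sigma$-schema machinery has been set up precisely to record the output of \cite{GorskySW2025Polynomial}, it is likely that their theorem already produces the annulus wall together with the embedding $\Gamma'$, in which case the rerouting collapses to a citation and only the routine cleanup survives.
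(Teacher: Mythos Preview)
Your proposal is correct and matches the paper's approach: the proposition is cited from \cite{GorskySW2025Polynomial} and the paper only adds a short remark explaining two adaptations, namely enforcing $\mathbf{S_2}$ by merging cells inside any $\delta$-aligned disk with at most three boundary nodes, and extending the $(0,t)$-$\Sigma$-annulus wall produced by Theorem~15.1 to an $(r,t)$-$\Sigma$-annulus wall by exploiting the larger grounded wall present in the inductive construction of Theorem~15.4 in \cite{GorskySW2025Polynomial}. Your final paragraph anticipates this exactly: the rerouting step you flag as the main obstacle does indeed collapse to a citation, since \cite{GorskySW2025Polynomial} already delivers the $\Sigma$-annulus wall shape (with $r=0$) and one only needs the routine cleanup plus the observation about where the extra $r$ layers come from.
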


We stress that the tightness of $\delta$ (property $\mathbf{S}_{1}$ in the definition of schemas) is not guaranteed in the the outcome of Theorem 15.1 in \cite{GorskySW2025Polynomial}.
However, this can easily be mended by applying \cref{lem:tight_renditions}.

Moreover, Theorem 15.1 in \cite{GorskySW2025Polynomial} only gives a $(0, t)$-$\Sigma$-annulus wall $W'$ as an outcome and a separate wall $W''$ of order $r$ that is grounded in $\delta$.
In their proof however, both, the base annulus of $W''$ and the wall $W'$ are found within a much larger wall $W'''$ that is grounded in $\delta$ (see the inductive construction in the proof of Theorem 15.4 in \cite{GorskySW2025Polynomial}).
This allows to easily extend the $(0,t)$-$\Sigma$-annulus wall to an $(r,t)$-$\Sigma$-annulus wall by using the infrastructure of $W'''$ as well as the wall $W'$.

\section{Obtaining a homogeneous walloid}\label{sec_homogeneous_walloid}

In this section we prove a lemma which marks the first step towards the final $\Sigma$-schema.
We prove that, assuming that the walloid of the $\Sigma$-schema is large enough, we can find a still large walloid, which can be divided into territories (enclosures) that all have the property of being homogeneous with respect to some pre-specified cell-coloring of the $\Sigma$-decomposition of the $\Sigma$-schema.

\subsection{Finding a homogeneous subwall}\label{subsec_homogeneity}

We first show how given a large enough wall whose bricks come equipped with a subset of a fixed set of colors, we can always find a still large subwall of it where all bricks are accompanied by the same color set.
We moreover observe how the same arguments give us a mono-dimensional variant applied to ladders.

\medskip
Let $W$ be a wall.
Notice that every cycle $C$ of $W$ naturally defines a separation $(A, B)$ of $W$ such that $A \cap B = V(C)$ and the vertex set of the perimeter of $W$ belongs to $A$.
Given another cycle $C'$ of $W$ we say that $C'$ is \emph{inside} $C$ if $V(C') \subseteq B$ and \emph{outside} otherwise.
Moreover, notice that given a cycle $C$ of $W$, every brick $B$ of $W$ is either inside of outside $C$.

We define a \emph{brick-coloring} of $W$ as a function $\chi$ mapping each brick of $W$ to some finite non-empty subset of $\mathbb{N}_{\geq 1}$.
The maximum number contained in any of these subsets is the \emph{capacity} of $\chi.$
Consider a subwall $W'$ of $W$.
Given a brick-coloring $\chi$ of $W$, we say that $\chi'$ is the brick-coloring of $W'$ \emph{induced by $\chi$} if for every brick $B$ of $W',$ $\chi'(B) = \bigcup_{B' \in \Bcal} \chi(B')$, where $\Bcal$ is the set of bricks of $W$ inside $B$.
Notice that the capacity of $\chi'$ is at most the capacity of $\chi$.

\begin{lemma}\label{lem_homogeneity_2d} There exists a function $f_{\ref{lem_homogeneity_2d}} \colon \Nbbb^{2} \to \Nbbb$ and an algorithm that, given $\ell \in \Nbbb_{\geq 1}$,  $t, z \in \mathbb{N}_{\geq 3}$, a $(f_{\ref{lem_homogeneity_2d}}(t, \ell) \times f_{\ref{lem_homogeneity_2d}}(z, \ell))$-wall $W$, and a brick-coloring $\chi$ of $W$ of capacity $\ell$, outputs a $(t \times z)$-subwall $W'$ of $W$ such that for every two bricks $B_{1}$ and $B_{2}$ of $W'$, $\chi'(B_{1}) = \chi'(B_{2})$, where $\chi'$ is the brick-coloring of $W'$ induced by $\chi$, in time $(t + z)^{\Ocal(2^{\ell})}.$
Moreover $f_{\ref{lem_homogeneity_2d}}(x, \ell) = x^{2^{\ell}}.$
\end{lemma}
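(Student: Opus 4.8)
The plan is to prove the statement by iterated Ramsey-type pigeonholing, one color at a time. Fix the set of possible colors to be $[\ell]$. The key observation is that for a single color $\alpha \in [\ell]$, the property ``$\alpha \in \chi'(B)$'' is \emph{monotone} in the following sense: if $W''$ is a subwall of $W'$ and $B$ is a brick of $W''$ containing a brick $B'$ of $W'$ with $\alpha \in \chi'(B')$, then $\alpha \in \chi''(B)$, where $\chi''$ is the coloring induced on $W''$. So for each fixed $\alpha$, I want to find a large subwall on which the binary predicate ``$\alpha \in \chi'(\cdot)$'' is constant across all bricks. I will then iterate this over all $\ell$ colors, each time passing to a subwall, so that after $\ell$ rounds every one of the $\ell$ binary predicates is constant, which is exactly the conclusion.

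First I would establish the single-color step: given a wall $W$ of side length roughly $m^{2}$ and a $2$-coloring (``has $\alpha$'' / ``does not have $\alpha$'') of its bricks, one can find an $(m \times m)$-subwall all of whose bricks agree on this predicate. The cheap way to see this: view the bricks of the $(N\times N)$-wall as forming an $(N' \times N')$ grid-like array (with $N' = N - O(1)$), and recall that a $2$-colored $(N'\times N')$ grid of cells, with $N' \geq 2m$, contains a combinatorially convex $m\times m$ subarray that is monochromatic — indeed one can simply split into the two color classes and, in the larger class, greedily extract $m$ rows and $m$ columns whose intersection pattern works; taking $N' = 2m$ already suffices because if fewer than $m$ rows contain a ``yes'' brick then the remaining $\geq m$ rows give an all-``no'' subwall, and otherwise we recurse within those $\geq m$ ``yes''-containing rows on the columns. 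Being careful, the clean bound is that side length $\approx 2m$ of bricks, i.e.\ side length $\approx 2m$ of the wall, suffices for one color; but because passing to an induced coloring on a subwall is precisely the monotone operation above, and because we must repeat $\ell$ times, the side lengths compose multiplicatively, yielding $f_{\ref{lem_homogeneity_2d}}(x,\ell) = x^{2^{\ell}}$ after tracking constants. (The doubly-exponential-in-$\ell$ dependence comes from the fact that each of the $\ell$ peeling rounds can roughly square the required dimension when one insists on the given one-color bound being iterated this way, or equivalently from a recursion $g(\ell) \le g(\ell-1)^{2}$.)

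The induction on $\ell$ is then: the base case $\ell = 0$ (or $\ell=1$) is immediate; for the inductive step, apply the single-color lemma with color $\ell$ to extract a subwall of side length $f_{\ref{lem_homogeneity_2d}}(\cdot,\ell-1)$ on which ``contains $\ell$'' is constant, pass to the induced brick-coloring (whose capacity is still at most $\ell$, but on which color $\ell$ is now ``frozen''), and apply the inductive hypothesis restricted to colors $[\ell-1]$ to get the final $(t\times z)$-subwall. Unwinding, $f_{\ref{lem_homogeneity_2d}}(x,\ell) = (f_{\ref{lem_homogeneity_2d}}(x,\ell-1))^{2}$ with base $f_{\ref{lem_homogeneity_2d}}(x,0) = x$ (after absorbing additive constants into the subdivision slack), giving $x^{2^{\ell}}$. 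The running time bound follows by noting that each peeling step inspects at most $O((tz)^{2^{\ell}})$ bricks and does a polynomial-time greedy row/column selection, and there are $\ell$ steps, so the total is dominated by $\Ocal((tz)^{2^{\ell}})$.

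The main obstacle I anticipate is bookkeeping the exact dimensions so that the stated bound $f_{\ref{lem_homogeneity_2d}}(x,\ell)=x^{2^{\ell}}$ comes out cleanly — in particular, handling the discrepancy between ``number of bricks'' and ``wall side length'' (an additive $O(1)$), making sure the one-color Ramsey step really only costs a \emph{doubling} of the brick-dimension rather than a squaring (so that the squaring is attributable purely to the $\ell$-fold iteration), and verifying that ``subwall of a subwall'' composes so that every horizontal/vertical path remains a subpath of one in $W$. The monotonicity of the induced coloring under subwall restriction, together with the fact that induced colorings never increase capacity, is what makes the iteration legitimate; none of this is deep, but it must be stated carefully. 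The ``mono-dimensional variant applied to ladders'' alluded to just before the lemma is then obtained by running the identical argument with $z$ fixed to its minimum value $3$ (a ladder being a $(t\times 3)$-wall), so no separate proof is needed.
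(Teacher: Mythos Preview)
Your overall strategy—induct on $\ell$, freeze one color at a time by passing to a subwall on which that color's membership predicate is constant—is exactly the paper's. The gap is in your single-color step: you claim doubling the side length suffices, via ``if fewer than $m$ brick-rows contain a `yes' then the remaining $\geq m$ rows give an all-`no' subwall.'' This fails because the induced coloring on a subwall accounts for \emph{all} original bricks geometrically inside each new brick, including those in rows you skipped; unless the all-``no'' rows happen to be consecutive, the subwall's bricks straddle ``yes'' rows and the induced coloring is not all-``no''. Doubling provably cannot work: in a $(2m \times 2m)$ brick-array, put a ``yes'' at $(i,j)$ whenever $j \equiv i \pmod m$. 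Every $m \times m$ consecutive rectangle then contains a ``yes'' (so no all-``no'' subwall of side $m$ exists), yet there are only $4m$ ``yes'' cells in total—too few to populate the $m^{2}$ pairwise disjoint super-cells required by any all-``yes'' coarse subwall once $m > 4$.

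The one-color step genuinely costs a \emph{squaring}, and that is what the paper does. From an $(M^{2} \times {M'}^{2})$-wall, partition the brick-array into an $M \times M'$ grid of $M \times M'$ blocks. Either some block is entirely free of color $\ell$—recurse on that block with the color frozen to ``absent''—or every block contains color $\ell$, in which case the coarse subwall whose horizontal and vertical paths are the block boundaries has each brick containing an entire block and hence color $\ell$; recurse there with the color frozen to ``present''. Either branch yields an $(M \times M')$-subwall with color $\ell$ constant, so $f(x,\ell) = f(x,\ell-1)^{2}$ and thus $f(x,\ell) = x^{2^{\ell}}$. Your written recursion and final bound are correct, but your route to them is internally inconsistent: $\ell$ iterated doublings yield $2^{\ell}x$, not $x^{2^{\ell}}$. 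The squaring \emph{is} the single-color step, not an artifact of the iteration.
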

\begin{proof} Let $\alpha \colon \Nbbb \to \Nbbb$ be such that $\alpha(\ell) \coloneqq 2^{\ell}.$
We define $f_{\ref{lem_homogeneity_2d}}(x, \ell) \coloneqq x^{\alpha(\ell)}.$

Let $W$ be a $(t^{\alpha(\ell)} \times z^{\alpha(\ell)})$-wall.
Let $\Pcal_{1}, \ldots, \Pcal_{t^{\alpha(\ell-1)}}$ be a grouping of the horizontal paths of $W$ in consecutive bundles of $t^{\alpha(\ell-1)}$ paths ordered from left to right, where the last path in $\Pcal_{i}$ is the first path of $\Pcal_{i+1}.$
Also let $\Qcal_{1}, \ldots, \Qcal_{z^{\alpha(\ell-1)}}$ be a grouping of the vertical paths of $W$ in consecutive bundles of $z^{\alpha(\ell-1)}$ paths ordered from top to bottom, where the last path in $\Qcal_{i}$ is the first path of $\Qcal_{i+1}.$

For every $i \in [t^{\alpha(\ell-1)}]$ and $j \in [z^{\alpha(\ell-1)}],$ let $W_{i, j}$ be the graph $\cupall \Pcal'_{i} \cup \Qcal'_{j}$ which is a $(t^{\alpha(\ell-1)} \times z^{\alpha(\ell-1)})$-subwall of $W$ (after removing all vertices of degree one), where $\Pcal'_{i}$ and $\Qcal'_{j}$ are defined as follows.
Let $P_{i}$ (resp. $P'_{i}$) be the topmost (resp. bottommost) path of $W$ in $\Pcal_{i}$ and $Q_{j}$ (resp. $Q'_{j}$) be the leftmost (resp. rightmost) path of $W$ in $\Qcal_{j}$.
Let $\Pcal'_{i}$ (resp. $\Qcal'_{j}$) contain the paths in $\Pcal_{i}$ (resp. $\Qcal_{i}$) cropped to contain only vertices of vertical (resp. horizontal) paths between $Q_{i}$ (resp. $P_{i}$) and $Q'_{i}$ (resp. $P'_{i}$).
There are $t^{\alpha(\ell-1)} \cdot z^{\alpha(\ell-1)}$ such subwalls depending on the choices of $i$ and $j$.

We also consider $P$ (resp. $P'$) to be the top (resp. bottom) horizontal path of $W$, and $Q$ (resp. $Q'$) to be the leftmost (resp. rightmost) vertical path of $W$.
Observe that the graph
$$P \cup P' \cup Q \cup Q' \cup \bigcup_{i \in [t^{\alpha(\zeta-1)} - 1]} (\Pcal_{i} \cap \Pcal_{i+1}) \cup \bigcup_{j \in [z^{\alpha(\zeta-1)} - 1]} (\Qcal_{j} \cap \Qcal_{j+1})$$ also forms a $(t^{\alpha(\ell-1)} \times z^{\alpha(\ell-1)})$-subwall $W'$ of $W$.

Now, we assume inductively that for every $(t^{\alpha(\ell-1)} \times z^{\alpha(\ell-1)})$-subwall $W''$ of $W,$ there exists a $(z \times t)$-subwall of $W$ such that for every two bricks $B_{1}$ and $B_{2}$ of $W''$, $\chi''(B_{1}) \cap [\ell - 1] = \chi''(B_{2}) \cap [\ell - 1]$, where $\chi''$ is the brick-coloring of $W''$ induced by $\chi$.
Next, we show that there exists $W'' \in \{ W_{i, j} \mid i \in [t^{\alpha(\ell-1)}], j \in [z^{\alpha(\ell-1)}]\} \cup \{ W' \}$ such that, either for every brick $B''$ of $W''$, $\ell \in \chi''(B''),$ or $\chi''$ has capacity strictly less than $\ell$, where $\chi''$ is the brick-coloring of $W''$ induced by $\chi$.
Notice that from the induction hypothesis this suffices to prove our claim.
To conclude, observe that by definition, either for every brick $B'$ of $W'$, $\ell \in \chi'(B'),$ where $\chi'$ is the brick-coloring of $W'$ induced by $\chi$, or there exists $W'' \in \{ W_{i, j} \mid i \in [t^{\alpha(\ell-1)}], j \in [z^{\alpha(\ell-1)}]\}$ such that $\chi''$ has capacity strictly less than $\ell$, where $\chi''$ is the brick-coloring of $W''$ induced by $\chi.$
\end{proof}

\paragraph{Ladders.}

Let $L$ be a subdivision of the $(t \times 2)$-grid, where $t \in \Nbbb_{\geq 2}.$
We call $L$ a $t$-\emph{ladder}.
Notice that $L$ can be seen as the union of $t$ horizontal paths $P_{1}, \ldots, P_{t}$ and two vertical paths $Q_{1}, Q_{2}$ which correspond to the subdivided rows and columns respectively of the original $(t \times 2)$-grid.
We say that a ladder $L'$ is a \emph{subladder} of $L$ if $L'$ is obtained from $L$ by removing the edges of some (maybe none) of its horizontal paths.
Consider a plane-embedding of $L$ obtained from the natural plane-embedding of the $(t \times 2)$-grid.
We call all facial cycles of $L$, except the cycle that bounds the outer face, the \emph{bricks} of $L$.
It is straightforward to define brick-colorings for ladders as well. In fact, one may see a the bricks of a $t$-ladder as the bricks of a $(t\times 2)$-wall.

\medskip
We get the following ``mono-dimensional'' corollary of \cref{lem_homogeneity_2d}, by applying the same inductive argument only in the dimension of the horizontal paths of a ladder.

\begin{corollary}\label{cor_homogeneity_1d} There is an algorithm that, given $\ell \in \Nbbb_{\geq 1},$ $t \in \mathbb{N}_{\geq 2},$ an $f_{\ref{lem_homogeneity_2d}}(t, \ell)$-ladder $L,$ a brick-coloring $\chi$ of $L$ of capacity $\ell,$ outputs a $t$-subladder $L'$ of $L$ such that for every two bricks $B_{1} = B_{2}$ of $L',$ $\chi'(B_{1}) = \chi'(B_{2}),$ where $\chi'$ is the brick-coloring of $L'$ induced by $\chi$, in time $\Ocal(t^{2^{\ell}}).$
\end{corollary}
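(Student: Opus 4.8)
The plan is to re-run the inductive argument from the proof of \autoref{lem_homogeneity_2d} essentially verbatim, but restricted to the single ``dimension'' of the horizontal paths of a ladder; no new mathematical idea is needed. Set $f\coloneqq f_{\ref{lem_homogeneity_2d}}$, so that $f(t,\ell)=t^{2^{\ell}}$, and prove by induction on $\ell\in\Nbbb$ the statement $P(\ell)$: for every $f(t,\ell)$-ladder $L$ and \emph{every} brick-coloring $\chi$ of $L$ there is a $t$-subladder $L^{\ast}$ of $L$ such that all induced brick-colors agree on $[\ell]$, i.e.\ $\chi^{\ast}(B_{1})\cap[\ell]=\chi^{\ast}(B_{2})\cap[\ell]$ for all bricks $B_{1},B_{2}$ of $L^{\ast}$, where $\chi^{\ast}$ is the brick-coloring of $L^{\ast}$ induced by $\chi$. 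Since $[\ell]$ contains every color used by a coloring of capacity $\ell$, instantiating $P(\ell)$ at the given $\ell$ immediately yields the corollary. The base case $P(0)$ is trivial because $[0]=\emptyset$ and $L$ is already a $t$-ladder.

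For the inductive step I would, exactly as in \autoref{lem_homogeneity_2d}, partition the $t^{2^{\ell}}$ horizontal paths of $L$ into $m\coloneqq t^{2^{\ell-1}}$ consecutive bundles $\Pcal_{1},\dots,\Pcal_{m}$ of $t^{2^{\ell-1}}$ paths each, with the last path of $\Pcal_{i}$ equal to the first path of $\Pcal_{i+1}$ (this uses $m^{2}-m+1\le t^{2^{\ell}}$ paths, so it fits). For each $i$, let $L_{i}$ be the $t^{2^{\ell-1}}$-subladder spanned by the paths of $\Pcal_{i}$ together with the two vertical paths of $L$ cropped to that range, and let $L'$ be the subladder whose horizontal paths are the top path of $\Pcal_{1}$, the bottom path of $\Pcal_{m}$, and the $m-1$ shared paths $\Pcal_{i}\cap\Pcal_{i+1}$; note that $L'$ has $m+1\ge f(t,\ell-1)$ horizontal paths. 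The one point worth checking is that the bricks of $L'$ are in a natural bijection with the bundles: the brick of $L'$ spanning $\Pcal_{i}$ contains inside it exactly the bricks of $L$ lying in $L_{i}$, so its induced $\chi$-color equals the union of all $\chi$-colors occurring in $L_{i}$. Consequently, either (a) every brick of $L'$ has $\ell$ in its induced color, or (b) some bundle $\Pcal_{i}$ carries no brick of color $\ell$, in which case the restriction of $\chi$ to $L_{i}$ (which retains all bricks of $L$ in that range) has capacity at most $\ell-1$, so $\chi$-colors inside $L_{i}$ meet $[\ell]$ precisely where they meet $[\ell-1]$. In case (a) apply $P(\ell-1)$ to an $f(t,\ell-1)$-subladder of $L'$ and note, using that induced colorings compose, that the resulting $t$-subladder has all induced colors agreeing on $[\ell-1]$ and all containing $\ell$, hence agreeing on $[\ell]$; in case (b) apply $P(\ell-1)$ directly to $L_{i}$. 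Either way we obtain the required $L^{\ast}$. Since only one recursive call is made while the bookkeeping (forming the $L_{i}$'s and $L'$, and locating an $\ell$-free bundle) costs $\Ocal(t^{2^{\ell}})$, the recurrence $T(t,\ell)\le T(t,\ell-1)+\Ocal(t^{2^{\ell}})$ unrolls to $\Ocal(t^{2^{\ell}})$.

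I expect no genuine obstacle here: the argument is literally the ``horizontal shadow'' of \autoref{lem_homogeneity_2d}. The only mildly delicate part is hygiene around ladder terminology — a $(t\times 2)$-grid is degenerate as a wall (walls require both parameters at least $3$), so one must make precise, for ladders, the notions of brick, of one brick being inside another, of subladder, and of induced brick-coloring, and then verify the bijection between the bricks of the boundary subladder $L'$ and the bundles $\Pcal_{i}$. Once that is set up (as the paper indicates is ``straightforward''), the proof goes through with no surprises; phrasing $P(\ell)$ in terms of $\cap[\ell]$ rather than ``capacity at most $\ell$'' is exactly what lets case (b) feed into the induction hypothesis without a separate capacity-bookkeeping step.
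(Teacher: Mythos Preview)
Your proposal is correct and follows exactly the approach the paper indicates: you re-run the inductive argument of \autoref{lem_homogeneity_2d} in the single dimension of horizontal paths, which is precisely what the paper means by ``applying the same inductive argument only in the dimension of the horizontal paths of a ladder.'' Your handling of the off-by-one for the boundary subladder $L'$ (noting it has $m+1$ paths and passing to an $m$-subladder) and your phrasing of the hypothesis via $\cap[\ell]$ are clean and, if anything, slightly more careful than the paper's own sketch.
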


\subsection{The homogenization lemma}

We are now in the position to present the main result of this section.

\begin{lemma}[Homogenization lemma]\label{lem:homogenization}
There exists functions $f^1_{\ref{lem:homogenization}} \colon \Nbbb^3 \to \Nbbb$ and $f^2_{\ref{lem:homogenization}} \colon \Nbbb^4 \to \Nbbb$ such that for every $r \in \Nbbb$, $t \in \Nbbb_{\geq 3}$, every $\ell \in \Nbbb_{\geq 1}$, every $\mathsf{h}, \mathsf{c} \in \mathbb{N}$, every $(r' \coloneqq f^2_{\ref{lem:homogenization}}(r, t, \ell, \mathsf{h} + \mathsf{c}), t' \coloneqq f^1_{\ref{lem:homogenization}}(t, \ell, \mathsf{h} + \mathsf{c}), \text{-}, \text{-}, \text{-})$-$\Sigma$-schema $(G, \delta, W)$ where $\Sigma$ has Euler genus $2\mathsf{h} + \mathsf{c}$ and $W$ is a $(r', t')$-$\Sigma$-annulus wall, and every cell-coloring $\chi$ of $\delta$ of capacity at most $\ell$, there is an $(r, t, \text{-}, \text{-}, \text{-}, \text{-})$-$\Sigma$-schema $(G, \delta, W')$ controlled by $\Tcal_{W}$ such that $W'$ is an $(r, t, \text{-})$-$\Sigma$-annulus wall that is $\chi$-homogeneous in $\delta$.

Moreover, it holds that
$$f^1_{\ref{lem:homogenization}}(t, \ell, \mathsf{h} + \mathsf{c}) \in (t(\mathsf{h} + \mathsf{c} + 1))^{2^{\Ocal(\ell)}} \ \text{ and } \ f^2_{\ref{lem:homogenization}}(r, t, \ell, \mathsf{h} + \mathsf{c}) \in ((r + t)(\mathsf{h} + \mathsf{c} + 1))^{2^{\Ocal(\ell)}}.$$
There also exists an algorithm that finds the outcome above in time $\poly(r + t + \mathsf{h} + \mathsf{c}) \cdot |E(G)||V(G)|$.
\end{lemma}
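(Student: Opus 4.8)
The walloid $W$ is (a subdivision of) the cylindrical closure of an $(r',t',\mathsf{h},\mathsf{c})$-surface wall, that is, the concatenation of one $((r'+t')\times(r'+t'))$-wall segment $S_{0}$ with $\mathsf{h}$ handle segments and $\mathsf{c}$ crosscap segments $S_{1},\dots,S_{\mathsf{h}+\mathsf{c}}$, each built on a wall-segment base with $r'+t'$ rows and of width parameter $t'$; its $2\mathsf{h}+\mathsf{c}+1$ enclosures are the two rainbow enclosures of each handle segment, the rainbow enclosure of each crosscap segment, and the big enclosure (the inner cycle of the base annulus). The plan is to first turn the cell-coloring $\chi$ into combinatorial data: to each brick $B$ of $\overline{W}$ assign the set $\{\chi(c)\mid c\in\delta\text{-}\mathsf{influence}(B)\}\subseteq\chi\text{-}\mathsf{col}(\delta)$, which is a brick-coloring of capacity at most $\ell$ in the sense of \autoref{subsec_homogeneity}. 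Keeping $\delta$ fixed throughout, the aim is to extract a sub-walloid $W'\subseteq W$ that is an $(r,t)$-$\Sigma$-annulus wall such that, for every enclosure $F$ of $W'$, all bricks of $\overline{W'}$ inside $F$ carry the same colour; this is precisely $\chi$-homogeneity of $W'$ in $\delta$.

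The homogenization is carried out one segment at a time, hence only $\Ocal(\mathsf{h}+\mathsf{c})$ times. I expect the big enclosure to capture no brick of $\overline{W'}$ in its interior, there being no flap or vortex segments to place there, so that it is vacuously homogeneous; if not, the same mechanism applied to the innermost band of the base annulus handles it. For each handle or crosscap segment $S_{i}$, the bricks of $\overline{W}$ captured by its enclosure(s), together with the strands of the corresponding rainbow(s) and the portions of the wall-segment base of $S_{i}$ they trap, form wall-shaped and ladder-shaped pieces; applying \autoref{lem_homogeneity_2d} to the wall-shaped pieces and \autoref{cor_homogeneity_1d} to the ladder-shaped ones, and pulling back the induced brick-colorings, I can pass inside $S_{i}$ to a sub-wall-segment and a sub-rainbow of the target width parameter $t$ and $r+t$ rows on which the brick-colorings are constant; similarly \autoref{lem_homogeneity_2d} applied to $S_{0}$ brings its size parameters down to $r$ and $t$. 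The point that keeps the bounds in check is that $S_{0}$ and the $\mathsf{h}+\mathsf{c}$ non-main segments occupy pairwise disjoint regions of $\Sigma$ (by $\mathbf{S_{7}}$, and since disjoint fences have disjoint influences), so these $\mathsf{h}+\mathsf{c}+1$ homogenizations do not interfere and their size demands add rather than compose. This is why the exponential in $\ell$ appears only once, contributing the factor $2^{\Ocal(\ell)}$ in the exponents of $f^{1}_{\ref{lem:homogenization}}$ and $f^{2}_{\ref{lem:homogenization}}$, while $\mathsf{h}+\mathsf{c}$ (there are $\mathsf{h}+\mathsf{c}+1$ segments to fit, in a surface of Euler genus $2\mathsf{h}+\mathsf{c}$) enters only their bases.

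Next comes the gluing. Aligning all choices -- restricting every segment to one common consecutive band of $r+t$ base cycles that includes the inner cycle of the base annulus, and to its chosen sub-wall-segment and sub-rainbow with matching left and right boundary vertices -- and taking the cylindrical closure of the resulting sub-segments gives $W'$, a subdivision of an elementary $(r,t,\mathsf{h},\mathsf{c})$-surface wall closed cylindrically, hence an $(r,t)$-$\Sigma$-annulus wall with $V(W')\subseteq V(G)$ and $E(W')\subseteq E(G)$. Since $\delta$ is unchanged, $\mathbf{S_{2}}$--$\mathbf{S_{5}}$ carry over verbatim; $\mathbf{S_{6}}$--$\mathbf{S_{8}}$ hold with the $\Sigma$-embedding of $W'$ obtained by restricting that of $W$, because after re-indexing every hyperedge of $W'$ is a hyperedge of $W$, hence the closure of a simple cell of $\delta$, and $W'$ has no vortex segments so the first clause of $\mathbf{S_{8}}$ applies -- keeping the inner cycle in the retained band leaves $\mathsf{trace}_{\delta}(C^{\mathsf{ex}}(W'))$ essentially where $\mathsf{trace}_{\delta}(C^{\mathsf{ex}}(W))$ was, so its $\mathsf{nodes}_{\delta}(C^{\mathsf{si}}(W'))$-avoiding disk still contains every vortex of $\delta$. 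Property $\mathbf{S_{1}}$ holds since every cycle of $\overline{W'}$ is, up to rerouting through the retained band of base cycles, a cycle of the $\delta$-grounded graph $\overline{W}$, and $\mathbf{S_{3}}$--$\mathbf{S_{4}}$ concern only $G$. The induced tangle $\Tcal_{W'}$, of order $r+t$, is a truncation of $\Tcal_{W}$: if $(A,B)$ has order $<r+t$ and $B\setminus A$ contains a base cycle and a vertical path of the base annulus of $W'$, then it also contains a base cycle and a vertical path of the base annulus of $W$, as the former live inside the latter; hence $(G,\delta,W')$ is controlled by $\Tcal_{W}$. Finally, $W'$ is $\chi$-homogeneous in $\delta$ by the segment-by-segment construction together with the vacuous homogeneity of the big enclosure.

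For the algorithmic statement, the segment decomposition of $W$ and the $\delta$-influences are read off the schema's data, the algorithms of \autoref{lem_homogeneity_2d} and \autoref{cor_homogeneity_1d} are invoked $\Ocal(\mathsf{h}+\mathsf{c})$ times in total (each polynomial in the walloid size), and reassembly is linear, all within $\poly(rt(\mathsf{h}+\mathsf{c}))\,|E(G)||V(G)|$; tallying the additive size requirements gives $f^{1}_{\ref{lem:homogenization}}(t,\ell,\mathsf{h}+\mathsf{c})\in(t(\mathsf{h}+\mathsf{c}))^{2^{\Ocal(\ell)}}$ and $f^{2}_{\ref{lem:homogenization}}(r,t,\ell,\mathsf{h}+\mathsf{c})\in((r+t)(\mathsf{h}+\mathsf{c}))^{2^{\Ocal(\ell)}}$. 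The main obstacle I anticipate is not the homogenization toolbox of \autoref{subsec_homogeneity}, used here essentially as a black box, but the local combinatorial analysis inside each handle/crosscap segment -- determining exactly which bricks of $\overline{W}$ its enclosures capture and how their $\delta$-influences and $\chi$-colorings behave under passing to a sub-wall-segment and sub-rainbow, so that ``$F$ becomes $\chi$-homogeneous'' really reduces to clean instances of \autoref{lem_homogeneity_2d} and \autoref{cor_homogeneity_1d} -- together with the bookkeeping needed to align the independently chosen sub-wall-segments and sub-rainbows into one $\Sigma$-walloid that still satisfies all of $\mathbf{S_{1}}$--$\mathbf{S_{8}}$ and the tangle-control condition.
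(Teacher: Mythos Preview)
Your plan has a genuine gap at the big enclosure. You write that you ``expect the big enclosure to capture no brick of $\overline{W'}$ in its interior \ldots\ so that it is vacuously homogeneous.'' This is false: the big enclosure is by definition the inner cycle $C_{1}$ of the base annulus, and the bricks \emph{inside} it (the side away from $C^{\mathsf{ex}}(W')$, towards $C^{\mathsf{si}}(W')$) are precisely \emph{all} bricks of the base annulus of $W'$. These bricks live in the bases of \emph{every} segment $S_{0},S_{1},\dots,S_{\mathsf{h}+\mathsf{c}}$, not only $S_{0}$. Your construction---homogenizing each segment independently and then passing to a common band of $r+t$ base cycles---provides no mechanism forcing all of these bricks to carry the same colour set; applying \autoref{lem_homogeneity_2d} to $S_{0}$ alone says nothing about the base bricks sitting under $S_{1},\dots,S_{\mathsf{h}+\mathsf{c}}$. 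Your hedge (``the same mechanism applied to the innermost band'') does not help either: a single ring of bricks is a cyclic ladder, not a wall, and homogenizing it via \autoref{cor_homogeneity_1d} leaves all other rows of the base annulus uncontrolled.

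The paper resolves this with a construction you are missing. It first applies \autoref{lem_homogeneity_2d} to a large subwall of $S_{0}$ that deliberately avoids the top $4f_{\ref{lem_homogeneity_2d}}(2t,\ell)\cdot(\mathsf{h}+\mathsf{c})$ horizontal paths, obtaining a homogeneous $(2z\times b)$-subwall $W'_{1}$ with $z=r+t$. It then builds the \emph{new base annulus entirely inside} $W'_{1}$, pairing up rows and columns of $W'_{1}$ into $z$ nested rectangular cycles (Step~2); every brick of this annulus therefore contains bricks of $W'_{1}$ and inherits its single colour, so the big enclosure is homogeneous by construction. The handle and crosscap rainbows, whose endpoints still sit on the top of the \emph{old} base annulus, are then \emph{rerouted} to the top boundary of the new annulus through the reserved top rows of the original base (Step~3). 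Only after this rerouting does one apply \autoref{cor_homogeneity_1d} to the ladders formed by each rainbow together with its routing paths (Step~4). Without the nested-rectangle trick and the subsequent rerouting there is no way to attach the handle/crosscap segments to an annulus whose bricks are already homogeneous; this is the idea your plan lacks.
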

\begin{proof} For the remainder of this proof, let $g = \mathsf{h} + \mathsf{c}$.
We define the two functions of the lemma as follows:
\begin{align*}
f^1_{\ref{lem:homogenization}}(t, \ell, \mathsf{h} + \mathsf{c}) \ &\coloneqq \ f_{\ref{lem_homogeneity_2d}}(2t, \ell)\text{ and}\\
f^2_{\ref{lem:homogenization}}(r, t, \ell, \mathsf{h} + \mathsf{c}) \ &\coloneqq \ 4 \cdot f_{\ref{lem_homogeneity_2d}}(2(r + t), \ell) \cdot g + f_{\ref{lem_homogeneity_2d}}(3(r + t) + 4 \cdot f_{\ref{lem_homogeneity_2d}}(2t, \ell) \cdot g, \ell).
\end{align*}

Let $z' = r' + t'$ and $z = r + t$.
By definition, $W$ is the cylindrical closure of an $(r', t', \mathsf{h}, \mathsf{c})$-surface wall which itself is the concatenation of a sequence $W_{0}, W_{1}, \ldots, W_{g}$, where $W_{0}$ is a $(z' \times z')$-wall segment and for each $i \in [g]$, $W_{i}$ is either a $(z' \times t')$-handle or $(z' \times t')$-crosscap segment.
We show how to utilize this infrastructure to define an $(r, t, \text{-})$-$\Sigma$-annulus wall that is a subgraph of $W$ with the property that all its $2\mathsf{h} + \mathsf{c} + 1$ enclosures are $\chi$-homogeneous in $\delta$.
The proof proceeds in the following four steps.
See \cref{fig_homogeneous_walloid} for an illustration.

\begin{figure}[h]
\centering
\includegraphics{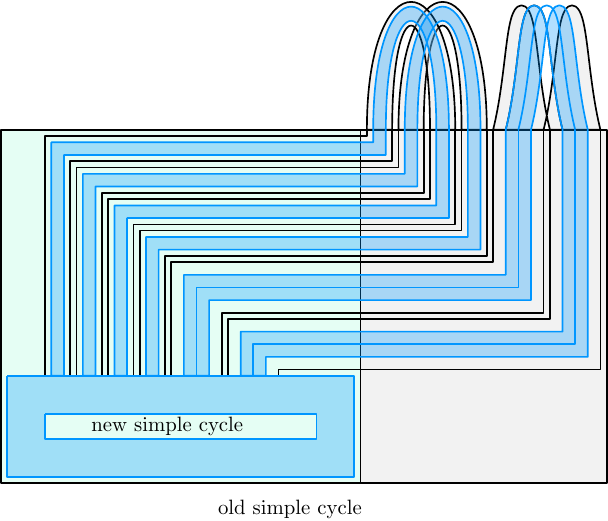}
\caption{\label{fig_homogeneous_walloid}Finding an $(r, t)$-$\Sigma$-annulus wall with homogeneous enclosures (depicted in blue) within an $(r', t')$-$\Sigma$-annulus wall in the proof of \cref{lem:homogenization}.}
\end{figure}

\smallskip
\textbf{Step 1: Finding a homogeneous subwall.} 
Let $W'_{0}$ be the $(z' \times z')$-wall obtained from $W_{0}$ by deleting all vertices of degree $1$.
By definition of $z'$, there is an $(f_{\ref{lem_homogeneity_2d}}(2z, \ell) \times f_{\ref{lem_homogeneity_2d}}(3z + 4 \cdot f_{\ref{lem_homogeneity_2d}}(2t, \ell) \cdot g, \ell))$-subwall $W''_{0}$ of $W'_{0}$ that is disjoint from the first $4 \cdot f_{\ref{lem_homogeneity_2d}}(2t, \ell) \cdot g$ horizontal paths of $W'_{0}$.

Now consider the brick-coloring $\chi_{0}$ of $W'_{0}$, where for every brick $B$ of $W'_{0},$ $\chi_{0}(B)$ is defined as the $\chi$-coloring of $B$ in $\delta$.
Next, by applying \cref{lem_homogeneity_2d} on $\chi_{0}$ and $W'_{0}$, we obtain a $(2z \times (3z + 4 \cdot f_{\ref{lem_homogeneity_2d}}(2t, \ell) \cdot g))$-subwall $W'_{1}$ of $W'_{0}$ such that for every two bricks $B_{1}$ and $B_{2}$ of $W'_{1}$, $\chi_{1}(B_{1}) = \chi_{1}(B_{2})$, where $\chi_{1}$ is the brick-coloring of $W'_{1}$ induced by $\chi_{0}$.
As a result, all bricks of $W'_{1}$ have the same $\chi$-coloring in $\delta$, since for every brick $B$ of $W'_{1}$, $\chi_{1}(B)$ is identical to the $\chi$-coloring of $B$ in $\delta$.

\smallskip
\textbf{Step 2: Defining the homogeneous base annulus.} Let $a \coloneqq 2z$ and $b \coloneqq z + 4 \cdot f_{\ref{lem_homogeneity_2d}}(2t, \ell) \cdot g + a$.
Let $P^{2}_{1}, \ldots, P^{2}_{a}$ be the horizontal paths of $W'_{1}$ and $Q^{2}_{1}, \ldots, Q^{2}_{b}$ be the vertical paths of $W'_{1}$.
We define $z$ pairwise disjoint cycles $\Ccal \coloneqq \{ C_{1}, \ldots, C_{z} \}$ as follows.
For every $i \in [z]$, define $C_{i} \coloneqq P^{2}_{i} \cup P^{2}_{a - i + 1} \cup Q^{2}_{i} \cup Q^{2}_{b - i + 1}$ (after removing all vertices of degree one).
Next, we define $b' \coloneqq b - a$ pairwise disjoint paths $\Pcal \coloneqq \{ P_{1}, \ldots, P_{b'} \}$ that are subpaths of vertical paths in $W'_{1}$ as follows.
For every $i \in [b']$, define $P_{i}$ to be the maximal subpath of $Q^{2}_{z + i}$ with one endpoint in $P^{2}_{1}$ and the other in $P^{2}_{z}$.
Observe that the graph $W'_{2} \coloneqq \cupall (\Pcal \cup \Qcal)$ is a $(z \times (z + 4 \cdot f_{\ref{lem_homogeneity_2d}}(2t, \ell) \cdot g))$-annulus wall that is a subgraph of $W'_{2}$ with the following additional property.
Every brick of $W'_{2}$ contains a set of bricks of $W'_{1}$, and therefore, all bricks of $W'_{2}$ have the same $\chi$-coloring in $\delta$ which is equal to the $\chi$-coloring of the perimeter of $W'_{2}$.

\smallskip
\textbf{Step 3: Attaching the handle and crosscap linkages.} Let $d \coloneqq f_{\ref{lem_homogeneity_2d}}(2t, \ell)$.
Notice that $W'_{2}$ can be seen as the cylindrical closure of a sequence $W''_{1}, \ldots, W''_{g}$ of $(z \times 4d)$-wall segments and a $(z \times z)$-wall segment which we assume to appear last in this ordering.
We can moreover define this ordering so that the top boundary vertices of $W''_{1}$ appear first in the left to right ordering of the top boundary vertices of $W'_{1}$.

Now, fix $i \in [g]$.
Assume that $W_{i}$ is a handle segment.
Let $\Lcal_{i} \coloneqq \{ L_{i, 1}, \ldots, L_{i, d} \}$ be the first $d$ paths in left to right order of the left rainbow of $W_{i}$.
Let $u_{i, j}$ and $v_{i,j}$ be the top boundary vertices of the base of $W_{i}$ in left to right order that are endpoints of $L_{i, j},$ $j \in [d]$.
Also, let $\Rcal_{i} \coloneqq \{ R_{i, 1}, \ldots R_{i, d} \}$ be the first $d$ paths in left to right order of the left rainbow of $W_{i}$.
Let $w_{i, j}$ and $z_{i,j}$ be the top boundary vertices of the base of $W_{i}$ in left to right order that are endpoints of $R_{i, j},$ $j \in [d]$.
Additionally, let $u'_{i,1}, \ldots, u'_{i,d}, w'_{i, 1}, \ldots, w'_{i, d}, v'_{i, 1}, \ldots v'_{i, d}, z'_{i, 1}, \ldots, z'_{i, d}$ be the top boundary vertices of $W''_{i}$ in left to right order.
Our target is to identify a set
$$\Ycal_{i} = \{ U_{i, 1}, \ldots, U_{i, d}, V_{i, 1}, \ldots, V_{i, d}, W_{i, 1}, \ldots, W_{i, d}, Z_{i, 1}, \ldots, Z_{i, d} \}$$
of $4d$ many paths such that for every $j \in [d]$, $U_{i,j}$ is a $u_{i,j}$-$u'_{i,j}$-path, $V_{i,j}$ is a $v_{i,j}$-$v'_{i,j}$-path, $W_{i,j}$ is a $w_{i,j}$-$w'_{i,j}$-path, and $Z_{i,j}$ is a $z_{i,j}$-$z'_{i,j}$-path.

On the other hand assume that $W_{i}$ is a crosscap segment.
Let $\Tcal_{i} \coloneqq \{ T_{i, 1}, \ldots, T_{i, 2d} \}$ be the first $2d$ paths in left to right order of the rainbow of $W_{i}$.
Let $u_{i, j}$ and $v_{i,j}$ be the top boundary vertices of the base of $W_{i}$ in left to right order that are endpoints of $T_{i, j},$ $j \in [d]$.
Additionally, let $u'_{i,1}, \ldots, u'_{i,2d}, v'_{i, 1}, \ldots v'_{i, 2d}$ be the top boundary vertices of $W''_{i}$ in left to right order.
Our target is to identify a set
$$\Ycal_{i} = \{ U_{i, 1}, \ldots, U_{i, 2d}, V_{i, 1}, \ldots, V_{i, 2d} \}$$
of $4d$ many paths such that for every $j \in [2d],$ $U_{i, j}$ is a $u_{i, j}$-$u'_{i, j}$-path and $V_{i, j}$ is a $v_{i, j}$-$v'_{i, j}$ path, in the case $W_{i}$ is a crosscap segment.
We also want the set $\bigcup_{i \in [g]} \{ \Ycal_{i} \} = \{ Y_{1}, \ldots, Y_{4dg} \}$ to be a linkage and every path to be internally disjoint from $W'_{2}$.

We show how to define the $x$-$y$-path $Y_{i},$ $i \in [4dg],$ where $x$ and $y$ are the endpoints of $Y_{i}$ as previously defined.
Consider $C_{j}$ to be the $j$-th cycle from top to bottom and $Q_{j}$ to be the $j$-th vertical path from left to right of the base annulus of $W$, where $Q_{1}$ is the leftmost vertical path of $W_{0}$.
Also let $s$ be minimum such that $C_{s}$ intersects $W'_{2}$.
Note that by construction, all top boundary vertices of $W'_{2}$ belong to $C_{s}$ and that $s > 4dg$.
Now, assume w.l.o.g. that $x$ is the topmost vertex of $Q_{i_{x}}$ in $C_{1} \cap Q_{i_{x}}$ and that $y$ is the topmost vertex of $Q_{i_{y}}$ in $C_{s} \cap Q_{i_{y}},$ for some $i_{x}, i_{y} \in \Nbbb$.
Since $W'_{2}$ is contained in $W_{0}$, observe that $i_{x} < i_{y}$.
Then, we define $Y_{i}$ as the union of the subpath of $Q_{i_{x}}$ whose endpoints are $x$ and the topmost vertex of $Q_{i_{x}}$ in $Q_{i_{x}} \cap C_{s - i_{x} + 1},$ the subpath of $C_{s - i_{x} + 1}$ whose endpoints are the topmost vertex of $Q_{i_{x}}$ in $Q_{i_{x}} \cap C_{s - i_{x} + 1}$ and the topmost vertex of $Q_{i_{y}}$ in $Q_{i_{y}} \cap C_{s - i_{x} + 1}$ and whose internal vertices that belong to vertical paths are vertices of $Q_{j},$ $j \in (i_{x}, i_{y}),$ and the subpath of $Q_{i_{y}}$ whose endpoints are $y$ and the topmost vertex of $Q_{i_{y}}$ in $Q_{i_{y}} \cap C_{s - i_{x} + 1}$.

It is not difficult to see that $\Ycal$ is a linkage whose paths are internally disjoint from $W'_{2}$.
Moreover by their definition, the graph $W'_{3} \coloneqq \cupall \{ W'_{2} \} \cup \Ycal$ is a subgraph of $W$, obtained as the cylindrical closure of a $(z \times z)$-wall segment $W'''_{0}$ and a sequence $W'''_{1}, \ldots, W'''_{g}$, where $W'''_{i}$ is the $(z \times d)$-handle segment obtained by the union of $W''_{i}$, the two rainbows of $W_{i}$, and the linkage $\Ycal_{i}$, in the case that $W_{i}$ is a handle segment, and where $W'''_{i}$ is the $(z \times d)$-crosscap segment obtained by the union of $W''_{i}$, the rainbow of $W_{i}$, and the linkage $\Ycal_{i}$, in the case $W_{i}$ is a crosscap segment.
Moreover, the big enclosure of $W'_{3}$ is $\chi$-homogeneous in $\delta$, since its base annulus is the previously defined annulus wall $W'_{2}$.

\smallskip
\textbf{Step 4: Making the handle and crosscap enclosures homogeneous.} Fix $i \in [g]$.
If $W_{i}$ is a handle segment, we define the graph $L_{i}$ as the union of the left enclosure of $W'''_{i},$ the left rainbow of $W_{i},$ the linkage $\{ U_{i, 1}, \ldots, U_{i, d} \}$, and the linkage $\{ W_{i, 1}, \ldots, W_{i, d} \}$.
Symmetrically, we define the graph $R_{i}$ as the union of the left enclosure of $W'''_{i},$ the left rainbow of $W_{i},$ the linkage $\{ V_{i, 1}, \ldots, V_{i, d} \}$, and the linkage $\{ Z_{i, 1}, \ldots, Z_{i, d} \}$.
Similarly, if $W_{i}$ is a crosscap segment, we define the graph $S_{i}$ as the union of the enclosure of $W'''_{i},$ the rainbow of $W_{i},$ the linkage $\{ U_{i, 1}, \ldots, U_{i, 2d} \}$, and the linkage $\{ V_{i, 1}, \ldots, V_{i, 2d} \}$.
Observe that the graphs $L_{i}$ and $R_{i}$ are $d$-ladders where each of their $d$ many horizontal paths contains a common vertex with two vertical paths of $W_{3}.$
Also the graphs $S_{i}$ are $2d$-ladders where each of their $2d$ many horizontal paths contains a common vertex with two vertical paths of $W'_{2}$.
Observe that every brick of each of the graphs $L_{i}, R_{i}, S_{i}$ is a fence of $W$ and as such its $\chi$-coloring in $\delta$ is well-defined.
Now, similarly to Step 1, we define a brick-coloring for each of $L_{i}, R_{i}$ via the $\chi$-coloring of their bricks in $\delta$ and apply \cref{cor_homogeneity_1d}, to obtain $t$-ladders $L'_{i}, R'_{i}$ and $2t$-ladders $S'_{i}$ whose bricks individually all have have the same $\chi$-coloring in $\delta$.
Finally, we obtain our desired $(r, t)$-$\Sigma$-annulus wall $W'_{4}$ as the subgraph of $W'_{2}$ that contains all cycles of $W'_{2}$, all vertical paths of $W'''_{0}$, and from the remaining vertical paths only those that contain an endpoint of some horizontal path of some $L'_{i}, R'_{i}, S'_{i}$, union all $L'_{i}, R'_{i}, S'_{i}$.
It is not difficult to observe that all enclosures of $W'_{4}$ are now $\chi$-homogeneous in $\delta$.
Moreover, the fact that $W'_{4}$ is controlled by $\Tcal_{W}$ follows since $W'_{4}$ is a obtained as a subgraph of $W'_{4}$.
\end{proof}

\section{Representing colors from the homogeneous area} \label{sec_representation}

In this section, we obtain the second step towards the final $\Sigma$-schema.
We prove that, assuming that the walloid in our schema is large enough and homogeneous with respect to a cell coloring of the decomposition of the schema, we can find a still large walloid, which is still homogeneous, and moreover sufficiently represents the coloring of every one of its homogeneous enclosures.
Our main goal is to prove \cref{lem:representation}.

\subsection{Fences and routing cells through the walloid}\label{subsec:routing_cells}

We first introduce the notion of a fence at some distance from some other fence in a walloid, which will be useful for several arguments concerning the routing of paths in our walloid in the following proofs.
We then prove a series of auxiliary results that will serve as intermediate steps towards the proof of \cref{lem:representation}.

\medskip
Let $(G, \delta, W)$ be a $\Sigma$-schema of a graph $G$ in a surface $\Sigma$.
Let $F$ be a fence of $W$.
We say that $F_{0} \coloneqq F$ is the fence \emph{at distance $0$ from $F$ in $W$}.
Given $d \in \Nbbb_{\geq 1}$, we recursively define the \emph{fence at distance $d$ from $F$ in $W$} as follows:
If $F_{d-1}$ is disjoint from $C^{\mathsf{si}}(W)$ and $C^{\mathsf{ex}}(W)$, then $F_{d}$ is the unique fence of $W$ whose bricks are exactly the bricks of $F_{d-1}$ union all bricks of $W$ that share a vertex with some brick of $F_{d-1}$.

We call a fence $F$ of $W$ \emph{$d$-internal} if the fence at distance $d$ from $F$ is well-defined.
Given an $1$-internal fence $F$ of $W$, we define its \emph{internal pegs} (resp. \emph{external pegs}) as its vertices that are incident to edges of $W$ that are not edges of $F$ and that are edges of bricks of $F$ (resp. of bricks that are not bricks of $F$).

\medskip
In what follows we extensively utilize property $\mathbf{S}_{1}$ of the definition of schemas.
We may observe the following for which we provide a short proof sketch.

\begin{observation} Let $(G, \delta, W)$ be a $\Sigma$-schema and $\Delta$ be a $\delta$-aligned disk such that $\delta \cap \Delta$ contains no vortex cell of $\delta$ and there exists a ground vertex of $\delta$ not contained in $G \cap \Delta.$
Then for every cell $c \in C(\delta \cap \Delta)$ there exists a $\pi_{\delta}(\tilde{c})$-$V(\Omega(\Delta))$-linkage of order $|\tilde{c}|$ in $G \cap \Delta.$
\end{observation}
\begin{proof}[Proof sketch] Consider the $\delta$-torsoid $T$ of $G.$
Let $T^{+}$ be the graph obtained from $T$ by adding a fresh vertex $u$ to $T$ adjacent to every vertex in $\pi_{\delta}(\tilde{c}).$
Since by the second property of the tightness of $\delta,$ $T$ is $3$-connected, Menger's Theorem tells us that there exists a set $\mathcal{P}$ of $|\tilde{c}|$ internally disjoint $u$-$v$ paths in $T^{+},$ where $v$ is any ground vertex of $\delta$ not contained in $G \cap \Delta,$ which exists by assumption.

Clearly, every path $P \in \mathcal{P}$ must intersect $V(\Omega(\Delta))$ to reach $v.$
We may also observe that the unique minimal $\pi_{\delta}(\tilde{c})$-$V(\Omega(\Delta))$-subpath $P'$ of $P,$ can be lifted to a $\pi_{\delta}(\tilde{c})$-$V(\Omega(\Delta))$-path $P''$ of $G \cap \Delta.$
This follows by utilizing the first property of the tightness of $\delta.$
Therefore, collecting all paths $P''$ defined like that, we obtain the desired linkage.
\end{proof}

This observation allows us to use the following proposition which is a restatement of \cite[Lemma 22]{BasteST19HittingMinors} in our terminology.

\begin{proposition}[Lemma 22, \cite{BasteST19HittingMinors}]\label{prop:cell_paths_to_pegs}
Let $(G, \delta, W)$ be a $\Sigma$-schema.
For every $2$-internal brick $B$ of $W$ and every simple cell $c$ in the $\delta$-influence of $B,$ $W$ contains a linkage of order $|\tilde{c}|$ between $\pi_{\delta}(\tilde{c})$ and the external pegs of the fence $F$ at distance $1$ from $B$ in $W.$
Moreover these paths are drawn inside of the $\mathsf{nodes}_{\delta}(C^{\mathsf{si}}(W))$-avoiding disk of the trace of the fence at distance $2$ from $B$ in $W.$
\end{proposition}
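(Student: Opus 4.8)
The plan is to derive \autoref{prop:cell_paths_to_pegs} from \cite[Lemma 22]{BasteST19HittingMinors} by supplying the local infrastructure that result needs and then reading the confinement claim off the $\Sigma$-embedding of $W$. First I would fix notation: write $n\coloneqq|\tilde{c}|\le 3$, let $F_{1}\coloneqq F$ be the fence at distance $1$ from $B$ and $F_{2}$ the fence at distance $2$ from $B$, which is well defined precisely because $B$ is $2$-internal; then $F_{1}$ is $1$-internal, so its external pegs are well defined. Let $\Delta^{\star}$ be the $\mathsf{nodes}_{\delta}(C^{\mathsf{si}}(W))$-avoiding disk of $\trace_{\delta}(F_{2})$; by construction $B$, $F_{1}$, $F_{2}$, and (since $c$ lies in the $\delta$-influence of $B$) the disk $\Delta_{c}$ are all contained in $\Delta^{\star}$, so it suffices to work inside $\Delta^{\star}$. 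Here I would invoke the observation recorded just before the statement: by property $\mathbf{S_{2}}$, $G$ contains a linkage of order $n$ between $\pi_{\delta}(\tilde{c})$ and any $n$ prescribed distinct ground vertices of $\delta$; I would apply it with the $n$ targets chosen among the branch vertices of $\overline{W}$ that lie on the subdivided cycle $B$ — these are ground vertices of $\delta$ since $\overline{W}$ is grounded by $\mathbf{S_{1}}$ — obtaining a linkage $\Lcal_{0}$ from $\pi_{\delta}(\tilde{c})$ to $V(B)$.

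Next I would exploit the wall structure inside $\Delta^{\star}$. Because $B$ is $2$-internal, the part of $W$ drawn in $\Delta^{\star}$, after suppressing subdivision vertices, is a grid-like patch made of the hexagonal brick $B$, the ring of bricks of $F_{1}\setminus B$, and the ring of bricks of $F_{2}\setminus F_{1}$; this patch alone carries, for every $n\le 3$, a linkage $\Lcal_{1}$ of order $n$ in $W$ from $V(B)$ to the external pegs of $F_{1}$, with every path contained in $\Delta^{\star}$ and crossing $\trace_{\delta}(F_{1})$ exactly once. I would then concatenate $\Lcal_{0}$, a routing along $B$, and $\Lcal_{1}$, and run a standard planar rerouting / Menger exchange argument inside the disk $\Delta^{\star}$ — using that $\delta\cap\Delta^{\star}$ has no vortices and that, by $\mathbf{S_{2}}$, every small-boundary sub-disk of it is trivial — to turn the concatenation into a single linkage of order $n$ from $\pi_{\delta}(\tilde{c})$ to the external pegs of $F_{1}$ that lies entirely in $W$ and is contained in $\Delta^{\star}$. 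Tracking which disk the resulting paths occupy yields the ``moreover'' part. Since the whole argument merely renames the data of $(G,\delta,W)$, $B$, and $c$ into the hypotheses of \cite[Lemma 22]{BasteST19HittingMinors}, most of the write-up is this verification.

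The main obstacle is the last step: forcing the $G$-linkage supplied by $\mathbf{S_{2}}$ to genuinely live in $W$ without leaving $\Delta^{\star}$. This is where the $2$-internal hypothesis and the groundedness axiom $\mathbf{S_{1}}$ are essential — groundedness keeps a cell in the influence of $B$ from ``short-circuiting'' across $B$ and pins its attachment vertices onto the wall near $B$, while $2$-internality guarantees two full concentric rings of bricks in which the $n\le 3$ radial paths of $\Lcal_{1}$ and the rerouting have room to be carried out disjointly. A secondary technical point is the case distinction $|\tilde{c}|\in\{2,3\}$ and the role of the tie-breaker of $\delta$ when $|\tilde{c}|=2$, which dictates exactly which component of $\bd(c)$ a path through $c$ may follow.
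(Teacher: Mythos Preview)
The paper does not give its own proof of this proposition: it is imported verbatim as a restatement of \cite[Lemma~22]{BasteST19HittingMinors}, preceded only by the one-line remark that property~$\mathbf{S_{2}}$ guarantees a $G$-linkage of order $|\tilde{c}|$ from $\pi_{\delta}(\tilde{c})$ to any prescribed ground vertices. So there is no in-paper argument to compare your sketch against.

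On your sketch itself: the outline is sound, but the step you flag as the ``main obstacle'' --- forcing the $\mathbf{S_{2}}$-linkage to lie in $W$ --- is partly an artifact of reading the statement too literally. The endpoints $\pi_{\delta}(\tilde{c})$ need not be vertices of $W$ at all, so a linkage strictly inside $W$ is impossible in general; the intended content (made explicit in the very next lemma, \autoref{lem:paths_to_bottom}, where the paths are said to lie in $W'$ \emph{union the graph drawn in} the relevant disk) is that the linkage lives in $G$ but is confined to the disk $\Delta^{\star}$. With that reading, your global Menger/rerouting step is unnecessary: since $c$ is in the $\delta$-influence of $B$, the disk $\Delta_{c}$ already sits inside the disk bounded by $\trace_{\delta}(F_{1})$, so one connects $\pi_{\delta}(\tilde{c})$ to vertices of $W$ locally (inside the at most three cells touching $\tilde{c}$, which is where $\mathbf{S_{2}}$ is really used) and then routes outward through the two rings of bricks exactly as in your $\Lcal_{1}$. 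The $2$-internality hypothesis is doing precisely the work you identify --- providing room for $n\le 3$ disjoint radial paths --- but groundedness $\mathbf{S_{1}}$ is needed only to make traces and $\delta$-influence well defined, not to prevent ``short-circuiting''.
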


The following lemma will be useful in order to extract the flap segments needed to obtain the desired schema in \cref{lem:representation}.

\begin{lemma}\label{lem:paths_to_bottom}
Let $(G, \delta, W)$ be a $\Sigma$-schema.
For every $5$-internal brick $B$ of $W$ and every simple cell $c$ in the $\delta$-influence of $B,$ $G$ contains a linkage of order $|\tilde{c}|$ between $\pi_{\delta}(\tilde{c})$ and the bottom boundary vertices of the $(12 \times 12)$-wall segment $W'$ that is a subgraph of $W$ such that every brick of $W'$ is a brick of $W$ and every brick of the fence at distance $5$ from $B$ is a brick of $W'$.
Moreover, these paths are contained in $W'$ union the graph drawn in the $\mathsf{nodes}_{\delta}(C^{\mathsf{si}}(W))$-avoiding disk of the trace of the fence at distance $1$ from $B$.
\end{lemma}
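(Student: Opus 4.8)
The plan is to bootstrap from \autoref{prop:cell_paths_to_pegs}, which already routes a linkage of order $|\tilde c|$ from $\pi_\delta(\tilde c)$ to the external pegs of the fence $F_1$ at distance $1$ from $B$, with all paths confined to the $\mathsf{nodes}_\delta(C^{\mathsf{si}}(W))$-avoiding disk of the trace of the fence $F_2$ at distance $2$ from $B$. The external pegs of $F_1$ lie on $F_2$ (they are the vertices of $F_2$ adjacent into $F_1$), so I first note that $F_2$ is a cycle of the wall-like structure of $W$ through which a prescribed number of vertices is already reached. From there the task is purely a routing problem \emph{inside} the walloid: push those $|\tilde c|\le 3$ path-endpoints from $F_2$ down to the bottom boundary vertices of a $(12\times 12)$-wall segment $W'$ that sits inside the fence $F_5$ at distance $5$ from $B$. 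First I would fix $W'$: since $B$ is $5$-internal, the fence $F_5$ is well-defined and, being a union of bricks of $W$ grown five times from a single brick, it contains (after possibly discarding a few boundary bricks) a $(12\times 12)$-wall segment $W'$ all of whose bricks are bricks of $W$ and which contains every brick of $F_5$ — this is just a counting check on how many brick-rows and brick-columns a distance-$5$ ball around a brick spans in a wall.

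Next I would do the internal routing. The three entry points sit on (or just inside) $F_2$, which is a cycle surrounding the central region; the bottom boundary vertices of $W'$ are a prescribed set of (at most $12$, certainly at least $3$) vertices on the bottom path of $W'$. Between $F_2$ and $F_5$ the walloid looks locally like a grid/wall annulus of width three brick-rows, so I can use the standard ``routing through a wall'' argument: take three vertex-disjoint vertical subpaths of $W$ emanating downward from the three entry vertices, slide them horizontally along horizontal paths of $W$ to the columns meeting the desired bottom boundary vertices of $W'$, and then run them straight down to those bottom boundary vertices. Disjointness is maintained because there is enough horizontal room (the $(12\times12)$ size is chosen precisely so that three disjoint ``staircase'' paths fit), and because Menger-type / linkage lemmas in walls guarantee such a three-flow whenever the source and sink sets each have size $\le 3$ and the wall has the modest size stipulated. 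I would then concatenate: the \autoref{prop:cell_paths_to_pegs} linkage from $\pi_\delta(\tilde c)$ to the entry points on $F_2$, followed by these staircase paths to the bottom of $W'$. The containment claim then follows by inspection: the first segment lies in the $\mathsf{nodes}_\delta(C^{\mathsf{si}}(W))$-avoiding disk of the trace of $F_2$, hence a fortiori of $F_1$ after re-indexing the distances in the statement, and the second segment lies inside $W'$ itself.

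The main obstacle I expect is book-keeping rather than conceptual: making sure the concatenated paths remain \emph{pairwise vertex-disjoint} at the interface on $F_2$ (the external pegs produced by \autoref{prop:cell_paths_to_pegs} must be matched, in the right cyclic order, to the tops of the three staircase paths so no crossing or collision occurs), and verifying that the three staircase paths stay inside the region bounded by the trace of $F_5$ and disjoint from the parts of $W$ already used — this is where the slack between distance $1$ (where the first linkage is confined) and distance $5$ (where $W'$ lives) is spent, and where the constant $12$ is calibrated. A secondary technical point is that $|\tilde c|$ could be $1,2$ or $3$, so I would phrase the routing uniformly for ``at most three'' paths; the degenerate cases are strictly easier. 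No genus or vortex subtleties enter, since everything happens within $\overline W$ in a region of $\Sigma$ that is a disk, so planar wall-routing suffices.
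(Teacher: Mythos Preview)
Your proposal is correct and follows the same approach as the paper, which in fact gives only a proof-by-picture: apply \autoref{prop:cell_paths_to_pegs} to reach the external pegs of the fence $F_1$ at distance~$1$ from $B$, then route through the wall infrastructure of $W'$ to the bottom boundary vertices.

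Two small slips to fix when you write it up. First, the external pegs of $F_1$ are vertices \emph{of} $F_1$ itself (those incident to outward edges), not vertices of $F_2$. Second, your containment argument is backwards: the $\mathsf{nodes}_\delta(C^{\mathsf{si}}(W))$-avoiding disk of $\trace(F_2)$ \emph{contains} that of $\trace(F_1)$, so ``a fortiori'' does not apply. The correct reason the containment clause of the lemma holds is that the \autoref{prop:cell_paths_to_pegs} linkage is a linkage \emph{in $W$}; hence whatever portion of it lies outside the disk of $F_1$ consists of edges of $W$ in the annulus between $F_1$ and $F_2$, and that annulus is made of bricks of $W'$.
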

\begin{proof}

\begin{figure}[h]
\centering
\includegraphics{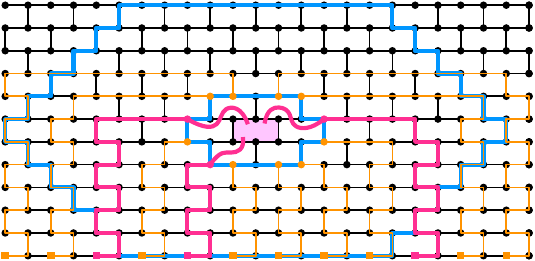}
\caption{\label{fig_brick_to_bottom_boundary}$B$ is illustrated in magenta. The inner blue cycle, say $F$, is the fence at distance $1$ from $B$ and the outer blue cycle is the fence at distance $5$ from $B$. The wiggly part of the pink linkage is implied by applying \cref{prop:cell_paths_to_pegs} on $B$ and $c$. We extend this linkage all the way to the bottom boundary paths of $W'$ by allocating a distinct orange path joining each external peg of $F$ to a bottom boundary vertex of $W'$.}
\end{figure}

The proof is fairly straightforward but quite tedious to formally present.
Instead we rely on a proof by picture.
See \cref{fig_brick_to_bottom_boundary}.
Let $B$ be a $5$-internal brick of $W$ and $c$ be a simple cell in the $\delta$-influence of $B$.
Moreover, let $W'$ be the $(12 \times 12)$-wall segment that is a subgraph of $W$ such that every brick of $W'$ is a brick of $W$ and every brick of the fence at distance $5$ is a brick of $W'$.
\cref{fig_brick_to_bottom_boundary} shows how to find the desired linkage when $|\tilde{c}| = 3$.
The other cases follow in the same manner.
\end{proof}

\subsection{Extracting $t$-flap segments representing colors}

We require one more auxiliary lemma that shows how to further progress with the construction our desired schema by extracting $b$ consecutive flap segments so that they all represent a color of a cell coloring of our $\Sigma$-decomposition which is not yet represented in our current schema.

\begin{lemma}\label{lem:representation_step}
There exists a function $f_{\ref{lem:representation_step}} \colon \Nbbb^{2} \to \Nbbb$ such that for every $r \in \Nbbb$, $t \in \Nbbb_{\geq 3}$, every $b \in \Nbbb_{\geq 1}$, every $(r, t' \coloneqq f_{\ref{lem:homogenization}}(t, b), \text{-}, \text{-}, \text{-})$-$\Sigma$-schema $(G, \delta, W)$ where $W$ is an $(r, t')$-$\Sigma$-walloid with no vortex segments, every cell-coloring $\chi$ of $\delta$ such that $W$ is $\chi$-homogeneous in $\delta$, and every $\alpha \in \mathbb{N}_{\geq 1}$ in the $\chi$-coloring of an enclosure of $W$ where $\alpha$ is not in the inclusion-maximum subset $S$ of $\chi$-$\mathsf{col}(\delta)$ which $W$ $b$-represents in $\delta$, there is an $(r, t, \text{-}, \text{-}, \text{-})$-$\Sigma$-schema $(G, \delta, W')$ controlled by $\Tcal_{W}$ such that $W'$ is an $(r, t, \text{-})$-$\Sigma$-walloid with no vortex segments which is $\chi$-homogeneous in $\delta$ and $b$-represents $S \cup \{ \alpha \}$ in $\delta$.

Moreover, it holds that $f_{\ref{lem:representation_step}}(t, b) \in \Ocal(t b).$
There also exists an algorithm that finds the outcome above in time $\poly(t + b) \cdot |E(G)||V(G)|$.
\end{lemma}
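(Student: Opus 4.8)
The plan is to cut the given walloid down to parameters $(r,t)$ and, in the slack this frees up inside the base annulus of $W$, to attach $b$ brand-new consecutive $t$-flap segments whose hyperedges are $b$ simple cells of color $\alpha$, everything routed along the infrastructure of $W$ so that the rest of the structure is left intact; this is why a linear amount of slack suffices. We set $f_{\ref{lem:representation_step}}(t,b) \coloneqq c_{0}\cdot t\cdot b$ for a large enough absolute constant $c_{0}$, which is the value of $t'$ demanded in the hypothesis; its size is dictated by the counting in Step~1 and the routing in Step~2. Fix an enclosure $E$ of $W$ whose $\chi$-coloring in $\delta$ contains $\alpha$. Since $W$ is $\chi$-homogeneous in $\delta$, all bricks of $W$ inside $E$ share this $\chi$-coloring, so each of them has a simple cell of color $\alpha$ in its $\delta$-influence.

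\textbf{Step 1 (harvesting $b$ cells of color $\alpha$).} Because $t'$ is large, $E$ contains $b$ bricks $B_{1},\dots,B_{b}$ that are $5$-internal in $W$ and pairwise far apart (brick-distance larger than some absolute constant), and that avoid a fixed $(r,t)$-subannulus wall $\widetilde{W}_{0}$ of the base annulus $\widetilde{W}$ of $W$ containing $C^{\mathsf{si}}(W)$, where $\widetilde W_{0}$ is chosen leaving enough room in $\widetilde W$ to also carry width-$t$ copies of all flap segments of $W$. For each $i$ pick a simple cell $c_{i}$ with $\chi(c_{i})=\alpha$ in the $\delta$-influence of $B_{i}$; since distinct $B_{i}$ are far apart, their fences and their $\mathsf{nodes}_{\delta}(C^{\mathsf{si}}(W))$-avoiding disks are pairwise disjoint, so the $c_{i}$ are pairwise distinct, and by \autoref{lem:paths_to_bottom} the $(12\times 12)$-wall segments $W''_{1},\dots,W''_{b}$ it produces are pairwise disjoint. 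That lemma also supplies, for each $i$, a linkage $\Pcal_{i}$ of order $|\tilde c_{i}|\le 3$ from $\pi_{\delta}(\tilde c_{i})$ to bottom boundary vertices of $W''_{i}$, drawn inside $W''_{i}$ together with the disk around $B_{i}$; hence $\bigcup_{i}\Pcal_{i}$ is a linkage.

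\textbf{Step 2 (building and attaching the flap segments).} The base annulus $\widetilde W$ has far more vertical paths than are used by $\widetilde W_{0}$ and the width-$t$ copies of the flap segments of $W$; from a consecutive block of the unused ones we carve $b$ pairwise disjoint $((r+t)\times(2t+3))$-wall segments $F_{1},\dots,F_{b}$ that will serve as the bases of the new flap segments, placed consecutively so that their hyperedges form a consecutive block. As in Step~3 of the proof of \autoref{lem:homogenization}, we route along the discarded part of the cycles of $\widetilde W$ and spare vertical paths a linkage joining, for each $i$, the bottom boundary vertices of $W''_{i}$ used by $\Pcal_{i}$ to the $|\tilde c_{i}|$ hyperedge-attachment vertices of $F_{i}$, keeping everything internally disjoint. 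Concatenating $\Pcal_{i}$ with this linkage, adding a fresh rainbow inside $F_{i}$, and adding a hyperedge with vertex set $\pi_{\delta}(\tilde c_{i})$ joined to the attachment vertices of $F_{i}$ by the concatenated paths yields an $((r+t)\times t)$-flap segment $W^{\alpha}_{i}$ of arity $|\tilde c_{i}|$ whose hyperedge disk equals the disk of the simple cell $c_{i}$. We then let $W'$ be the cylindrical closure of: the surface wall carried by $\widetilde W_{0}$ with its handle and crosscap segments restricted to $((r+t)\times t)$-size (possible since $t'\gg t$), the flap segments of $W$ restricted to width $t$, and $W^{\alpha}_{1},\dots,W^{\alpha}_{b}$, taken in this consecutive order.

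\textbf{Verification and the hard part.} The triple $(G,\delta,W')$ is an $(r,t,\text{-},\text{-},\text{-})$-$\Sigma$-schema: $\mathbf{S_{1}}$--$\mathbf{S_{5}}$ and $\mathbf{S_{8}}$ transfer from $(G,\delta,W)$ since $W'$ uses the same ground vertices and the same vortex cells, and $\mathbf{S_{6}}$--$\mathbf{S_{7}}$ hold because every edge of $W'$ is an edge of $G$ (hence an arc of $\Gamma$) lying in a simple cell of $\delta$ and the new flap hyperedges are exactly the disks $\Delta_{c_{i}}$; it is controlled by $\Tcal_{W}$ because the base annulus of $W'$ contains an $(r,t)$-subannulus wall of $\widetilde W$, as in \autoref{lem:homogenization}. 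Since $C^{\mathsf{si}}(W')=C^{\mathsf{si}}(W)$, every brick of $W'$ keeps the $\chi$-coloring it had in $W$, so the homogeneous enclosures of $W$ restrict to homogeneous enclosures of $W'$ while the new flap segments contribute no enclosures; hence $W'$ is $\chi$-homogeneous in $\delta$. The flap groups inherited from $W$ still $b$-represent $S$ and $W^{\alpha}_{1},\dots,W^{\alpha}_{b}$ $b$-represent $\alpha$, so $W'$ $b$-represents $S\cup\{\alpha\}$; all steps are polynomial, and the counting gives $t'=O(tb)$. The main obstacle is Step~2: carving the $b$ flap-segment bases out of the base annulus and simultaneously routing the $b$ cell-linkages into them so that the outcome is a genuine, correctly concatenated sequence of flap segments embedding as required (each new arc an arc of $\Gamma$, no crossings) while leaving the kept infrastructure and its tangle untouched. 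This is precisely where the $\chi$-homogeneity of $W$ (to pin down $b$ distinct color-$\alpha$ cells within one controlled band) and the linear slack $t'=\Theta(tb)$ are both used, and it reuses the base-annulus routing machinery of \autoref{lem:homogenization}.
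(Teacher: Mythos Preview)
Your proposal is correct and takes essentially the same approach as the paper: harvest $b$ color-$\alpha$ cells from the bricks of a homogeneous enclosure via \autoref{lem:paths_to_bottom}, route them through spare walloid infrastructure into $b$ new consecutive flap segments, and downscale the remaining segments to parameter $t$ while keeping the old flap hyperedges. The paper differs only in presentation---it splits explicitly into the big-enclosure case versus the handle/crosscap-enclosure case and does its routing along fences $F_{1},\dots,F_{q+6}$ measured from $C^{\mathsf{ex}}(W)$ rather than along ``discarded cycles of $\widetilde{W}$''---but these are the same layers of the base annulus viewed from the opposite side, so the two constructions coincide.
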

\begin{proof}

We define $f_{\ref{lem:representation_step}}(t, b) \coloneqq t + 2(6 + b \cdot (2t + 12)).$

First, note that since $W$ is $\chi$-homogeneous in $\delta$, the $\delta$-influence of every brick of an enclosure of $W$ whose $\chi$-coloring contains $\alpha$, contains a simple cell of $\delta$ whose color is $\alpha$.
The proof proceeds by demonstrating how to use the infrastructure provided by $W$ to enhance (a smaller version of) it with $b$ additional consecutive $(r, t)$-flap segments, such that the drawing of their hyperedges, corresponds to the disks of $b$ simple cells of $\delta$, hailing from $b$ distinct bricks of an enclosure of $W$ whose $\chi$-coloring contains $\alpha$, effectively defining a new walloid that $b$-represents $S \cup \{ \alpha \}$ in $\delta$.

Our second observation is the following.
Let $F_{1} \coloneqq C^{\mathsf{ex}}(W).$
For every fence $F_{d},$ $d \leq 5 + b \cdot (2t + 10)$, at distance $d$ from $C^{\mathsf{ex}}(W),$ we can naturally define an $(r, t' - 2d)$-$\Sigma$-walloid $W_{t' - 2d}$ by downscaling the order of each segment of $W$.
Indeed, let $W_{t' - 2d}$ be the graph obtained by the union of any $r + t' - 2d$ base cycles of $W$, the $t' - 2d$ paths of the two rainbows of each handle segment, any $2(t' - 2d)$ paths of the rainbow of each crosscap segments, and any $t' - d$ paths of the rainbow of each flap segment, all disjoint from $F_{1}, \ldots, F_{d}$, all hyperedges of $W$ (along with the edges connecting them to top boundary vertices of the base of their respective flap segment), the vertical paths of $W$ intersected by the previously selected paths and edges, and any $r + t' - 2d$ vertical paths from the wall segment of $W$, after removing all degree one vertices that may appear.

Observe that $W_{t'-2d}$ is well-defined since $F_{1}, \ldots, F_{d}$ only intersect the base cycles $C_{1}, \ldots, C_{d}$ of $W,$ the first and last $d$ paths of both rainbows of each handle segment, the first and last $d$ paths of the rainbow of each crosscap segment, and the first $d$ paths of the rainbow of each flap segment.

Now, let $W^{\alpha}$ be a segment of $W$ with an enclosure whose $\chi$-coloring contains $\alpha$.
We proceed by distinguishing cases based on the type of segment that $W^{\alpha}$ is.

\smallskip
\textbf{Extracting from a handle or crosscap enclosure:} Assume that $W^{\alpha}$ is a crosscap segment.
Let $p \coloneqq 2t + 12$ and $q \coloneqq bp.$
Consider the fences $\Fcal \coloneqq \{ F_{1} \ldots F_{q + 6} \}$ at distance $0$ up to $q + 5$ from $F_{1}$.
Let $P_{1}, \ldots, P_{t'}$ be the horizontal and $Q_{1}, \ldots, Q_{4t'}$ be the vertical paths of the base of $W^{\alpha}$ in top to bottom and left to right order respectively.
Moreover, let $\Rcal = \{ R_{1}, \ldots, R_{2t'} \}$ be the rainbow of $W^{\alpha}$ in left to right order.
Observe that $P_{i},$ $i \in [q + 6]$ is a subpath of $F_{i},$ and that $R_{i}$ and $R_{2t' - i + 1}$ are also subpaths of $F_{i}.$

Let $\langle B_{1}, \ldots, B_{b} \rangle$ be a sequence of $b$ bricks of $W$, ordered from left to right (according to the ordering of $\Rcal$) obtained as follows.
We select $B_{i},$ $i \in [b]$ as the brick of the enclosure of $W^{\alpha}$ that intersects both $R_{6 + (i - 1) \cdot p + 6}$ and $B_{i} \cap R_{6 + (i - 1) \cdot p + 7}$.
Moreover, for every $B_{i},$ $i \in [b],$ we reserve a bundle of $p$ many consecutive cycles $\Fcal_{i} \subseteq \Fcal$ defined as $\Fcal_{i} \coloneqq \{ F_{6 + (i - 1) \cdot p + 1}, \ldots, F_{6 + i \cdot p} \}$ (see \cref{fig:extract_crosscap}).

\begin{figure}[h]
\centering
\includegraphics{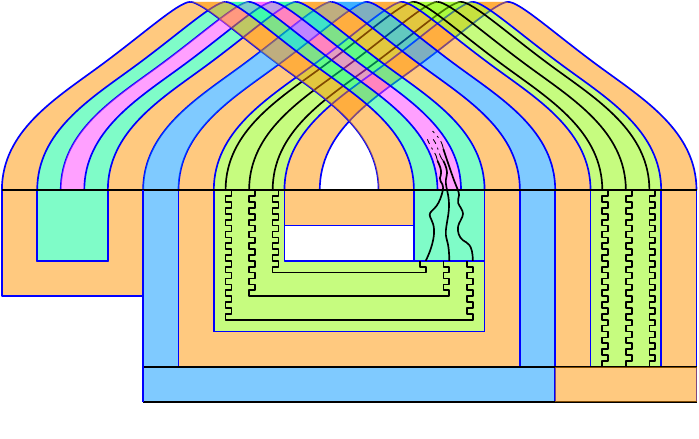}
\caption{\label{fig:extract_crosscap}Extracting a crosscap segment $W_{1}$ and a flap segment $W_{2}$ from a crosscap segment $W$. The magenta area represents a brick $B$ in the enclosure of $W$. The green area represents the perimeter of the $(12 \times 12)$-wall segment $W'$ containing $B$. The wiggly paths represent the linkage between the vertices on the boundary of the chosen cell from the influence of $B$ and bottom boundary vertices of $W'$. The blue area represents the rainbow and base of $W_{1}$. The orange area represents the rainbow and base of $W_{2}$.}
\end{figure}

Now for every $i \in [b]$, let $W_{i}$ be the $(12 \times 12)$-wall segment subgraph of $W$ such that every brick of $W_{i}$ is a brick of $W$, every brick of the fence at distance $5$ from $B_{i}$ is a brick of $W_{i}$, $W_{i}$ is disjoint from $W_{j},$ $j \in [b] \setminus \{ i \}$, $W_{i}$ is disjoint from the first and last $t$ fences in $\Fcal_{i}$, and exactly $2$ subpaths of each of the horizontal paths $P_{j}$, $j \in [6]$, are horizontal paths of $W_{i}$.
We assume that the bottom horizontal path of $W_{i}$ is the rightmost such subpath of $P_{6}.$
Moreover, for every $i \in [b]$, consider a cell $c_{i}$ in the $\delta$-influence of $B_{i}$ whose color is $\alpha$ (as we observed such a cell is bound to exist).
With the wall $W_{i}$ at hand we may now call upon \cref{lem:paths_to_bottom} for $B_{i}$ and $c_{i}$, thus obtaining a linkage $\Pcal_{i}$ of order $|\tilde{c_{i}}|$ between $\pi_{\delta}(\tilde{c_{i}})$ and the bottom boundary vertices of $W_{i}$ with each path in $\Pcal_{i}$ contained in $W_{i}$ union the graph drawn in the $\mathsf{nodes}_{\delta}(C^{\mathsf{si}}(W))$-avoiding disk of the trace of the fence at distance $1$ from $B_{i}$.

Next for every $i \in [b]$, we define a linkage $\Lcal_{i}$ of order $t$ that will serve as the rainbow of the $i$-th flap segment we are trying to define.
Let $\Fcal_{i}'$ be a set consisting of $2t$ paths, each a subpath of one of the first or last $t$ fences in $\Fcal_{i}$ with one endpoint being the leftmost top boundary vertex of the base of $W^{\alpha}$ visited by the respective cycle and with the other endpoint being the rightmost top boundary vertex of the base of the rightmost segment of $W$ that is not its wall segment.
Notice that the paths in $\Fcal_{i}'$ are made up of $2$ bundles of $t$ consecutive paths.
We define $\Lcal_{i}$ by connecting the leftmost endpoint of each path of one bundle to the leftmost endpoint of a path in the other bundle, using the $2t$ vertical paths of the base of $W^{\alpha}$ that intersect the paths of the two bundles and the $t$ horizontal paths $P_{7}, \ldots, P_{t + 6}$ of the base $W^{\alpha},$ in a way that we obtain a linkage of order $t$ disjoint from all others.

Additionally, notice that for every $i \in [b],$ each endpoint of a path in $\Pcal_{i}$ not in $\pi_{\delta}(\tilde{c_{i}})$ is a vertex of a distinct fence in $\Fcal_{i}.$
Using this observation, we define a new set of paths $\Tcal_{i}$ as an extension of each path $P \in \Pcal_{i}$ by considering the union of $P$ with the subpath of the corresponding fence that connects the endpoint of $P$ not in $\pi_{\delta}(\tilde{c_{i}})$ to the rightmost top boundary vertex of the base of the rightmost segment of $W$ that is not its wall segment visited by the fence.
Note that the set $$\bigcup_{i \in [b]} \{ \Tcal_{i} \} \cup \bigcup_{i \in [b]} \{ \Lcal_{i} \}$$ is a linkage, by construction.

To conclude with this case, observe that $t' - 2|\Fcal| = t.$
We obtain our desired $(r, t)$-$\Sigma$-walloid $W'$ by considering the $(r, t)$-$ \Sigma$-walloid $W_{t' - 2|\Fcal|}$, adding the linkages $\Lcal_{i}$ and $\Tcal_{i}$, $i \in [b]$, adding all vertical paths of $W$ intersected by the former linkages, and last but not least adding $\pi_{\delta}(\tilde{c_{i}})$ as a hyperedge of (the $i$-th flap segment of) $W'$, $i \in [b]$.

As for the case that $W^{\alpha}$ is a handle segment the proof is very similar and we omit it in favour of brevity.

\smallskip
\textbf{Extracting from the big enclosure:} The only remaining case is when $W^{\alpha}$ is the $((r + t') \times (r + t'))$-wall segment of $W$.
Observe that the bricks of the big enclosure of $W$ are the bricks of the base annulus $\widetilde{W}$ of $W$ and $W^{\alpha}$ is clearly a subgraph of $\widetilde{W}$.
Let $P_{1}, \ldots, P_{r + t'}$ be the horizontal and $Q_{1}, \ldots, Q_{r + t'}$ be the vertical paths of $W^{\alpha}$.

Let $\langle B_{1}, \ldots, B_{b} \rangle$ be a sequence of $b$ bricks of $W^{\alpha}$, ordered from left to right (respecting the ordering of the vertical paths of $W^{\alpha}$) obtained as follows.
We select $B_{i}$, $i \in [b]$, as the brick bounded by $P_{t + 6}$, $P_ {t + 7}$, $Q_{2(q + 6) + (i - 1) \cdot p + 6}$, and $Q_{2(q + 6) + (i - 1) \cdot p + 7}$.
Moreover, for every $i \in [b],$ we reserve a bundle of $p$ consecutive vertical paths of $W,$ from $Q_{2(q + 6) + (i - 1) \cdot p + 1}$ up to $Q_{2(q + 6) + i \cdot p}.$

Now for every $i \in [b]$, let $W_{i}$ be the $(12 \times 12)$-wall segment subgraph of $W$ such that every brick of $W_{i}$ is a brick of $W$, every brick of the fence at distance $5$ from $B_{i}$ is a brick of $W_{i}$, $W_{i}$ is disjoint from $W_{j},$ $j \in [b] \setminus \{ i \}$, $W_{i}$ is disjoint from $P_{1}, \ldots, P_{t}$, and $W_{i}$ is disjoint from the first and last $t$ paths of the bundle of vertical paths we reserved for $B_{i}$.
Moreover, for every $i \in [b]$, consider a cell $c_{i}$ in the $\delta$-influence of $B_{i}$ whose color is $\alpha$ (as we observed such a cell is bound to exist).
With the wall $W_{i}$ at hand we may now call upon \cref{lem:paths_to_bottom} for $B_{i}$ and $c_{i}$, thus obtaining a linkage $\Pcal_{i}$ of order $|\tilde{c_{i}}|$ between $\pi_{\delta}(\tilde{c_{i}})$ and the bottom boundary vertices of $W_{i}$ with each path in $\Pcal_{i}$ contained in $W_{i}$ union the graph drawn in the $\mathsf{nodes}_{\delta}(C^{\mathsf{si}}(W))$-avoiding disk of the trace of the fence at distance $1$ from $B_{i}$.

We immediately extend each path $P \in \Pcal_{i}$ to reach all the way to a bottom boundary vertex of $W^{\alpha}$ via the vertical path of $W^{\alpha}$ intersected by $P$.

Also for every $i \in [b],$ we define a linkage $\Lcal_{i}$ of order $t$ by taking the first and last $t$ vertical paths reserved for $B_{i}$ and using the intersection between these vertical paths and the paths $P_{1}, \ldots, P_{t}$.
Notice that we can define each $\Lcal_{i}$ to be disjoint from any $\Lcal_{j},$ $j \in [b] \setminus \{ i \}.$

To conclude with this case, we obtain our desired $(r, t)$-$\Sigma$-walloid $W'$ by considering the $(r, t)$-$\Sigma$-walloid $W_{t' - 2(q + 6)}$ (in this case we forcibly remove the first $2(q + 6)$ vertical paths of the wall segment of $W$), adding the linkages $\Lcal_{i}$ and $\Pcal_{i}$, $i \in [b]$, and last but not least adding $\pi_{\delta}(\tilde{c_{i}})$ as a hyperedge of (the $i$-th flap segment of) $W'$, $i \in [b]$.

Finally, we may observe that by construction, the simple cycle of $W$ is equal to that of $W'$ and the big enclosure of $W'$ is a fence of $W$ contained in the big enclosure of $W$ such that every one of its bricks is a brick of $W$ and as such remains $\chi$-homogeneous in $\delta$.
Moreover the remaining $2\mathsf{h} + \mathsf{c}$ enclosures of $W'$ that correspond to the enclosures of its handle and crosscap segments can be seen to also be $\chi$-homogeneous in $\delta$ as their bricks originate from bricks of the corresponding handle or crosscap enclosures of $W$, after subsuming bricks from the big enclosure of $W$, and all such bricks have the same $\chi$-coloring in $\delta$.
Also, again by construction, $W'$ can be seen to $b$-represent $S \cup \{ \alpha \}$ in $\delta$, which concludes the proof.
Lastly, $W'$ is controlled by $\Tcal_{W}$, since the base annulus of $W'$ is a subgraph of the base annulus of $W$.
\end{proof}

\subsection{The representation lemma}

We are now in the position to prove the main result of this section, which by an iterative application of \cref{lem:representation_step}, will give us a schema that sufficiently represents every color of the given cell coloring that belongs to the coloring of some enclosure of the walloid.

\begin{lemma}[Representation lemma]\label{lem:representation}
There exists a function $f_{\ref{lem:representation}} \colon \Nbbb^{3} \to \Nbbb$ such that for every $r \in \Nbbb$, every $t \in \Nbbb_{\geq 3}$, every $\ell, b \in \Nbbb_{\geq 1}$, every $(r, t' \coloneqq f_{\ref{lem:representation}}(t, \ell, b), \text{-}, \text{-}, \text{-})$-$\Sigma$-schema $(G, \delta, W)$ where $W$ is an $(r, t')$-$\Sigma$-annulus wall, and every cell-coloring $\chi$ of $\delta$ of capacity at most $\ell$ such that $W$ is $\chi$-homogeneous in $\delta$, there is an $(r, t, \text{-}, \text{-}, \text{-})$-$\Sigma$-schema $(G, \delta, W')$ controlled by $\Tcal_{W}$ such that $W'$ is an $(r, t, \text{-})$-$\Sigma$-walloid with no vortex segments which is $\chi$-homogeneous in $\delta$ and $b$-represents the $\chi$-coloring of every enclosure of $W'$ in $\delta$.

Moreover, it holds that $f_{\ref{lem:representation}}(t, \ell, b) \in \Ocal(t \ell b).$
There also exists an algorithm that finds the outcome above in time $\poly(t + \ell + b) \cdot |E(G)||V(G)|$.
\end{lemma}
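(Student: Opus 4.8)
The plan is to iterate \autoref{lem:representation_step}, peeling off one colour at a time, while arranging the at most $\ell$ steps so that the order of the walloid grows only additively rather than geometrically. Since $\chi$ has capacity at most $\ell$, all of its colours lie in $[\ell]$, so $\chi\text{-}\mathsf{col}(\delta)$ has at most $\ell$ elements; let $\alpha_{1},\dots,\alpha_{m}$, with $m\le\ell$, enumerate those colours occurring in the $\chi$-coloring of some enclosure of $W$. As $W$ is $\chi$-homogeneous and every enclosure of any walloid we produce will be a fence of $W$ whose bricks come from (possibly merging) bricks of a single enclosure of $W$, the $\chi$-coloring of each enclosure of the output is contained in $\{\alpha_{1},\dots,\alpha_{m}\}$. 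Hence it suffices to build a $\chi$-homogeneous $(r,t,\text{-})$-$\Sigma$-walloid $W'$ with no vortex segments that $b$-represents the whole set $\{\alpha_{1},\dots,\alpha_{m}\}$ in $\delta$ and is controlled by $\Tcal_{W}$.

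I would set $f_{\ref{lem:representation}}(t,\ell,b)\coloneqq t+\ell\cdot\bigl(f_{\ref{lem:representation_step}}(t,b)-t\bigr)=t+2\ell\bigl(6+b(2t+12)\bigr)\in\mathcal{O}(t\ell b)$ and carry out the $m$ extractions for $\alpha_{1},\dots,\alpha_{m}$ on $m$ pairwise-disjoint annular bands of the incoming walloid, each band made of the $2\bigl(6+b(2t+12)\bigr)$ fences closest to $C^{\mathsf{ex}}(W)$ that one application of \autoref{lem:representation_step} consumes; the remaining order-$t$ ``core'' surface wall of $W$ survives untouched. For a fixed $i$ this is exactly the construction in the proof of \autoref{lem:representation_step}: pick an enclosure of $W$ whose $\chi$-coloring contains $\alpha_{i}$, choose $b$ pairwise-disjoint bricks $B_{i,1},\dots,B_{i,b}$ on it (so their $\delta$-influences are disjoint, hence the colour-$\alpha_{i}$ cells $c_{i,j}$ guaranteed in each $\delta$-influence by $\chi$-homogeneity are distinct), apply \autoref{lem:paths_to_bottom} (which rests on \autoref{prop:cell_paths_to_pegs}) to route each $c_{i,j}$, inside the $\mathsf{nodes}_{\delta}(C^{\mathsf{si}}(W))$-avoiding disk of the trace of a small fence around $B_{i,j}$, to the bottom boundary of a $(12\times12)$-wall-segment gadget, and finish with the rainbow/enclosure bookkeeping inside the $i$-th band to turn the $c_{i,j}$ into the hyperedges of $b$ consecutive order-$t$ flap segments. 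Choosing the bricks, gadgets, linkages and bands of distinct colours disjointly (there is room, since each band and each reserved bundle of bricks lies in its own part of its enclosure), the union of the core surface wall with all $mb$ flap segments, arranged so that these flap segments occur consecutively in the cyclic order of segments, is the walloid $W'$; equivalently, one may view this as applying \autoref{lem:representation_step} $m$ times, each time discarding the band used in the previous step so that the already-extracted flap segments survive verbatim.

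Verifying that $(G,\delta,W')$ is an $(r,t,\text{-},\text{-},\text{-})$-$\Sigma$-schema with the required properties is then inherited, property by property, from $(G,\delta,W)$ and from the verification already carried out in \autoref{lem:representation_step}: $\mathbf{S_{1}}$ holds because $\overline{W'}$ is assembled from $\overline{W}$ and $\delta$-grounded routing linkages; $\mathbf{S_{2}}$--$\mathbf{S_{5}}$ are unaffected; $\mathbf{S_{6}}$--$\mathbf{S_{7}}$ hold because each new hyperedge is the disk of a simple cell of $\delta$; and $\mathbf{S_{8}}$ holds because $W'$ has no vortex segments and the vortices of $\delta$ already lie inside the $\mathsf{nodes}_{\delta}(C^{\mathsf{si}}(W))$-avoiding disk of $\mathsf{trace}_{\delta}(C^{\mathsf{ex}}(W))$, which the construction does not enlarge. $\chi$-homogeneity of $W'$, and the fact that each enclosure of $W'$ is a fence of $W$ whose bricks come from a single enclosure of $W$, follow as in \autoref{lem:representation_step}; together with the construction guaranteeing $b$-representation of every $\alpha_{i}$, this gives the stated $b$-representation of all enclosure colours. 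Control by $\Tcal_{W}$ follows since the base annulus of $W'$ is a subgraph of the base annulus of $W$, so $\Tcal_{W'}$ is a truncation of $\Tcal_{W}$. Finally, chaining the $m\le\ell$ polynomial-time invocations of the algorithms of \autoref{lem:representation_step} and \autoref{lem:paths_to_bottom} yields running time $\mathsf{poly}(t\ell b)\,|E(G)||V(G)|$.

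The main obstacle is precisely making the order of $W'$ grow linearly rather than geometrically in $\ell$: using \autoref{lem:representation_step} as a black box in sequence forces the order to be multiplied by roughly $1+4b$ at each of the up-to-$\ell$ steps, so one must check that its single-colour construction can be confined to one outermost annular band of $\mathcal{O}(tb)$ fences together with bricks inside one enclosure, and then argue that the $m\le\ell$ band-local constructions for the relevant colours are pairwise disjoint and that their union is a bona fide walloid -- in particular that the newly created flap segments can be made consecutive and that the $2\mathsf{h}+\mathsf{c}+1$ enclosures of $W'$ remain $\chi$-homogeneous fences of $W$. The remaining steps are routine given the corresponding arguments in \autoref{lem:representation_step}.
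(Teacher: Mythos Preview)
Your proposal follows the paper's approach exactly: iterate \autoref{lem:representation_step} once for each colour appearing in some enclosure of $W$ that is not yet $b$-represented. The paper's entire proof is two lines—set $f_{\ref{lem:representation}}(t,\ell,b)\coloneqq \ell\cdot f_{\ref{lem:representation_step}}(t,b)$ and repeatedly apply \autoref{lem:representation_step} to $(G,\delta,W_i)$ until no unrepresented enclosure colour remains—without any discussion of band allocation.

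Your observation in the last paragraph is well taken: treating \autoref{lem:representation_step} as a literal black box forces the target order at step $i$ to be $f_{\ref{lem:representation_step}}(\cdot,b)$ applied to the target at step $i+1$, and since $f_{\ref{lem:representation_step}}(t,b)=t(1+4b)+12+24b$, the $\ell$-fold iterate is $\Theta\bigl(t(1+4b)^{\ell}\bigr)$ rather than $\mathcal{O}(t\ell b)$. The paper's stated bound therefore does not follow from the proof as written. What you sketch—confining each single-colour extraction to its own outermost band of $f_{\ref{lem:representation_step}}(t,b)-t$ fences while all $m\le\ell$ batches of flap segments target the same final order $t$ and attach to the same core base annulus—is precisely the reading needed to justify $\mathcal{O}(t\ell b)$, and your slightly tighter constant $t+\ell\bigl(f_{\ref{lem:representation_step}}(t,b)-t\bigr)$ reflects this. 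So you are not taking a different route; you are filling in a gap the paper leaves implicit, and the checks you flag (disjointness of the bands, consecutiveness of the resulting flap segments, preservation of $\chi$-homogeneity of the enclosures) are indeed the only points requiring care.
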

\begin{proof}

We define $f_{\ref{lem:representation}}(t, \ell, b) \coloneqq f_{\ref{lem:representation_step}}(t, b) \cdot \ell$.

Let $(G, \delta, W_{0}) = (G, \delta, W)$ and $i \coloneqq 0.$
Iteratively apply \cref{lem:representation_step} on $(G, \delta, W_{i})$ with a color $\alpha$ that belongs to the $\chi$-coloring of an enclosure of $W_{i}$ that is not $b$-represented by $W_{i}$ in $\delta$.
Let $(G, \delta_{i+1}, W_{i+1})$ be the result.
Set $i \coloneqq i + 1,$ and repeat.
\end{proof}

\section{Splitting vortex segments and extracting flap segments}
\label{sec_splitting_extracting}

In this section, we show that given a $\Sigma$-schema equipped with a walloid that is large enough, homogeneous with respect to a cell coloring of the decomposition of the schema, and sufficiently represents the coloring of every one of its homogeneous enclosures, we can find a coarser $\Sigma$-schema to one with a bounded number of vortex segments each of bounded depth, such that all colors not yet represented, may be cornered in the interior of the vortices within our vortex segments.
Our goal is to prove \cref{lem:coarsening}.
This step marks the last step towards our goal.
The proof of \cref{lem:coarsening} will be an inductive one based on a sequence of lemmas which we derive in the following subsections.

\paragraph{Schemas with a single vortex segment.}

We first observe that we may view any $(r, (t + s + 2), \text{-}, \text{-})$-$\Sigma$-walloid as an $(r, t, s)$-$\Sigma$-walloid with a single $((r + t) \times t, s)$-vortex segment by sacrificing the first $s + 2$ many fences at distance $0$ up to $s + 1$ from its exceptional cycle.
Moreover, any properties that the original walloid satisfied in its schema will also hold true in the new schema. The next observation helps us to extract the fist vortex segment from a walloid without vortex segments as the one provided  by \cref{lem:representation}.
As we see in the proof, this vortex segment is extracted so that it contains the whole exceptional face and enough many layers around it.

\begin{observation}\label{obs:single_vortex_segment_schema}
For every $t \in \Nbbb_{\geq 4}$, every $s, b \in \Nbbb_{\geq 1}$, every $r, x, y \in \Nbbb$, every $(r, (2t + s + 2), \text{-}, x, y)$-$\Sigma$-schema $(G, \delta, W)$ where $W$ is an $(r, (2t + s + 2), \text{-})$-$\Sigma$-walloid with no vortex segments, and every cell-coloring $\chi$ of $\delta$ such that $W$ is $\chi$-homogeneous in $\delta$ and $b$-represents the $\chi$-coloring of every enclosure of $W$ in $\delta$, there is an $(r, t, s, 1, \text{-})$-$\Sigma$-schema $(G, \delta', W')$ controlled by $\Tcal_{W}$ such that $\delta' \sqsubseteq \delta$, $W'$ has a single $((r + t) \times t, s)$-vortex segment and $\delta'$ has a single vortex cell $c^{*}$, $W'$ is $\chi$-homogeneous in $\delta'$ and $b$-represents the $\chi$-coloring of every enclosure of $W'$ in $\delta'$, and $\delta \cap \Delta_{c^{*}}$ is a rendition of the vortex society of $c^{*}$ in $\Delta_{c^{*}}$ of breadth at most $x$ and depth at most $y.$

Moreover there exists an algorithm that finds the outcome above in time $\poly(t + s + b) \cdot |E(G)|$.
\end{observation}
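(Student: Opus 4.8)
The plan is to take the walloid $W$ provided by the hypothesis --- an $(r, (2t+s+2), \text{-})$-$\Sigma$-walloid with no vortex segments, so the cylindrical closure of an $(r, 2t+s+2, \mathsf{h}, \mathsf{c})$-surface wall together with $\ell$ flap segments --- and carve out the interior of its exceptional cycle $C^{\mathsf{ex}}(W)$, together with exactly $s+2$ layers of fences around it, to serve as the single vortex segment of the new walloid $W'$. Concretely, I would first set $F_0 \coloneqq C^{\mathsf{ex}}(W)$ and consider the fences $F_1, \dots, F_{s+2}$ at distances $1, \dots, s+2$ from $C^{\mathsf{ex}}(W)$ in $W$; these are well defined because $2t+s+2$ is large enough that peeling off $s+2$ layers still leaves a positive-order wall. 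By the same ``downscaling'' argument used in the proof of \autoref{lem:representation_step} (select, from each segment of $W$, the base cycles and rainbow paths disjoint from $F_1, \dots, F_{s+2}$, discard the vertical paths they do not meet, and delete degree-one vertices), the part of $W$ outside $F_{s+2}$ yields an $(r, t, \text{-})$-$\Sigma$-walloid with no vortex segments; call it $W'_{\mathsf{outer}}$. Its outer annulus wall has order $r+t$ because $(2t+s+2) - 2(s+2) = 2t - s - 2 \geq t$ once $t$ and $s$ are in the stated ranges, and --- crucially --- every brick of $W'_{\mathsf{outer}}$ is a brick of $W$, so $\chi$-homogeneity of every enclosure of $W$ in $\delta$ is inherited, and likewise the $b$-representation of the $\chi$-coloring of every enclosure survives because the flap segments and their hyperedges are left untouched.

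Next I would build the vortex segment itself. The region enclosed by $\mathsf{trace}_\delta(F_{s+2})$ that avoids $\mathsf{nodes}_\delta(C^{\mathsf{si}}(W))$ is a $\delta$-aligned disk $\Delta$; I define the new cell $c^*$ to be $\Delta$ minus the nodes of $\delta$ on its boundary, i.e. I \emph{merge} into a single cell all the simple cells of $\delta$ drawn inside $\Delta$ (together with any vortices of $\delta$, which by $\mathbf{S}_8$ already live inside the $C^{\mathsf{si}}$-avoiding disk of $\mathsf{trace}_\delta(C^{\mathsf{ex}}(W)) \subseteq \Delta$). This gives a coarsening $\delta'$ with $\delta' \sqsubseteq \delta$: every simple cell of $\delta'$ is a simple cell of $\delta$, the only new cell $c^*$ is a vortex cell (its boundary meets $\geq 4$ nodes since $s+2 \geq 3$ layers were peeled and $t$ is large), and $\delta'$ has exactly one vortex cell. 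The nest $\mathcal{C} = \{F_{s+2}, F_{s+1}, \dots, F_1\}$ of $s+2$ pairwise disjoint $\delta$-grounded cycles in the rendition $\delta \cap \Delta$, together with a radial linkage orthogonal to it --- obtained from the vertical paths of $W$ crossing these fences, which are orthogonal to each $F_i$ by construction of fences --- is exactly what is needed to exhibit a subdivision of an elementary $((r+t)\times t, s)$-vortex segment inside $\Delta$, whose hyperedge is $V(C_0)$ for $C_0$ the innermost chosen cycle (one may take $C_0$ to be a cycle through $\pi_\delta(\tilde c^*)$, or the trace-innermost fence). Gluing this vortex segment onto $W'_{\mathsf{outer}}$ at the exceptional boundary produces the $(r, t, s)$-$\Sigma$-walloid $W'$ with a single vortex segment, and $\mathbf{S}_8$'s second alternative ($\#$vortex segments $= x$, here $x = 1$) holds. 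The rendition $\rho \coloneqq \delta \cap \Delta_{c^*}$ of the vortex society of $c^*$ is a sub-$\Sigma$-decomposition of $\delta$ in $\Delta_{c^*}$, and its breadth and depth are at most the breadth $x$ and depth $y$ of $\delta$ since every vortex of $\rho$ is a vortex of $\delta$ and every transaction in the vortex society of $c^*$ extends (through the radial linkage) to a transaction witnessing depth at most $y$.

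Finally I would verify the schema axioms $\mathbf{S}_1$--$\mathbf{S}_8$ for $(G, \delta', W')$: $\mathbf{S}_1$ ($\overline{W'}$ grounded) because $\overline{W'}$ is built from bricks of $W$ and grounded cycles, and no edge of $\overline{W'}$ lies in $c^*$ by construction of $W'_{\mathsf{outer}}$; $\mathbf{S}_2$ is preserved under coarsening since merging cells can only enlarge the cell contained in a small $\delta$-aligned disk; $\mathbf{S}_3$--$\mathbf{S}_4$ because $G$ itself is unchanged and ground vertices of $\delta'$ are ground vertices of $\delta$ lying outside $\Delta$; $\mathbf{S}_5$--$\mathbf{S}_7$ as discussed; and the control by $\Tcal_W$ follows, exactly as in \autoref{lem:representation_step}, from the base annulus of $W'$ being a subgraph of the base annulus of $W$. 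The running time is dominated by computing the fences and the coarsening, which is $\poly(t+s+b)|E(G)|$. The main obstacle I anticipate is not any single step but the careful bookkeeping needed to confirm that the merged cell $c^*$ together with the peeled layers genuinely realizes the \emph{elementary $((r+t)\times t, s)$-vortex segment} template from the definition --- in particular producing the orthogonal radial linkage $\mathcal{R}$ of the right order and ensuring the hyperedge $F = V(C_0)$ satisfies $\{w_1,\dots,w_t\}\subseteq F \subseteq V(C_0)$ --- and in checking that the depth of the resulting vortex society is still bounded by $y$ rather than having grown; this last point uses that a transaction of large order in $(G\cap\Delta_{c^*}, \Omega(\Delta_{c^*}))$ composed with the nest's radial linkage would yield a transaction of the same order in some vortex society of $\delta$, contradicting depth $\leq y$.
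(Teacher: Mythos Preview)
Your overall plan---peel off $s+2$ fences around $C^{\mathsf{ex}}(W)$ to form the nest of a single vortex segment and coarsen $\delta$ to have one vortex cell---is exactly the paper's approach, but you place the vortex cell $c^{*}$ at the wrong end of the nest. You set $c^{*}$ to be the disk bounded by the \emph{outermost} peeled fence $F_{s+2}$, whereas the paper takes $c^{*}$ to be the disk bounded by $F_{0}=C^{\mathsf{ex}}(W)$ itself and uses $\{F_{0},\ldots,F_{s+1}\}$ as the nest. Your choice cannot satisfy the schema axioms: by $\mathbf{S}_{7}$ the hyperedge of the vortex segment (namely $V(C_{0})$ for the \emph{innermost} nest cycle $C_{0}$) must correspond to the disk of the vortex cell, so $C_{0}$ must bound $c^{*}$; and by $\mathbf{S}_{1}$ all nest cycles, being cycles of $\overline{W'}$, must be $\delta'$-grounded, hence must use no edge drawn inside a vortex cell. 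With your $c^{*}$ bounded by $F_{s+2}$, the cycles $F_{1},\ldots,F_{s+1}$ lie entirely in the interior of $c^{*}$ and therefore cannot be $\delta'$-grounded; they cannot serve as the nest of $W'$. The fix is exactly what the paper does: let $c^{*}$ be the disk bounded by $F_{0}$, so the nest $\{F_{0},\ldots,F_{s+1}\}$ sits outside (or on the boundary of) $c^{*}$ and remains grounded after coarsening.

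You also misread the depth requirement. The statement asks that the vortex society of $c^{*}$ admit a rendition $\rho\subseteq\delta$ in $\Delta_{c^{*}}$ of breadth at most $x$ and depth at most $y$---that is, $\rho$ has at most $x$ vortex cells, each of depth at most $y$---not that the society $(G\cap\Delta_{c^{*}},\Omega(\Delta_{c^{*}}))$ itself has depth at most $y$. With the correct $c^{*}$, taking $\rho\coloneqq\delta\cap\Delta_{c^{*}}$ simply inherits the (at most $x$) vortices of $\delta$ with their original depths, so no transaction-routing argument is needed. Finally, your arithmetic $(2t+s+2)-2(s+2)\geq t$ would require $t\geq s+2$, which is not among the hypotheses; the paper instead just keeps the last $r+t$ base cycles of $W$.
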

\begin{proof}
Consider the fences $C^{\mathsf{ex}}(W) = F_{0}, \ldots, F_{s + 1}$ of distance $0$ up to $s + 1$ from the exceptional cycle $C^{\mathsf{ex}}(W)$ of $W$, let $\Delta_{0}$ be the $\mathsf{nodes}_{\delta}(C^{\mathsf{si}}(W))$-avoiding disk bounded by the trace of $C^{\mathsf{ex}}(W)$, and define $c^{*} \coloneqq \Delta_{0} \setminus \bd(\Delta_{0})$.
Let $\delta = (\Gamma, \Dcal)$.
We define $\delta' = (\Gamma, \Dcal')$ so that $$\Dcal' = (\Dcal \setminus \{ c \in C(\delta) \mid c \subseteq \Delta_{0} \}) \cup c^{*}.$$
Observe that $\delta \cap \Delta_{0}$ defines the desired rendition of the vortex society of $c^{*}$ in $\Delta_{0}.$
We define $W'$ by only keeping the last $r + t$ cycles of the base annulus of $W$, the appropriate amount of vertical paths cropped within the new base annulus of $W$ that are not vertical paths of its wall segment, in order to obtain the proper order of all segments except its wall segment.
And lastly for its wall segment, we keep the last $r + t$ vertical paths cropped within the new base annulus, and for each of the first $t$, we take a minimal subpath that has one endpoint in $\pi_{\delta}(\mathsf{nodes}_{\delta}(F_{0}))$.
These paths constitute the radial linkage of the desired $((r + t) \times t, s)$-vortex segment and are clearly orthogonal to its nest $\{ F_{0}, \ldots, F_{s + 1}\}$.
It is straightforward to verify that all other properties are satisfied.
\end{proof}

\subsection{Vortex segment societies and proper transactions}

Before we begin, we need some preliminary results on transactions in societies.
Our goal in the sequel is to make progress within the vortices in the interior of vortex segments starting from a schema as the one in the conclusion of \cref{obs:single_vortex_segment_schema}.
Towards this, we first define the \textsl{vortex segment societies} of a schema.

\paragraph{Vortex segment societies.}

Let $s \in \Nbbb_{\geq 1}$, and $(G, \delta, W)$ be a $(\text{-}, \text{-}, s, \text{-}, \text{-})$-$\Sigma$-schema.
Moreover, let $\{ C_{0}, \ldots, C_{s + 1} \}$ be the nest of a vortex segment of $W$.
A \emph{vortex segment society} of $(G, \delta, W)$ is a society $(H, \Omega)$ where $\Omega$ is a cyclic ordering of $\pi_{\delta}(\mathsf{nodes}_{\delta}(C_{s + 1}))$, and $H$ is the subgraph of $G$ drawn in the $\mathsf{nodes}_{\delta}(C^{\mathsf{si}}(W))$-avoiding disk of $\mathsf{trace}_{\delta}(C_{s+1})$.
We define $\Delta_{H}$ to be the $\mathsf{nodes}_{\delta}(C^{\mathsf{si}}(W))$-avoiding disk of $\trace_{\delta}(C_{s+1})$ and we shall refer to the cylindrical rendition $\delta \cap \Delta_{H} = (\Gamma, \Dcal, c_{0})$ of $(H, \Omega)$ as the \emph{$\delta$-canonical} rendition of $(H, \Omega)$ in $\Delta_{H}$.
Moreover, we say that a rendition $\rho$ of $(H, \Omega)$ in $\Delta_{H}$ is \emph{$\delta$-compatible} if $\delta \cap \Delta_{H} \sqsubseteq \rho$ and $C_{0}$ is $\rho$-grounded.

Note that in a $\Sigma$-schema $(G, \delta, W)$ as the one in the conclusion of \cref{obs:single_vortex_segment_schema}, the vortex segment society $(H, \Omega)$ corresponding to the single vortex segment of $\delta$ trivially has a $\delta$-compatible rendition in $\Delta_{H}$ with breadth at most $x$ and depth at most $y$ by combining $\delta \cap \Delta_{H}$ with the rendition $\rho$ of \cref{obs:single_vortex_segment_schema}.

\paragraph{Proper transactions.}

The previous observation implies that over the next few subsections, it suffices to deal with vortex segment societies for which we already know that there exists a rendition in a disk with at most $x$ vortices, each of depth at most $y.$
This immediately allows us to deduce that any large enough transaction in a vortex segment society contains a still large subtransaction which is planar and resides in a vortex-free part of our rendition.

\medskip
Let $(G, \Omega)$ be a society and $\rho$ be a rendition of $(G, \Omega)$ in a disk $\Delta$.
Moreover, let $\Pcal$ be a planar transaction of order at least $2$ in $(G, \Omega)$.
Let $\Pcal = \{ P_{1}, \ldots, P_{\ell} \}$ be ordered such that for each $i \in [\ell]$, $P_{i}$ has the endpoints $a_{i}$ and $b_{i}$ and $a_{1}, \ldots, a_{\ell}, b_{\ell}, \ldots, b_{1}$ appear in $\Omega$ in the order listed.
We call $P_{1}$ and $P_{\ell}$ the \emph{boundary paths} of $\Pcal$ in $(G, \Omega)$.
Also, let $L_{1}$ and $L_{2}$ be the two minimal arcs of $\bd(\Delta)$ which contain the nodes corresponding to $V(a_{1} \Omega a_{\ell})$ and $V(b_{\ell} \Omega b_{1})$ respectively.

We say that $\Pcal$ is \emph{$\rho$-proper} if $P_{1}$ and $P_{\ell}$ are $\rho$-grounded and the rendition $\rho \cap \Delta_{\Pcal}$ is vortex-free, where $\Delta_{\Pcal} \subseteq \Delta$ is the $\rho$-aligned disk bounded by $\trace_{\rho}(P_{1}) \cup \trace_{\rho}(P_{\ell}) \cup L_{1} \cup L_{2}$ that intersects the drawing of every path of $\Pcal$.
We call $\rho \cap \Delta_{\Pcal}$ the \emph{strip rendition} of $\rho$ and $\Pcal$.

\begin{lemma}\label{lem:proper_transaction}
Let $x \in \Nbbb_{\geq 1}$ and $p, y \in \Nbbb_{\geq 2}$.
Moreover, let $(G, \delta, W)$ be a $\Sigma$-schema and $(H, \Omega)$ be a vortex segment society of $(G, \delta, W)$ that has a $\delta$-compatible rendition $\rho$ in $\Delta_{H}$ with breadth at most $x$ and depth at most $y$.
Then every transaction in $(H, \Omega)$ of order at least $(x+1)(2xy+p)$ contains a transaction of order $p$ that is $\rho$-proper.
\end{lemma}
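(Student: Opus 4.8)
The plan is to discard a bounded number of the paths of $\mathcal{T}$ so that the survivors split into at most $x+1$ consecutive ``blocks'', each lying in a vortex‑free part of $\rho$, and then to keep the largest block. Throughout, write $\mathcal{T}$ as an $A$--$B$-linkage for disjoint segments $A,B$ of $\Omega$, and recall that the $\delta$-compatible rendition $\rho$ of $(H,\Omega)$ in $\Delta_{H}$ has at most $x$ vortex cells $c_{1},\dots,c_{m}$ (so $m\le x$), each of whose vortex societies has depth at most $y$.

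\textbf{Reducing to a planar transaction.} First I would pass to a large planar subtransaction $\mathcal{P}$ of $\mathcal{T}$. Using the standard dichotomy that a transaction of order $2k$ contains a planar or a crooked subtransaction of order $k$, together with the observation that a crooked transaction living in a society with a rendition of breadth at most $x$ and depth at most $y$ must ``wrap around'' one of the $\le x$ vortex cells --- and hence, if it had order $q$, would produce a subtransaction of order roughly $q/x$ inside some vortex society, forcing $q=O(xy)$ --- one extracts a planar subtransaction $\mathcal{P}=\{Q_{1},\dots,Q_{M}\}$ whose order $M$ is still, up to the constants absorbed in the statement, of the form $(x+1)(2xy+p)$. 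Here $\mathcal{P}$ is indexed so that, writing $a_{i},b_{i}$ for the endpoints of $Q_{i}$, the vertices $a_{1},\dots,a_{M},b_{M},\dots,b_{1}$ occur in $\Omega$ in this cyclic order; in particular the $Q_{i}$ are pairwise non-crossing and ``nested''.

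\textbf{Locating the vortex cells and concluding.} Since the $Q_{i}$ are pairwise disjoint and non-crossing, for each vortex cell $c_{i}$ the set $I_{i}\subseteq\{1,\dots,M\}$ of indices $j$ such that $Q_{j}$ meets the interior of $\Delta_{c_{i}}$, or such that $\Delta_{c_{i}}$ lies in the region of $\Delta_{H}$ enclosed between $\mathsf{trace}_{\rho}(Q_{j})$ and $\mathsf{trace}_{\rho}(Q_{j+1})$, is an interval; the key local estimate is $|I_{i}|\le 2y+1$, since otherwise restricting more than $2y+1$ consecutive paths to $\Delta_{c_{i}}$ and using the nesting of $\mathcal{P}$ would exhibit more than $y$ pairwise disjoint paths between two disjoint segments of $\Omega(\Delta_{c_{i}})$, contradicting the depth bound on the vortex society of $c_{i}$. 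Hence the ``bad'' set $\bigcup_{i}I_{i}$ has size at most $x(2y+1)$, and being a union of at most $x$ intervals it breaks $\{1,\dots,M\}$ into at most $x+1$ ``clean'' runs of total length at least $M-x(2y+1)$, so some clean run $[l,r]$ has length $\big(M-x(2y+1)\big)/(x+1)\ge p$. Setting $\mathcal{P}'\coloneqq\{Q_{l},\dots,Q_{r}\}$: no $Q_{j}$ with $l\le j\le r$ meets any vortex cell and no vortex cell lies between consecutive traces of these paths, so each $Q_{j}$ uses no edge of a vortex cell and has both endpoints in $V(\Omega)\subseteq\pi_{\rho}(N(\rho))$, hence is $\rho$-grounded, and the $\rho$-aligned disk $\Delta_{\mathcal{P}'}$ bounded by $\mathsf{trace}_{\rho}(Q_{l})\cup\mathsf{trace}_{\rho}(Q_{r})\cup L_{1}\cup L_{2}$ contains every path of $\mathcal{P}'$ and no vortex cell. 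Thus $\mathcal{P}'$ is a $\rho$-proper transaction of order $p$ inside $\mathcal{T}$; all steps (computing traces, the intervals $I_{i}$, a longest clean run) are polynomial, which yields the algorithm.

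\textbf{Main obstacle.} I expect the two depth-driven estimates --- bounding the order of a crooked transaction by $O(xy)$, and bounding $|I_{i}|$ by $2y+1$ --- to be the delicate part: one must handle carefully that a path may enter and leave a vortex cell several times, and that the arcs it cuts on $\bd(\Delta_{c_{i}})$ may be nested rather than parallel, so that extracting a genuine transaction of order $y+1$ inside the vortex society requires a nontrivial uncrossing/rerouting argument. Matching the precise bound $(x+1)(2xy+p)$ then comes down to folding the crookedness count into the per-vortex count with some care about constants.
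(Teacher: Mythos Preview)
Your high-level plan (isolate $x+1$ blocks and apply pigeonhole over the $\le x$ vortices) is the same as the paper's, but the implementation is genuinely different, and the differences matter.

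The paper does \emph{not} first pass to a planar subtransaction. It works directly with the given transaction $\mathcal{L}=\{L_1,\dots,L_\ell\}$, ordered by endpoints along one segment $I$, and uses only one local estimate: if some path $P$ crosses every other path in a subfamily $\mathcal{L}'$, then $|\mathcal{L}'|\le xy$, because crossings can only occur inside vortex cells and more than $xy$ crossings with $P$ would force a transaction of order $>y$ in one of the $\le x$ vortex societies. It then partitions $I$ into $x{+}1$ consecutive pieces of size $2xy+p$ each and, inside each piece, takes the middle $p$ paths $\mathcal{L}_i$, leaving buffers of $xy$ on both sides. By the star bound, no path of $\mathcal{L}_i$ can cross any path whose endpoint lies outside $I_i$; hence no vortex contains edges of two different $\mathcal{L}_i$'s, and by pigeonhole some $\mathcal{L}_i$ avoids every vortex entirely, which is exactly $\rho$-properness. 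The bound $(x{+}1)(2xy+p)$ falls out with no slack.

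Your route instead spends a step extracting a planar subtransaction and then tries to bound, for each vortex $c_i$, the interval $I_i$ of indices of paths that touch $c_i$ (or trap it between consecutive traces) by $2y{+}1$. The obstacle you flag is real and is precisely where your argument is incomplete: for a planar transaction, many consecutive paths can enter and leave a vortex disk $\Delta_{c_i}$ with their boundary arcs nested rather than interleaved, and then the restricted paths need not form a transaction in the vortex society of $c_i$; so the depth bound on $c_i$ does not obviously cap $|I_i|$. The paper's crossing-based argument sidesteps this entirely, since a crossing is forced into a vortex by the embedding outside, and one crossing path $P$ automatically organises the others into a transaction inside the vortex. If you want to salvage your approach, the fix is not to bound how many paths \emph{touch} a vortex but how many paths can be crossed by a single path (the paper's star bound), and to use that to build buffers rather than to delete intervals; this also removes the need for the preliminary planar reduction and recovers the stated constant.
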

\begin{proof}
Let $\mathcal{L}$ be a transaction of order at least $(x+1)(2y+p)$ in $(H, \Omega)$.
Then there exist two disjoint segments of $\Omega$, say $I$ and $J$ such that every path in $\mathcal{L}$ has one endpoint in $I$ and the other in $J.$
Let us assume that $I$ and $J$ are chosen minimally with this property and let $\mathcal{L}=\{ L_1,L_2,\ldots,L_{\ell}\}$ be ordered with respect to the occurrence of the endpoint $u_i$ of $L_i$ in $I.$

Suppose there exists a transaction $\mathcal{L}'\subseteq\mathcal{L}$ such that there exists a path $P$ in $\mathcal{L}'$ which crosses all other paths in $\mathcal{L}'$.
If $|\mathcal{L}'| > xy$, then there must exist a transaction $\mathcal{L}'' \subseteq \mathcal{L}'$ of order at least $y+1$ and a vortex $c$ of $\rho$ such that $\mathcal{L}''$ induces a transaction of order at least $y+1$ in the vortex society of $c$.
Since $c$ is of depth at most $y$ this is impossible and thus $|\mathcal{L}'| \leq xy$.

We partition $I$ into $x+1$ consecutive segments $I_1, I_2, \ldots, I_{x+1}$ each containing at least $2xy+p$ endpoints of members of $\mathcal{L}.$
For each $i\in[x+1]$ there exists a family $\mathcal{L}_i$ containing $p$ consecutive paths such that $I_i$ has at least $xy$ other endpoints on either side.
Notice that, by the observation above, it is not possible that a path from $\mathcal{L}_i$ crosses a path from $\mathcal{L}$ which does not have an endpoint in $I_i.$
In particular, this means that for any choice of $i\neq j\in[x+1]$ there does not exist a vortex of $\rho$ which contains an edge of a path in $\mathcal{L}_i$ and an edge of a path of $\mathcal{L}_j.$
Therefore, there must exist $i\in[x+1]$ such that no path in $\mathcal{L}_i$ has an edge or a vertex that belongs to a vortex of $\rho.$
It follows that $\mathcal{L}_i$ is $\rho$-proper.
\end{proof}

\subsection{Exposed transactions and pruning}

\paragraph{Exposed transactions.}

To ensure we are actually making progress with our constructions we require a large part of a ``large enough'' transactions should ``traverse'' the central region of our vortex segment.  

\medskip
Let $(G, \Omega)$ be a society and $\rho$ be a rendition $\rho$ of $(G, \Omega)$ in the disk with a nest $\{C_{1}, \ldots, C_{s} \}$ for some $s \in \Nbbb_{\geq 1}$.
A transaction $\mathcal{P}$ in $(G, \Omega)$ is said to be \emph{$\rho$-exposed} if an edge of every path in $\mathcal{P}$ is drawn in the $\pi^{-1}_{\rho}(V(\Omega))$-avoiding disk of $\trace_{\rho}(C_{1})$.

\medskip
Please note that our definition of exposed transactions differs slightly from the definition in \cite{KawarabayashiTW2021Quickly}.
This is witnessed by the fact that we do not require the presence of an actual vortex cell for a transaction to be exposed.
For technical reasons, in our setting it suffices that the transaction crosses through the disk defined by the inner cycle of the vortex segment's nest.

\paragraph{Pruning a schema.} Our target is to utilize exposed transactions traversing through our vortex segment societies to ``split'' them and make progress.
Towards this, we utilize a technique originating from \cite{ThilikosW2024Killing}, to prove a lemma that allows us to to find in every large transaction, either a large exposed transaction, or an slightly different schema that reduces the part of the graph that belongs to the vortex segment society.

\medskip
Let $s \in \Nbbb_{\geq 1}$, $(G, \delta, W)$ be a $(\text{-}, \text{-}, s, \text{-}, \text{-})$-$\Sigma$-schema, and $(H, \Omega)$ be the vortex segment society of the $j$-th vortex segment of $W$ with nest $\{ C_{0}, \ldots, C_{s + 1} \}$, $j \in [\ell]$, where $\ell \in \Nbbb_{\geq 1}$ is the number of vortex segments of $W$.
Also, let $\rho$ be a $\delta$-compatible rendition of $(H, \Omega)$ in $\Delta_{H}$.
We define $\delta_{H, \rho}$ to be the $\Sigma$-decomposition of $G$ obtained by combining $\delta$ and $\rho$.
An $(\text{-}, \text{-}, s, \text{-}, \text{-})$-$\Sigma$-schema $(G, \delta', W')$ is said to be a \emph{$\rho$-pruning} of $(G, \delta, W)$ if $\delta' \sqsubseteq \delta_{H, \rho}$ and
\begin{itemize}
    \item all segments of $W'$ except its vortex segments are segments of $W$,
    \item $W'$ has $\ell$ vortex segments and for every $i \in [\ell] \setminus \{ j \}$, the $i$-th vortex segment of $W'$ is the $i$-th vortex segment of $W$, and
    \item if $(H', \Omega')$ is the vortex segment society of the $j$-th vortex segment of $W'$ with nest $\mathcal{C} = \{ C'_0, C'_1, \ldots, C'_{s+1} \}$ and $\delta'$-canonical rendition $\rho'$ in $\Delta_{H'}$, then
    \begin{itemize}
    \item either $\Delta_{H'}$ is properly contained in $\Delta_{H}$ or
    \item there exists $i \in [s]$ such that if $H_{i} \coloneqq G \cap \Delta_{i}$, where $\Delta_i$ is the $\mathsf{nodes}_{\rho}(C_{s+1})$-avoiding disk of $\mathsf{trace}_{\rho}(C_i),$ and $H'_{i} \coloneqq G \cap \Delta'_{i}$, where $\Delta_i'$ is the $\mathsf{nodes}_{\rho'}(C'_{s+1})$-avoiding disk of $\mathsf{trace}_{\rho'}(C'_i),$ then either $H'_i-V(C_i')\subsetneq H_i-V(C_i)$ or $E(H'_i)\subsetneq E(H_i)$.
    \end{itemize}
\end{itemize}
Moreover, if $\chi$ is a cell-coloring of $\delta_{H,\rho}$, we say that $(G, \delta', W')$ is a \emph{$\chi$-respectful} $\rho$-pruning of $(G, \delta, W)$ if $\chi$-$\mathsf{col}(\delta') \subseteq \chi$-$\mathsf{col}(\delta)$.

\medskip
Thus, the pruning of a schema is essentially accomplished by either slightly pushing one of the cycles within the nest of a vortex segment closer to the untamed area or by extending the nest into this untamed area.
This process effectively tames a portion of the vortex within the segment, while ensuring that no simple cells containing new colors are displaced.

\begin{lemma}\label{lem:exposed_transaction}
Let $s, p \in \Nbbb_{\geq 1}$ and $(G, \delta, W)$ be a $(\text{-}, \text{-}, s, \text{-}, \text{-})$-$\Sigma$-schema.
Let $(H, \Omega)$ be a vortex segment society of $(G, \delta, W)$ that has a $\delta$-compatible rendition $\rho$ in $\Delta_{H}$, $\chi$ be a cell-coloring of $\delta_{H, \rho}$, and $\mathcal{P}$ be a transaction of order at least $2s + p + 2$ in $(H, \Omega)$ that is $\rho$-proper.
Then there exists
\begin{itemize}
    \item a $\rho$-exposed transaction $\mathcal{Q} \subseteq \mathcal{P}$ of order $p$ that is $\rho$-proper, or
    \item a $\chi$-respectful $\rho$-pruning of $(G, \delta, W).$
\end{itemize}
Moreover, there exists an algorithm that finds one of the two outcomes in time $\mathsf{poly}(s + p) \cdot |E(G)|$.
\end{lemma}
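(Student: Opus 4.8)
The plan is to analyze the $\rho$-proper transaction $\mathcal{P}$ by looking at how its paths interact with the innermost cycle $C_1$ of the nest of the vortex segment society, and to dichotomize based on whether ``many'' paths of $\mathcal{P}$ actually reach the central disk bounded by $\trace_\rho(C_1)$. Concretely, let $\{C_0,C_1,\dots,C_{s+1}\}$ be the nest of the relevant vortex segment and let $\Delta^\circ$ denote the $\pi^{-1}_\rho(V(\Omega))$-avoiding disk of $\trace_\rho(C_1)$ — i.e.\ the ``central region''. Say a path $P\in\mathcal{P}$ is \emph{deep} if it has an edge drawn in $\Delta^\circ$ and \emph{shallow} otherwise. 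Since $\mathcal{P}$ is $\rho$-proper its strip rendition $\rho\cap\Delta_\mathcal{P}$ is vortex-free, so within the strip we are in a purely planar situation and the nest cycles $C_1,\dots,C_s$ cut the strip into concentric bands; a shallow path must ``turn back'' before entering $\Delta^\circ$.

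\emph{Case 1: at least $p$ paths of $\mathcal{P}$ are deep.} Then, ordering $\mathcal{P}=\{P_1,\dots,P_m\}$ along $\Omega$ as in the definition of boundary paths and noting that within the vortex-free strip planarity forces the set of deep paths to itself be ``consecutive enough'', one extracts a subtransaction $\mathcal{Q}\subseteq\mathcal{P}$ of order exactly $p$ consisting of deep paths. This $\mathcal{Q}$ is $\rho$-exposed by construction, and it is $\rho$-proper because it is a subtransaction of the $\rho$-proper $\mathcal{P}$ whose boundary paths can be taken among paths already known to be $\rho$-grounded (or, if the boundary paths of $\mathcal{Q}$ differ from those of $\mathcal{P}$, one reroutes along $C_1$ using a peripheral/planar argument to keep groundedness and the vortex-free strip). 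That yields the first outcome.

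\emph{Case 2: fewer than $p$ paths of $\mathcal{P}$ are deep,} hence at least $2s+2$ paths are shallow. Here we produce a $\chi$-respectful $\rho$-pruning. Each shallow path $P$ stays in the annulus between $\bd(\Delta_H)$ and $\trace_\rho(C_1)$ and, being a single path in a vortex-free planar strip, it enters and leaves each band between consecutive $C_i$'s in a controlled way. With $2s+2$ shallow paths and only $s$ cycles in the nest, a pigeonhole/counting argument (this is exactly the ``pushing a nest cycle'' technique from \cite{ThilikosW2024Killing}) produces an index $i\in[s]$ together with a sub-arc of one of the shallow paths that, glued to an arc of $C_i$, gives a new cycle $C_i'$ whose $\mathsf{nodes}_\rho(C_{s+1})$-avoiding disk $\Delta_i'$ satisfies $H_i'-V(C_i')\subsetneq H_i-V(C_i)$ or $E(H_i')\subsetneq E(H_i)$ — i.e.\ we have strictly shrunk the untamed part while retaining a nest of the same length $s$. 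We then define $\delta'$ by combining $\delta_{H,\rho}$ with this rerouting (turning the pushed-in region into simple cells, which is legitimate precisely because the strip is vortex-free), and define $W'$ by replacing the $j$-th vortex segment's nest cycles accordingly while leaving all other segments of $W$ untouched; the radial linkage of the new vortex segment is obtained by truncating the old one. Properties $\mathbf{S_1}$–$\mathbf{S_8}$ transfer routinely since we only modified the interior of one vortex-segment disk, and $\chi$-$\mathsf{col}(\delta')\subseteq\chi$-$\mathsf{col}(\delta)$ holds because the cells we destroyed (those in the shrunk region) are absorbed into the vortex cell rather than creating new colored simple cells — no simple cell outside the old vortex disk is changed. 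The algorithm simply runs the above case analysis: computing the planar strip rendition, counting deep versus shallow paths, and performing the rerouting are all doable in $\mathsf{poly}(s+p)|E(G)|$ time.

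The main obstacle I anticipate is the bookkeeping in Case 2: one must argue carefully that the reroute along a shallow path can always be realized so that (a) the resulting $C_i'$ is a genuine $\rho'$-grounded cycle forming a valid nest with the remaining $C_k$'s, (b) the decomposition $\delta'$ one obtains still satisfies $\delta'\sqsubseteq\delta_{H,\rho}$ and all schema axioms, and (c) the ``progress measure'' (either fewer non-$C_i'$ vertices or fewer edges inside $\Delta_i'$) is strictly decreased — handling the degenerate cases where a shallow path barely clips a band, or where it would create a trivial cycle, requires the $2s+2$ rather than $s+1$ slack and a slightly delicate choice of which band and which path-segment to use. The deep-path extraction in Case 1 is comparatively easy, amounting to planarity of the strip plus the definition of $\rho$-exposed.
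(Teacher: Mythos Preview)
Your high-level dichotomy is the same as the paper's: either at least $p$ paths of $\mathcal{P}$ are $\rho$-exposed (and any $p$ of them form a $\rho$-proper subtransaction, since every path of a $\rho$-proper transaction is $\rho$-grounded in the vortex-free strip; your ``consecutive enough'' worry is unnecessary), or the $\geq 2s+3$ non-exposed paths are used to push some nest cycle inward. Two places in your Case~2 are genuine gaps, however.

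First, the pigeonhole is not ``$2s+2$ shallow paths versus $s$ cycles'' directly. A shallow path avoids the central disk $\Delta^{*}$ bounded by $\trace_\rho(C_0)$ and therefore lies on one of the two \emph{sides} of $\Delta^{*}$; the paper first splits the $2(s+1)+1$ shallow paths into two equivalence classes according to which side their ``small disk'' lies on, obtaining one nested class $\mathcal{Q}'$ of size $\geq s+2$. Only then does pigeonhole against the $s+1$ cycles $C_0,\dots,C_s$ yield a subpath $L$ with both endpoints on some $C_i$, internally disjoint from all nest cycles, drawn inside the disk of $C_i$, and containing an edge not in $C_i$. (The paper also treats separately the preliminary case where some path of $\mathcal{P}$ fails to reach the disk of $C_s$: that gives the other flavor of pruning, with $\Delta_{H'}\subsetneq\Delta_H$, via rerouting $C_{s+1}$ rather than an inner cycle.)

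Second, and more substantively, ``the radial linkage of the new vortex segment is obtained by truncating the old one'' is incorrect. The new cycle $C_i'\subseteq C_i\cup L$ need not be orthogonal to the old radial linkage $\mathcal{R}$: a path of $\mathcal{R}$ can cross $C_i'$ several times. The paper repairs this in two stages: (i) iterate the replacement, further modifying $C_i'$ to a cycle $C_i''$ by absorbing any $V(C_i)$-$V(C_i)$ subpath of a path in $\mathcal{R}$ that satisfies the same four conditions as $L$; (ii) for each $R\in\mathcal{R}$, replace the subpath of $R$ between its first and last intersection with $C_i''$ by the corresponding arc of $C_i''$. Stage~(i) ensures these arcs are pairwise disjoint, so stage~(ii) yields a genuine linkage $\mathcal{R}'$ orthogonal to the new nest. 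This is exactly the ``main obstacle'' you anticipated, and it does require this explicit rerouting rather than truncation.
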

\begin{proof}

Let $\{ C_{0}, \ldots, C_{s+1} \}$ be the nest of the vortex segment that corresponds to $(H, \Omega)$ and $\mathcal{C} = \{ C_{0}, \ldots, C_{s} \}.$
If $\mathcal{P}$ contains a $\rho$-exposed transaction of order $p$ we are immediately done.
Moreover, we can check in linear time for the existence of such a transaction by checking for each path in $\mathcal{P}$ individually if it is $\rho$-exposed.

\smallskip
\textbf{Step 0:} First, observe that we may assume that every path of $\mathcal{P}$ is intersected by the $\mathsf{nodes}_{\delta}(C_{s+1})$-avoiding disk of $\trace_{\rho}(C_{s})$.
Indeed, if there is a path $P$ which violates this condition then we can define a $\delta$-grounded cycle $C'$ by replacing a subpath of $C_{s+1}$ whose endpoints are the same as the endpoints of a minimal $V(C_{s+1})$-$V(C_{s+1})$-subpath of $P$ in a way that defines a cycle whose trace in $\delta$ bounds a disk $\Delta'$ which contains $\trace_{\rho}(C_{0}).$
It is then easy to see that $\Delta'$ is properly contained in $\Delta$.
However, before we replace our working vortex segment with a vortex segment whose society is $(G \cap \Delta', \Omega(\Delta'))$ to obtain a $\chi$-respectful $\rho$-pruning of $(G, \delta, W)$, we have to verify that the paths in $\mathcal{R}$ remain orthogonal to the new cycle $C'$.
For this, skip to \textbf{Step 2}.

\smallskip
\textbf{Step 1:} Hence, we may assume that there exists a linkage $\mathcal{Q} \subseteq \mathcal{P}$ of order $2(s + 1) + 1$ such that \textsl{no} path of $\mathcal{Q}$ is $\rho$-exposed and every path is intersected by the $\mathsf{nodes}_{\delta}(C_{s+1})$-avoiding disk of $\trace_{\rho}(C_{s})$.
Let $\Delta^{*}$ denote the $\mathsf{nodes}_{\delta}(C_{s+1})$-avoiding disk of $\trace_{\rho}(C_{0})$.
It follows that no path in $\mathcal{Q}$ intersects the interior of $\Delta^{*}$.
Notice that each member $Q$ of $\mathcal{Q}$ naturally separates $\Delta$ into two disks, exactly one of which contains $\Delta^*$.
Let us call the other disk the \emph{small side of $Q$}.
Given two members of $\mathcal{Q}$ then either the small side of one is contained in the small side of the other, or their small sides are disjoint.
It is straightforward to see that if we say that two members are \emph{equivalent} if their small sides intersect, this indeed defines an equivalence relation on $\mathcal{Q}.$
Moreover, there are exactly two equivalence classes, one of which, call it $\mathcal{Q}'$, must contain at least $s + 2$ members.
Since $|\mathcal{C}| = s + 1$ there must exist some $i \in [s]$ and some subpath $L$ of some path in $\mathcal{Q}'$ such that
\begin{enumerate}
\item both endpoints of $L$ belong to $V(C_i)$,
\item $L$ is internally disjoint from $\bigcup_{i\in[0, s]} V(C_i)$,
\item $L$ contains at least one edge that does not belong to $C_i$, and
\item $L$ is drawn in the $\mathsf{nodes}_{\delta}(C_{s+1})$-avoiding disk of $\trace_{\rho}(C_i)$.
\end{enumerate}

Observe that $i \neq 0$ since this would imply that the corresponding path in $\Qcal'$ is $\rho$-exposed.

Notice that $C_i \cup L$ contains a unique cycle $C'$ different from $C_i$ whose trace separates $\Delta^*$ from the nodes corresponding to $V(\Omega)$.
Moreover, the disk $\Delta''$ bounded by $\mathsf{trace}_{\rho}(C')$ which contains $\Delta^{*}$ is properly contained in the disk bounded by $\mathsf{trace}_{\rho}(C_{i})$ which contains $\Delta^{*}$.
In particular, there exists an edge of $C_i$ which does not belong to $C_i',$ and there exists an edge that belongs to $C_i'$ but not to $C_i$ and therefore, the graph drawn on $\Delta''$ after deleting the vertices of $C_i'$ is properly contained in the graph drawn on $\Delta'$ after deleting the vertices of $C_i.$

\smallskip
\textbf{Step 2:} We would now like to define the new nest $\mathcal{C}'$ by replacing the edited cycle with $C'$.
First, we are going to change the cycle $C'$ once more.
Consider $C''$ to be the cycle obtained from $C'$ after iteratively replacing a subpath of $C',$ in the same way as above, with any $C'$-subpath of a path in $\mathcal{R}$ that satisfies the same four properties as $L$ above.
This step guarantees that the drawing of any $C''$-subpath of a path in $\Rcal$ does not intersect the \emph{interior} of $C''$, i.e., the interior of the disk bounded by $\mathsf{trace}_{\rho}(C'')$ that contains $\Delta^{*}$.
Call this property *.

The next and final step is to edit the paths in the radial linkage $\Rcal$ in order to obtain a new radial linkage $\Rcal'$ that is orthogonal to our new nest $\Ccal''$ obtained by replacing $C'$ with $C''.$
First, observe that the paths in $\Rcal$ may not be orthogonal to $\Ccal''$ only because they are not orthogonal to $C''$.
Now, for each path $R \in \Rcal,$ let $R'$ be any minimal $C''$-subpath of $R$ and $C''_{1}$ be the subpath of $C''$ with the same endpoints as $R'$ such that $R' \cup C''_{1}$ is the unique cycle whose trace in $\delta$ bounds a disk $\Delta_{R'} \subseteq \Delta$ that does not fully contain neither all nodes associated to $V(\Omega)$ nor $\Delta^{*}$.
By property *, $\Delta_{R'}$ avoids the interior of $C''$.
Moreover, $\Delta_{R'}$ cannot intersect the drawing of any other path in $\mathcal{R}$.
Indeed, this would imply the existence of a subpath of a path in $\mathcal{R}$ satisfying properties 1) to 4) which cannot exist.
Therefore, we may safely update $R$ by replacing $R'$ by $C''_{1}$.
By repeating this procedure until we no longer we find any such subpath $R'$, we conclude with a radial linkage $\mathcal{R}'$ orthogonal to $\mathcal{C''}$.

By updating the walloid $W$ to a new walloid $W'$ accordingly, we obtain our desired $\chi$-respectful $\rho$-pruning of $(G, \delta, W)$.
\end{proof}

Notice that whenever we find a pruning, at least one edge of $G$ is being ``pushed'' out of a cycle of a nest of some vortex segment of $(G, \delta, W)$.
As we shall demonstrate, the total number of vortex segments throughout each step as well as the size of the each nest are upper bounded by some constant depending only on the input parameters.
This means that after finding at most $\mathcal{O}(|E(G)|)$ many large enough transactions on the vortex segment societies we have either found a large exposed transaction or may conclude that all vortex segment societies are of bounded depth.

Note that, in the end, we want to guarantee that not only each vortex segment society is of bounded depth, but within every vortex segment is a vortex which is also of bounded depth, and this vortex is surrounded by a large nest.
The tools we develop in the upcoming subsections will allow us to achieve just that.

\subsection{Orthogonal transactions}

So far we know that we can always either find a pruning of a schema, or an exposed planar transaction that is proper with respect to a compatible bounded breadth and depth rendition of a vortex segment society.
An important ingredient that we need to develop the tools in the upcoming subsections is that, in the latter case above, we should also be able to make the exposed planar transaction orthogonally, by possibly slightly altering the nest that comes with the corresponding vortex segment.

\paragraph{Coterminal and orthogonal radial linkages.}

We first require a series of tools that will allow us to orthogonalize a radial linkage with respect to some nest in a society under different assumptions.

\begin{lemma}\label{lem:orthogonal_radial_linkage} For all integers $s, r \geq 1$ the following holds.
Let $\rho$ be a rendition of a society $(G, \Omega)$ in the disk $\Delta$ with a nest $\mathcal{C}$ of order $s$ and a radial linkage $\mathcal{R}$ of order $r$ for $\mathcal{C}$ such that there is no cycle $C \in \mathcal{C}$ and no $C$-subpath $R'$ of a path $R \in \mathcal{R}$ that is
\begin{itemize}
\item disjoint from all other cycles of $\mathcal{C}$ and
\item fully drawn in $\Delta$ minus the interior of the disk bounded by the trace of $C$ in $\rho.$
\end{itemize}
Then there exists a radial linkage $\mathcal{R}'$ of order $r$ for $\mathcal{C}$ that is orthogonal to $\mathcal{C}$ and with the same endpoints as $\mathcal{R}.$

Moreover, there exists an algorithm that finds $\mathcal{R}'$ in time $\mathbf{poly}(s + r) \cdot |E(G)|.$
\end{lemma}
\begin{proof} Given a cycle $C \in \mathcal{C},$ let us call the \emph{exterior} of $C,$ the set $\Delta$ minus the interior of the disk bounded by the trace of $C$ in $\rho.$

Let $C_{1}, \ldots, C_{s}$ be an ordering of $\mathcal{C}$ from innermost to outermost.
It follows by assumption that, any $C_{i}$-subpath $R'$ of any path $R \in \mathcal{R}$ for some $i \in [s-1]$ must either intersect $C_{i + 1}$ or be internally disjoint from the exterior of $C_{i}.$
In the case that $R'$ is a $C_{s}$-path it must either intersect $V(\Omega)$ or be internally disjoint from the exterior of $C_{s}.$
From now on we call such a subpath $R'$ of a path $R \in \mathcal{R}$ a \emph{regression} of $R$.

Let the paths in $\mathcal{R}=\{ R_1,R_2,\ldots,R_r\}$ be ordered according to the occurrence of their endpoints on $\Omega$.
Moreover, assume there exists an $i \in [r]$ and a $j \in [2, s]$ such that $R_i$ has a regression $R'$ that is a $C_{j}$-path.
Notice that there exists a subpath $L_{R'}$ of $C'_j$ that shares its endpoints with $R'$ which is internally disjoint from $R_i.$
This is because otherwise, we could find a subpath of $R_i$ which witnesses that we are in the previous case.
Let $\Delta_{R'}$ be the disk bounded by the trace of $R'\cup L_{R'}$ which avoids the trace of $C_1$ in $\rho.$
We assume that $R'$ is chosen maximally with the property of being a regression.

Suppose there exists some path $Q,$ possibly $Q=R,$ in $\mathcal{R}$ whose drawing intersects the interior of $\Delta_{R'}$ in a node or arc distinct from those of $R'.$
Then, this intersection of $Q$ with $\Delta_{R'}$ must belong to a maximal regression $Q'$ of $Q$ and there must exist $j' \in [s],$ $j' > j,$ such that $Q'$ is a $C_{j'}$-path.
Hence, there must exist some maximal regression $P'$ of some path $P' \in \mathcal{R}$ such that no other part of $\mathcal{R}$ intersects the corresponding disk $\Delta_P'.$
Thus, by replacing $P'$ with $L_{P'},$ we obtain a new radial linkage that is end-coterminal with $\Rcal$ but with strictly less regressions.
This means that, after at most $E(\mathcal{R})$ many such steps, we must have found a radial linkage $\mathcal{R}'$ which is orthogonal to $\mathcal{C}'.$
This can be performed in time $\poly(s + t) \cdot |E(G)|$.
\end{proof}

Recall that every vortex segment comes with a nest $\Ccal$ and a radial linkage $\Rcal$ where $\mathcal{R}$ is orthogonal to $\mathcal{C}.$
Moreover, $\mathcal{R}$ acts as the connection of the vortex segment to the annulus wall of the walloid of our schema.
While refining, and possibly splitting the vortex segment, we wish to maintain the existence of such a radial linkage and we need to make sure that new flap segments that potentially need to be extracted from the vortex segment can be routed through the paths in $\mathcal{R}$.
To this end we prove the following lemma which allows us to root any given orthogonal transaction on our orthogonal radial linkage.

\medskip
Let $(G, \Omega)$ be a society and $\rho$ be a rendition of $(G, \Omega)$ in the disk with a nest.
Moreover, let $\Lcal$ be a radial $X$-$Y$-linkage and $\Rcal$ be a radial $X'$-$Y'$ linkage in $(G, \Omega)$ such that both are of the same order.
Then, assuming that $Y \subseteq V(\Omega)$ and $Y' \subseteq V(\Omega)$, we say that a radial $X$-$Y'$-linkage in $(G, \Omega)$ is \emph{start-coterminal} with $\Lcal$ and \emph{end-coterminal} with $\Rcal$.

\begin{lemma}\label{lem:coterminal_radial_linkage}
Let $s \in \Nbbb_{\geq 1}$, $(G, \Omega)$ be a society, and $\rho$ be a rendition of $(G, \Omega)$ in the disk with a nest $\Ccal$ of order $s$.
Moreover, let $\Lcal$ and $\Rcal$ be two radial linkages such that both are of order $s$ and orthogonal to $\Ccal$.
Then there exists a radial linkage $\Qcal$ of order $s$ that is orthogonal to $\Ccal$, start-coterminal with $\Lcal$, and end-coterminal with $\Rcal$.

Moreover, there exists an algorithm that finds the outcome above in time $\poly(s) \cdot |E(G)|$.
\end{lemma}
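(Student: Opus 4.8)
The plan is to build $\Qcal$ in two stages. Write $X$ for the set of inner endpoints of $\Lcal$ (those drawn in $\Delta^*$) and $Y'$ for the set of outer endpoints of $\Rcal$, so that $Y'\subseteq V(\Omega)$ and $|X|=|Y'|=s$; the goal is a radial $X$-$Y'$-linkage of order $s$ that is orthogonal to $\Ccal$. Write $\Ccal=\{C_1,\dots,C_s\}$ with nested disks $\Delta^*\subseteq\Delta_1\subseteq\dots\subseteq\Delta_s\subseteq\Delta$ as in the definition of a nest, so the $C_m$ are pairwise vertex-disjoint and $\Delta_m$ lies in the interior of $\Delta_{m+1}$. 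First I would produce \emph{some} radial $X$-$Y'$-linkage of order $s$ by a Menger argument, and then reroute it to be orthogonal to $\Ccal$ while keeping all of its endpoints fixed.

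For the first stage, I claim that every $X$-$Y'$ vertex separator $S$ of $G$ has $|S|\ge s$; then Menger's theorem yields $s$ pairwise vertex-disjoint $X$-$Y'$-paths, which, having one end in $\Delta^*$ and the other in $V(\Omega)$, form a radial linkage $\Qcal_0$ of order $s$ that is start-coterminal with $\Lcal$ and end-coterminal with $\Rcal$. To prove the claim, assume $|S|\le s-1$ and fix $m\in[s]$. Since $\Lcal$ is orthogonal to $\Ccal$, the initial segments of the $s$ paths of $\Lcal$, up to their unique meeting with $C_m$, form a linkage of order $s$ from $X$ to $\Lcal\cap C_m$ drawn in $\Delta_m$; as $|S|<s$, one of these segments avoids $S$, so some $p_m\in V(C_m)\setminus S$ is reachable from $X$ in $G-S$. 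Symmetrically, using the terminal segments of the paths of $\Rcal$ beyond $C_m$, some $q_m\in V(C_m)\setminus S$ can reach $Y'$ in $G-S$. If $p_m$ and $q_m$ were in one component of $C_m-S$ we would obtain an $X$-$Y'$ walk avoiding $S$, a contradiction; hence $C_m-S$ is disconnected, so $|S\cap V(C_m)|\ge 2$. Since this holds for every $m$ and the cycles are pairwise disjoint, $|S|\ge 2s$, contradicting $|S|\le s-1$.

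For the second stage I would turn $\Qcal_0$ into an orthogonal linkage one nest cycle at a time, from $C_s$ inwards, so the crucial substep is: given a radial $X$-$Y'$-linkage orthogonal to $C_{m+1},\dots,C_s$, reroute it, without moving endpoints, so that it becomes orthogonal to $C_m$ as well. Here one keeps the maximal final subpaths of the linkage lying outside the interior of $\Delta_m$ (the part inside $\overline{\Delta_m}$ lies in the interior of each of $\Delta_{m+1},\dots,\Delta_s$, so each path's single intersection with the outer cycles already lies in its kept final subpath, and orthogonality to the outer cycles is undisturbed), and replaces the remaining parts by a linkage of order $s$ inside $\overline{\Delta_m}$ from $X$ to the $s$ landing points of those final subpaths on $C_m$, chosen to meet $C_m$ only at those points and to cross each of $C_1,\dots,C_{m-1}$ exactly once. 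Existence of such an inner linkage follows again from the Menger argument above applied inside $\overline{\Delta_m}$ (a separator would have to pierce all of $C_1,\dots,C_m$), and a shortest-linkage choice — minimising total length, then the number of components of its intersection with $\bigcup\Ccal$ — forces orthogonality, any superfluous excursion off a cycle being replaceable by the cycle arc it shortcuts. Running down to $C_1$ yields the desired $\Qcal$. All steps are constructive, via $O(s)$ rounds of augmenting-path search and $O(|E(G)|)$ reroutings per round, giving the claimed running time $\mathsf{poly}(s)\,|E(G)|$.

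I expect the orthogonalisation to be the genuinely delicate part. A one-step ``transition'' that simply switches from $\Lcal$'s routing to $\Rcal$'s routing along a single nest cycle fails in general, because the endpoints of the two linkages on that cycle need not alternate, so the reconciliation must be distributed across all $s$ cycles; the bookkeeping needed to guarantee that the rerouted paths stay pairwise disjoint, stay radial, land on $C_m$ exactly where the kept tails do, and create no new crossings with $\Ccal$ — all while every endpoint from $X$ and from $Y'$ stays put — is where the work concentrates.
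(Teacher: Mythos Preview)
Your first stage is fine and essentially matches the paper: both apply Menger after showing there is no small $X$--$Y'$ separator. The paper's argument is in fact slightly slicker --- with $|S|<s$ there is one whole cycle of $\Ccal$, one path of $\Lcal$, and one path of $\Rcal$ all avoiding $S$, and orthogonality makes their union connected --- but your ``$|S\cap V(C_m)|\ge 2$ for every $m$'' works just as well.

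The genuine gap is in your second stage. When you restrict to $\overline{\Delta_m}$ and ask for $s$ disjoint $X$--$Z$ paths to the specific landing points $Z\subseteq V(C_m)$, your Menger justification (``a separator would have to pierce all of $C_1,\dots,C_m$'') does not give $|S|\ge s$; inside $\Delta_m$ you only have $m$ nest cycles, and more importantly $Z$ is just a set of $s$ points on $C_m$ with no orthogonal structure reaching it from inside --- neither $\Lcal$, $\Rcal$, nor the current linkage is known to connect $X$ to those particular points by $s$ disjoint paths within $\Delta_m$. (Concretely, for small $m$ one can arrange that a two-vertex cut on $C_m$ separates all of $\Lcal$'s landing points from all of $Z$.) Your further claim that a shortest linkage is automatically orthogonal is also not justified: replacing an ``excursion'' by a cycle arc can collide with another path of the linkage, and nothing in a global length/component minimisation rules that out.

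The paper avoids both issues. It finds $\Qcal$ by Menger inside the subgraph $\Lcal\cup\Rcal\cup\Ccal$, so every off-cycle piece of a path of $\Qcal$ is a piece of some $L\in\Lcal$ or $R\in\Rcal$; orthogonality of $\Lcal,\Rcal$ then rules out any $C$-expanding subpath (a subpath leaving $C_i$ into the exterior and returning without touching another cycle). With no expanding subpaths, the paper simply invokes Step~2 of the proof of \autoref{lem:orthogonal_transaction}: iteratively pick an \emph{innermost} regression --- one whose disk is not touched by any other part of $\Qcal$ --- and replace it by the corresponding arc of its cycle. The innermost choice is exactly what guarantees disjointness after the replacement, and it terminates because each step strictly decreases the number of regressions. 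So the orthogonalisation is a single pass on $\Qcal$, not a cycle-by-cycle recursion with fresh Menger calls.
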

\begin{proof} Notice that for every set $S \subseteq V(G)$ such that $|S| < s$, there exists at least one cycle, one path in $\Lcal$, and one path in $\Rcal$ fully contained in $G - S$.
Since $\Lcal$ and $\Rcal$ are orthogonal to $\Ccal$, the union of these three elements is connected.
This implies that by Menger's theorem, there must exist a radial linkage $\mathcal{Q}$ in $G$ of order $s$ that is start-coterminal with $\Lcal$ and end-coterminal with $\Rcal$.
Moreover, since $\mathcal{L}$ and $\mathcal{R}$ are orthogonal to $\mathcal{C}$, we can immediately deduce that there is no $C$-subpath of any path in $\mathcal{Q}$ for any cycle $C \in \mathcal{C}$ that is disjoint from all other cycles of $\mathcal{C}$ and fully drawn in $\Delta$ minus the interior of the disk bounded by the trace of $C$ in $\rho.$
This allows us to apply \cref{lem:orthogonal_radial_linkage} and conclude with a radial linkage $\mathcal{Q}'$ that is orthogonal to $\mathcal{C}$ and with the same endpoints $\mathcal{Q}.$
This completes the proof.
\end{proof}

We also require the following lemma from~\cite{GorskyPW2026Quickly} that allows to orthogonalize a radial linkage with respect to some nest in a society in a respectful manner.
The proof of this lemma is very similar to that of \cref{lem:orthogonal_radial_linkage}, but in favor of brevity we choose to not reprove it.

\begin{proposition}[Lemma 4.11, \cite{GorskyPW2026Quickly}]\label{prop:orthogonal_radial_linkage} For all integers $s, r \geq 1$ the following holds.
Let $\rho$ be a rendition of a society $(G, \Omega)$ with a nest $\mathcal{C}$ of order $s$ and a radial linkage $\mathcal{R}$ of order $r$ for $\mathcal{C}$.
Then there exists a nest $\mathcal{C}'$ of order $s$ and a radial linkage $\mathcal{R}'$ of order $r$ for $\mathcal{C}'$ that is orthogonal to $\mathcal{C}'$ and with the same endpoints on $V(\Omega)$ as $\mathcal{R}$.
Further, the disk bounded by the trace of the outer cycle of $\mathcal{C}'$ in $\rho$ is contained in the disk bounded by the trace of the outer cycle of $\mathcal{C}$ in $\rho$.

Moreover, there exists an algorithm that finds $\mathcal{C}'$ and $\mathcal{R}'$ in time $\mathbf{poly}(s + r) \cdot |E(G)|.$
\end{proposition}

We now proceed with the proof of our orthogonalization tool.
Note that a preliminary version of this result already appeared in~\cite{PaulPTW2024Delineating,PaulPTW2024Obstructions}.

\begin{lemma}\label{lem:orthogonal_transaction}
Let $t \in \Nbbb_{\geq 4}$, $s \in \Nbbb_{\geq 1}$, $p \in \Nbbb_{\geq 2}$, and $(G, \delta, W)$ be a $(\text{-}, t, s, \text{-}, \text{-})$-$\Sigma$-schema.
Moreover, let $(H, \Omega)$ be a vortex segment society of $(G, \delta, W)$ corresponding to a vortex segment with nest $\{ C_{0}, \ldots, C_{s + 1} \}$ and radial linkage $\Rcal$ that has a $\delta$-compatible rendition $\rho$ in $\Delta_{H}$.
Let $\Pcal$ be a $\rho$-exposed transaction of order $(s + 1)(p + 2)$ in $(H, \Omega)$ that is $\rho$-proper.
Then there exists
\begin{itemize}
\item a nest $\Ccal$ in $\rho$ of order $s + 1$ around the $\mathsf{nodes}_{\delta}(C_{s + 1})$-avoiding disk of $\trace_{\rho}(C_{0})$,
\item a radial linkage of order $t$ that is orthogonal to $\mathcal{C}$ and end-coterminal with $\mathcal{R}$, and
\item a $\rho$-exposed transaction $\mathcal{Q}$ of order $p$ in $(H, \Omega)$ that is orthogonal to $\mathcal{C}$ and $\rho$-proper.
\end{itemize}
Moreover, there exists an algorithm that finds the outcome above in time $\poly(s + p + t) \cdot |E(G)|$.
\end{lemma}
\begin{proof}
Let $\rho = (\Gamma, \Dcal)$ and $\Ccal = \{ C_{0}, \ldots, C_{s} \}$.
First note that by definition of $\rho$-proper, the paths in $\Pcal$ never cross one another in $\Gamma$ and this is crucial for the arguments that follow.

Let $q \coloneqq (s + 1)(p + 2)$.
Let $\Lambda_{\Pcal}$ be an ordering of the paths in $\Pcal$ such that while traversing $\Omega$ we first encounter one of the two endpoints for all paths in $\Pcal$ and then all others.
Let $P_{1}, \ldots, P_{q}$ be the paths in $\Pcal$ respecting the ordering $\Lambda_{\Pcal}.$
Moreover, for every cycle $C \in \mathcal{C}$ we define the \emph{interior} (resp. \emph{exterior}) of $C$ as the disk bounded by the trace of $C$ that contains (resp. avoids) the nodes associated to $V(\Omega)$.

Now, for $i \in [0, s]$ and $j \in [q],$ let $Β$ be a non-trivial $V(P_{j})$-$V(P_{j})$-path that is a subpath of $C_{i}.$
Let $P_{B}$ be the subpath of $P_{i}$ that shares its endpoints with $B.$
Let $\Delta_{B}$ be the disk bounded by the trace of the cycle $B \cup P_{B}$ that does not fully contain either the trace of $C_{0}$ or the nodes associated to $V(\Omega)$.
We call $B$ a \emph{bend of $C_{i}$ at $P_{j}$} if $\Delta_{B}$ is non-empty (meaning that $P_{B}$ differs from $B$ in at least one edge) and $\Delta_{B}$ does not intersect the drawing of $C_{i} \setminus B$.
Moreover, we call a bend $B$ \emph{shrinking} if $\Delta_{B}$ is a subset of the interior of $C_{i}$ and \emph{expanding} otherwise.
We call an expanding bend $B$ \emph{loose} if $\Delta_{B}$ does not intersect the drawing of any cycle in $\Ccal$ that is not $C_{i}$ and \emph{tight} if any expanding bend $B'$ of $C_{i}$ where $B'$ is a subpath of $B$ is not loose.

We also define the \emph{height} of a bend $B$ as the non-negative integer $r + 1,$ where $r$ is the maximum value such that $B$ intersects at least one of $P_{j + r}$ or $P_{j - r}$ and $j + r \leq s$ or $j - r \geq 1$ (if $j + r$ exceeds $q$ we consider $P_{j + r} = P_{q}$ and if $j - r$ drops below $1$ we similarly assume that $P_{j - r} = P_{1}$).
Finally, we call any minimal $V(P_{1})$-$V(P_{q})$ path contained in a cycle $C \in \Ccal$ a \emph{pillar} of $C.$

First, notice that if $B$ is any expanding bend of a cycle $C \in \Ccal,$ then the graph obtained by replacing the subpath $B$ of $C$ with the path $P_{B}$ defines a cycle $C'$ and the interior of $C'$ strictly contains the interior of $C,$ hence the name expanding.
In the case of a shrinking bend the opposite happens, particularly we obtain a cycle with strictly smaller interior.
Additionally, notice that for every $C \in \Ccal,$ since $\Pcal$ is a $\rho$-exposed transaction, $C$ contains at least one pillar and since $C$ is a cycle, it must contain an even number of pairwise disjoint pillars.

We first prove the following claim by induction on the number of cycles in $\mathcal{C}$.

\begin{claim} If no cycle in $\Ccal$ contains a loose expanding bend then for every $i \in [0, s],$ then
\begin{enumerate}
\item every expanding bend of $C_{i}$ has height at most $s - i + 1,$
\item $C_{i}$ contains exactly two disjoint pillars $R_{i},$ $i \in [2],$ and
\item every shrinking bend of $C_{i}$ that is a subpath of $R_{i},$ $i \in [2],$ has height at most $s - i + 1.$
\end{enumerate}
\end{claim}
\begin{cproof}
Let $k \in [s]$ and $\Ccal^{k}$ be the subset of $\Ccal$ that consists of its $k$ outermost cycles.
Assume inductively that no cycle in $\mathcal{C}^{k}$ contains a loose expanding bend and as a result the three properties above hold for $\mathcal{C}^{k}$.
We prove that under the same assumption that no cycle in $\mathcal{C}^{k + 1}$ contains a loose expanding bend, then the three properties above also hold for $\Ccal^{k + 1}.$

Assume that no cycle in $\Ccal^{k + 1}$ contains a loose expanding bend.
Let $B$ be any tight expanding bend of $C_{s - k}$ at $P \in \Pcal$ which is the innermost cycle contained in $\Ccal^{k+1}$ and the only cycle of $\Ccal^{k+1}$ not contained in $\Ccal^{k}.$
Since $B$ is tight, the disk $\Delta_{B}$ intersects the drawing of some other cycle of $\mathcal{C}$.
In fact, since $B$ is expanding and therefore $\Delta_{B}$ is not a subset of the interior of $C_{s-k}$, it follows that $\Delta_{B}$ intersects the drawing of cycles only in $\mathcal{C}^{k}$.
Now, let $C' \in \mathcal{C}^{k}$ such that $\Delta_{B}$ intersects the drawing of $C'$.
Consider the maximal $V(P)$-$V(P)$-subpath of $C'$, say $P'$ whose endpoints are drawn in $\Delta_{B}$.
Note that $P'$ may not be a bend of $C'$ at $P$.
Even worse it may be that $P'$ is not fully drawn in $\Delta_{B}$.
However, we can find a set $\{ B'_{1}, \ldots, B'_{l} \}$ of internally disjoint $V(P)$-$V(P)$-subpaths of $P'$ such that any subpath of $P'$ drawn in $\Delta_{B}$ is a subpath of one of the $B'_{i}$'s and each $B'_{i}$ is an expanding bend of $C'$ at $P$.
Indeed this set can be defined as the set of maximal subpaths of $P'$ that define an expanding bend of $C'$ at $P$.
Moreover, applying the inductive hypothesis, implies that each $B'_{i}$ has height at most $k$.

Next, notice that any subpath $B'$ of $B$ whose endpoints are the endpoints of a maximal subpath of a path in $\Pcal$ that is drawn in $\Delta_{B}$ is also an expanding bend of $C_{s - k}$ where $\Delta_{B'} \subseteq \Delta_{B}.$
Since $B$ is tight, any such subpath $B'$ must also be tight.
This implies that the disk $\Delta_{B'}$ for any such subpath $B'$ intersects at least one of the $B'_{i}$'s defined analogously for a cycle $C'$ whose drawing intersects $\Delta_{B'}$.

Now, assume towards contradiction that the height of $B$ is at least $k + 2$.
This implies that there exists a $V(P'')$-$V(P'')$- subpath $B''$ of $B$ that is an expanding bend of $C$ at $P''$, where the index of $P''$ differs by at least $k + 1$ from that of $P$.
However, no $B'_{i}$ defined as before can intersect the disk $\Delta_{B''}$ as the height of $B'_{i}$ is at most $k$.
Therefore, $B''$ must be loose.
This contradicts our assumptions and we conclude that the height of $B'$ is at most $k + 1$.

Moreover, it is straightforward to observe that $C_{s - k}$ contains exactly two disjoint pillars.
Indeed, if there were at least four disjoint pillars in $C_{s - k}$ then it is easy to see that there would exist an expanding bend of $C_{s - k}$ on either $P_{1}$ and/or $P_{q}$ whose height would be larger than $k + 1,$ which would imply that it is a loose expanding bend, which by assumption cannot exist.

Finally, it is also straightforward to observe that the existence of a shrinking bend of $C_{s - k}$ that is a subpath of either of the two pillars of $C_{s - k}$ of height more than $k + 1,$ implies the existence of an expanding bend of the same height, which as we proved cannot exist.
\end{cproof}

We proceed with the description of an algorithm that runs in time $\poly(s + p + t) \cdot |E(G)|^{2}$ that computes the desired nest $\mathcal{C}'$ along with the desired railed linkage $\mathcal{R}'$ that is orthogonal to $\mathcal{C}',$ and moreover computes the desired planar $\rho$-exposed transaction $\mathcal{Q}$ of order $p$ that is orthogonal to the new nest $\mathcal{C}'.$

\smallskip
\textbf{Step 1:} We obtain our new set of cycles $\mathcal{C}' \coloneqq \langle C'_{0}, \ldots, C'_{s} \rangle,$ where $C'_{i},$ $i \in [0, s],$ is obtained from $C_{i}$ by iteratively applying whenever applicable the following two rules.

\begin{itemize}

\item[\textbf{1:}] As long as there exists a path $R \in \mathcal{R},$ some $i \in [0, s]$ and a subpath $R' \subseteq R$ such that $R'$ is a $C_{i}$-path and is otherwise disjoint from all cycles of $\mathcal{C}$ and moreover $R'$ is drawn entirely in the exterior of $C_{i},$ we update $C_{i}$ to the unique cycle $C'$ contained in $C_{i} \cup R'$ different from $C_{i}$ whose trace separates the interior of $C_{0}$ from the nodes associated to $V(\Omega)$.
We call such a subpath $R'$ of $R$ as above, a $C_{i}$-\emph{expanding} path.
Now observe that $C'$ remains $\rho$-grounded.
Moreover, the interior of $C'$ properly contains the interior of $C_{i}$ (before the update).

\item[\textbf{2:}] As long as there exists a loose expanding bend $B$ of a cycle $C \in \mathcal{C}$ we replace $B$ by $P_{B}$ as defined above.
Now observe that the updated cycle remains $\rho$-grounded.
\end{itemize}

Since for both rules above, whenever we update a cycle of our nest, it is guaranteed that the interior of the updated cycle strictly contains the interior of its predecessor, the above procedure will terminate in at most $|E(G)|$ many steps and the resulting nest $\mathcal{C}'$ will satisfy the following two properties.
There is no loose expanding bend of any cycle in $\mathcal{C}',$ which implies that $\mathcal{C}'$ satisfies the three properties above, and there is no $C$-expanding subpath of any path in $\mathcal{R}$ for any cycle $C \in \mathcal{C}'.$

\smallskip
\textbf{Step 2:} Now, update $\mathcal{R}$ to contain the minimal $V(\Omega)$-$\pi_{\rho}(\mathsf{nodes}_{\rho}(C'_{0}))$ subpath of each path in $\mathcal{R}$.
By our previous step, no path in $\mathcal{R}$ contains a $C'_{i}$-expanding path.
This permits us to apply \cref{lem:orthogonal_radial_linkage} to $\mathcal{C}'$ and $\mathcal{R}$ and obtain a radial linkage for $\mathcal{C}'$ that is orthogonal to $\mathcal{C}'$ as desired, in time $\mathsf{poly}(s + t) \cdot |E(G)|^{2}.$

\smallskip
\textbf{Step 3:} We now define a transaction $\Pcal' \subseteq \Pcal$ of order $p$ by skipping the first and last $s + 1$ paths in $\Pcal$ and then choosing the first path from every bundle of $s + 1$ many consecutive paths from the remaining paths of $\Pcal.$
By the arguments above the height of every expanding bend of every cycle in $\Ccal'$ at $P_{1}$ or $P_{s}$ is at most $s + 1$ and therefore by definition no such bend can intersect any of the paths in $\Pcal'$.
It follows that any other bend of any cycle must be a bend that is a subpath of a pillar of the given cycle.
By definition of $\Pcal'$, this implies that any such bend of any cycle can intersect at most one path in $\Pcal'$.
This implies that the transaction $\Pcal'$ which is a planar and $\rho$-exposed transaction in $(H, \Omega)$ is almost orthogonal to our new set of cycles $\Ccal'$.

We need to make one more adjustment to each of the paths in $\Pcal'$ to finally make them orthogonal to $\Ccal'$.
We define the final transaction $\Qcal$ where each path $Q \in \Qcal$ is a path obtained from a path $P \in \Pcal'$ that shares the same endpoints with $P$ and is defined by starting from one of the two endpoints of $P$ and while moving towards the other endpoint of $P$ taking any possible ``shortcut'', that is by greedily following along any bend of any pillar of a cycle in $\Ccal'$ at $P$ that appears, until we reach the other endpoint of $P$.
Since as we already observed any bend of any pillar of a cycle can intersect at most one path in $\Pcal'$, it is implied that $\Qcal$ is a linkage.
Moreover, by construction, it now follows that $\Qcal$ is orthogonal to $\Ccal'$ as desired and that the paths in $\Qcal$ are $\rho$-grounded which also implies that $\Qcal$ is $\rho$-proper.
Finally, since the shortcuts we take are subpaths of pillars it follows that $\Qcal$ contains at least one edge drawn in the interior of $C'_{0}$ and therefore is $\rho$-exposed.
\end{proof}

\subsection{Splitting and homogeneous transactions}

With the tools developed in the previous sections, we know that we can always either find a pruning of our schema, or a planar transaction in some vortex segment society that is proper with respect to a rendition of the society that, moreover, is compatible with the decomposition of our schema.
On top of this, by \cref{lem:orthogonal_transaction}, we may also assume that this transaction is orthogonal to the nest of our vortex segment.
The next step is to apply the ideas of \cref{sec_homogeneous_walloid} to the transaction to make sure that the area covered by it is homogeneous with respect to a cell-coloring of our decomposition which would then allow us to extract additional flap segments from it.

\medskip
Let $(G, \Omega)$ be a society and $\rho$ be a rendition of $(G, \Omega)$ in the disk with a nest $\Ccal = \{ C_{1}, \ldots, C_{s} \}$ for some $s \in \Nbbb_{\geq 1}$.
We say that a transaction in $(G, \Omega)$ is \emph{$\rho$-splitting} if it is $\rho$-proper and orthogonal to $\Ccal$.

Let $\mathcal{P}$ be a $\rho$-splitting transaction in $(G, \Omega)$.
A \emph{parcel} of $\mathcal{P}$ is a cycle $C$ consisting of two vertex-disjoint subpaths of $C_{1}$, say $X_1$ and $X_2,$ and a shortest $\pi_{\rho}(\mathsf{nodes}_{\rho}(C_1))$-$\pi_{\rho}(\mathsf{nodes}_{\rho}(C_1))$ subpath from two distinct paths $P_{1} \neq P_{2} \in \mathcal{P}$, such that $C_1$ contains an $X_1$-$X_2$-subpath of $P_i$ for each $i \in [2]$, and $C_1$ intersects no other member of $\mathcal{P}$.
Notice that all but the boundary paths of a $\rho$-splitting transaction belong to exactly two parcels and the boundary paths belong to exactly one parcel each.

Let $\ell \in \Nbbb_{\geq 1}$ and $\chi$ be a cell-coloring of $\rho$ of capacity at most $\ell$.
The \emph{$\chi$-coloring of a parcel $O$ of $\mathcal{P}$ in $\rho$} is the set $\{ \chi(c) \mid c \in \rho\text{-}\mathsf{influence}(O) \}$.
A transaction $\mathcal{P}$ is said to be \emph{$\chi$-homogeneous in $\rho$} if it is $\rho$-splitting and all of its parcels have the same $\chi$-coloring in $\rho$.

\begin{lemma}\label{lem:homogeneous_transaction}
Let $\ell \in \Nbbb_{\geq 1}$, $p \in \Nbbb_{\geq 2}$, and $(G, \delta, W)$ be a $\Sigma$-schema.
Moreover, let $(H, \Omega)$ be a vortex segment society of $(G, \delta, W)$ that has a $\delta$-compatible rendition $\rho$ in $\Delta_{H}$ and $\chi$ be a cell-coloring of $\rho$ of capacity at most $\ell$.
There exists a function $f_{\ref{lem:homogeneous_transaction}} \colon \Nbbb^{2} \to \Nbbb$ such that for every $\rho$-splitting transaction $\mathcal{P}$ of order $f_{\ref{lem:homogeneous_transaction}}(p, \ell)$ in $(H, \Omega)$ there exists a transaction $\Qcal \subseteq \Pcal$ of order $p$ that is $\chi$-homogeneous in $\rho$.

Moreover, $f_{\ref{lem:homogeneous_transaction}}(p,d)\in p^{2^{\ell}}.$
\end{lemma}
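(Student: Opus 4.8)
The plan is to reduce the statement to the ``mono-dimensional'' homogenization of \autoref{cor_homogeneity_1d}, exploiting the fact that the parcels of a $\rho$-splitting transaction are exactly the bricks of a ladder whose rungs are subpaths of the transaction paths. Accordingly, I would set $f_{\ref{lem:homogeneous_transaction}}(p,\ell) \coloneqq f_{\ref{lem_homogeneity_2d}}(p,\ell)$, which is $p^{2^{\ell}}$ and hence yields the claimed estimate at once.

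First I would extract the ladder. Since $\mathcal{P}$ is $\rho$-splitting, it is a planar, $\rho$-proper transaction that is orthogonal to $\mathcal{C}$; in particular the strip rendition $\rho\cap\Delta_{\mathcal{P}}$ is vortex-free, and since every path of $\mathcal{P}$ is drawn inside this vortex-free disk with both endpoints in $V(\Omega)\subseteq\pi_{\rho}(N(\rho))$, every path of $\mathcal{P}$ is $\rho$-grounded (not only the boundary paths). Order $\mathcal{P}=\{P_{1},\dots,P_{m}\}$ along $\Omega$ as in the definition of parcels, with $m=f_{\ref{lem_homogeneity_2d}}(p,\ell)$, and for each $i$ let $\widehat{P}_{i}$ be the shortest $\pi_{\rho}(\mathsf{nodes}_{\rho}(C_{1}))$-$\pi_{\rho}(\mathsf{nodes}_{\rho}(C_{1}))$ subpath of $P_{i}$ appearing in the parcels incident with $P_{i}$. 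Because $\mathcal{P}$ is planar and orthogonal to $\mathcal{C}$, the endpoints of $\widehat{P}_{1},\dots,\widehat{P}_{m}$ occur along $C_{1}$ in the ``parallel'' cyclic order $u_{1},\dots,u_{m},v_{m},\dots,v_{1}$; hence the $m-1$ parcels $O_{1},\dots,O_{m-1}$, where $O_{i}$ is bounded by $\widehat{P}_{i}$, $\widehat{P}_{i+1}$, the arc of $C_{1}$ from $u_{i}$ to $u_{i+1}$, and the arc of $C_{1}$ from $v_{i+1}$ to $v_{i}$, tile the disk cut off by $C_{1}$ on the side away from $V(\Omega)$. Consequently the union of the rungs $\widehat{P}_{1},\dots,\widehat{P}_{m}$ with the two ``rails'' obtained from the corresponding arcs of $C_{1}$ is a subdivision of the $(m\times 2)$-grid, i.e.\ an $m$-ladder $L$ whose bricks are exactly $O_{1},\dots,O_{m-1}$. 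Deleting rungs of $L$ corresponds to passing to a subtransaction of $\mathcal{P}$, and such a subtransaction is again $\rho$-splitting, since its strip is contained in $\Delta_{\mathcal{P}}$ (so still vortex-free) and all its paths remain $\rho$-grounded as above.

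Next I would transfer the coloring. Let $\chi_{0}$ be the brick-coloring of $L$ given by setting $\chi_{0}(O_{i})$ to be the $\chi$-coloring of the parcel $O_{i}$ in $\rho$; its capacity is at most that of $\chi$, hence at most $\ell$. Applying \autoref{cor_homogeneity_1d} to $L$ and $\chi_{0}$ yields a $p$-subladder $L'$ all of whose bricks have the same induced brick-coloring $\chi_{0}'$; let $\mathcal{Q}\subseteq\mathcal{P}$ be the corresponding order-$p$ subtransaction. For a brick $B$ of $L'$, the induced color $\chi_{0}'(B)=\bigcup_{B'\text{ inside }B}\chi_{0}(B')$ is the union of the $\chi$-colorings of the parcels of $\mathcal{P}$ subsumed by the parcel of $\mathcal{Q}$ corresponding to $B$; the one remaining point is that this union equals the $\chi$-coloring of that parcel of $\mathcal{Q}$ in $\rho$, i.e.\ that $\rho$-influence is monotone under subsumption of parcels. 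Granting this, all parcels of $\mathcal{Q}$ carry the same $\chi$-coloring in $\rho$, so $\mathcal{Q}$ is $\chi$-homogeneous in $\rho$, as required. The main obstacle is not the homogenization step, which is a black-box invocation of \autoref{cor_homogeneity_1d}, but the bookkeeping underlying this last point: one must verify that the parcels genuinely assemble into a ladder and that a simple cell contributes to the $\chi$-coloring of a wide parcel of $\mathcal{Q}$ precisely when it contributes to one of the narrow parcels of $\mathcal{P}$ it contains — in particular that a cell containing an edge of an interior rung, or whose disk lies inside the wide parcel, is accounted for by an adjacent narrow parcel. This is the exact analogue, for parcels, of the induced-brick-coloring claim used for walls in \autoref{lem_homogeneity_2d}, and I expect it to be routine but the most delicate part of the write-up.
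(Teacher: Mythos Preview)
Your proposal is correct and follows essentially the same route as the paper: set $f_{\ref{lem:homogeneous_transaction}}(p,\ell)\coloneqq f_{\ref{lem_homogeneity_2d}}(p,\ell)$, build a ladder from the subpaths of the transaction between their first and last hits on $C_{1}$ together with the two arcs of $C_{1}$ serving as rails, observe that the ladder's bricks coincide with the parcels, transfer the parcel coloring to a brick-coloring, and invoke \autoref{cor_homogeneity_1d}. The paper handles the ``monotonicity'' point you flag with a single sentence (the induced brick-coloring of a subladder brick equals its $\chi$-coloring in $\rho$), so your care there is appropriate but not a departure.
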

\begin{proof}
Recall that $f_{\ref{lem_homogeneity_2d}}(z_1, z_2)= z_1^{2^{z_1}}.$
We set $f_{\ref{lem:homogeneous_transaction}}(d,p) \coloneqq f_{\ref{lem_homogeneity_2d}}(p, \ell)$ and thus meet the upper bound claimed in the assertion.

Let $\{ C_{0}, \ldots, C_{s+1} \}$ be the nest of the vortex segment of $W$ corresponding to $(H, \Omega)$.
Let $P_1$ and $P_2$ be the two boundary paths of $\mathcal{P}$ and for each $i\in[2]$ let $P_i'$ be the shortest $\pi_{\rho}(\mathsf{nodes}_{\rho}(C_1))$-$\pi_{\rho}(\mathsf{nodes}_{\rho}(C_1))$-subpath of $P_i$ with at least one edge not in $C_1$.
Then there exist two vertex-disjoint subpaths $L_1$ and $L_2$ of $C_1$ such that each $L_i$ has one endpoint in common with $P_1'$ and the other endpoint in common with $P_2',$ and every path $P\in\mathcal{P}$ contains a unique $V(L_1)$-$V(L_2)$-subpath $P'.$
It follows that $J\coloneqq L_1\cup L_2\cup \bigcup_{P\in\mathcal{P}}$ is an $f_{\ref{lem:homogeneous_transaction}}(p, \ell)$-ladder.
Moreover, the bricks of $J$ are in a one-to-one correspondence with the parcels of $\mathcal{P}.$
Now consider the brick-coloring $\chi_{J}$ of $J$, where for every brick $B$ of $J$, $\chi_{J}(B)$ is defined as the $\chi$-coloring of $B$ in $\rho$.
This allows us to call \cref{cor_homogeneity_1d} to obtain a $p$-ladder $J' \subseteq J$ such that for every two bricks $B_{1}$ and $B_{2}$ of $J'$, $\chi_{J'}(B_{1}) = \chi_{J'}(B_{2})$, where $\chi_{J'}$ is the brick-coloring of $J'$ induced by $\chi_{J}$.
As a result, all bricks of $J'$ have the same $\chi$-coloring in $\rho$, since for every brick $B$ of $J'$, $\chi_{J'}(B)$ is identical to the $\chi$-coloring of $B$ in $\rho$.
Moreover, notice that each of the $p$ horizontal paths of $J'$ is, in fact, a subpath $P'$ of some path in $\mathcal{P}.$
Let $\mathcal{Q}$ be the collection of all paths $Q\in\mathcal{P}$ such that $Q'$ is a horizontal path of $J'.$
It follows that $\mathcal{Q}$ is $\chi$-homogeneous in $\rho$ and of order $p$ as desired.
\end{proof}

\subsection{Storage segments}

We are now able to always find large homogeneous transactions.
Next, we need an auxiliary tool that will help us extract extract additional flap segments when processing the vortices within vortex segments.
Toward this goal we define an intermediate structure.

\medskip
Let $p \geq t \geq 3$ be integers.
A \emph{$(t \times p)$-storage segment} is a $((2t + 1) \times p)$-wall segment $W$ that is the union of a $(t \times p)$-wall segment $W_1$, called the \emph{base}, whose $t$ horizontal paths are exactly the first $t$ horizontal paths of $W$, a $((t + 1) \times p)$-wall $W_2$ whose $t$ horizontal paths are exactly the horizontal paths $t+1$ up to $2t+1$ of $W$, and a $(2 \times p)$ wall segment $Q$ whose two horizontal paths are exactly the horizontal paths $t$ and $t+1$ of $W$.
We call the bricks of $Q$ the \emph{plots} of $W$.

Let $G$ be a graph, $\delta$ be a $\Sigma$-decomposition of $G$, $\chi$ be a cell-coloring of $\delta$, and $W$ be a $(t \times p)$-storage segment that is a subgraph of $G$.

The \emph{$\chi$-coloring of a plot $O$ of $W$ in $\delta$} is the set $\{ \chi(c) \mid c \in \delta\text{-}\mathsf{influence}(O) \}$.
We say that $W$ is \emph{$\chi$-homogeneous in $\delta$} if the $\chi$-coloring of all plots of $W$ is the same.

We will use the storage segments as a storage unit for simple cells we have found deep within a segment and which will be turned into flap segments in a second step.
For this reason, storage segments only appear as an intermediate structure within this subsection.

\begin{figure}[h]
\centering
\includegraphics{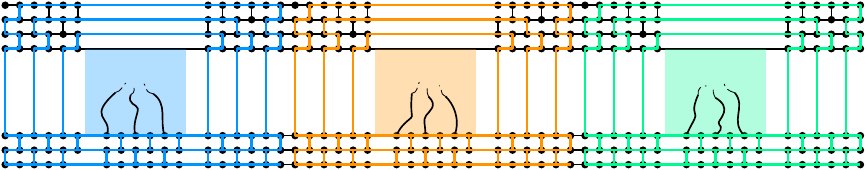}
\caption{\label{fig:extract_storage}A storage segment divided into a sequence of flap segments.}
\end{figure}

\begin{lemma} \label{lem:storage_to_flaps}
Let $t \in \Nbbb_{\geq 3}$, $p, b \in \Nbbb_{\geq 1}$, $G$ be a graph, $\delta$ be a $\Sigma$-decomposition of $G$, $\chi$ be a cell-coloring of $\delta$, $W$ be a $((t + 12) \times p)$-storage segment in $G$ that is $\chi$-homogeneous in $\delta$, and let $X$ be the perimeter of the wall obtained after removing all degree one vertices of $W$.

Assume further that $p \geq \ell^{*} b \cdot (2t + 12)$ where $\ell^*$ is the number of colors in the $\chi$-coloring of the plots of $W.$
Then there exists a sequence $W_1, \ldots, W_{\ell^{*}b}$ of $(t \times t)$-flap segments such that
\begin{itemize}
    \item for every $i \in [\ell^{*}b]$, $W_i$ is contained in $W$ union the graph drawn in the closure of the union all cells in the $\delta$-influence of $X$, the base of $W_i$ is a subgraph of the base of $W$, and its horizontal paths coincide with the horizontal paths of $W$, and
    \item for every color $\alpha$ in the $\chi$-coloring of the plots of $W$, there exists $i \in [d^{*}]$ such that for every $j \in [b]$, $W_{(i - 1)b + j}$ represents $\alpha$ in $\delta$.
\end{itemize}
\end{lemma}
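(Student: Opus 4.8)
The plan is to slice the $p$ vertical paths of $W$ into $\ell^{*}b$ consecutive blocks $\mathcal{B}_{1},\dots,\mathcal{B}_{\ell^{*}b}$, each made of $2t+12$ consecutive vertical paths of $W$ (this is possible exactly because $p\geq\ell^{*}b\cdot(2t+12)$), and to build a single $(t\times t)$-flap segment $W_{m}$ inside each block $\mathcal{B}_{m}$. Since $W$ is $\chi$-homogeneous in $\delta$, all plots of $W$ share one $\chi$-coloring in $\delta$; write it as $\{\alpha_{1},\dots,\alpha_{\ell^{*}}\}$. For $m\in[\ell^{*}b]$ write $m=(i-1)b+j$ with $i\in[\ell^{*}]$ and $j\in[b]$; I will arrange that the hyperedge of $W_{m}$ is a simple cell of colour $\alpha_{i}$. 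Then the second bullet is immediate: for any colour $\alpha=\alpha_{i}$ occurring in the $\chi$-coloring of the plots, the $b$ flap segments $W_{(i-1)b+1},\dots,W_{ib}$ are consecutive in the sequence and each has a hyperedge of colour $\alpha$.

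To build one $W_{m}$ I would proceed as follows. Pick a plot $O_{m}$ of $W$ lying in $\mathcal{B}_{m}$, at least $6$ vertical paths away from each side of $\mathcal{B}_{m}$; there are many more plots in each block than we need. By $\chi$-homogeneity the $\chi$-coloring of $O_{m}$ in $\delta$ equals $\{\alpha_{1},\dots,\alpha_{\ell^{*}}\}$, so by the very definition of the $\chi$-coloring of a plot there is a simple cell $c_{m}\in\delta\text{-}\mathsf{influence}(O_{m})$ with $\chi(c_{m})=\alpha_{i}$; set $q_{m}\coloneqq|\tilde{c_{m}}|\in[3]$. Now, carrying out the same ``proof by picture'' as in \autoref{lem:paths_to_bottom} but localised inside $\mathcal{B}_{m}$ (the $(12\times12)$-wall segment around $O_{m}$ that it uses fits in $\mathcal{B}_{m}$, since $2t+12\geq12$ and $W$ has more than $12$ horizontal paths running above and below its plots), I obtain: a $(t\times(2t+q_{m}))$-wall segment $B_{m}^{\ast}$ that is a subgraph of the base of $W$ restricted to $\mathcal{B}_{m}$ whose $t$ horizontal paths are $t$ fixed horizontal paths of $W$; and a linkage $\mathcal{P}_{m}$ of order $q_{m}$ from $\pi_{\delta}(\tilde{c_{m}})$ to $q_{m}$ consecutive designated top boundary vertices of $B_{m}^{\ast}$, drawn inside $W$ together with the cells of $\delta\text{-}\mathsf{influence}(X)$ and meeting $B_{m}^{\ast}$ only in those boundary vertices. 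Declaring $c_{m}$ the hyperedge, using $\mathcal{P}_{m}$ to realise the edges joining the hyperedge to top boundary vertices, and routing a rainbow of order $t$ through horizontal paths of $W$ lying below its base and inside $\mathcal{B}_{m}$ so as to enclose the hyperedge (there is room: $W$ has $2t+25$ horizontal paths while its base has only $t+12$), produces the desired $(t\times t)$-flap segment $W_{m}$ of arity $q_{m}$, contained in $W$ together with the graph drawn in the closure of the cells of $\delta\text{-}\mathsf{influence}(X)$, and with base a subgraph of the base of $W$ whose horizontal paths are the prescribed ones.

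Finally I would assemble. The blocks $\mathcal{B}_{m}$ are pairwise disjoint in their vertical paths, the linkages $\mathcal{P}_{m}$ and the rainbows stay inside their own block, and the hyperedges $c_{m}$ lie in the pairwise disjoint sets $\delta\text{-}\mathsf{influence}(O_{m})$; hence $W_{1},\dots,W_{\ell^{*}b}$ are pairwise vertex-disjoint. Being built in consecutive vertical-path blocks of the single wall segment $W$, for each admissible index $j$ the $j$-th right boundary vertex of $W_{m}$ is joined in $W$ to the $j$-th left boundary vertex of $W_{m+1}$, so $W_{1},\dots,W_{\ell^{*}b}$ is a genuine sequence of $(t\times t)$-flap segments. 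Both bullets of the statement now hold, the colouring one by the choice of $c_{m}$ made at the outset.

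The main obstacle will be the routing in the middle step: pulling a simple cell that sits deep in the $\delta$-influence of a plot up to the top boundary vertices of the flap base, while staying inside the $2t+12$ vertical paths and $2t+25$ horizontal paths available within a block, staying disjoint from the flap base and from its rainbow, and keeping all $\ell^{*}b$ flap segments mutually disjoint. This is exactly the ``tedious but straightforward'' picture argument behind \autoref{lem:paths_to_bottom} and the extraction steps of \autoref{lem:representation_step}; the numerology of a storage segment --- $2t+12$ vertical paths per block (enough for the $(2t+q_{m})$-wide flap base, the $(12\times12)$-wall-segment neighbourhood of a plot, and the crossing of the rainbow) and $2t+25=2(t+12)+1$ horizontal paths (the top $t$ for the flap base, about $12$ more in which to descend to the plots, and the remaining $t+13$ for the rainbow) --- is tuned precisely so that all of this fits. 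A secondary point requiring care is the appeal to the analogue of property $\mathbf{S_{2}}$, that a simple cell can be routed out to ground vertices along the external pegs of nearby fences, which is what makes \autoref{lem:paths_to_bottom}-style routing available in this setting.
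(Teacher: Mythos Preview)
Your proposal is correct and follows essentially the same approach as the paper: partition the $p$ vertical paths into $\ell^{*}b$ consecutive blocks of width $2t+12$, use $\chi$-homogeneity to locate in each block a plot whose $\delta$-influence contains a cell of the desired colour, route that cell out via the $(12\times12)$-subwall argument of \autoref{lem:paths_to_bottom}, and assemble a $(t\times t)$-flap segment per block from the remaining infrastructure. The paper's proof is equally picture-driven, the only cosmetic difference being that it fixes the rainbow first (first and last $t$ columns, crossing at the $(t+2)$-nd horizontal path) and then places the $(12\times12)$-subwall in the $12$ leftover middle columns, whereas you pick the plot first and then thread the rainbow around it.
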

\begin{proof}
Let $\{ \alpha_1, \ldots, \alpha_{\ell^*}\}$ be the collection of all colors that belong to the $\chi$-coloring of the plots of $W.$
Let us begin by partitioning $W$ into a sequence $\langle W'_1, \ldots, W'_{\ell^*} \rangle$ of $((t + 12) \times b(2t + 12))$-wall segments.
Fix some $i \in [\ell^*]$ and now partition $W'_i$ further into a sequence $\langle W^i_1, \ldots, W^i_b \rangle$ of $((t + 12) \times (2t + 12))$-wall segments.
It suffices to show that each $W^i_j$ can be turned into a $t$-flap segment that represents color $\alpha_{i}$.

Notice that each $W^i_j$ individually can be seen as a $((t + 12) \times (2t + 12))$-storage segment.
Since $W$ is $\chi$-homogeneous in $\delta$, it follows that $W^i_j$ is so as well since every plot of $W^i_j$ is a plot of $W.$
We may now find a linkage $\mathcal{P}$ starting at the first vertex of each of the first $t$ vertical paths of $W^i_j,$ then following these vertical paths into the $(t+2)$-nd horizontal path, from there heading to the last $t$ vertical paths of $W^i_j$ and following these down to the bottom horizontal path.
See \cref{fig:extract_storage} for an illustration.

This linkage intersects only $2t$ many plots of $W^i_j$ and will serve as the rainbow of our flap segment.
To finish creating the flap segment observe that there are $12$ remaining vertical paths in $W^{i}_{j}$.
We may now use the same arguments as in the proof of \cref{lem:representation_step} and \cref{lem:paths_to_bottom} to pick a plot of $W^{i}_{j}$ which is the center of a $(12 \times 12)$-subwall of $W^i_j$ that does not intersect $\mathcal{P}$.
Within the influence of this plot, we know we can find a cell $c_{i}$ such that $\chi(c_{i}) = \alpha_{i}$.
Now \cref{lem:paths_to_bottom} allows us to link the boundary vertices of $c_{i}$ to the columns of $W^{i}_{j}$ we are currently handling.
Again, see \cref{fig:extract_storage} for an illustration.
This completes our construction. 
\end{proof}

\subsection{Swapping a vortex and a flap segment}

A last issue we have to deal with before we can jump into the proof of our main tool is the following.
Suppose that the vortex segment society of, at least, the first vortex segment is already of bounded depth.
This means that there is nothing more to be extracted from that vortex segment, and we have to start treating some vortex segment to the right of it.
However, whenever we extract a flap segment by first pulling out a part of some homogeneous transaction in a vortex segment society of the $i$-th vortex segment and then using \cref{lem:storage_to_flaps}, the flap segment will be ``in-between'' vortex segments with respect to the linear ordering of the segments.
As we demand all flap segments of a schema to occur in order without being interrupted by other segments, we need a way to move this flap segment to the ``left'' of all vortex segments.
This will cost us a bit of the infrastructure of our vortex segments, but since we will only extract, and then move, a bounded number of flap segments in the end, this becomes just a matter of large enough numbers.

\begin{lemma} \label{lem:moving_flaps}
Let $r \in \Nbbb$, $t, t_{0} \in \Nbbb_{\geq 3}$ and $s, s_{0}, \ell, b \in \Nbbb_{\geq 1}$ where
$$s \geq s_0 + \ell (2t_0 + 3) \coloneqq d_{1} \ \text{ and } \ t \geq t_0 + 2\ell (2t_0 + 3)\coloneqq d_{2}.$$

Now, let $(G, \delta, W)$ be obtained from an $(r, t, s, \text{-}, \text{-})$-$\Sigma$-schema $(G, \delta^{*}, W^{*})$ by replacing the $i$-th $((r + t) \times t, s)$-vortex segment of $W$ with a sequence $\langle W_1, \ldots, W_{\ell}, W_{\ell+1}, W_{\ell + 2} \rangle$, where $W_i$ is an $((r + t) \times t_{0})$-flap segment, for every $i \in [\ell]$, and both $W_{\ell+1}$ and $W_{\ell+2}$ are $((r + t) \times d_{2}, d_{1})$-vortex segments where we allow either $W_{\ell+2},$ or both $W_{\ell+1}$ and $W_{\ell+2}$ to be empty.
Then there exists an $(r, t_{0}, s_{0}, \text{-}, \text{-})$-$\Sigma$-schema $(G, \delta, W')$ such that the number of vortex segments of $(G, \delta^{*}, W^{*})$ is exactly the number of vortex segments of $(G, \delta, W)$ minus one plus the number of non-empty members of $\{ W_{\ell+1}, W_{\ell+2} \}.$

Moreover, let $\chi$ be a cell-coloring of $\delta$ and assume that for every color $\alpha \in \chi\text{-}\mathsf{col}(\delta)$, either $W^{*}$ $b$-represents $\alpha$ in $\delta^{*}$, or $W_{i}$ represents $\alpha$ in $\delta$ where $i$ belongs to an interval of $\ell$ of length $b$.
Then $W'$ $b$-represents $\chi$ in $\delta$.
\end{lemma}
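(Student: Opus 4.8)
The plan is to construct $W'$ by \emph{relocating} the flap segments $W_{1}, \ldots, W_{\ell}$ leftward — past the vortex segments of $W$ that separate them from the flap region — and then \emph{downscaling} every segment to the target orders $t_{0}$ and $s_{0}$. First I would record the cyclic order of the segments of $W$: the surface wall $W_{0}^{*}$ of $W^{*}$; the (unaffected) flap segments $F_{1}, \ldots, F_{\ell_{1}}$ of $W^{*}$; the vortex segments $V_{1}, \ldots, V_{i-1}$ of $W^{*}$ preceding position $i$; then $W_{1}, \ldots, W_{\ell}$; then $W_{\ell+1}, W_{\ell+2}$; and finally the vortex segments $V_{i+1}, \ldots$ following position $i$. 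Here $\delta$ is $\delta^{*}$ with only the cells inside the disk of the old $i$-th vortex cell reorganized into the hyperedges of $W_{1}, \ldots, W_{\ell}$ and the vortex cells of $W_{\ell+1}, W_{\ell+2}$, so the hyperedges of $F_{1}, \ldots, F_{\ell_{1}}$ remain simple cells of both $\delta^{*}$ and $\delta$. The target $W'$ will have a downscaled copy of $W_{0}^{*}$ as surface wall (still with $\mathsf{h}$ handle and $\mathsf{c}$ crosscap segments), flap region $F_{1}, \ldots, F_{\ell_{1}}, W_{1}, \ldots, W_{\ell}$, and vortex segments $V_{1}, \ldots, V_{i-1}, W_{\ell+1}, W_{\ell+2}, V_{i+1}, \ldots$ in that order. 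Thus $W'$ has exactly as many vortex segments as $W$, and since $W$ was obtained from $W^{*}$ by replacing one vortex segment by $\langle W_{1}, \ldots, W_{\ell}, W_{\ell+1}, W_{\ell+2}\rangle$, the number of vortex segments of $W^{*}$ equals that of $W$ (hence of $W'$) minus one plus the number of nonempty members of $\{W_{\ell+1}, W_{\ell+2}\}$, as demanded.

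The heart of the argument is the relocation. Each $W_{j}$ is an arity-$q_{j}$ ($q_{j}\le 3$) flap segment, so it is built from a wall-segment base on the base annulus, a rainbow of order $t_{0}$, and $q_{j}$ edges joining the base to the boundary $\pi_{\delta}(\tilde{c}_{j})$ of a simple cell $c_{j}$ of $\delta$ (its hyperedge). Since the base-annulus region to which $W_{j}$ is attached lies to the right of each of $V_{1}, \ldots, V_{i-1}$, relocating $W_{j}$ into the flap region comes down to re-routing the $\Ocal(t_{0})$ paths that define it through the base-annulus portions of $V_{1}, \ldots, V_{i-1}$ and rebuilding it at a fresh position between $F_{\ell_{1}}$ and $V_{1}$, keeping its hyperedge $c_{j}$. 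Every vortex segment of $W$ has width at least $d_{2} = t_{0} + 2\ell(2t_{0}+3)$ and nest size at least $d_{1} = s_{0} + \ell(2t_{0}+3)$ — the $V_{j}$ inherit width $t$ and nest size $s$ from $W^{*}$, and $W_{\ell+1}, W_{\ell+2}$ have these parameters by hypothesis — so a crossed vortex segment has enough spare vertical paths to host $\ell$ pairwise-disjoint transit linkages, leaving its vortex cell untouched, and enough spare nest cycles to absorb the re-routing forced by these linkages meeting its radial linkage. To do this while preserving all structural invariants I would reuse the machinery of \autoref{lem:orthogonal_transaction}: for each crossed vortex segment, absorb ``loose expanding bends'' and expanding subpaths of the radial linkage into the nest cycles until no further expansion is possible, then re-route the radial linkage by the argument of Step~2 of that proof together with \autoref{lem:coterminal_radial_linkage}, so that it stays orthogonal to the modified nest and coterminal with the base annulus; passing to the innermost $s_{0}+2$ nest cycles and $t_{0}$ radial paths then yields a genuine $((r+t_{0})\times t_{0}, s_{0})$-vortex segment. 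Property $\mathbf{S_{7}}$ survives because the relocated flap segments keep their hyperedges $c_{j}$, and they end up consecutive, in the flap region, and in their original order.

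It then remains to downscale and verify the schema axioms. I would keep $r+t_{0}$ of the base cycles everywhere, shrink the wall segment of the surface wall to order $r+t_{0}$, each handle, crosscap and flap segment (the $F_{j}$ and the relocated $W_{j}$) to order $t_{0}$, and each vortex segment to $((r+t_{0})\times t_{0}, s_{0})$, and take the obvious sub-embedding $\Gamma'$ of the $\Sigma$-embedding of $W$. Properties $\mathbf{S_{1}}$, $\mathbf{S_{3}}$, $\mathbf{S_{4}}$ are inherited, since $W'$ uses only edges of $W$ and transit paths of $G$ drawn inside simple cells of $\delta$; $\mathbf{S_{2}}$ and $\mathbf{S_{5}}$ are statements about $\delta$, which is untouched; and $\mathbf{S_{6}}$--$\mathbf{S_{8}}$ hold for $\Gamma'$ by construction, as its arcs are arcs of $\Gamma$, each flap or vortex hyperedge is still the disk of the corresponding simple or vortex cell, and every vortex of $\delta$ is still witnessed by a vortex segment of $W'$. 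For the last assertion, fix $\alpha \in \chi\text{-}\mathsf{col}(\delta)$: if $W^{*}$ $b$-represents $\alpha$ in $\delta^{*}$, the $b$ witnessing simple cells lie outside the old $i$-th vortex disk, hence are simple cells of $\delta$, and the $b$ consecutive flap segments of $W^{*}$ carrying them survive (downscaled, still consecutive) among $F_{1}, \ldots, F_{\ell_{1}}$ of $W'$; otherwise a block of $b$ consecutive indices $j$ has $W_{j}$ carrying a hyperedge of color $\alpha$, and these survive as a block of $b$ consecutive flap segments of $W'$ with unchanged hyperedges. Either way $W'$ $b$-represents $\alpha$ in $\delta$, and ranging over $\alpha$ gives the claim.

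The main obstacle will be the relocation step: moving all $\ell$ flap segments past all the intervening vortex segments at once, within the width budget $t - t_{0} = 2\ell(2t_{0}+3)$ and nest budget $s - s_{0} = \ell(2t_{0}+3)$, while re-establishing after every crossing a nest of the right order, a radial linkage orthogonal to it and coterminal with the annulus wall (which is exactly where \autoref{lem:orthogonal_transaction} and \autoref{lem:coterminal_radial_linkage} enter), and ultimately exhibiting a $\Sigma$-embedding $\Gamma'$ certifying that $W'$ is a bona fide walloid with its segments in the prescribed cyclic order. The bookkeeping that the transit linkages for distinct flap segments and distinct crossed vortices stay pairwise disjoint and fit within the stated budget is the technical core; everything is constructive, as \autoref{lem:orthogonal_transaction}, \autoref{lem:coterminal_radial_linkage} and the downscaling are all algorithmic.
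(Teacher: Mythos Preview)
Your overall plan is correct and matches the paper's: relocate the flap segments $W_{1},\ldots,W_{\ell}$ leftward past the intervening vortex segments, paying for each crossing with a bounded amount of infrastructure (nest cycles, radial-linkage paths, base cycles), then downscale. Your accounting of the budget $t-t_{0}=2\ell(2t_{0}+3)$ and $s-s_{0}=\ell(2t_{0}+3)$ is exactly right, and your verification of the schema axioms and of the $b$-representation claim is fine.

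Where you diverge from the paper is in the mechanics of the routing step, and here you overcomplicate things. You propose to push the $\Ocal(t_{0})$ transit paths through the interior of each crossed vortex segment and then repair the resulting disturbance to the nest and radial linkage via \autoref{lem:orthogonal_transaction} and \autoref{lem:coterminal_radial_linkage}. The paper avoids this entirely: for each single flap--vortex swap it \emph{sacrifices} dedicated outer infrastructure rather than routing through infrastructure that must survive. Concretely, it allocates the outermost $2t_{0}+3$ base cycles of $W$ (those adjacent to $C^{\mathsf{ex}}(W)$), the outermost $2t_{0}+3$ nest cycles of the crossed vortex segment, and the first and last $2t_{0}+3$ paths of its radial linkage. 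These pieces together form a ready-made channel of $2t_{0}+3$ pairwise disjoint paths carrying the at most $2t_{0}+3$ terminals of the flap segment (the $2t_{0}$ rainbow endpoints plus the $\leq 3$ hyperedge vertices) around the vortex to the other side; see the paper's \autoref{fig_swap_vortex_flap}. Since the allocated cycles and paths are then simply discarded from the walloid, nothing needs to be re-orthogonalized and neither \autoref{lem:orthogonal_transaction} nor \autoref{lem:coterminal_radial_linkage} is invoked. One then inducts: first on the number of vortex segments to be crossed by a single flap, then on $\ell$.

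Your heavier approach would likely work too, but it introduces bookkeeping (bends, expanding subpaths, coterminality) that the direct ``allocate-and-discard'' routing sidesteps. The paper's method also makes it transparent why exactly $2t_{0}+3$ units of each resource are consumed per swap, which is where the constants in $d_{1}$ and $d_{2}$ come from.
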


The proof of \cref{lem:moving_flaps} is mostly technical but not very difficult.
We therefore choose to present a rather informal description of the operation instead of a full formalization.
The core part of the necessary re-routing is depicted in \cref{fig_swap_vortex_flap}.

\begin{figure}[h]
\centering
\includegraphics{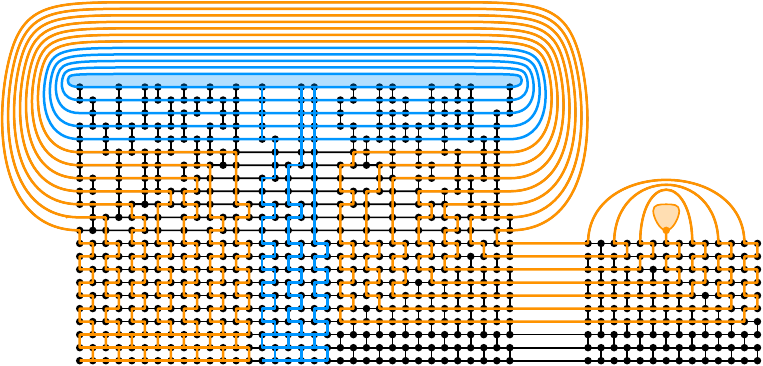}
\caption{\label{fig_swap_vortex_flap}``Moving'' a flap segment from the ``right'' of a vortex segment to its ``left'' in the proof of \cref{lem:moving_flaps}.}
\end{figure}

\begin{proof}

Let $C$ denote the exceptional cycle $C^{\mathsf{ex}}(W)$ of $W.$
For each $i \in [\ell]$, $W_i$ is an $((r + t) \times t_{0})$-flap segment.
Moreover, for each $i \in [\ell]$, the hyperedge of $W_i$ has at most $3$ vertices in common with $C$ while the rainbow of $W_i$ meets $2t_0$ additional vertices of $C$.
Let us call these vertices the \emph{terminals} of $W_i$.
To prove out statement it suffices to show that the flap segments may be ``moved'' sequentially from one side of a single vortex segment to the other.
If this is true, then, by using the same arguments, we can move the flap segment to ``the other side'' of any number of consecutive vortex segments since we only have to sacrifice the infrastructure locally.
Moreover, if this works for a single flap segment, then by induction on $\ell$ it follows that we may repeat the procedure for each of them.

We allocate the following part of the infrastructure to make our routing:
Starting from $C$ we select $2t_0 + 3$ many consecutive cycles from the base cycles of $W.$
From the $(i-1)$-st vortex segment we allocate the first $2t_0 + 3$ paths from its radial linkage together with the last $2t_0 + 3$ paths of its radial linkage.
Moreover, we reserve the outermost $2t_0 + 3$ cycles of its nest.

Let $C'$ denote the new exceptional cycle of $W'$ in case $W'$ is obtained from $W$ by forgetting the first $2t_0 + 3$ consecutive cycles starting from $C.$

Observe that each part of the infrastructure allocated is large enough to host $2t_0 + 3$ disjoint paths, which is at least the number of terminals of our flap segment.
By following along the vertical paths of $W$ and the allocated infrastructure, we can now find a linkage from the terminals of our flap segment to the intersection of the first $2t_0 + 3$ paths of the radial linkage of the $(i-1)$-st vortex segment such that these paths stay within the allocated infrastructure except for their endpoints, and the endpoint of each path belongs to $C'$.
See \cref{fig:extract_storage} for an illustration of how those paths are to be chosen.

Notice that this operation reduces the infrastructure by at most $2t_0 + 3$ many cycles in the vortex segment and $W$ and at most $2(2t_0 + 3)$ many paths from its radial linkage.
Moreover, the operation results, after forgetting the used infrastructure and adjusting the numbers accordingly, in the flap segment to now be on ``the other side'' of the $(i-1)$-st vortex segment.

Since we perform this operation for $\ell$ flap segments in total, the total amount of lost infrastructure is at most $\ell (2t_0 + 3)$ cycles from the nest of each involved vortex segment, at most $2\ell (2t_0 + 3)$ paths from the radial linkage of each involved vortex segment, and at most $\ell (2t_0 + 3)$ cycles of $W$.
By our choice of numbers this implies that the base annulus of $W$ itself has at least $r + t_{0}$ cycles left.
Moreover, each vortex segment has at least $t_0$ paths in its radial linkage that remain untouched and at least $s_0$ cycles in its nest that have not been used.
Thus, our claim follows.
\end{proof}

\subsection{Splitting and extracting}

We are now ready to deal with our most technical tool that will facilitate most of the technicalities of the proof of the coarsening lemma, that is \cref{lem:coarsening} in \cref{sec_coarsening_lemma}.

\medskip
Let $(G, \delta, W)$ be a $\Sigma$-schema.
Moreover, let $(H, \Omega)$ be a vortex segment society of $(G, \delta, W)$ with $\delta$-canonical rendition $\delta \cap \Delta_{H}$ around $c_{0}$ and $\rho$ be a $\delta$-compatible rendition of $(H, \Omega)$ in $\Delta_{H}$.
Also, let $\Pcal$ be a $\rho$-splitting transaction in $(H, \Omega)$.

By combining $\delta \cap \Delta_{H}$ with the strip rendition of $\rho$ and $\Pcal$, we may obtain a $\delta$-compatible rendition $\rho^{*}$ of $(H, \Omega)$ such that $\rho^{*} \sqsubseteq \rho$ and $c_{0}$ is divided into two vortices $c^{*}_{1}$ and $c^{*}_{2}$ in $\rho^{*}$.
We call $\rho^{*}$ the \emph{residual rendition} of $\rho$ and $\Pcal$ and these two vortices the \emph{residual vortices} of $\rho^{*}$.
Moreover, for both $i \in [2]$, $\rho^{*}_{i} \coloneqq \rho \cap \Delta_{c^{*}_{i}}$ is a rendition of the vortex society of $c^{*}_{i}$ in $\Delta_{c^{*}_{i}}$ and $\rho$ can be obtained by combining $\rho^{*},$ $\rho^{*}_{1},$ and $\rho^{*}_{2},$ after removing $c^{*}_{1}$ and $c^{*}_{2}.$

\begin{lemma}\label{lem:splitting_extracting}
Let $r \in \Nbbb$, $t_{0} \in \Nbbb_{\geq 4}$, $s_{0}, x, \ell, b \in \Nbbb_{\geq 1}$, $y \in \Nbbb_{\geq 2}$, and let
\begin{align*}
d &\coloneqq 2 \ell b (2t_{0} + 12),\\
t &\coloneqq 8d + 2t_{0} + 2,\\
s &\coloneqq s_{0} + 2(t_{0} + 14) + 12d + 4t_{0} + 2\text{, and}\\
p &\coloneqq 3t + 2s + 2.
\end{align*}

Let $(G, \delta, W)$ be an $(r, t, s, \text{-}, \text{-})$-$\Sigma$-schema, $(H, \Omega)$ be a vortex segment society of $(G, \delta, W)$, $\rho$ be a $\delta$-compatible rendition of $(H, \Omega)$ in $\Delta_{H}$ with breadth at most $x$ and depth at most $y$, $\chi$ be a cell-coloring of $\delta_{H, \rho}$, and $\Pcal$ be a transaction in $(H, \Omega)$ of order $p$ that is $\chi$-homogeneous in $\rho$.

Let $\delta^{*}$ be the $\Sigma$-decomposition of $G$ obtained by combining $\delta$ and the residual rendition $\rho^{*}$ of $\rho$ and $\Pcal$, and let $c^{*}_{1}, c^{*}_{2}$ be the residual vortices of $\rho^{*}$.

Now, assume that either $\rho$ has a vortex, or that the set $\chi$-$\mathsf{col}(\rho) \setminus \chi$-$\mathsf{col}(\delta)$ is non-empty.
Let $\Ocal$ be the maximal subset of $\chi$-$\mathsf{col}(\delta_{H, \rho}) \setminus \chi$-$\mathsf{col}(\delta)$ that is a subset of the $\chi$-coloring of the parcels of $\Pcal$, and assume $|\Ocal| \leq \ell$.
Then,
\begin{enumerate}
    \item if either
    \begin{itemize}
        \item $\Ocal$ is non-empty, or
        \item $\Ocal$ is empty but for each $i \in [2]$, either $\rho^{*}_{i}$ has a vortex, or there is a simple cell $c_{i}$ of $\rho$ contained in $c^{*}_{i}$ such that $\chi(c_{i}) \notin \chi$-$\mathsf{col}(\delta)$,
    \end{itemize}
    there exists an $(r, t_{0}, s_{0}, \text{-}, \text{-})$-$\Sigma$-schema $(G, \delta', W')$ controlled by $\Tcal_{W}$ such that
    \begin{enumerate}
    \item every cell of $\delta'$ that is not a cell of $\delta^{*}$ is a vortex,
    \item every vortex of $\delta'$ is a vortex of $\delta$ except for $c'_{1}$ and $c'_{2}$ which contain $c^{*}_{1}$ and $c^{*}_{2}$ respectively, and
    \item there exist non-negative integers $x_{1}$ and $x_{2}$ such that $x_{1} + x_{2} = x$ and for each $i \in [2],$ $\rho \cap \Delta_{c'_{i}}$ is a rendition of the vortex society of $c'_{i}$ in $\Delta_{c'_{i}}$ with breadth at most $x_{i}$ and depth at most $y.$
    \end{enumerate}
    Moreover, if $W$ $b$-represents $\chi$-$\mathsf{col}(\delta)$ in $\delta$, then $W'$ $b$-represents $\chi$-$\mathsf{col}(\delta) \cup \Ocal$ in $\delta'$.
    \item Otherwise, there exists a $\chi$-respectful $\rho$-pruning of $(G, \delta, W)$.
\end{enumerate}
Moreover, there exists an algorithm that finds one of the two outcomes above in time $\poly(t_{0} + s_{0} + \ell + b) \cdot |E(G)||V(G)|$.
\end{lemma}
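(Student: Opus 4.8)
The plan is to exploit that the transaction $\Pcal$ has order $p = 3t + 2s + 2$, which is large enough to be cut into several disjoint pieces playing different roles. First I would start from the residual rendition $\rho^{*}$ and the residual vortices $c^{*}_{1}, c^{*}_{2}$ supplied by the hypothesis: since $\Pcal$ is $\rho$-proper its strip rendition is vortex-free, so every one of the $x$ vortices of $\rho$ lies on exactly one side of the strip. This immediately yields the splitting $x = x_{1} + x_{2}$ of conclusion (c); taking $\rho_{i}$ to be the restriction of $\rho$ to the side of $c^{*}_{i}$ (enlarged so that $c^{*}_{i}$ together with that side's $\rho$-vortices becomes a single vortex cell $c'_{i}$), each $\rho_{i}$ is a rendition of the vortex society of $c'_{i}$ of breadth at most $x_{i}$ and depth at most $y$, the depth bound holding because $\rho$ has depth at most $y$ and the strip contributes nothing. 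I would also record $\Ocal$ as the set of new colours (those not in $\chi\text{-}\mathsf{col}(\delta)$) occurring in the common $\chi$-colouring of the parcels of $\Pcal$, which by hypothesis has $|\Ocal| \le \ell$.

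Next I would build the new walloid $W'$ piece by piece, exactly as the numerology of $d$, $t$, $s$ prescribes. Discarding the two boundary paths of $\Pcal$, I would reserve one block of the remaining paths, together with the corresponding cycles of the base annulus and rainbow paths of $W$, to rebuild a downscaled surface wall and downscaled copies of the old flap segments of $W$ of order $t_{0}$ (this is the downscaling already used in \autoref{lem:representation_step}); two blocks of $s$ paths each, together with $s$ of the nest cycles $C_{1}, \dots, C_{s}$ which $\Pcal$ crosses orthogonally and with the radial linkage of the original vortex segment re-routed via \autoref{lem:orthogonal_transaction} and \autoref{lem:coterminal_radial_linkage}, to form the nests and radial linkages of two $((r+t_{0}) \times t_{0}, s_{0})$-vortex segments around $c^{*}_{1}$ and $c^{*}_{2}$; and a middle block of at least $\ell b (2t_{0} + 12)$ consecutive paths. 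On this middle block, orthogonality of $\Pcal$ to the nest lets me read off a $((t_{0}+12) \times p')$-storage segment whose rows are consecutive nest cycles and whose columns are transaction paths, and $\chi$-homogeneity of $\Pcal$ makes all its plots carry the same colouring, so the storage segment is $\chi$-homogeneous in $\delta_{H,\rho}$ with plot-colouring equal to that of the parcels of $\Pcal$.

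In the main case --- when $\Ocal \ne \emptyset$, or when $\Ocal = \emptyset$ but each $c^{*}_{i}$ still carries a $\rho$-vortex or a new-colour simple cell --- I would, if $\Ocal \ne \emptyset$, apply \autoref{lem:storage_to_flaps} to extract $|\Ocal| \cdot b$ many $(t_{0} \times t_{0})$-flap segments whose hyperedges realise simple cells of $\delta_{H,\rho}$ and which contain, for each $\alpha \in \Ocal$, an interval of $b$ consecutive flap segments each representing $\alpha$ (these live inside the vortex segment society, in the influence of the storage segment's perimeter); then append these flap segments to the downscaled wall, append the two new vortex segments, and finally invoke \autoref{lem:moving_flaps} with $\ell b$ in place of $\ell$ to slide the extracted flap segments to the left of both vortex segments --- the budget $d = 2\ell b(2t_{0}+12)$ baked into $t$ and $s$ is exactly what that lemma consumes. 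Letting $\delta'$ be $\delta^{*}$ with $c^{*}_{1}, c^{*}_{2}$ replaced by $c'_{1}, c'_{2}$ (absorbing the side vortices and the simple cells that end up inside them), conclusions (a)--(c) are immediate, the schema axioms $\mathbf{S_{1}}$--$\mathbf{S_{8}}$ are inherited since every new fragment of $W'$ lies in $W$ or in the influence of a cell of $\delta$, control by $\Tcal_{W}$ holds because the base annulus of $W'$ is a subgraph of that of $W$, and the $b$-representation of $\chi\text{-}\mathsf{col}(\delta) \cup \Ocal$ follows from the downscaled old flaps plus the new flaps plus the fact that \autoref{lem:moving_flaps} preserves representation. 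When $\Ocal = \emptyset$ but both sides are charged, the same assembly is run without the extraction step and the new colours survive inside $c'_{1}, c'_{2}$. Otherwise $\Ocal = \emptyset$ and, say, the side of $c^{*}_{1}$ carries neither a $\rho$-vortex nor a new-colour simple cell; then I would instead produce a $\chi$-respectful $\rho$-pruning by re-routing the outer nest cycle along a boundary path of $\Pcal$ (and subpaths of nest cycles) so as to cut the uncharged side off the vortex segment society, which either strictly shrinks $\Delta_{H}$ or strictly shrinks the graph inside one cycle of the nest, exactly as in the pruning built in \autoref{lem:exposed_transaction}; respectfulness holds since the excised part contains no new colour.

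I expect the main obstacle to be purely organisational: showing that the chosen blocks of $\Pcal$, the reserved nest cycles and the reserved part of the radial linkage can be taken pairwise disjoint and are simultaneously large enough for the downscaled walloid, the storage segment, the two new vortex segments and all re-routings --- which is precisely what the explicit values of $d$, $t$, $s$, $p$ encode --- and then patiently verifying that the surgically assembled triple $(G, \delta', W')$ still satisfies every clause of the definition of a $\Sigma$-schema, that $\delta'$ coarsens the relevant decomposition, and that $b$-representation is destroyed at no step. All ingredients are constructive and run in the times stated in their respective lemmas, so the whole procedure runs in time $\poly_{\ell, b}(t_{0} + s_{0})\,|E(G)||V(G)|$.
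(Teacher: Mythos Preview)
Your proposal is correct and follows essentially the same five-block decomposition of $\Pcal$ as the paper (which names the blocks $\mathcal{T}_1, \mathcal{S}_1, \mathcal{Q}, \mathcal{S}_2, \mathcal{T}_2$ and organises the case analysis via a single ``flag'' bit): the middle block $\mathcal{Q}$ together with the inner nest cycles yields the storage segment, the $\mathcal{S}_i$-blocks combined cycle-by-cycle with old nest cycles form the two new nests, the $\mathcal{T}_i$-blocks supply the new radial linkages, and \autoref{lem:storage_to_flaps} followed by \autoref{lem:moving_flaps} finishes the split case. Two minor discrepancies worth noting: the new radial linkages come from the $\mathcal{T}_i$-blocks and are made end-coterminal with $\mathcal{R}$ via \autoref{lem:coterminal_radial_linkage} alone (\autoref{lem:orthogonal_transaction} is not invoked in this proof), and in the pruning step the paper re-routes \emph{every} nest cycle $C_i$ along its own dedicated path $P_{t+2+i}$ from $\mathcal{S}_1$ rather than only the outermost cycle along a boundary path, which is what actually guarantees the strict shrinking required by the definition of pruning.
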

\begin{proof}

We break the proof into five steps as follows.
During the proof we keep one bit of information called the \emph{flag} which will determine which of the two outcomes of the lemma we end up in.
In case the bit is $1,$ i.e., the \emph{flag is set}, we find the first outcome, otherwise, we will end up with a pruning of $(G, \delta, W).$
In the beginning, our bit equals $0$ and the flag is not set.
\begin{description}

    \item[Step 1:] This step is to establish the general setup.
    We divide $\mathcal{P}$ into five transactions (see \cref{fig:partition_transaction}) which will be used for different parts of the construction later on.
    
    \item[Step 2:] We then check if $\mathcal{O}$ is non-empty.
    If this is true we set the flag and introduce an intermediate setup that will allow us to extract a large storage segment that will later be used to $b$-represent the colors in $\Ocal$.
    Otherwise, the flag remains unset and we move on.
    
    \item[Step 3:] For each of the two residual vortices $c^{*}_{1}$ and $c^{*}_{2}$ of $\rho^{*}$, we test if either \textsl{a)} $\rho^{*}_{i}$ has a vortex, or \textsl{b)} there is a simple cell $c_{i}$ of $\rho$ contained in $c^{*}_{i}$ such that $\chi(c_{i}) \notin \chi$-$\mathsf{col}(\delta)$.
    If for each $c^{*}_{i}$, $i \in [2]$, one of \textsl{a)} or \textsl{b)} holds and the flag is unset, we set the flag.
    Otherwise, we do not touch the flag and move on.
    
    \item[Step 4:] If the flag has been set in one of the previous steps we now extract a storage segment, which will be converted in a sequence of flaps by \cref{lem:storage_to_flaps} each $b$-representing a different color of $\Ocal$, and two new vortex segments from the original vortex segment.
    This yields the first outcome of the assertion and our case distinction terminates here.
    
    \item[Step 5:] This step is only reached if the flag is not set at any point.
    By our assumptions on $(H, \Omega)$ this implies that exactly one of $c^{*}_{i}$, $i \in [2]$, satisfies \textsl{a)} or \textsl{b)} from \textbf{Step 3} and the $\chi$-coloring of the parcels of $\mathcal{P}$ is a subset of $\chi$-$\mathsf{col}(\delta)$.
    In this case, we make use of $\mathcal{P}$ to find a $\chi$-respectful $\rho$-pruning of $(G, \delta, W).$
    
\end{description}

\begin{figure}[h]
\centering
\includegraphics{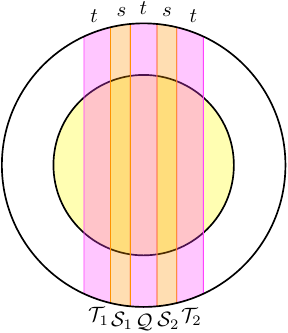}
\caption{\label{fig:partition_transaction}The partition of the transaction $\mathcal{P}$ into five smaller transactions in the proof of \cref{lem:splitting_extracting}.}
\end{figure}

We now dive directly into discussing the details of each step.

\smallskip
\textbf{Step 1: Setup.} Let $\{ C_{0}, \ldots, C_{s+1} \}$ be the nest and $\Rcal$ be the radial linkage of the vortex segment of $W$ corresponding to $(H, \Omega)$.
Let $\Ccal = \{ C_{1}, \ldots, C_{s} \}$.
Let $I_1$ and $I_2$ be the two minimal segments of $\Omega$ such that $\mathcal{P}$ is a $V(I_1)$-$V(I_2)$-linkage.
Let $p_1, p_2, \ldots, p_{p}$ be the endpoints of the paths in $\mathcal{P}$ in $V(I_1)$ ordered according to their appearance in $\Omega,$ and $p'_1, p'_2, \ldots, p'_p$ be the endpoints of the paths in $\mathcal{P}$ in $V(I_2)$.
We may number $\mathcal{P} = \{ P_1, P_2, \ldots, P_p \}$ such that $P_i$ has endpoints $p_i$ and $p'_i$ for each $i \in [p]$.
With this, we are ready to define the five transactions that provide the framework for all following construction steps.
\begin{align*}
\mathcal{T}_1 & \coloneqq \{ P_2, \ldots, P_{t+1}\}\\
\mathcal{S}_1 & \coloneqq \{ P_{t+2}, \ldots, P_{t+s+1}\}\\
\mathcal{Q} &\coloneqq \{ P_{t+s+2}, \ldots, P_{2t+s+1} \}\\
\mathcal{S}_2 & \coloneqq \{ P_{2t+s+2}, \ldots, P_{2t+2s+1}\}\\
\mathcal{T}_2 & \coloneqq \{ P_{2t+2s+2}, \ldots, P_{3t+2s+1} \}
\end{align*}
Notice that, with $\mathcal{P}$ being $\chi$-homogeneous, each of the five transactions above is also $\chi$-homogeneous and any member of the $\chi$-coloring of a parcel of one of these transactions belongs to the $\chi$-coloring of every parcel of any of these five transactions.
See \cref{fig:partition_transaction} for an illustration.

We also define two disks $\Delta_1$ and $\Delta_2$.
Let $U_1$ be the cycle consisting of a subpath of $C_{2(t_{0} + 14) + s_{0} + 2d + 2}$  together with a subpath of $P_{t+s+1}$ whose $\mathsf{nodes}_{\delta^{*}}(C_s)$-avoiding disk also avoids the paths in $\mathcal{Q}$.
Similarly, let $U_2$ be the cycle consisting of a subpath of $C_{2(t_{0} + 14) + s_{0} + 2d + 2}$ together with a subpath of $P_{2t+s+2}$ whose $\mathsf{nodes}_{\delta^{*}}(C_s)$-avoiding disk also avoids the paths in $\mathcal{Q}$.
Then, for each $i \in [2]$, we define $\Delta_i$ to be the $\mathsf{nodes}_{\delta^{*}}(C_s)$-avoiding disk bounded by the trace of $U_i$.
The two disks $\Delta_i$ are our candidates for the new vortex segments societies.

Finally, notice that for each $i \in [2]$, $c^{*}_{i}$ is fully contained in $\Delta_{i}$, and since $\rho$ has breadth at most $x$ and depth at most $y$, there exist non-negative integers $x_{1}$ and $x_{2}$ such that $x_{1} + x_{2} = x$ and the rendition $\rho^{*}_{i}$ of the vortex society of $c^{*}_{i}$ has breadth at most $x_{i}$ and depth at most $y$.
Moreover, let $c_{0}$ be the single vortex cell of the $\delta$-canonical rendition of $(H, \Omega)$.

\smallskip
\textbf{Step 2: Preparing the storage segment.} In case $\mathcal{O}$ is empty we completely skip this step and move on to \textbf{Step 3}.

So for the following let us assume that $\mathcal{O}$ is non-empty.
In this case, we \textbf{set the flag}.
Moreover, we construct a certain part of the infrastructure which can easily be turned into a large storage segment later.
To this end, we start by selecting a linkage $\mathcal{Q}'$ as follows.
For each $Q \in \mathcal{Q}$ let $Q'$ be the shortest subpath of $Q$ which starts in $V(I_1)$, contains an edge which is completely drawn inside $c_0$, and ends on $C_{2(t_{0} + 14)}$.
Then $\mathcal{Q}' \coloneqq \{ Q' \mid Q \in \mathcal{Q} \}.$
Notice that, with respect to $\mathcal{C}' \coloneqq \{ C_{2(t_{0} + 14) + 1}, \ldots C_{s} \}$, $\mathcal{Q}'$ is an orthogonal radial linkage.
Moreover, the drawing of any path in $\mathcal{Q}'$ is disjoint from $\Delta_1\cup \Delta_2$.

Now, we may select two paths $T_{2(t_{0} + 14)}$ and $T'_{2(t_{0} + 14)}$ as follows:
Let $q^+_1$ and $q^+_2$ be the endpoint of $P_{t+s+2}'$ and $P_{2t+s+1}'$ on $C_{2(t_{0} + 14)}$ respectively.
Then $C_{2(t_{0} + 14)}$ contains a unique $q^+_1$-$q^+_2$-subpath which contains all endpoints of $\mathcal{Q}'$ on $C_{2(t_{0} + 14)}$.
Let $T_{2(t_{0} + 14)}$ be this subpath.
Similarly, let $q^-_1$ and $q^-_2$ be the first vertex of $P_{t+s+2}'$ and $P_{2t+s+1}'$ on $C_{2(t_{0} + 14)}$ respectively which is encountered when traversing these paths starting in $V(I_1)$.
Then $C_{2(t_{0} + 14)}$ contains a unique $q_1^{-}$-$q_2^{-}$-subpath which does not contain any endpoint of $\mathcal{Q}'$ on $C_{2(t_{0} + 14)}$.
Let be this subpath.
Now notice that there exists a $\delta$-aligned disk $\Delta'$ which intersects the drawing of $G$ exactly in the nodes corresponding to the set  $$V(T_{2(t_{0} + 14)}) \cup V(P_{t+s+2}')\cup V(P_{2t+s+1}')\cup V(T'_{2(t_{0} + 14)}).$$
Let $G'$ be the subgraph of $G$ drawn in the disk $\Delta'.$

Then, for every $j \in [2(t_{0} + 14)]$, $C_j \cap G'$ contains two disjoint $V(P_{t+s+2}')$-$V(P_{2t+s+1}')$-subpaths, say $T_{j}$ and $T_{j}'$.
Now, observe that $\cupall \mathcal{Q} \cup\{ T_j \cup T'_{j} \mid j \in [2(t_{0} + 14)] \}$ is a system of $4(t_{0} + 12)$ horizontal paths and $t$ vertical paths which are pairwise orthogonal which, since $t \geq 8d$, can be seen to contain as a subgraph a $((2(t_{0} + 12) + 1) \times 2d)$-wall segment $W_Q$ together with a linkage $\mathcal{Q}''$ of order $2d$ such that each path in $\mathcal{Q}''$ is the continuation of a vertical path of $W_{Q}$ to $V(I_{1})$ and contains an edge which is drawn in the interior of $c_{0}$.
Moreover, $W_{Q}$ is disjoint from any cycle in $\Ccal'$.
Finally, notice that for every plot $X$ of $\mathcal{Q}$ there exists a $\delta$-aligned disk whose boundary intersects the drawing of $\rho$ only in vertices of $V(I_1)$, the bottom horizontal path of $W_Q$, and two consecutive members of $\mathcal{Q}''$ such that $X$ is contained in this disk.

With this our preparations are complete.
See \cref{fig:split_vortex_segment} for an illustration of some of the objects defined above.
Notice that, if we made $\mathcal{Q}''$ end-coterminal with $\mathcal{R}$ within the partial nest $\mathcal{C}'$ we would immediately obtain a $((t_{0} + 12) \times 2d)$-storage segment.
Moreover, by construction $W_{Q}$ is $\chi$-homogeneous in $\delta^{*}$ and the $\chi$-coloring of the plots of $\Qcal$ contain $\Ocal$.

\smallskip
\textbf{Step 3: Preparing the ``split''.} For each $i \in [2],$ let $\Ocal_{i} \coloneqq \chi$-$\mathsf{col}(\rho_{i}) \setminus \chi$-$\mathsf{col}(\delta)$.
We say that $c^{*}_{i}$ is \emph{promising} if $\rho^{*}_{i}$ has a vortex or $\mathcal{O}_{i}$ is non-empty.
If there is $i \in [2]$ such that $c^{*}_{i}$ is not promising, we move on to the next step.
Otherwise, $c^{*}_{i}$ is promising for both $i \in [2]$.
In this case, we \textbf{set the flag} (if it has not been set yet) and move on to the next step.

\begin{figure}[h]
\centering
\includegraphics{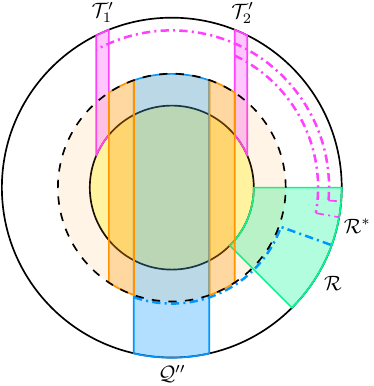}
\caption{\label{fig:split_vortex_segment}The ``splitting'' of a vortex segment into one large storage segment represented by the blue area and two smaller vortex segments whose nests are represented in orange and radial linkages in magenta in the proof of \cref{lem:splitting_extracting}.}
\end{figure}

\smallskip
\textbf{Step 4: The flag has been set.} If we reach this point and the flag has not been set so far, we directly move on to \textbf{Step 5}.
Otherwise, the flag has been set which means that $\mathcal{O}$ is non-empty and/or $c^{*}_{i}$ is promising for both $i \in [2]$.
Let $I \subseteq [2]$ be the maximal set such that $c^{*}_i$ is promising for every $i \in I$.

We now fix the following radial linkages which will be used to connect the newly constructed objects to a large portion of the original walloid $W$.
For every $Q\in\mathcal{Q}''$ as constructed in \textbf{Step 2} let $Q'$ be the shortest $V(I_1)$-$\pi_{\rho}(\mathsf{nodes}_{\rho}(C_{2(t_{0} + 14) + 1}))$-subpath of $Q$.
We set
\begin{align*}
    \mathcal{Q}''' \coloneqq \{ Q' \mid Q \in \mathcal{Q}'' \}.
\end{align*}
For every $T \in \mathcal{T}_1 \cup \mathcal{T}_2$ let $T'$ be the shortest $V(I_2)$-$\pi_{\rho}(\mathsf{nodes}_{\rho}(C_{2(t_{0} + 14) + 1}))$-subpath of $T$.
We then set
\begin{align*}
    \mathcal{T}_1' &\coloneqq \{ P_i' \mid i \in [2, 2t_{0} + 4d + 1] \}\text{ and}\\
    \mathcal{T}_2' &\coloneqq \{ P_i' \mid i \in [3t + 2s + 1 - 2{t_{0}} - 4d, 3t + 2s + 1] \}.
\end{align*}
In total this means that $\mathcal{R}' \coloneqq \mathcal{Q}''' \cup \mathcal{T}_1' \cup \mathcal{T}_2'$ is an orthogonal radial linkage of order $4t_0 + 10d$.
We now call upon \cref{lem:coterminal_radial_linkage} on the (partial) nest $\{ C_{s_{0} + 2(t_{0} + 14) + 2d + 3} , \ldots, C_{s} \}$ to obtain an orthogonal radial linkage $\mathcal{R}^*$ from $\mathcal{R}'$ such that $\mathcal{R}^*$ is start-coterminal with $\Rcal'$, end-coterminal with $\Rcal$, and $V(\mathcal{R}^*) \subseteq V(\mathcal{R}) \cup V(\mathcal{R}') \cup \bigcup_{i \in [s_{0} + 2(t_{0} + 14) + 2d + 3, s]} V(C_i)$.
See \cref{fig:split_vortex_segment} for an illustration.
For each $i \in I$, let $\mathcal{T}^{*}_{i}$ be the inclusion-maximum subset of $\Rcal^{*}$ that is start-coterminal with $T'_{i}$ and $\Qcal^{*} \coloneqq \Rcal^{*} \setminus (T^{*}_{1} \cup T^{*}_{2})$.

Next, we need to create new nests for each of the future vortex segments arising from $c^{*}_i$, $i \in I.$
We will define, for each $i \in [I],$ a cycle family $\mathcal{C}_i'$ of $s_{0} + 2d + 2$  many cycles ``around'' $c^{*}_{i}$.
Fix some point $x_1$ in the interior of $c^{*}_1$ which belongs to $c_0$ and is disjoint from the disk defined by the strip of $\mathcal{P}$.
For each $j \in [s_{0} + 2d + 2]$, let $C^1_{j}$ be the cycle contained in $C_{2(t_{0} + 14) + j} \cup P_{t + 1 + j}$ which is fully drawn within $\Delta_1$ and which separates $x_1$ from the trace of $C_s$.
Then
\begin{align*}
    \mathcal{C}_1'\coloneqq \{ C^1_j \mid j \in [s_{0} + 2d + 2] \}.
\end{align*}
Similarly, we fix some point $x_2$ in the interior of $c^{*}_2$ which belongs to $c_0$, is fully drawn within $\Delta_2$, and does not belong to the disk defined by the strip of $\mathcal{P}$.
Then, for each $j \in [s_{0} + 2d + 2]$, let $C^2_{j}$ be the cycle contained in $C_{2(t_{0} + 14) + j} \cup P_{2t+s+1+j}$ whose trace separates $x_2$ from the trace of $C_s$.
Then 
\begin{align*}
    \mathcal{C}_2' \coloneqq \{ C^2_j \mid j \in [s_{0} + 2d + 2] \}.
\end{align*}

Next, notice that by construction $\mathcal{T}^{*}_{i}$ is also orthogonal to $\mathcal{C}_i'$.
Similarly, we define the nest $$\mathcal{C}_i^* \coloneqq \{ C^i_2, \ldots, C^i_{s_0 + 2d +2}\}.$$
Finally, we select $c'_i$ to be the cell obtained from the disk bounded by the trace of $C^i_1$.
We know that for each $i \in [2] \setminus I$, the rendition $\rho^{*}_{i}$ is vortex-free and $\chi$-$\mathsf{col}(\rho'_{i}) \subseteq \chi$-$\mathsf{col}(\delta)$.
It follows that we can augment $\rho^{*}$ to a $\delta$-compatible rendition $\rho'$ of $(H, \Omega)$ in $\Delta_{H}$ where exactly $\{ c'_i \mid i \in I \}$ are the vortex cells of $\rho'$ and every other cell is a cell of $\rho$.
Moreover, since the renditions $\rho^{*}_{1}$ and $\rho^{*}_{2}$ have breadth at most $x_{1}$ and $x_{2}$ respectively, and both have depth at most $y,$ it follows from our construction that for each $i \in I$, $\rho \cap \Delta_{c'_{i}}$ is a rendition of the vortex society of $c'_{i}$ in $\Delta_{c'_{i}}$ with breadth at most $x_{i}$ and depth at most $y$ as required.

What remains is to describe the final step of the construction.
That is, we have to describe how to obtain the desired $(r, t_{0}, s_{0})$-$\Sigma$-schema.
The first thing to do is observe that by a simple planarity argument, in each of $T^{*}_{1}$, $T^{*}_{2}$ and $\Qcal^{*}$, there must be $t_{0} + 2d$, $t_{0} + 2d$, and $d$ members corresponding to disjoint minimal segments on $V(\Omega)$.
We keep these paths and the rest we discard.
Next, we extend these paths all they way down through the cycles of our walloid.
The infrastructure described above yields, an $((t_{0} + 12) \times d)$-storage segment and for each $i \in I$ a $((r + t) \times (t_{0} + 2d), (s_{0} + 2d))$-vortex segment.
Now, \cref{lem:storage_to_flaps} allows us to transform the storage segment into a sequence of $(t_{0} \times t_0)$-flap segments such that for each member of $\mathcal{O}$ there exist $b$ consecutive segments that represent that member.
Moreover, by extending their vertical paths all the way down through the cycles of our walloid these can be seen as $((r + t) \times t_{0})$-flap segments.
Together with the (up to two) new vortex segments we have now reached the situation of \cref{lem:moving_flaps}.
By applying \cref{lem:moving_flaps} at most two times and then forgetting about some of the excess infrastructure of the walloid $W$ and its segments, we finally obtain the desired $(r, t_{0}, s_{0}, \text{-}, \text{-})$-$\Sigma$-schema $(G, \delta', W')$, where $\delta'$ is obtained by combining $\delta$ with $\rho'$.
It follows, by construction,  that all necessary conditions are met.

\smallskip
\textbf{Step 5: Pruning.} All that is left is to discuss the case where the flag was not set.

In the case where neither $c^{*}_1$ nor $c^{*}_2$ is promising it follows that $\rho$ does not have a vortex and that the set $\chi$-$\mathsf{col}(\rho) \setminus \chi$-$\mathsf{col}(\delta)$ is empty.
By assumption this cannot happen and as we shall see later in this case the entire vortex segment can be safely forgotten to begin with and thus we may ignore this case.

The only remaining option is that exactly one of $c^{*}_1$ or $c^{*}_2$ is promising.
Without loss of generality we may assume that $c^{*}_2$ is not promising.

For every path $T$ in $\mathcal{T}_1$ let $T'$ be a shortest $V(I_2)$-$\pi_{\rho}(\mathsf{nodes}_{\rho}(C_0))$-subpath of $T$.
We set
\begin{align*}
    \mathcal{T} \coloneqq \{T' \mid T \in \mathcal{T}_1 \}.
\end{align*}
Then $\mathcal{T}$ is a radial linkage of order $t$ that is orthogonal to $\mathcal{C}$.
We then apply \cref{lem:coterminal_radial_linkage} and obtain an orthogonal radial linkage $\mathcal{L}$ that is start-coterminal with $\Tcal$ and end-coterminal with $\mathcal{R}$.

Next we define our new nest.
This construction is similar to the construction from \textbf{Step 4}.
We begin by fixing a point $x_1$ in the interior of $c^{*}_1 \cap c_0$ which avoids the disk defined by the strip of $\mathcal{P}$.
Now, for each $i \in [0, s+1],$ let $C_i^*$ be the cycle in $C_i \cup P_{t+2+i}$ whose trace in $\rho$ separates $x_1$ from the trace of $C_{s+1}$ in $\delta$.
Moreover, let $\mathcal{C}^* \coloneqq \{ C_i^* \mid i \in [0, s]\}.$

Now, by applying \cref{prop:orthogonal_radial_linkage} we may assume (up to possibly making the nest $\mathcal{C^{*}}$ even tighter) that $\mathcal{L}$ is orthogonal to $\mathcal{C}^{*}.$
Let $c_0^*$ be a cell obtained from the disk bounded by the trace in $\rho$ of $C_0^*$ which contains the point $x_1$.
It follows that $(H, \Omega)$ has a rendition $\rho'$ in the disk where $c^{*}_0$ is its only vortex cell, every other cell of $\rho'$ is a cell of $\rho$, and $\chi$-$\mathsf{col}(\rho') \subseteq \chi$-$\mathsf{col}(\delta)$.

With this we may augment $\delta$ to a $\Sigma$-decomposition $\delta'$ of $G$ by combining it with $\rho'$ and adjust $W$ by replacing the vortex segment with nest $\mathcal{C}$ and radial linkage $\mathcal{R}$ with a vortex segment with nest $\mathcal{C}^*$ and radial linkage $\mathcal{L},$ thereby obtaining a new walloid $W'.$
It is now straightforward to verify that $(G,\delta',W')$ is indeed a $\chi$-respectful $\rho$-pruning of $(G, \delta, W)$ since no path in $\mathcal{T}_2$ (except its first one), which is a $\rho$-exposed transaction in $(H, \Omega),$ belongs to any of the vortex segments of $(G, \delta', W')$.
With this, our proof is complete.
\end{proof}

\subsection{The coarsening lemma} \label{sec_coarsening_lemma}

Before we begin with the proof of \cref{lem:coarsening}, we fix three recursive functions $\mathsf{s},$ $\mathsf{t},$ and $\mathsf{p}.$
These functions represent the recursive application of \cref{lem:splitting_extracting} and ensure that the infrastructure of the final walloid has order $t^*,$ while its vortex segments are surrounded by nests of order $s^*$ for any choice of $t^*$ and $s^*.$

Let $t^{*} \in \Nbbb_{\geq 4}$ and $s^{*}, \ell, b, z \in \Nbbb_{\geq 1}$.
We define:
\begin{align*}
   \mathsf{t}(t^{*}, 0) \ &\coloneqq \ t^{*}\\
   \mathsf{t}(t^{*}, z) \ &\coloneqq \ (32 \ell b + 2)\mathsf{t}(t^{*}, z-1) + 192 \ell b + 2\text{, and}\\
   ~&~\\
   \mathsf{s}(s^*,t^*,0) \ &\coloneqq \ s^{*} + 6t^{*} + 30\\
   \mathsf{s}(s^*,t^*,z) \ &\coloneqq \ s^{*} + 1 + \mathsf{s}(s^{*}, t^{*}, z - 1) + (48 \ell b + 2)\mathsf{t}(t^{*}, z-1) + 288 \ell b + 30.
\end{align*}
From the recursive definitions of the functions above, one can derive the following observations on their order:
\begin{align*}
    \mathsf{t}(t^{*},z) \ &\in \ (\ell b + 1)^{\mathcal{O}(z)} \cdot t^{*}\text{ and}\\
    \mathsf{s}(s^{*},t^{*},z) \ &\in \ \Ocal(z \cdot s^{*} + z (\ell b + 1)^{\Ocal(z)} \cdot t^{*}).
\end{align*}

Moreover, whenever we start with a schema whose decomposition has breadth at most $x \in \Nbbb_{\geq 1}$ and depth at most $y \in \Nbbb_{\geq 2}$, the following upper bound $z^*$ on the maximum value $z$ may attain will follow from our proof:
\begin{align*}
    z^* \ \coloneqq \ x + \ell \cdot (\ell b + 1) \cdot 2^{\ell}
\end{align*}

We also fix a value $p^*$ which will act as an upper bound on the depth of the final schema as follows:
\begin{align*}
    p^* \ \coloneqq \ x^2y + (\ell b + 1)^{(x + b)2^{\Ocal(\ell)}} \cdot (s^{*} + t^{*})
\end{align*}

Based on these functions let us briefly provide explicit bounds for the three functions from the statement of \cref{lem:coarsening}.
\begin{align*}
f^1_{\ref{lem:coarsening}}(t^*, \ell, b, x) \coloneqq 2 \mathsf{t}(t^*, z^*) + \mathsf{s}(s^*,t^*,z^*) + 2 \ &\in \ \Ocal(x + b \cdot 2^{\Ocal(\ell)}) \cdot s^{*} + (\ell b + 1)^{\Ocal(x + b \cdot 2^{\Ocal(\ell)})} \cdot t^{*},\\
f^2_{\ref{lem:coarsening}}(\ell, b, x) &\coloneqq x + (\ell b + 1) \cdot 2^{\ell}\text{, and}\\
f^3_{\ref{lem:coarsening}}(t^*, \ell, b, x, y) \coloneqq p^{*} \ &\in \ x^2y + (\ell b + 1)^{(x + b)2^{\Ocal(\ell)}} \cdot (s^{*} + t^{*}).
\end{align*}

Please note that we do not derive these bounds explicitly in the proof below.
The bounds we provide will all be in terms of $z^*,$ $\mathsf{t},$ and $\mathsf{s},$ however, the bounds above then follow from the respective bounds on these functions.

With this, we are ready for the proof of \cref{lem:coarsening}.

\begin{lemma}[Coarsening lemma]\label{lem:coarsening}
There exist functions $f^{1}_{\ref{lem:coarsening}}$, $f^{3}_{\ref{lem:coarsening}} \colon \Nbbb^{6} \to \Nbbb$, and $f^{2}_{\ref{lem:coarsening}} \colon \Nbbb^{4} \to \Nbbb$ such that, for every $t^{*} \in \Nbbb_{\geq 3}$, every $s^{*}, \ell, b \in \Nbbb_{\geq 1}$, every $r, x, y \in \Nbbb$, every $(r, t' \coloneqq  f^{1}_{\ref{lem:coarsening}}(t^{*}, s^{*}, \ell, b, x), \text{-}, x, y)$-$\Sigma$-schema $(G, \delta, W)$ where $W$ is an $(r, t', \text{-})$-$\Sigma$-walloid with no vortex segments, and every cell-coloring $\chi$ of $\delta$ of capacity at most $\ell$ such that $W$ is $\chi$-homogeneous in $\delta$ and $b$-represents the $\chi$-coloring of every enclosure of $W$ in $\delta$, there is an $(r, t^{*}, s^{*}, f^{2}_{\ref{lem:coarsening}}(\ell, b, x), f^{3}_{\ref{lem:coarsening}}(t^{*}, s^{*}, \ell, b, x, y))$-$\Sigma$-schema $(G, \delta', W')$ controlled by $\Tcal_{W}$ such that
\begin{itemize}
\item $\delta' \sqsubseteq \delta$,
\item every vortex segment society of $(G, \delta', W')$ has depth at most $f^{3}_{\ref{lem:coarsening}}(t^{*}, s^{*}, \ell, b, x, y)$, and
\item $W'$ $b$-represents $\chi$ in $\delta'$.
\end{itemize}

Moreover, there exists an algorithm that finds the outcome above in time $\Ocal_{t^{*}, s^{*}, \ell, b, x, y}(|E(G)|^{2} |V(G)|)$.
\end{lemma}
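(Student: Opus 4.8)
The plan is to set up an induction that repeatedly invokes the machinery built in this section, peeling layers off the (single, then multiple) vortex segments until all vortex segment societies have bounded depth. First I would fix the recursive functions $\mathsf{s},\mathsf{t},\mathsf{p}$ as specified in the preamble to the lemma, and then I would observe that by \autoref{obs:single_vortex_segment_schema} we may start from the given $(r,t',\text{-},x,y)$-$\Sigma$-schema with a walloid having \emph{no} vortex segments and convert it into an $(r,\mathsf{t}(t^*,z^*-1),\mathsf{s}(s^*,t^*,z^*-1),1,\text{-})$-$\Sigma$-schema $(G,\delta_0,W_0)$ with a single vortex segment whose society $(H,\Omega)$ has a $\delta$-compatible rendition of breadth at most $x$ and depth at most $y$, still $\chi$-homogeneous and $b$-representing each enclosure; here $t'=f^1_{\ref{lem:coarsening}}(\cdot)$ is chosen large enough, in terms of $\mathsf{t}(t^*,z^*)$ and $\mathsf{s}(s^*,t^*,z^*)$, to absorb the $s+2$ fences we sacrifice. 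The key measure driving the induction is a lexicographic potential: first the total breadth (number of vortices summed over all vortex segment societies' renditions), then the number of colors in $\bigcup_i (\chi\text{-}\mathsf{col}(\rho_i)\setminus\chi\text{-}\mathsf{col}(\delta'))$ not yet $b$-represented, and finally $|E(G)|$ (to bound the number of prunings).

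The heart of the argument is the inductive step. Given a current $\Sigma$-schema $(G,\delta_i,W_i)$ with some vortex segment whose society $(H,\Omega)$ has a $\delta_i$-compatible rendition $\rho$ of breadth $\le x_i$ and depth $\le y$, and assuming either $\rho$ has a vortex or the set $\chi\text{-}\mathsf{col}(\rho)\setminus\chi\text{-}\mathsf{col}(\delta_i)$ is non-empty, I would: (1) extract a large transaction in $(H,\Omega)$ — this is possible because depth $\le y$ is not bounding its \emph{order}, only the order inside vortices — then (2) apply \autoref{lem:proper_transaction} to get a $\rho$-proper subtransaction, (3) apply \autoref{lem:exposed_transaction} to get either a $\chi$-respectful $\rho$-pruning (decrease $|E(G)|$, potential drops, recurse) or a $\rho$-exposed proper transaction, (4) apply \autoref{lem:orthogonal_transaction} to additionally make it orthogonal to a (possibly altered) nest, (5) apply \autoref{lem:homogeneous_transaction} to extract a $\chi$-homogeneous suborder of size exactly $p=3t+2s+2$ with the right $\mathsf{t},\mathsf{s}$ values, and finally (6) apply \autoref{lem:splitting_extracting}. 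That last lemma either returns a $\chi$-respectful $\rho$-pruning (recurse with smaller $|E(G)|$) or splits the vortex segment into two, with the sum of the two new breadths equal to $x_i$, each still of depth $\le y$, while additionally $b$-representing the colors in $\mathcal O$; crucially its hypothesis — that either $\rho$ has a vortex or $\chi\text{-}\mathsf{col}(\rho)\setminus\chi\text{-}\mathsf{col}(\delta)\ne\emptyset$, and that the two residual vortices are "promising" — is exactly what drives the first two coordinates of the potential strictly down in the non-pruning outcome (a vortex of positive depth is replaced by two vortices whose depths are bounded but whose count is controlled, or a new color is newly represented). I would track through the recursion that the order parameters degrade precisely according to $\mathsf{t}(t^*,z-1)$ and $\mathsf{s}(s^*,t^*,z-1)$ at recursion depth $z$, so that after at most $z^*=x+(\ell b+1)2^{\Ocal(\ell)}$ levels the process terminates; the termination condition — no vortex segment society has a vortex \emph{and} no unrepresented color remains — is exactly the statement that every vortex segment society now has depth at most $p^*=f^3_{\ref{lem:coarsening}}(\cdot)$ (each such society is then vortex-free in its canonical rendition, hence of depth bounded by the order of its nest plus a term coming from the rendition $\rho$; a short separate argument shows a vortex-free rendition forces depth $\le x^2y+\dots$ via \autoref{prop:cell_paths_to_pegs}-type routing) and that $W'$ $b$-represents $\chi$ everywhere. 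When a vortex segment society has terminated, \autoref{lem:moving_flaps} (already applied inside \autoref{lem:splitting_extracting}) guarantees the extracted flap segments sit to the left of all vortex segments, so the walloid remains a legitimate $(r,t^*,s^*)$-$\Sigma$-walloid throughout.

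For the bookkeeping I would maintain the invariants that $(G,\delta_i,W_i)$ is always controlled by $\Tcal_W$ (each step only passes to subgraphs of the base annulus, so the induced tangle is only truncated), that $\delta_i\sqsubseteq\delta$ (each step is a coarsening: $\rho$-prunings and the split both only displace cells into vortices, never create new simple cells with new colors, by the $\chi$-respectful/$\chi$-homogeneity conditions), and that $\chi$-homogeneity of the walloid's enclosures persists (the enclosures of $W_i$ are fences of $W$ with bricks among bricks of $W$). The running time is $\Ocal_{t^*,s^*,\ell,b,x,y}(|E(G)||V(G)|^2)$: there are $\Ocal(|E(G)|)$ prunings (each pushes an edge out of a nest cycle), between prunings at most $z^*$ non-pruning splits, and each of the constitutent lemmas runs in $\poly_{\cdots}(|E(G)|)$ or $\poly_{\cdots}(|E(G)||V(G)|)$ time, with the outermost $|V(G)|$ factor from \autoref{lem:splitting_extracting}. \textbf{The main obstacle} I anticipate is \textbf{designing the potential function and verifying it strictly decreases in every non-pruning branch}: \autoref{lem:splitting_extracting} replaces one vortex by two, so raw breadth is not monotone — one must argue via the "promising" dichotomy that either a color-deficit strictly drops or the \emph{multiset} of vortex depths strictly decreases in a suitable well-order, and then reconcile this bound with the explicit recursion $z^*=x+(\ell b+1)2^{\Ocal(\ell)}$. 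The second delicate point is ensuring the order parameters $t$, $s$, $p$ demanded by \autoref{lem:splitting_extracting} and \autoref{lem:moving_flaps} are met at every recursion level; this is exactly what the recursive definitions of $\mathsf{t}$ and $\mathsf{s}$ are engineered for, but threading them through the at-most-two applications of \autoref{lem:moving_flaps} per step, each costing $\Theta(\ell b\, t_0)$ in both parameters, requires care.
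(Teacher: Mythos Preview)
Your plan follows the paper's approach closely: start via \autoref{obs:single_vortex_segment_schema}, then iterate the pipeline \autoref{lem:proper_transaction} $\to$ \autoref{lem:exposed_transaction} $\to$ \autoref{lem:orthogonal_transaction} $\to$ \autoref{lem:homogeneous_transaction} $\to$ \autoref{lem:splitting_extracting}, treating prunings as harmless because they happen at most $\mathcal{O}(|E(G)|)$ times. Your bookkeeping (control by $\Tcal_W$, $\delta'\sqsubseteq\delta$, persistence of $\chi$-homogeneity, the role of \autoref{lem:moving_flaps}) is all accurate.

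The obstacle you flag is exactly the right one, and your lexicographic potential as stated does \emph{not} resolve it: a split with $\mathcal{O}=\emptyset$ where both residuals are promising preserves total breadth and the set of unrepresented colors, so nothing drops. The paper's fix is a separate pigeonhole step you are missing. Call a vortex segment \emph{flat} if its $\delta$-induced rendition is vortex-free; any non-useless flat segment contains a cell whose color is not yet in $\chi\text{-}\mathsf{col}(\delta_i)$. If at any point there are more than $(\ell b+1)2^{\ell}$ flat vortex segments, then by pigeonhole on the at most $2^{\ell}$ possible $\chi$-colorings of their $\delta$-induced renditions, some $\ell b+1$ of them share a coloring $\mathcal{F}'$; since $\mathcal{F}\coloneqq\mathcal{F}'\setminus\chi\text{-}\mathsf{col}(\delta_i)$ is non-empty, you can harvest $b$ flap segments for each color in $\mathcal{F}$ directly from $|\mathcal{F}|\cdot b$ of these flat segments (routing as in \autoref{subsec:routing_cells}), thereby strictly increasing $\chi\text{-}\mathsf{col}(\delta_i)$. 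With this step inserted, the correct induction parameter is $z$ = (number of remaining allowed non-pruning splits), bounded by $z^*$ because each split either (i) increases the count of non-flat vortex segments (at most $x$ times, since the original $\delta$ has only $x$ vortices to distribute), (ii) enlarges $\chi\text{-}\mathsf{col}(\delta_i)$ (at most $\ell$ times), or (iii) increases the count of flat segments, which the pigeonhole step caps at $(\ell b+1)2^{\ell}$ before forcing an occurrence of (ii). This is what makes $z^*=x+(\ell b+1)2^{\Ocal(\ell)}$ work.

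One minor point: your base-case termination criterion is slightly off. Termination for a given vortex segment is not ``its $\delta$-induced rendition is vortex-free and color-free'' but rather ``its inner society $(H',\Omega')$ (cut at cycle $C_{s'}$ for the appropriate $s'$) has depth below the threshold $p^*$''; when that fails you get the large transaction to feed into the pipeline, and when it holds you declare the disk bounded by $C_{s'}$ to be the new vortex cell and keep an $s^*$-nest around it.
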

\begin{proof}

We begin the proof with an application of \cref{obs:single_vortex_segment_schema}, thereby obtaining the implied $(r, \mathsf{t}(t^{*}, z^{*}), \mathsf{s}(s^{*}, t^{*}, z^{*}), 1, \text{-})$-$\Sigma$-schema $(G, \delta_{1}, W_{1})$.
Note that \cref{obs:single_vortex_segment_schema} implies that the single vortex segment society $(H, \Omega)$ of $(G, \delta_{1}, W_{1})$ has a $\delta$-compatible rendition $\rho_{H}^{\delta} \coloneqq \delta \cap \Delta_{H}$ in $\Delta_{H}$ with breadth at most $x$ and depth at most $y.$
For the remainder of this proof, we shall call $\rho_{H}^{\delta}$ the \emph{$\delta$-induced} rendition of $(H, \Omega)$.

We prove the following slightly stronger claim by induction on $z.$

\medskip
\noindent
\textbf{Inductive claim:} There exists an algorithm that, given an $(r, \mathsf{t}(t^{*}, z), \mathsf{s}(s^{*}, t^{*}, z), \text{-}, \text{-})$-$\Sigma$-schema $(G, \delta_{1}, W_{1})$ controlled by $\Tcal_{W}$ such that
\begin{itemize}
\item[a)] $\delta_{1} \sqsubseteq \delta,$
\item[b)] for every vortex segment society $(H_{1}, \Omega_{1})$ of $(G, \delta_{1}, W_{1}),$ $\rho^{\delta}_{H_{1}} \coloneqq \delta \cap \Delta_{H_{1}}$ is a $\delta$-compatible rendition of $(H_{1}, \Omega_{1})$ in $\Delta_{H_{1}}$ with breadth at most $x$ and depth at most $y,$ and 
\item[c)] $W_{1}$ is $\chi$-homogeneous in $\delta_{1}$ and $b$-represents the $\chi$-coloring of every enclosure of $W_{1}$ in $\delta_{1}$,
\end{itemize}
in time $\mathcal{O}_{t, s, \ell, b, x, y}(|E(G)|^{2} |V(G)|),$
outputs an $(r, t^{*}, s^{*}, z^{*}, p^{*})$-$\Sigma$-schema $(G, \delta_{2}, W_{2})$ controlled by $\Tcal_{W}$ such that
\begin{itemize}
\item[1)] $\delta_{2} \sqsubseteq \delta$ and
\item[2)] $W_{2}$ $b$-represents $\chi$ in $\delta_{2}.$
\end{itemize}

Here $z$ represents an upper bound on the number of times \cref{lem:splitting_extracting} can yield the first outcome.
The two main points here are that, since $\delta$ has breadth at most $x$, repeated applications of \cref{lem:splitting_extracting} can never produce strictly more than $x$ vortex segments whose vortex segment society admits a $\delta$-induced rendition that has a vortex.
Moreover, for any vortex segment with society $(H, \Omega)$ such that $\rho^{\delta}_{H}$ is vortex-free, the set $\chi$-$\mathsf{col}(\rho^{\delta}_{H})$ must contain a color that does not belong to $\chi$-$\mathsf{col}(\delta')$, where $\delta'$ is the decomposition of the input schema of the respective application of \cref{lem:splitting_extracting}, since otherwise we could simply discard the vortex segment as it does not offer anything in terms of representing $\chi$ in the target decomposition.

Under these principles, there are three situations under which an application of \cref{lem:splitting_extracting} on $(G, \delta', W')$ does not yield a pruning (which is a situation we will handle online).
\begin{enumerate}
    \item the outcome increases the number of vortex segments whose $\delta$-induced renditions have a vortex,
    \item the outcome produces a sequence of $b$ flap segments representing a color that did not belong to $\chi$-$\mathsf{col}(\delta')$, or
    \item the outcome increases the number of ``flat'' vortex segments whose $\delta$-induced renditions contain some simple cell whose color under $\chi$ does not belong to $\chi$-$\mathsf{col}(\delta')$.
\end{enumerate}

As discussed above, the first case can only occur $x$ times.
Moreover, the second case can occur at most $\ell$ times.
For the third case, we will see that whenever the total number of ``flat'' vortex segments goes above a certain threshold we are able to apply a pigeonhole argument to turn some of them into a sequence of $b$ flap segments representing some not yet represented color and thereby increasing $\chi$-$\mathsf{col}(\delta'')$, where $\delta''$ is the decomposition of the resulting schema of the respective application of \cref{lem:splitting_extracting}.
For this reason, also the third case can only occur a bounded number of times.

\smallskip
\textbf{Base of the induction:} For the base of the induction, we consider the case $z = 0.$
This case reflects the situation where $(G, \delta_{1}, W_{1})$ has $x$ vortex segments whose $\delta$-induced renditions have a vortex and where $\chi$-$\mathsf{col}(\delta_{1}) = \chi$-$\mathsf{col}(\delta)$.
In this situation, by the discussion above, there cannot exist a vortex segment whose $\delta$-induced rendition is vortex-free.
We proceed to show how $(G, \delta_{1}, W_{1})$ can be turned into a $(t^{*}, s^{*} + 6t^{*} + 30)$-$\Sigma$-schema satisfying the desired properties.

Let $(H, \Omega)$ be the vortex segment society of some vortex segment of $(G, \delta_{1}, W_{1})$.
Let $\mathcal{C} = \{ C_0, \ldots, C_{s^{*} + 6t^{*} + 30} \}$ be the nest of $(H, \Omega)$ and let $\Delta$ be the $\mathsf{nodes}_{\delta_{1}}(C_{s^{*} + 6t^{*} + 29})$-avoiding disk defined by the trace of $C_{6t^{*} + 30}.$
Moreover, let $(H', \Omega')$ be the society defined by $\Delta$ and consider the $\delta$-induced rendition $\rho^{\delta}_{H'}$ of $(H', \Omega')$ in $\Delta$.
Notice that $\mathcal{C}' \coloneqq \{ C_{0}, \ldots, C_{6t^{*} + 29} \}$ is a nest in $\rho^{\delta}_{H'}$ and by cropping the radial linkage of $(H, \Omega)$ we obtain a radial linkage $\mathcal{R}'$ of order $t^{*}$ for $(H', \Omega')$.
If $(H', \Omega')$ is of depth at most
\begin{align*}
    & 12t^{*} + 60 + (x + 1) \big( 2xy + (6t^{*} + 31)((15t^{*} + 60)^{2^{\ell}} + 2) \big) - 1\\
    =~ & 2s' + 2 + (x + 1) \big( 2xy + (s' + 1)((3t^{*} + 2s' + 2)^{2^{\ell}} + 2)  \big) - 1
\end{align*}
where $s' \coloneqq 6t^{*} + 30 = |{\mathcal{C}}'|,$ we are done.
Hence, we may assume that there exists a transaction $\mathcal{P}_0$ of order $12t^{*} + 60 + (x+1)\big( 2xy + (6t^{*} + 31)((15t^{*} + 60)^{2^{\ell}} + 2) \big)$ in $(H', \Omega')$.
By \cref{lem:exposed_transaction}, there is either a $\chi$-respectful $\rho^{\delta}_{H'}$-\textbf{pruning} of $(G, \delta_{1}, W_{1}),$ or there exists a transaction $\mathcal{P}_1 \subseteq \mathcal{P}_0$ of order $(x+1)\big( 2xy + (6t^{*} + 12)((15t^{*} + 60)^{2^{\ell}} + 2) \big)$ which is $\rho^{\delta}_{H'}$-exposed.
By \cref{lem:proper_transaction}, $\mathcal{P}_1$ contains a transaction $\mathcal{P}_2$ of order $(6t^{*} + 31)((15t^{*} + 60)^{2^{\ell}} + 2)$ that is $\rho^{\delta}_{H'}$-proper.
Then, by \cref{lem:orthogonal_transaction}, we can assume that (up to possibly slightly changing the cycles in $\mathcal{C}'$ and the paths in the radial linkage $\mathcal{R}'$) that there exists a transaction $\mathcal{P}'_{2}$ of order $(15t^{*} + 60)^{2^{\ell}}$ that is $\rho^{\delta}_{H'}$-splitting, which we can find in time $\Ocal(|E(G)|).$
Next we apply \cref{lem:homogeneous_transaction} to obtain a transaction $\mathcal{P}_3 \subseteq \mathcal{P}'_2$ of order $15t^{*} + 60 = 3t^{*} + 2s' + 2$ where $s' + 1 = |\mathcal{C}'|$ that is $\chi$-homogeneous in $\rho^{\delta}_{H'}$.
This finally allows us to use \cref{lem:splitting_extracting} on $(H', \Omega')$ and $\mathcal{P}_3.$
Notice that, since $(G, \delta_{1}, W_{1})$ has exactly $x$ vortex segments with a cross and $\chi$-$\mathsf{col}(\delta_{1}) = \chi$-$\mathsf{col}(\delta)$, the first outcome of \cref{lem:splitting_extracting} is impossible.
Thus, we must find a $\chi$-respectful $\rho^{\delta}_{H'}$-\textbf{pruning} of $(G, \delta_{1}, W_{1})$ in time $\mathcal{O}(|E(G)||V(G)|)$.

Notice that every time we find a pruning as the outcome of one of the above steps, at least one edge of $G$ is ``pushed'' further to the outside, or fully outside, of a vortex segment.
Since there are at most $|E(G)|$ many edges and only $6t^{*} + 12$ many cycles in $\mathcal{C}',$ we cannot find a pruning more than $(6t^{*} + 12) \cdot |E(G)|$ times.
As a result, after $\mathcal{O}(|E(G)|^2 |V(G)|)$ many iterations of the procedure above the only possible case left is that $(H', \Omega')$ is of depth at most
\begin{align*}
    12t^{*} + 60 + (x+1)\big( 2xy + (6t^{*}+31)((15t^{*}+60)^{2^{\ell}} + 2) \big)-1. 
\end{align*}
Since there are only $x$ vortex segments, these arguments establish the base of the induction.

\smallskip
\textbf{The inductive step:} The arguments that bounded the number of times we may encounter a pruning in the base of our induction still apply when we find a pruning in the inductive step.
In essence, we cannot encounter a pruning more than $\mathcal{O}(|E(G)|)$ times in total and thus, by iterating the arguments enough times, we always encounter an outcome that is not a pruning.
To avoid repetition, we continue to properly mark all places in the arguments were a pruning might occur, but we always assume that in these cases we encounter one of the non-pruning outcomes.
Formally, we can establish such a situation by assuming that $(G, \delta_{1}, W_{1})$ is a minimal counterexample to our inductive claim with respect to improvements via pruning.
Please note that, according to \cref{lem:exposed_transaction}, it is imperative that we perform all possible improvements via pruning, if we are to also guarantee in the end that not only the vortex societies within vortex segments have bounded depth but also their vortex segment societies.

From here on we assume that $z \geq 1$.

\smallskip
\textbf{Step 1: No useless vortex segments.} First, we may assume that for every vortex segment $\widehat{W}$ if the $\delta$-induced rendition of the vortex segment society of $\widehat{W}$ is vortex-free, then it contains a simple cell whose color under $\chi$ does not belong to $\chi$-$\mathsf{col}(\delta_{1})$.
We may assume this since any vortex segment that does not meet this criterion can safely be forgotten.

\smallskip
\textbf{Step 2: A bounded number of vortex segments.} Next, let us assume that there exist more than $(\ell b + 1) \cdot 2^{\ell}$ many vortex segments whose society has a vortex-free $\delta$-induced rendition.
Let $(H, \Omega)$ be the vortex segment society of a vortex segment of $(G, \delta_{1}, W_{1})$.
We define the \emph{$\chi$-coloring} of $(H, \Omega)$ as the set $\chi(C_{\mathsf{s}}(\rho^{\delta}_{H}))$.
Since there are at most $2^{\ell}$ many possible such $\chi$-colorings, there must be at least $\ell b + 1$ many vortex segments with the same $\chi$-coloring, say $\mathcal{F}'$.
Let $\mathcal{F} \coloneqq \mathcal{F}' \setminus \chi$-$\mathsf{col}(\delta_{1})$.
Notice that $|\mathcal{F}|\geq 1$ by the assumption from \textbf{Step 1}.
Moreover, $z \geq |\mathcal{F}|$ must hold since we at least allow one ``split'' for each unrepresented color.

Now, for each member $\alpha \in \mathcal{F}$ we wish to produce $b$ many $((r + \mathsf{t}(t^{*}, z-|\mathcal{F}|)) \times \mathsf{t}(t^{*}, z-|\mathcal{F}|))$-flap segments to represent $\alpha$.
To achieve this, the amount of infrastructure of our orchard we must sacrifice is at most $w \coloneqq |\mathcal{F}| \cdot b \cdot (2\mathsf{t}(t^{*}, z - |\mathcal{F}|) + 3)$ cycles from the nest and at most $2w$ paths from the radial linkage of every vortex segment.
By doing so, similar arguments as those for \cref{lem:moving_flaps} allow us to produce a new schema whose walloid is of order $\mathsf{t}(t^{*}, z - |\mathcal{F}|)$ and $b$-represents $\chi$-$\mathsf{col}(\delta_{1}) \cup \mathcal{F}$.
Since these arguments are essentially the same, we omit the discussion on how to move and place the extracted flap segments through the vortex segments.
Note that to achieve all this, for each such vortex segment society $(H, \Omega)$ we have to locate a cell $c \in C(\rho^{\delta}_{H})$ with $\chi(c) = \alpha$, and connect its boundary $\pi_{\rho^{\delta}_{H}}(\tilde{c})$ to distinct endpoints on $V(\Omega)$ of the radial linkage of $(H, \Omega)$.
This works similarly to the arguments in \cref{subsec:routing_cells} and we shall omit the discussion.
Notice that the following two inequalities follow directly from the recursive definitions of $\mathsf{s}$ and $\mathsf{t}$ for every pair $z \geq z'$ of positive integers.
\begin{align}
\mathsf{t}(t^{*}, z-z') + 2(z' \cdot b \cdot (\mathsf{t}(t^{*}, z - z') + 3)) \ &\leq \ \mathsf{t}(t^{*}, z)\\
\mathsf{s}(s^{*}, t^{*}, z - z') + z' \cdot b \cdot (\mathsf{t}(t^{*}, z-z') + 3) \ &\leq \ \mathsf{s}(s^{*}, t^{*}, z)
\end{align}
Therefore, if there are at least $x + (\ell b + 1)2^{\ell} + 1$ many vortex segments, under the assumption from \textbf{Step 1}, we may first extract $b \cdot |\mathcal{F}|$ flap segments to $b$-represent $|\mathcal{F}| \geq 1$ many unrepresented colors and then use the induction hypothesis to complete the proof.
Thus, from now on we may assume that there are at most $x + (\ell b + 1) \cdot 2^{\ell}$ many vortex segments in $(G, \delta_{1}, W_{1})$ as desired.

\smallskip
\textbf{Step 3: Splitting a vortex segment.} What follows is, essentially, a second iteration of the steps from the base of our induction.
The only two changes that occur are 1) the numbers are now bigger and depend directly on the functions $\mathsf{t}$ and $\mathsf{s}$ and 2) we may now also encounter the first outcome of \cref{lem:splitting_extracting} which will lead to a proper ``split'' of the vortex segment.

We begin by selecting $(H, \Omega)$ to be the vortex segment society of some vortex segment of $(G, \delta_{1}, W_{1})$ as before.
Let $$s'\coloneqq \mathsf{s}(s^{*}, t^{*}, z - 1) + 24 \ell b \cdot \big(2\mathsf{t}(t^{*}, z-1) + 12 \big) + 2(\mathsf{t}(t^{*}, z-1) + 14) + 2,$$ let $\mathcal{C} = \{ C_0, \ldots, C_{s^{*} + s' + 1} \}$ be the nest of $(H, \Omega)$, and let $\Delta$ be the $\mathsf{nodes}_{\delta_{1}}(C_{s^{*} + s' + 1})$-avoiding disk defined by the trace of $C_{s' + 1}$.
Moreover, let $(H', \Omega')$ be a society defined by $\Delta$ and consider the $\delta$-induced rendition $\rho^{\delta}_{H'}$ of $(H', \Omega')$ in $\Delta$.
Notice that $\mathcal{C}' \coloneqq \{ C_{0}, \ldots, C_{s'} \}$ is a nest in $\rho^{\delta}_{H'}$ and by cropping the radial linkage of $(H, \Omega)$ we obtain a radial linkage $\mathcal{R}'$ of order $\mathsf{t}(t^{*}, z)$ for $(H', \Omega')$.
If $(H', \Omega')$ is of depth at most
\begin{align*}
    &s''\coloneqq  2s' + 2 + (x+1)\big( 2xy + (s'+ 1)((3\mathsf{t}(t^{*},z)+2s' +2)^{2^{\ell}}+2) \big)-1 
\end{align*}

we may, by transforming $\Delta$ into a cell of $\delta_{1}$ and declaring it the hyperedge of our current vortex segment, obtain a vortex segment with a nest of order $s^{*}$ whose hyperedges corresponds to a vortex of depth at most $s''$.
In case this is the outcome for all vortex segments we are done.

Hence, we may assume that there exists a transaction $\mathcal{P}_0$ of order $2s' + 2 + (x+1)\big( 2xy + (s'+1)((3\mathsf{t}(t^{*},z)+2s' +2)^{2^{\ell}}+2) \big)$ in $(H',\Omega')$.
By \cref{lem:exposed_transaction}, there is either a $\chi$-respectful $\rho^{\delta}_{H'}$-\textbf{pruning} of $(G, \delta_{1}, W_{1})$, or there exists a transaction $\mathcal{P}_1 \subseteq \mathcal{P}_0$ of order $(x+1)\big( 2xy + (s'+1)((3\mathsf{t}(t^{*},z)+2s' +2)^{2^{\ell}}+2) \big)$ which is $\rho^{\delta}_{H'}$-exposed.
By \cref{lem:proper_transaction}, $\mathcal{P}_1$ contains a transaction $\mathcal{P}_2$ of order $$(s'+1)((3\mathsf{t}(t^{*},z)+2s' +2)^{2^{\ell}}+2)$$ that is $\rho^{\delta}_{H'}$-proper.
Then, by \cref{lem:orthogonal_transaction}, we can assume that (up to possibly slightly changing the cycles in $\mathcal{C}'$ and the paths in the radial linkage $\mathcal{R}'$) that there exists a transaction $\mathcal{P}'_{2}$ of order $(3\mathsf{t}(t^{*},z)+2s' +2)^{2^{\ell}}$ that is $\rho^{\delta}_{H'}$-splitting.
Next we apply \cref{lem:homogeneous_transaction} to obtain a transaction $\mathcal{P}_3 \subseteq \mathcal{P}_2$ of order $3\mathsf{t}(t^{*},z)+2s' +2$ that is $\chi$-homogeneous in $\rho^{\delta}_{H'}$.
Recall that $s' + 1 = |\mathcal{C}'|.$

We are now ready to call upon \cref{lem:splitting_extracting} for $(H', \Omega')$ and $\mathcal{P}_3.$
The second outcome of \cref{lem:splitting_extracting} is a $\chi$-respectful $\rho^{\delta}_{H'}$-\textbf{pruning} of $(G, \delta_{1}, W_{1})$.
Hence, we may assume that \cref{lem:splitting_extracting} returns the first outcome.
Let us discuss the numbers involved to see that we may now complete the proof by using the induction hypothesis.
For the purpose of applying \cref{lem:splitting_extracting} let us set
\begin{align*}
    t_0 \ &\coloneqq \ \mathsf{t}(t^{*}, z-1)\text{, and}\\
    s_0 \ &\coloneqq \ \mathsf{s}(s^{*}, t^{*}, z-1).
\end{align*}
It then follows from the recursive definition of $\mathsf{s}$ and $\mathsf{t}$ that the size of the nest, the radial linkage of $(G, \Omega)$, and the transaction $\mathcal{P}_3$ are large enough such that \cref{lem:splitting_extracting} outputs an $(r, t_{0}, s_{0},\text{-},\text{-})$-$\Sigma$-schema $(G, \delta_{2}, W_{2})$ satisfying the assumptions of the inductive hypothesis and such that one of the following holds:
\begin{itemize}
\item $\chi$-$\mathsf{col}(\delta_{1})$ is a proper subset of $\chi$-$\mathsf{col}(\delta_{2})$ or
\item $W_{2}$ has exactly one vortex segment more than $W_{1}$.
\end{itemize}
In both cases, the total number of times we are allowed to ``split'' vortex segments has been reduced by at least one.
Hence, by applying the induction hypothesis for $z-1$ to $(G,\delta_{2}, W_{2})$ the proof is complete.
\end{proof}

\section{The local structure theorem with finite index}

In this final section, we present two versions of the LST.
The first, namely \cref{thm_local_structure}, combines \cref{prop_lst} with the tools we have developed in the previous sections.
The second, namely \cref{cor_bidim_structure}, is a corollary that witnesses the bidimensionality of our representation.

\subsection{Finding the final $\Sigma$-schema}\label{sec_main_lemma}

We first prove our core lemma on representing cell-colorings in $\Sigma$-schemas.

\begin{lemma}\label{main_lemma}
There exist functions $\mathsf{r}_{\ref{main_lemma}} \colon \Nbbb^{7} \to \Nbbb$, $\mathsf{t}_{\ref{main_lemma}}$, $\mathsf{depth}_{\ref{main_lemma}} \colon \Nbbb^{6} \to \Nbbb$, and $\mathsf{breadth}_{\ref{main_lemma}} \colon \Nbbb^{3} \to \Nbbb$ such that, for every $r, \mathsf{h}, \mathsf{c} \in \Nbbb$, every $t \in \Nbbb_{\geq 4}$, every $s, x, \ell, b \in \Nbbb_{\geq 1}$, every $y \in \Nbbb _{\geq 2}$, every $(r', t', \text{-}, x, y)$-$\Sigma$-schema $(G, \delta, W)$ such that
\begin{itemize}
\item $t' = \mathsf{t}_{\ref{main_lemma}}(t, s, \mathsf{h} + \mathsf{c}, \ell, b, x)$ and $r' = \mathsf{r}(r, t, s, \mathsf{h} + \mathsf{c}, \ell, b, x)$,
\item $\Sigma$ is a surface of Euler genus $2\mathsf{h} + \mathsf{c}$, and
\item $W$ is an $(r', t')$-$\Sigma$-annulus wall,
\end{itemize}
and every cell-coloring $\chi$ of $\delta$ of capacity at most $\ell$, there is an $(r, t, s, z, p)$-$\Sigma$-schema $(G, \delta', W')$ controlled by $\Tcal_{W}$ such that
\begin{enumerate}
\item $\delta'$ is a coarsening of $\delta$, i.e, $\delta' \sqsubseteq \delta$,
\item $z = \mathsf{breadth}_{\ref{main_lemma}}(\ell, b, x)$ and $p = \mathsf{depth}_{\ref{main_lemma}}(t, \ell, b, x, y)$,
\item every vortex segment society of $(G, \delta', W')$ has depth at most $\mathsf{depth}_{\ref{main_lemma}}(t, s, \ell, b, x, y)$, and
\item $W'$ $b$-represents $\chi$ in $\delta'$.
\end{enumerate}
Moreover, it holds that
\begin{align*}
\mathsf{t}_{\ref{main_lemma}}(t, s, \mathsf{h} + \mathsf{c}, \ell, b, x) \ &\in \ (\ell  b + 1)^{(x + b) \cdot 2^{\Ocal(\ell)}} \cdot (t + s)^{2^{\Ocal(\ell)}} \cdot (\mathsf{h} + \mathsf{c} + 1)^{2^{\Ocal(\ell)}},\\
\mathsf{r}_{\ref{main_lemma}}(r, t, s, \mathsf{h} + \mathsf{c}, \ell, b, x) \ &\in \ (r + (\ell  b + 1)^{(x + b) \cdot 2^{\Ocal(\ell)}} \cdot (t + s))^{2^{\Ocal(\ell)}} \cdot (\mathsf{h} + \mathsf{c} + 1)^{2^{\Ocal(\ell)}},\\
\mathsf{breadth}_{\ref{main_lemma}}(\ell, b, x) \ &\in \ x + b \cdot 2^{\Ocal(\ell)},\text{ and}\\
\mathsf{depth}_{\ref{main_lemma}}(t, s, \ell, b, x, y) \ &\in \ x^{2}y + (\ell  b + 1)^{(x + b) \cdot 2^{\Ocal(\ell)}} \cdot (t + s).
\end{align*}
There also exists an algorithm that finds the outcome above in time $\Ocal_{r, t, s, \mathsf{h} + \mathsf{c}, \ell, b, x, y}(|E(G)|^{2} |V(G)|)$.
\end{lemma}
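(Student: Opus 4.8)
The plan is to derive \autoref{main_lemma} by composing, in order, the three principal results established in the preceding sections: the Homogenization lemma (\autoref{lem:homogenization}), the Representation lemma (\autoref{lem:representation}), and the Coarsening lemma (\autoref{lem:coarsening}). Of these, only the Coarsening lemma modifies the $\Sigma$-decomposition, and each step consumes precisely the type of walloid produced by the previous one, so once the parameters are fixed the argument becomes pure bookkeeping. Working backwards from the target order $t$ and the target nest parameter $s$, I would set $t_2 \coloneqq f^1_{\ref{lem:coarsening}}(t, s, \ell, b, x)$ and $t_1 \coloneqq f_{\ref{lem:representation}}(t_2, \ell, b)$, and then define $\mathsf{t}_{\ref{main_lemma}}(t, s, \mathsf{h}+\mathsf{c}, \ell, b, x) \coloneqq f^1_{\ref{lem:homogenization}}(t_1, \ell, \mathsf{h}+\mathsf{c})$, $\mathsf{r}_{\ref{main_lemma}}(r, t, s, \mathsf{h}+\mathsf{c}, \ell, b, x) \coloneqq f^2_{\ref{lem:homogenization}}(r, t_1, \ell, \mathsf{h}+\mathsf{c})$, $\mathsf{breadth}_{\ref{main_lemma}}(\ell, b, x) \coloneqq f^2_{\ref{lem:coarsening}}(\ell, b, x)$, and $\mathsf{depth}_{\ref{main_lemma}}(t, s, \ell, b, x, y) \coloneqq f^3_{\ref{lem:coarsening}}(t, s, \ell, b, x, y)$.

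Then I would run the three lemmas in sequence. First, apply \autoref{lem:homogenization} to $(G, \delta, W)$ and $\chi$; by the choice of $\mathsf{t}_{\ref{main_lemma}}$ and $\mathsf{r}_{\ref{main_lemma}}$ the annulus wall $W$ has exactly the required size, and $\Sigma$ has Euler genus $2\mathsf{h}+\mathsf{c}$ as demanded, so the output is an $(r, t_1, \text{-}, \text{-}, \text{-})$-$\Sigma$-schema $(G, \delta, W_1)$ controlled by $\Tcal_W$ where $W_1$ is an $(r, t_1)$-$\Sigma$-annulus wall that is $\chi$-homogeneous in $\delta$; since $\delta$ is unchanged it still has breadth at most $x$ and depth at most $y$. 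Second, apply \autoref{lem:representation} to $(G, \delta, W_1)$ and $\chi$ (the capacity is at most $\ell$, $W_1$ is $\chi$-homogeneous, and $t_1 = f_{\ref{lem:representation}}(t_2, \ell, b)$), obtaining $(G, \delta, W_2)$ controlled by $\Tcal_{W_1}$ where $W_2$ is an $(r, t_2)$-$\Sigma$-walloid with no vortex segments that is $\chi$-homogeneous in $\delta$ and $b$-represents the $\chi$-coloring of every enclosure of $W_2$ in $\delta$. Third, apply \autoref{lem:coarsening} to $(G, \delta, W_2)$ and $\chi$: the hypotheses hold because $\delta$ has breadth at most $x$ and depth at most $y$, $W_2$ has no vortex segments, is $\chi$-homogeneous, $b$-represents every enclosure, and $t_2 = f^1_{\ref{lem:coarsening}}(t, s, \ell, b, x)$; the output is an $(r, t, s, f^2_{\ref{lem:coarsening}}(\ell, b, x), f^3_{\ref{lem:coarsening}}(t, s, \ell, b, x, y))$-$\Sigma$-schema $(G, \delta', W')$ controlled by $\Tcal_{W_2}$ with $\delta' \sqsubseteq \delta$, every vortex segment society of depth at most $f^3_{\ref{lem:coarsening}}(t, s, \ell, b, x, y)$, and $W'$ $b$-representing $\chi$ in $\delta'$. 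To finish, I would note that control by $\Tcal_W$ follows from transitivity of truncation, $\Tcal_{W'} \subseteq \Tcal_{W_2} \subseteq \Tcal_{W_1} \subseteq \Tcal_W$, and that $\delta' \sqsubseteq \delta$ is immediate since the first two steps do not touch $\delta$; hence conclusions (1)--(4) hold with $z = \mathsf{breadth}_{\ref{main_lemma}}(\ell, b, x)$ and $p = \mathsf{depth}_{\ref{main_lemma}}(t, s, \ell, b, x, y)$.

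The asymptotic bounds would then be verified by substituting the explicit estimates attached to each of the six functions, composed in the above order: because $f_{\ref{lem:representation}}$ is linear in each argument while $f^1_{\ref{lem:homogenization}}$ and $f^2_{\ref{lem:homogenization}}$ are of the shape $(\cdot)^{2^{\Ocal(\ell)}}$, the nested exponents collapse into the claimed forms; and the running time is dominated by the $\Ocal_{t, s, \ell, b, x, y}(|E(G)||V(G)|^{2})$ term of \autoref{lem:coarsening}, the other two steps contributing only $\poly(\cdot)\,|E(G)||V(G)|$. I do not expect a conceptual obstacle here: this lemma is the assembly point, and all the difficulty has already been discharged in the three component lemmas. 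The points that do require care are mechanical --- checking that the output of each lemma literally matches the hypotheses of the next (that $\chi$-homogeneity and ``$b$-represents the $\chi$-coloring of every enclosure'' carry over unchanged from the representation step to the coarsening step, that the walloid passed to \autoref{lem:coarsening} has no vortex segments, and that the surface $\Sigma$ of Euler genus $2\mathsf{h}+\mathsf{c}$ is one and the same throughout, each lemma only reorganizing the infrastructure drawn in $\Sigma$) --- together with the elementary arithmetic of composing the parameter functions.
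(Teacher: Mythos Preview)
Your proposal is correct and follows essentially the same approach as the paper: define the parameter functions by composing $f^{1}_{\ref{lem:coarsening}}$, $f_{\ref{lem:representation}}$, and $f^{1}_{\ref{lem:homogenization}}$ (and the companion functions) in that nesting order, then apply \autoref{lem:homogenization}, \autoref{lem:representation}, and \autoref{lem:coarsening} sequentially, with control by $\Tcal_W$ following from transitivity of truncation. The paper's proof is identical up to the choice of subscript names for the intermediate walloids and the intermediate values of $t$.
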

\begin{proof} We define the four functions of the lemma as follows:
\begin{align*}
\mathsf{t}_{\ref{main_lemma}}(t, s, \mathsf{h} + \mathsf{c}, \ell, b, x) \ &\coloneqq \ f^{1}_{\ref{lem:homogenization}}\Big(f_{\ref{lem:representation}}\big( f^{1}_{\ref{lem:coarsening}}(t, s, \ell, b, x), \ell, b \big), \ell, \mathsf{h} + \mathsf{c} \Big),\\
\mathsf{r}_{\ref{main_lemma}}(r, t, s, \mathsf{h} + \mathsf{c}, \ell, b, x) \ &\coloneqq \ f^{2}_{\ref{lem:homogenization}}\Big(r, f_{\ref{lem:representation}}\big( f^{1}_{\ref{lem:coarsening}}(t, s, \ell, b, x), \ell, b \big), \ell, \mathsf{h} + \mathsf{c} \Big),\\
\mathsf{breadth}_{\ref{main_lemma}}(\ell, b, x) \ &\coloneqq \ f^{2}_{\ref{lem:coarsening}}(\ell, b, x),\text{ and}\\
\mathsf{depth}_{\ref{main_lemma}}(t, s, \ell, b, x, y) \ &\coloneqq \ f^{3}_{\ref{lem:coarsening}}(t, s, \ell, b, x, y).
\end{align*}

Let $t' = t_{1} = f^{1}_{\ref{lem:homogenization}}(t_{2}, \ell, \mathsf{h} + \mathsf{c})$, $t_{2} = f_{\ref{lem:representation}}(t_{3}, \ell, b)$, and $t_{3} = f^{1}_{\ref{lem:coarsening}}(t, s, \ell, b, x)$.
We first apply \cref{lem:homogenization} on the initial schema in order to obtain an $(r, t_{2}, \text{-}, x, y)$-$\Sigma$-schema $(G, \delta, W_{2})$ such that $W_{2}$ is an $(r, t_{2}, \text{-})$-$\Sigma$-annulus wall that is $\chi$-homogeneous in $\delta$.
Afterwards, we apply \cref{lem:representation} on $(G, \delta, W_{2})$ thus obtaining an $(r, t_{3}, \text{-}, x, y)$-$\Sigma$-schema $(G, \delta, W_{3})$ such that $W_{3}$ is an $(r, t_{3}, \text{-})$-$\Sigma$-walloid with no vortex segments that is $\chi$-homogeneous in $\delta$ and $b$-represents the $\chi$-coloring of every enclosure of $W_{3}$ in $\delta$.
The last step is to apply \cref{lem:coarsening} on $(G, \delta, W_{3})$.
Note that we can do so as $\delta$ by assumption has breadth at most $x$ and depth at most $y$.
This gives us an $(r, t, s, z, p)$-$\Sigma$-schema $(G, \delta', W')$ such that $\delta' \sqsubseteq \delta$ and 1) every vortex segment society of $(G, \delta', W')$ has depth at most $\mathsf{depth}_{\ref{main_lemma}}(t, s, \ell, b, x, y)$, 2) $W'$ $b$-represents $\chi$ in $\delta'$, and 3) $\Tcal_{W'} \subseteq \Tcal_{W}$.
With this our proof is complete.
\end{proof}

We remark that the dependency of the functions $\mathsf{t}_{\ref{main_lemma}}$ and $\mathsf{r}_{\ref{main_lemma}}$ on $s$ is in fact better that our gross estimations here.
However we choose to simplify the presentation as this does not really affect the order of the functions in the end.

\subsection{Grid models representing cell-colorings}

We first formalize the notion of bidimensional representation of a cell-coloring via grid models.

\medskip
Given a $\Sigma$-schema $(G, \delta, W)$ and $t \in \Nbbb_{\geq 5}$, we say that a minor model $\mu$ of a $(t \times t)$-grid $\Gamma$ in $G$ \emph{represents} $\chi$ in $\delta$ if
\begin{itemize}
\item $\mu(V(\Gamma))$ is disjoint from any vertex drawn the $\mathsf{nodes}_{\delta}(C^{\mathsf{si}}(W))$-avoiding disk of $\mathsf{trace}_{\delta}(C)$ of the outer cycle $C$ of every vortex segment of $(G, \delta, W)$ and
\item for every vertex $u \in V(\Gamma)$, for every color $\alpha \in \chi$-$\mathsf{col}(\delta)$, there is a simple cell $c$ of $\delta$ with $\chi(c) = \alpha$ and $\pi_{\delta'}(\tilde{c}) \subseteq \mu(u)$.
\end{itemize}

The following lemma shows how we can utilize the rich infrastructure provided by a walloid that sufficiently represents a cell-coloring in a $\Sigma$-schema, in order to obtain a large grid that represents the cell-coloring.

\begin{lemma}\label{lem:colorful_grid} Let $t \in \Nbbb_{\geq 5}$, $(G, \delta, W)$ be a $(\text{-}, t^{2}, \text{-}, \text{-}, \text{-})$-$\Sigma$-schema and $\chi$ be a coloring of $\delta$ such that $W$ $t^{2}$-represents $\chi$ in $\delta$.
Then there is a minor model $\mu$ of the $(t \times t)$-grid in $G$ that represents $\chi$ in $\delta$ and is controlled by the tangle $\Tcal_{W}$.
\end{lemma}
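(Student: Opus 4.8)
The plan is to use the rich wall-like infrastructure of the walloid $W$ to \emph{build} the grid minor ``by hand'', using horizontal cycles and vertical paths of the base annulus of $W$ as the skeleton of the $(t\times t)$-grid, and then to ``thicken'' each intended grid vertex by routing into it representatives of every color. Since $W$ $t^2$-represents $\chi$ in $\delta$, for every color $\alpha\in\chi\text{-}\mathsf{col}(\delta)$ there are $t^2$ simple cells $c^{\alpha}_1,\dots,c^{\alpha}_{t^2}$ whose disks coincide with the closures of the disks of $t^2$ consecutive hyperedges of flap segments of $W$. Index these $t^2$ cells of color $\alpha$ by the $t^2$ vertices of the $(t\times t)$-grid $\Gamma$; so to each $u\in V(\Gamma)$ we assign, for every color $\alpha$, a distinct flap-segment cell $c^{\alpha}_u$ of color $\alpha$. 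The branch set $\mu(u)$ will consist of a small ``junction'' region inside the base annulus together with, for each color $\alpha$, a path connecting that junction to $\pi_\delta(\widetilde{c^{\alpha}_u})$; the key point is that all these cells and paths are drawn outside the vortex part of $W$, which is exactly what is needed for the first bullet of ``represents''.

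First I would fix the skeleton. The base annulus $\widetilde{W}$ is an $(r',t')$-annulus wall with $t'\ge t^2$; take a $(t\times t)$-subgrid $\Gamma'$ of (a large grid minor inside) the annulus wall, chosen so that it is grounded in $\delta$, uses only simple cells of $\delta$, and is disjoint from the $\mathsf{nodes}_\delta(C^{\mathsf{si}}(W))$-avoiding disks bounded by the traces of the outer cycles of the vortex segments — this is possible because by property $\mathbf{S_1}$ the graph $\overline W$ is grounded, and by $\mathbf{S_8}$ (or the construction of walloids) all vortex segments are confined to a region bounded by a single fence near the exceptional cycle, leaving an annular region of the base annulus untouched. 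The row and column paths of $\Gamma'$ will be exactly the $\mu$-images of the rows and columns of $\Gamma$ after we merge in the color-gadgets. Then, for the color-routing: each flap segment hyperedge sits on (a fence near) the exceptional cycle, and flap segments appear consecutively in the cyclic order of segments; moreover the flap segments carrying color $\alpha$ that $t^2$-represent it are consecutive. I would use the vertical paths of the flap segments (their rainbows) together with vertical paths of the base annulus to route, for each $u\in V(\Gamma)$ and each $\alpha\in\chi\text{-}\mathsf{col}(\delta)$, a path $R^{\alpha}_u$ from $\pi_\delta(\widetilde{c^{\alpha}_u})$ to a designated vertex of the grid-skeleton cell of $\Gamma'$ corresponding to $u$. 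The $t^2$ cells of a fixed color are distributed among $t^2$ distinct grid vertices, and the $\ell$ colors give $\ell$ paths into each grid vertex; since there are only $\ell t^2$ such paths in total and each flap segment contributes one hyperedge whose boundary has at most $3$ vertices, a Menger/planarity bookkeeping (entirely analogous to the routing arguments of \autoref{subsec:routing_cells}, in particular \autoref{prop:cell_paths_to_pegs} and \autoref{lem:paths_to_bottom}) shows these paths can be chosen pairwise disjoint and internally disjoint from the skeleton, staying in the non-vortex region. Finally set $\mu(u)$ to be the union of the $u$-th ``plus-shaped'' piece of the skeleton $\Gamma'$ (a short subpath of its row and of its column meeting at the grid node) together with all the $R^{\alpha}_u$ and the boundary vertices $\pi_\delta(\widetilde{c^{\alpha}_u})$; this is connected, the $\mu(u)$ are pairwise disjoint, adjacent grid vertices are joined via the skeleton edges, each $\mu(u)$ contains $\pi_\delta(\widetilde{c})$ for a cell $c$ of each color, and nothing touches the vortex region — so $\mu$ represents $\chi$ in $\delta$. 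Control by $\Tcal_W$ is immediate: the $\mu$-image of any row or column of $\Gamma$ contains the vertex set of a cycle and of a vertical path of the base annulus of $W$ (the corresponding row/column of $\Gamma'$ lives inside $\widetilde W$), so for any $(A,B)\in\Tcal_W$ of order $<t$ that row/column cannot lie in $A\setminus B$, by definition of the tangle induced by the walloid.

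The main obstacle is the simultaneous disjointness of the $\ell t^2$ colour-routing paths together with the skeleton, while keeping everything outside the vortex region and respecting the cyclic ordering of the segments. The flap segments carrying a given colour are consecutive but the blocks for different colours are interleaved with the handle/crosscap segments and with each other, so one must be careful that a path dragging colour $\alpha$ into grid vertex $u$ does not have to cross a flap hyperedge of colour $\alpha'$ destined for $u'$. I expect this to be handled exactly as in the extraction lemmas of \autoref{sec_representation}: reserve, for the $j$-th flap segment, a dedicated bundle of vertical paths of the base annulus through which its hyperedge boundary is pulled down to a prescribed ``slot'' on a fixed fence, and since the $t^2$ slots for a fixed colour and the $\ell$ slots for a fixed grid vertex are all distinct and can be arranged monotonically along that fence, the resulting system of paths is planar and hence realizable disjointly. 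Because all of this takes place strictly inside the $\mathsf{nodes}_\delta(C^{\mathsf{si}}(W))$-avoiding disk of the trace of the exceptional cycle's neighbourhood, which is disjoint from the vortex-segment disks by $\mathbf{S_8}$, the disjointness-from-vortices requirement is automatic. The whole construction is polynomial-time since it only invokes Menger's theorem and the earlier routing lemmas a bounded number of times.
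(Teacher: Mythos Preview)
Your approach has a genuine gap in the routing step. You propose to take a $(t\times t)$-skeleton inside the base annulus whose branch sets are ``short plus-shaped pieces'', and then to attach to each branch set $\mu(u)$ a path $R^{\alpha}_u$ from the flap cell of color~$\alpha$ earmarked for $u$, for every color $\alpha$. The difficulty is that for an \emph{interior} vertex $u$ of the $(t\times t)$-grid, the plus of $u$ is completely hemmed in by the pluses of its four neighbors; any path reaching $u$'s plus from outside the skeleton must pass through a neighbor's branch set. So once $\ell\geq 1$ (and certainly once $\ell$ is large) there is no way to realize the $R^{\alpha}_u$ disjointly from the other branch sets. Relatedly, your claim that the $\ell t^{2}$ routing paths are ``planar and hence realizable disjointly'' is not correct: the flap segments of different colors sit in $\ell$ separated contiguous blocks along a single fence, while each grid vertex needs one path from \emph{each} block. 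If $u_1,u_2$ are two grid vertices and $a_1<a_2$ are their sources in block~$1$ while $b_1<b_2$ are their sources in block~$2$, then $a_2<b_1$ (block~$1$ precedes block~$2$) but the target of $a_2$ is $u_2$ and the target of $b_1$ is $u_1$, so these two paths must cross. No monotone assignment avoids this; it is a property of having separated color blocks, not of the specific bijection.

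The paper sidesteps both problems by a two–stage construction. First it builds a \emph{wide} grid minor, of shape roughly $(2t)\times t^{2}$, in which only a single row (the middle one) carries the colors: the $j$-th vertex of that row is made to contain one flap-cell of every color. Because this row is one-dimensional, its vertices are naturally in bijection with the $t^{2}$ indices within each color block, and loading all $\ell$ colors into the $j$-th middle-row branch set is a straightforward linear routing along the cylindrical closure of the flap segments---no crossings arise. Second, the wide $(2t)\times t^{2}$ grid is \emph{folded} into a $(t\times t)$-grid minor in such a way that every branch set of the final grid contains a contiguous segment of the colored middle row (this is the content of the right-hand pictures in \autoref{fig_colorful_grid}). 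Thus every final branch set inherits a cell of every color without any further routing. Your control argument is also too quick: a row of your skeleton is only a short subpath of one base cycle, not a full cycle, so you cannot directly invoke the definition of $\Tcal_{W}$; in the paper's construction control follows because the whole grid model is carved out of the cycles and vertical paths of the base annulus of $W$, and every row/column of the final $(t\times t)$-grid uses a full base cycle or a full vertical path after the folding.
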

\begin{proof} As a first step, we define $W'$ by keeping only the infrastructure of $W$ necessary to see $W'$ as the cylindrical closure of all flap segments of $W$ (with their order adjusted to $t$) and a $(2t \times t^{2})$-wall segment.
The fact that $W$ $t^{2}$-represents $\chi$ in $\delta$, implies that for every color $\alpha \in \chi$-$\mathsf{col}(\delta)$, there are $t^{2}$ simple cells $c_{1}, \ldots, c_{t}$ of $\delta$ whose disks correspond to consecutive hyperedges of $t$-flap segments of $W'$, and such that for every $i \in [t]$, $\chi(c_{i}) = \alpha$.

As a second step, we show how to use the infrastructure provided by $W'$ to obtain a minor model $\mu$ of a $(2t \times t^{2})$-grid $\Gamma$ in $G$ such that for every $u \in V(\Gamma)$ of the $i$-th row of $\Gamma$, for every color $\alpha \in \chi$-$\mathsf{col}(\delta)$, $\pi_{\delta}(\tilde{c}) \subseteq \mu(u)$, where $c$ is a simple cell of $\delta$ whose disk corresponds to the hyperedge of one of the consecutive $t'$-flap segments of $W'$ that represent $\chi(c) = \alpha$.
Finding this grid is straightforward but quite tedious to formally present, so we instead present a proof by picture.
See \cref{fig_colorful_grid}.

\begin{figure}[th]
\centering
\includegraphics{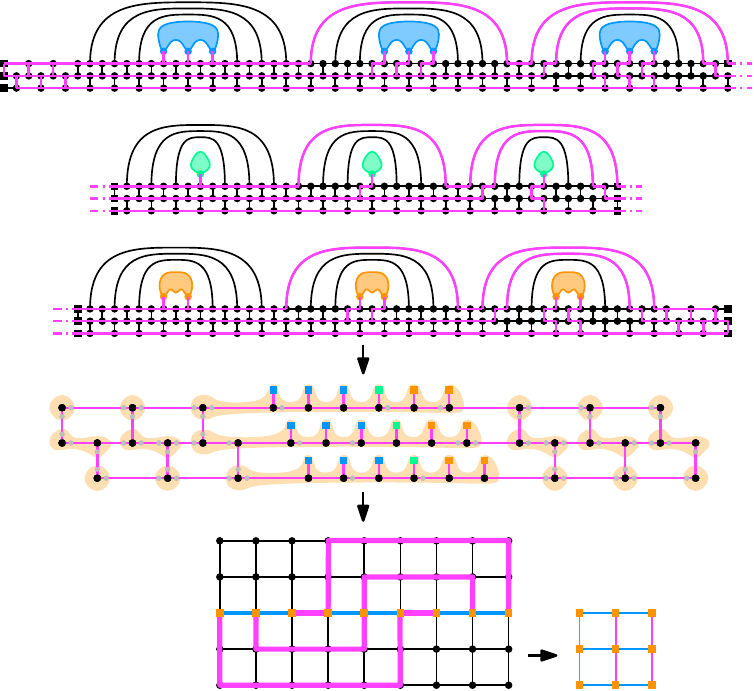}
\caption{\label{fig_colorful_grid} In the first two illustrations, a minor model in orange of a $(2t, t')$-grid in $W'''$ in magenta as required in the proof of \cref{lem:colorful_grid} for $t = t' = 3$. In the last two, the desired $(t, t)$-grid in the $(2t - 1, t^{2})$-grid as required in the proof of \cref{lem:colorful_grid} for $t = 3$.}
\end{figure}

The final step is to show how we can use $\mu$ to define a minor model $\mu'$ of a $(t \times t)$-grid $\Gamma'$ in $G$ that satisfies our claim.
As before we resort to a proof by picture.
See \cref{fig_colorful_grid}.

The fact that $\mu'$ is controlled by $\Tcal_{W}$ follows directly by construction of $\mu'$ from $W'$.
With this our proof concludes.
\end{proof}

The next corollary finally allows us to formally describe how \cref{main_lemma} bidimensionally represents the subgraphs that attach to our decomposition via simple cells.
It follows in a straightforward manner by applying \cref{lem:colorful_grid} to the outcome of \cref{main_lemma} applied with $b$ being $t^{2}$ and $s$ being $1$.

\begin{corollary}\label{bidim_main_lemma}
There exist functions $\mathsf{t}_{\ref{bidim_main_lemma}} \colon \Nbbb^{4} \to \Nbbb$, $\mathsf{r}_{\ref{bidim_main_lemma}} \colon \Nbbb^{5} \to \Nbbb$, $\mathsf{breadth}_{\ref{bidim_main_lemma}} \colon \Nbbb^{2} \to \Nbbb$, and $\mathsf{depth}_{\ref{bidim_main_lemma}} \colon \Nbbb^{4} \to \Nbbb$ such that, for every $r, \mathsf{h}, \mathsf{c} \in \Nbbb$, every $t \in \Nbbb_{\geq 4}$, every $x, \ell \in \Nbbb_{\geq 1}$, every $y \in \Nbbb _{\geq 2}$, every $(r', t', \text{-}, x, y)$-$\Sigma$-schema $(G, \delta, W)$ such that
\begin{itemize}
\item $t' = \mathsf{t}_{\ref{bidim_main_lemma}}(t, \mathsf{h} + \mathsf{c}, \ell, x)$ and $r' = \mathsf{r}_{\ref{bidim_main_lemma}}(r, t, \mathsf{h} + \mathsf{c}, \ell, x)$,
\item $\Sigma$ is a surface of Euler genus $2\mathsf{h} + \mathsf{c}$, and
\item $W$ is an $(r', t')$-$\Sigma$-annulus wall,
\end{itemize}
and every cell-coloring $\chi$ of $\delta$ of capacity at most $\ell$, there is a $\Sigma$-decomposition $\delta'$ of $G$ and an $(r + t)$-wall $W' \subseteq G$ such that $\delta'$ is a coarsening of $\delta$, i.e., $\delta' \sqsubseteq \delta$, and
\begin{enumerate}
\item $\delta'$ has breadth at most $\mathsf{breadth}_{\ref{bidim_main_lemma}}(\ell, x)$ and depth at most $\mathsf{depth}_{\ref{bidim_main_lemma}}(t, \ell, x, y)$,
\item $W'$ is grounded in $\delta'$ and controlled by $\Tcal_{W}$, and
\item there is a minor model of the $(t \times t)$-grid in $G$ controlled by $\Tcal_{W'}$ that represents $\chi$ in $\delta'$.
\end{enumerate}
Moreover, it holds that
\begin{align*}
\mathsf{t}_{\ref{bidim_main_lemma}}(t, \mathsf{h} + \mathsf{c}, \ell, x) \ &\in \ 2^{(x + t^{2}) \cdot \log t \cdot 2^{\Ocal(\ell)}} \cdot (\mathsf{h} + \mathsf{c} + 1)^{2^{\Ocal(\ell)}},\\
\mathsf{r}_{\ref{bidim_main_lemma}}(r, t, \mathsf{h} + \mathsf{c}, \ell, x) \ &\in \ (r + 2^{(x + t^{2}) \cdot \log t \cdot 2^{\Ocal(\ell)}})^{2^{\Ocal(\ell)}} \cdot (\mathsf{h} + \mathsf{c} + 1)^{2^{\Ocal(\ell)}},\\
\mathsf{breadth}_{\ref{bidim_main_lemma}}(\ell, x) \ &\in \ x + t^{2} \cdot 2^{\Ocal(\ell)},\text{ and},\\
\mathsf{depth}_{\ref{bidim_main_lemma}}(t, \ell, x, y) \ &\in \ x^{2}y + 2^{(x + t^{2}) \cdot \log t \cdot 2^{\Ocal(\ell)}}.
\end{align*}
There also exists an algorithm that finds the outcome above in time $\Ocal_{r, t, \mathsf{h} + \mathsf{c}, \ell, x, y}(|E(G)|^{2} |V(G)|)$.
\end{corollary}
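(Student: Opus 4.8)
\textbf{Proof plan for \autoref{bidim_main_lemma}.}
The plan is to obtain \autoref{bidim_main_lemma} as an essentially immediate corollary of \autoref{main_lemma} combined with \autoref{lem:colorful_grid}. First I would instantiate \autoref{main_lemma} with the parameters $s \coloneqq 1$ and $b \coloneqq t^{2}$, while keeping $t$, $\ell$, $x$, $y$, $r$, $\mathsf{h}$, $\mathsf{c}$ as given (and noting that $t^{2} \ge t \ge 4$, so all the ``$\geq$'' constraints in \autoref{main_lemma} are met). This produces an $(r, t^{2}, 1, z, p)$-$\Sigma$-schema $(G, \delta', W')$ controlled by $\Tcal_{W}$, where $\delta' \sqsubseteq \delta$, $z = \mathsf{breadth}_{\ref{main_lemma}}(\ell, t^{2}, x)$, $p = \mathsf{depth}_{\ref{main_lemma}}(t, 1, \ell, t^{2}, x, y)$, every vortex segment society of the schema has depth at most $p$, and $W'$ $t^{2}$-represents $\chi$ in $\delta'$. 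Here I am using that $\delta$ has breadth at most $x$ and depth at most $y$, which is exactly the hypothesis needed to invoke \autoref{main_lemma}.

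Next I would feed the resulting schema $(G, \delta', W')$ into \autoref{lem:colorful_grid}: since $W'$ is (in particular) a $(\text{-}, t^{2}, \text{-})$-$\Sigma$-walloid that $t^{2}$-represents $\chi$ in $\delta'$, the lemma yields a minor model $\mu$ of the $(t \times t)$-grid in $G$ that represents $\chi$ in $\delta'$ and is controlled by the tangle $\Tcal_{W'}$ induced by the walloid $W'$. For the wall $W'$ claimed in the statement, I would extract it from the base annulus of the walloid $W'$ of the schema: the base annulus of an $(r, t^{2}, 1)$-$\Sigma$-walloid contains an $(r + t^{2})$-annulus wall, which in turn contains an $(r+t)$-subwall; this subwall is grounded in $\delta'$ by property $\mathbf{S_{1}}$ of schemas (which asserts $\overline{W'}$ is grounded), and it is controlled by $\Tcal_{W'} \subseteq \Tcal_{W}$ by the control assertion of \autoref{main_lemma}. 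The grid model $\mu$ is then controlled by $\Tcal_{W'}$ for this extracted wall because the tangle induced by the base annulus agrees with the one induced by the walloid on separations of the relevant order. Finally, the breadth and depth bounds for $\delta'$ are read off directly: $\delta'$ has breadth at most $z$, and both the vortex depth of $\delta'$ and the vortex-segment-society depth are at most $p$, so I set $\mathsf{breadth}_{\ref{bidim_main_lemma}}(\ell, x) \coloneqq \mathsf{breadth}_{\ref{main_lemma}}(\ell, t^{2}, x)$ and $\mathsf{depth}_{\ref{bidim_main_lemma}}(t, \ell, x, y) \coloneqq \mathsf{depth}_{\ref{main_lemma}}(t, 1, \ell, t^{2}, x, y)$, and similarly define $\mathsf{t}_{\ref{bidim_main_lemma}}$ and $\mathsf{r}_{\ref{bidim_main_lemma}}$ by substituting $b = t^{2}$, $s = 1$ into $\mathsf{t}_{\ref{main_lemma}}$ and $\mathsf{r}_{\ref{main_lemma}}$.

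The only genuinely non-trivial bookkeeping is the asymptotic bound computation: substituting $b = t^{2}$ into the stated orders of $\mathsf{t}_{\ref{main_lemma}}$ and $\mathsf{depth}_{\ref{main_lemma}}$ replaces the factor $(\ell b + 1)^{(x+b)\cdot 2^{\Ocal(\ell)}}$ by $(\ell t^{2} + 1)^{(x + t^{2})\cdot 2^{\Ocal(\ell)}} = 2^{(x+t^{2})\cdot \Theta(\log t)\cdot 2^{\Ocal(\ell)}}$, and the polynomial factor $(t+s)^{2^{\Ocal(\ell)}} = t^{2^{\Ocal(\ell)}}$ and $(\mathsf{h}+\mathsf{c})^{2^{\Ocal(\ell)}} = 2^{\log(\mathsf{h}+\mathsf{c})\cdot 2^{\Ocal(\ell)}}$ get absorbed, yielding $\mathsf{t}_{\ref{bidim_main_lemma}}(t, \mathsf{h}+\mathsf{c}, \ell, x) \in 2^{((x+t^{2})\cdot \log t + \log(\mathsf{h}+\mathsf{c}))\cdot 2^{\Ocal(\ell)}}$; the bounds for $\mathsf{r}_{\ref{bidim_main_lemma}}$, $\mathsf{breadth}_{\ref{bidim_main_lemma}}$, and $\mathsf{depth}_{\ref{bidim_main_lemma}}$ follow by the same substitution, with the $x^{2}y$ term in the depth coming straight from $\mathsf{depth}_{\ref{main_lemma}}$. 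The running time is the sum of the running times of \autoref{main_lemma} and \autoref{lem:colorful_grid}, both of which are $\Ocal_{r, t, \mathsf{h}+\mathsf{c}, \ell, x, y}(|E(G)||V(G)|^{2})$ (the latter being polynomial and hence subsumed), so the composite algorithm runs within the claimed bound. I do not anticipate a real obstacle here — the content is all in \autoref{main_lemma} and \autoref{lem:colorful_grid}; the corollary is a matter of choosing $b = t^{2}$, $s = 1$ and verifying that the wall $W'$ and the tangle control transfer cleanly, which they do because the extracted wall lives inside the walloid's base annulus.
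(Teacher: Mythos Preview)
Your proposal is correct and follows essentially the same approach as the paper, which states that the corollary ``follows in a straightforward manner by applying \autoref{lem:colorful_grid} to the outcome of \autoref{main_lemma} applied with $b$ being $t^{2}$ and $s$ being $1$.'' Your write-up contains a minor slip (you say you keep $t$ ``as given'' but then claim an $(r, t^{2}, 1, z, p)$-schema as output, so presumably you intend to instantiate \autoref{main_lemma}'s second parameter with $t^{2}$), but this is a harmless wording inconsistency and the substance of the argument is exactly what the paper does.
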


\subsection{Proving the local structure theorem} \label{sec_proving_lst}

We now introduce the concept of a boundaried index as well as the concept of index representation.
Based on these concepts, we translate \cref{bidim_main_lemma} in order to obtain our extended version of the LST.

\paragraph{Universal ranking.}

A \emph{universal ranking} of a graph $G$ is an injection $\zeta \colon V(G) \to |V(G)|$.
From this point onward we assume that every graph is accompanied with a universal ranking that we denote by $\zeta_{G}$.
Given a set $S \subseteq V(G)$, $\zeta_{G}(S)$ denotes the set $\{ \zeta_{G}(u) \mid u \in S \}$.

\paragraph{Boundaried Indices.}

Let $q$ be a positive integer.
A \emph{$q$-boundaried graph} is a triple $(G, B, \rho)$ where $G$ is a graph, $B \subseteq V(G)$, $|B| = q$, and $\rho \colon B \to [q]$ is a bijection.
We use $\Bcal^{(q)}$ for the set of $q$-boundaried graphs and we define $\Bcal \coloneqq \bigcup_{i \in [3]} \Bcal^{(i)}$, i.e. the set of all $q$-boundaried graphs with $q \in [3]$.

A \emph{boundaried index} is a function $\iota \colon \Bcal \rightarrow \Nbbb_{\geq 1}$, such that if for two $\mathbf{B}_{1} = (G_{1}, B_{1}, \rho_{1}) \in \Bcal$ and $\mathbf{B}_{2} = (G_{2}, B_{2}, \rho_{2}) \in \Bcal$, $\iota(\mathbf{B}_{1}) = \iota(\mathbf{B}_{2})$, then $|B_{1}| = |B_{2}|$ and $\rho_{2}^{-1} \circ \rho_{1}$ is an isomorphism from $G[B_{1}]$ to $G[B_{2}]$.
Moreover, we demand that if $|B_{1}| < |B_{2}|$, then $\iota(\mathbf{B}_{1}) < \iota(\mathbf{B}_{2})$.
We define the capacity of $\iota$ as $\mathsf{cap}(\iota) = \max_{\mathbf{B} \in \Bcal} \iota(\mathbf{B})$.
We moreover demand that $\mathsf{cap}(\iota) = [\ell]$, for some positive integer $\ell$.

The fact that every graph $G$ comes with a universal ranking allows us to uniquely define boundaried graphs from its subgraphs. For instance, if $H$ is a subgraph of $G$ and $B \subseteq V(H)$, then we may define the boundaried graph $(H, B, \rho)$, where for every $u \in B$, $\rho(u) = i$ if and only if $\zeta_{G}(u)$ is the $i$-th smallest integer in $\zeta_{G}(B)$.
The universal ranking allows us, when dealing with subgraphs of a particular graph, to use the simpler notation $(H, B)$ for a boundaried graph instead of $(H, B, \rho)$. In other words, the universal ranking can be seen as a convenient way to assign an “identity” to each of the boundary vertices.
In what follows, we always consider boundaried graphs $(H, B)$, where every vertex $v \in V(H)$ that is adjacent to a vertex in $V(G) \setminus V(H)$ is also a vertex of $B$.

\paragraph{Equivalent cells.}

Let $\delta$ be a $\Sigma$-decomposition of a graph $G$.
We say that two simple cells $c$ and $c'$ of $\delta$ are \emph{$\Sigma$-equivalent} if $q \coloneqq |\tilde{c}| = |\tilde{c}'|$ and, given that $\zeta_{G}(\pi_{\delta}(\tilde{c})) = \lin{\pi_{\delta}(p_{1}), \ldots, \pi_{\delta}(p_{q})}$ and $\zeta_{G}(\pi(\tilde{c}')) = \lin{\pi_{\delta}(p_{1}'), \ldots, \pi_{\delta}(p_{q}')}$, there is no set $\{ I_{i} \mid i \in [q] \}$ of pairwise disjoint arcs in $\Sigma$ with $p_{i}$ and $p_{i}'$ being the two endpoints of $I_{i}$, $i \in [q]$, such that $(\bigcup_{i \in [q]} I_{i}) \cap (c \cup c') = \tilde{c} \cup \tilde{c}'$.
Notice that if $\Sigma$ is a non-orientable surface, then every two simple cells of $\delta$ are $\Sigma$-equivalent, independent of the choice of the accompanying universal ranking $\zeta_{G}$.
However, this is not true in the particular cases where $\Sigma$ is an orientable surface and $\delta$ contains simple cells with three nodes on their boundary. 

\paragraph{Index representation.}

Let $(G - A, \delta, W)$ be a $\Sigma$-schema for some $A \subseteq V(G)$.
Given a boundaried index $\iota$ we define the function $\iota_{\delta} \colon C_{\mathsf{s}}(\delta) \to \Nbbb_{\geq 1}$ so that for every simple cell $c$ of $\delta$, $\iota_{\delta}(c) \coloneqq \iota(\sigma_{\delta}(c), \pi_{\delta}(c))$.

We say that $(G - A, \delta, W)$ \emph{$b$-represents $\iota$} if for every simple cell $c$ of $\delta$, there are $b$ simple cells $c_{1}, \ldots, c_{b}$ of $\delta$ whose disks correspond to consecutive hyperedges of $t$-flap segments of $W$, and such that for every $i \in [b]$, $c_{i}$ is $\Sigma$-equivalent to $c$ and $\iota_{\delta}(c) = \iota_{\delta}(c_{i})$.

Moreover, an algorithm $\mathbf{A}$ for a boundaried index $\iota$ is one that, given a boundaried graph $\mathbf{B}$, computes $\iota(\mathbf{B})$. Typically, $\iota(\mathbf{B})$ may represent a property of boundaried graphs that has finite index.
The specific choice of $\iota$ may vary depending on the intended application of these results.

{In the above notion of representation, we also take into account the orientation of the simple cells of $\delta$, a feature that will prove useful in later applications. Moreover, all our definitions and results can be extended to accommodate more refined boundaried indices, where boundaried graphs carry additional structure—for example, vertex or edge labels.}

\medskip
We are now ready to prove our main result.

\begin{theorem}\label{thm_local_structure}
There exist functions $\mathsf{wall}_{\ref{thm_local_structure}} \colon \Nbbb^{6} \to \Nbbb$, $\mathsf{apex}_{\ref{thm_local_structure}}$, $\mathsf{depth}_{\ref{thm_local_structure}} \colon \Nbbb^{5} \to \Nbbb$, and $\mathsf{breadth}_{\ref{thm_local_structure}} \colon \Nbbb^{3} \to \Nbbb$ such that, for every boundaried index $\iota$ with $\mathsf{cap}(\iota) = \ell \in \Nbbb_{\geq 1}$, for every $r \in \Nbbb$, $k, t \in \Nbbb_{\geq 5}$, every $s, b \in \Nbbb_{\geq 1}$, every graph $H$ on $k$ vertices, every graph $G$, and every $\mathsf{wall}_{\ref{thm_local_structure}}(r, k, t, s, \ell, b)$-wall $W \subseteq G$, one of the following holds.
\begin{enumerate}
\item there is a minor model of $H$ in $G$ controlled by $\Tcal_{W}$, or
\item there is a set $A \subseteq V(G)$ and an $(r, t, s, z, p)$-$\Sigma$-schema $(G - A, \delta, W')$ controlled by $\Tcal_{W}$ such that
\begin{itemize}
\item $|A| \leq \mathsf{apex}_{\ref{thm_local_structure}}(k, t, s, \ell, b)$,
\item $z = \mathsf{breadth}_{\ref{thm_local_structure}}(k, \ell, b)$ and $p = \mathsf{depth}_{\ref{thm_local_structure}}(k, t, s, \ell, b)$,
\item $\Sigma$ is a surface where $H$ does not embed,
\item every vortex segment society of $(G - A, \delta, W')$ has depth at most $\mathsf{depth}_{\ref{thm_local_structure}}(k, t, s, \ell, b)$, and
\item $(G - A, \delta, W')$ $b$-represents $\iota$.
\end{itemize}
\end{enumerate}
Moreover, it holds that
\begin{align*}
\mathsf{wall}_{\ref{thm_local_structure}}(r, k, t, s, \ell, b) \ &\in \ (r + (\ell b + 1)^{(k^{2} + b) \cdot 2^{\Ocal(\ell)}} \cdot (t+s))^{2^{\Ocal(\ell)}} \cdot k^{2^{\Ocal(\ell)}}\\ \mathsf{apex}_{\ref{thm_local_structure}}(k, t, s, \ell, b), \mathsf{depth}_{\ref{thm_local_structure}}(k, t, s, \ell, b) \ &\in \ (\ell b + 1)^{(k^{2} + b) \cdot 2^{\Ocal(\ell)}} \cdot (t+s)^{2^{\Ocal(\ell)}} \cdot k^{2^{\Ocal(\ell)}}\text{, and}\\
\mathsf{breadth}_{\ref{thm_local_structure}}(k, \ell, b) \ &\in \ k^{2} + b \cdot 2^{\Ocal(\ell)}
\end{align*}
Also, given an algorithm $\mathbf{A}$ for $\iota$, there exists an algorithm that finds one of these outcomes in time $\Ocal_{r, k, t, s, \ell, b}(|E(G)|^{2} |V(G)|)$.
\end{theorem}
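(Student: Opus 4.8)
The plan is to derive \autoref{thm_local_structure} as a routine packaging of \autoref{prop_lst} (the polynomial LST of Gorsky--Seweryn--Wiederrecht) followed by \autoref{bidim_main_lemma} (equivalently \autoref{main_lemma} specialized to the case that is needed). First I would fix the boundaried index $\iota$ of capacity $\ell$ and define a cell-coloring from it: given any $\Sigma$-decomposition $\delta$ of a graph $G'$, the map $c \mapsto \iota_{\delta}(c) = \iota(\sigma_{\delta}(c), \pi_{\delta}(c))$ is a cell-coloring of $\delta$ of capacity at most $\ell$ (this is exactly the function $\iota_{\delta}$ defined just before the theorem). Note however that the plain cell-coloring only records the value $\iota_{\delta}(c)$, while ``$b$-represents $\iota$'' additionally requires the representing cells to be $\Sigma$-equivalent to $c$; since $\Sigma$-equivalence refines only in the orientable case with $3$-node cells, I would fold this refinement into the coloring by replacing $\iota_{\delta}$ with the finer coloring $c \mapsto (\iota_{\delta}(c), \theta_{\delta}(c))$, where $\theta_{\delta}(c)$ records the $\Sigma$-equivalence class of $c$ (a bounded amount of extra information, depending only on $\Sigma$ which in turn depends only on $k$), so that equally colored cells are automatically $\Sigma$-equivalent. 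The capacity of this refined coloring is still $2^{\mathcal O(\ell)}$ after re-indexing, which is the source of the $2^{\mathcal O(\ell)}$ exponents in the bounds.

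Next I would set the wall bound $\wall_{\ref{thm_local_structure}}(r,k,t,s,\ell,b)$ to be $\wall_{\ref{prop_lst}}(k, T, R)$ where $T = \mathsf t_{\ref{main_lemma}}(t, s, \mathsf h+\mathsf c, \ell', b)$ and $R = \mathsf r_{\ref{main_lemma}}(r, t, s, \mathsf h+\mathsf c, \ell', b)$ with $\ell'$ the capacity of the refined coloring and $\mathsf h+\mathsf c$ bounded by the Euler genus of the largest surface in which $H$ does not embed (a function of $k$ only, e.g.\ the genus is $O(k^2)$). Apply \autoref{prop_lst} to $G$ and $W$: either we land in its first outcome and get a minor model of $H$ controlled by $\Tcal_W$ (which is the first outcome of our theorem), or we obtain a set $A$ with $|A| \le \mathsf{apex}_{\ref{prop_lst}}(k, T)$, a surface $\Sigma$ in which $H$ does not embed (so $2\mathsf h + \mathsf c$ is bounded in terms of $k$), and an $(R, T, \text{-}, \tfrac12(k-3)(k-4), \mathsf{depth}_{\ref{prop_lst}}(k,T))$-$\Sigma$-schema $(G - A, \delta, W_0)$ controlled by $\Tcal_W$ with $W_0$ an $(R,T)$-$\Sigma$-annulus wall. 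Here I would invoke the two remarks following \autoref{prop_lst} to ensure property $\mathbf S_2$ holds and that the annulus wall actually has the $r$-parameter $R$ rather than $0$. Now apply \autoref{main_lemma} to this schema with the cell-coloring $\iota_{\delta}$ refined by $\Sigma$-equivalence, with $x = \tfrac12(k-3)(k-4)$ and $y = \mathsf{depth}_{\ref{prop_lst}}(k,T)$ — both of which are $\poly(k+T)$, hence of the right shape once $T$ is substituted — to get an $(r,t,s,z,p)$-$\Sigma$-schema $(G-A, \delta', W')$ controlled by $\Tcal_{W_0}$, with $\delta' \sqsubseteq \delta$, every vortex segment society of depth at most $p$, and $W'$ $b$-representing the refined coloring in $\delta'$. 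Since the refined coloring is equally colored exactly when $\Sigma$-equivalent and carrying the same $\iota_{\delta'}$-value, and since $\iota_{\delta'}(c)$ agrees with $\iota_{\delta}(c)$ for simple cells $c$ of $\delta'$ (these are cells of $\delta$ by $\delta' \sqsubseteq \delta$, and $\sigma_{\delta'}(c) = \sigma_{\delta}(c)$, $\pi_{\delta'}(c) = \pi_{\delta}(c)$), this literally says $(G-A, \delta', W')$ $b$-represents $\iota$. Finally, $\Tcal_{W'}$ is a truncation of $\Tcal_{W_0}$ which is a truncation of $\Tcal_W$ (control is transitive), so the schema is controlled by $\Tcal_W$ as required.

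For the numerical bounds I would just compose: $\mathsf{apex}_{\ref{thm_local_structure}}(k,t,s,\ell,b) = \mathsf{apex}_{\ref{prop_lst}}(k,T)$ with $T = \mathsf t_{\ref{main_lemma}}(\cdots)$; by the estimate $\mathsf{apex}_{\ref{prop_lst}}(k,T) \in \mathcal O((k+T)^{112})$ and $\mathsf t_{\ref{main_lemma}} \in (\ell b+1)^{(x+b)2^{\mathcal O(\ell)}}(t+s)^{2^{\mathcal O(\ell)}}(\mathsf h+\mathsf c)^{2^{\mathcal O(\ell)}}$ with $x \in \mathcal O(k^2)$ and $\mathsf h + \mathsf c \in \poly(k)$, the stated form $(\ell b+1)^{(k^2+b)2^{\mathcal O(\ell)}}(t+s)^{2^{\mathcal O(\ell)}}k^{2^{\mathcal O(\ell)}}$ follows after absorbing polynomial-in-$k$ factors into $k^{2^{\mathcal O(\ell)}}$. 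Similarly $\mathsf{depth}_{\ref{thm_local_structure}}$ is the max of $\mathsf{depth}_{\ref{prop_lst}}(k,T)$ and $\mathsf{depth}_{\ref{main_lemma}}(t,s,\ell,b,x,y)$, both of the right shape; $\mathsf{breadth}_{\ref{thm_local_structure}}(k,\ell,b) = \mathsf{breadth}_{\ref{main_lemma}}(\ell,b,x) \in x + b\cdot 2^{\mathcal O(\ell)} \subseteq k^2 + b\cdot 2^{\mathcal O(\ell)}$; and $\wall_{\ref{thm_local_structure}} = \wall_{\ref{prop_lst}}(k,T,R) \in \mathcal O((k+T)^{115}R)$ with $R = \mathsf r_{\ref{main_lemma}}(\cdots) \in (r^{2^{\mathcal O(\ell)}} + (\ell b+1)^{(x+b)2^{\mathcal O(\ell)}}(t+s)^{2^{\mathcal O(\ell)}})(\mathsf h+\mathsf c)^{2^{\mathcal O(\ell)}}$, giving the claimed bound after the same absorptions. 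The algorithmic statement chains the polynomial-time algorithm of \autoref{prop_lst} (which needs no oracle) with the algorithm of \autoref{main_lemma}, the only subtlety being that computing the cell-coloring $\iota_{\delta}$ requires one call to $\mathbf A$ per simple cell of $\delta$, hence $\mathcal O(|V(G)|)$ oracle calls, each on a boundaried graph of size $\le |V(G)|$; the running time is dominated by $\mathcal O_{\cdots}(|E(G)||V(G)|^2)$ from \autoref{main_lemma}.

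The only genuinely delicate point — and the place I would spend the most care — is the interaction between the abstract ``$b$-represents $\iota$'' notion (which bakes in $\Sigma$-equivalence) and the ``$b$-represents $\chi$'' notion of \autoref{main_lemma} (which is purely about equal integer colors): one must be sure that refining the coloring by $\Sigma$-equivalence classes does not blow up the capacity beyond $2^{\mathcal O(\ell)}$ and that the refinement is well-defined and invariant under the coarsening $\delta' \sqsubseteq \delta$, i.e.\ that $\Sigma$-equivalence of two cells of $\delta'$ can be read off from the same cells viewed in $\delta$. Since a simple cell of $\delta'$ is a simple cell of $\delta$ with the same disk, nodes, and the same subgraph $\sigma$ drawn inside, both $\iota_{\bullet}$ and the $\Sigma$-equivalence relation are intrinsic to the cell and the surface, so this invariance holds; but I would state it as an explicit observation before invoking \autoref{main_lemma}. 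Everything else is bookkeeping of the function compositions.
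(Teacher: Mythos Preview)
Your approach is the same as the paper's and the structure is correct. The one place that needs tightening is the $\Sigma$-equivalence refinement: the number of $\Sigma$-equivalence classes of simple cells is always at most \emph{two}, not merely ``bounded depending on $\Sigma$ (hence on $k$)''. In a non-orientable surface all simple cells are $\Sigma$-equivalent, and in an orientable surface the only possible distinction is between the two cyclic orientations of a $3$-node cell boundary. Thus the refined coloring has capacity at most $2\ell$, and the paper passes exactly $2\ell$ (not $2^{\mathcal O(\ell)}$) as the capacity parameter to \autoref{main_lemma}. If you actually fed $\ell' \in 2^{\mathcal O(\ell)}$ into \autoref{main_lemma} you would get $2^{\mathcal O(\ell')} = 2^{2^{\mathcal O(\ell)}}$ in the exponents and miss the claimed bounds. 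Relatedly, the $2^{\mathcal O(\ell)}$ exponents in the final estimates do not originate from this capacity doubling; they come from the homogenization steps inside \autoref{main_lemma} (\autoref{lem_homogeneity_2d} and \autoref{lem:homogeneous_transaction}, both of shape $x^{2^{\ell}}$). With the capacity corrected to $2\ell$ (and the missing $x$-argument restored in your definition of $T$, since $\mathsf t_{\ref{main_lemma}}$ takes six arguments including $x = \tfrac12(k-3)(k-4) \le k^2$), your proof goes through verbatim and matches the paper's.
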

\begin{proof}
We define the four functions of the theorem as follows:
\begin{align*}
\mathsf{wall}_{\ref{thm_local_structure}}(r, k, t, s, \ell, b) \ &\coloneqq \ \mathsf{wall}_{\ref{prop_lst}}\Big( k, \mathsf{t}_{\ref{main_lemma}}\big( t, s, k^{2}, 2\ell, b, k^{2} \big), \mathsf{r}_{\ref{main_lemma}}\big( r, t, s, k^{2}, 2\ell, b, k^{2} \big) \Big),\\
\mathsf{apex}_{\ref{thm_local_structure}}(k, t, s, \ell, b) \ &\coloneqq \ \mathsf{apex}_{\ref{prop_lst}}\Big( k, \mathsf{t}_{\ref{main_lemma}}\big( t, s, k^{2}, 2\ell, b, k^{2} \big) \Big),\\
\mathsf{breadth}_{\ref{thm_local_structure}}(k, \ell, b) \ &\coloneqq \ \mathsf{breadth}_{\ref{main_lemma}}\big( 2\ell, b, k^{2} \big),\text{ and}\\
\mathsf{depth}_{\ref{thm_local_structure}}(k, t, s, \ell, b) \ &\coloneqq \ \mathsf{depth}_{\ref{main_lemma}}\bigg( t, s, 2\ell, b, k^{2}, \mathsf{depth}_{\ref{prop_lst}}\Big( k, \mathsf{t}_{\ref{main_lemma}}\big( t, s, k^{2}, 2\ell, b, k^{2} \big) \Big) \bigg).
\end{align*}

Let $t_{1} = \mathsf{t}_{\ref{main_lemma}}(t, s, k^{2}, 2\ell, b, k^{2})$ and $r_{1} = \mathsf{r}_{\ref{main_lemma}}(r, t, s, k^{2}, 2\ell, b, k^{2})$.
We first call upon \cref{prop_lst} in order to obtain 1) either a minor model of $H$ in $G$ controlled by $\Tcal_{W}$, in which case we conclude, or 2) a set $A \subseteq V(G)$ and an $(r_{1}, t_{1}, \text{-})$-$\Sigma$-schema $(G - A, \delta_{1}, W_{1})$ such that 1) $|A| \leq \mathsf{apex}_{\ref{thm_local_structure}}(k, t, s, \ell, b)$, 2) $\Sigma$ is a surface where $H$ does not embed, 3) $\delta_{1}$ has breadth at most $\nicefrac{1}{2}(k - 3)(k - 4)$ and depth at most $\mathsf{depth}_{\mathsf{\ref{prop_lst}}}(k, t_{1})$, 4) $W_{1}$ is an $(r_{1}, t_{1}, \text{-})$-$\Sigma$-annulus wall, and 5) $\Tcal_{W_{1}} \subseteq \Tcal_{W}$.

We now define a cell-coloring $\chi$ of $\delta_{1}$ as follows.
First, observe that the definition of $\Sigma$-equivalence between simple cells of $\delta_{1}$ defines an equivalence relation with at most two equivalence classes.
Let $C_{1}$, $C_{2}$ be these two equivalence classes (it may be that either $C_{1}$ or $C_{2}$ is empty but not both).
W.l.o.g. we may assume that $C_{1}$ is non-empty.
Given a cell $c_{i} \in C_{i}$, $i \in [2]$, we define $\chi(c_{i}) = (i - 1) \cdot \ell + \iota_{\delta_{1}}(c_{i})$.
Clearly this defines a cell-coloring of $\delta_{1}$ of capacity at most $2\ell$.

Moreover, note that since $H$ does not embed in $\Sigma$, it follows from known results (see e.g., \cite{gross1987topological,Ringel68Heawood,MoharT01Graphs}) that $\mathsf{h} + \mathsf{c} \leq k^{2}$, where $\Sigma$ is a surface of Euler genus $2\mathsf{h} + \mathsf{c}$ for some $\mathsf{h}, \mathsf{c} \in \Nbbb$.

We are now in the position to apply \cref{main_lemma} on $(G - A, \delta_{1}, W_{1})$ and obtain an $(r, t, s)$-$\Sigma$-schema $(G - A, \delta, W')$ such that a) $\delta \sqsubseteq \delta_{1}$, b) $\delta'$ has breadth at most $\mathsf{breadth}_{\ref{thm_local_structure}}(k, \ell, b)$ and depth at most $\mathsf{depth}_{\ref{thm_local_structure}}(k, t, s, \ell, b)$, c) every vortex segment society of $(G, \delta', W')$ has depth at most $\mathsf{depth}_{\ref{thm_local_structure}}(k, t, s, \ell, b)$, d) $W'$ $b$-represents $\chi$ in $\delta$, and e) $\Tcal_{W'} \subseteq \Tcal_{W_{1}}$.

By definition of $\chi$ and property d), it is implied that $(G - A, \delta, W')$ $b$-represents $\iota$.
Moreover, since $\Tcal_{W'} \subseteq \Tcal_{W_{1}}$ and $\Tcal_{W_{1}} \subseteq \Tcal_{W}$, then $\Tcal_{W'} \subseteq \Tcal_{W}$.
This concludes our proof.
\end{proof}

\subsection{Grid models representing boundaried indices}

Our last goal is to prove a version of \cref{thm_local_structure} that witnesses the bidimensionality of the representation in the same way as we do in \cref{bidim_main_lemma}.
For this, we first need the corresponding definitions of grids representing boundaried indices.

\paragraph{Grids representing indices in decompositions.}

Let $\iota$ be a boundaried index, $G$ be a graph, and $\delta$ be a $\Sigma$-decomposition of $G$.
Given $t \in \Nbbb_{\geq 5}$, we say that a minor model $\mu$ of a $(t \times t)$-grid $\Gamma$ in $G$ \emph{represents} $\iota$ in $\delta$ if
\begin{itemize}
\item $\mu(V(\Gamma))$ is disjoint from any vertex drawn in the closure of a vortex cell of $\delta$ and
\item for every vertex $u \in V(\Gamma)$, for every simple cell $c^{*}$ of $\delta$, there is a simple cell $c$ of $\delta$ that is $\Sigma$-equivalent to $c^{*}$ with $\iota_{\delta}(c^{*}) = \iota_{\delta}(c)$ and $\pi_{\delta}(\tilde{c}) \subseteq \mu(u)$.
\end{itemize}
 

Now, the next corollary follows in a straightforward manner by applying \cref{lem:colorful_grid} to the outcome of \cref{thm_local_structure} applied with $b$ being $t^{2}$ and $s$ being $1$.

\begin{corollary}\label{cor_bidim_structure}
There exists functions $\mathsf{wall}_{\ref{cor_bidim_structure}} \colon \Nbbb^{4} \to \Nbbb$, $\mathsf{apex}_{\ref{cor_bidim_structure}}, \mathsf{breadth}_{\ref{cor_bidim_structure}}, \mathsf{depth}_{\ref{cor_bidim_structure}} \colon \Nbbb^{3} \to \Nbbb$ such that, 
for every boundaried index $\iota$ with $\mathsf{cap}(\iota) = \ell \in \Nbbb_{\geq 1}$, every $k, t \in \Nbbb_{\geq 5}$, every $r \in \Nbbb$, every graph $H$ on $k$ vertices, every graph $G$, and every $\mathsf{wall}_{\ref{cor_bidim_structure}}(r, k, t, \ell)$-wall $W \subseteq G$, one of the following holds.
\begin{enumerate}
\item there is a minor model of $H$ in $G$ controlled by $\Tcal_{W}$, or
\item there is a set $A \subseteq V(G)$, a $\Sigma$-decomposition $\delta$ of $G - A$, and an $(r + t)$-wall $W'$ such that
\begin{itemize}
\item $|A| \leq \mathsf{apex}_{\ref{cor_bidim_structure}}(k, t, \ell)$,
\item $\Sigma$ is a surface where $H$ does not embed,
\item $\delta$ has breadth at most $\mathsf{breadth}_{\ref{cor_bidim_structure}}(k, t, \ell)$ and depth at most $\mathsf{depth}_{\ref{cor_bidim_structure}}(k, t, \ell)$, 
\item $W'$ is grounded in $\delta$ and controlled by $\Tcal_{W}$, and
\item there is a minor model of a $(t \times t)$-grid in $G - A$ controlled by $\Tcal_{W'}$ that represents $\iota$ in $\delta$.
\end{itemize}
\end{enumerate}
Moreover, it holds that
\begin{align*}
\mathsf{wall}_{\ref{cor_bidim_structure}}(r, k, t, \ell) \ &\in \ (r + 2^{(k^{2} + t^{2}) \cdot \log t \cdot {2^{\Ocal(\ell)}}})^{2^{\Ocal(\ell)}} \cdot k^{2^{\Ocal(\ell)}},\\
\mathsf{apex}_{\ref{cor_bidim_structure}}(k, t, \ell), \mathsf{depth}_{\ref{cor_bidim_structure}}(k, t, \ell) \ &\in \ 2^{(k^{2} + t^{2}) \cdot \log t \cdot {2^{\Ocal(\ell)}}}\text{, and}\\
\mathsf{breadth}_{\ref{cor_bidim_structure}}(k, \ell) \ &\in \ k^{2} + t^{2} \cdot 2^{\Ocal(\ell)}.
\end{align*}
Also, given an algorithm $\mathbf{A}$ for $\iota$, there exists an algorithm that finds one of these outcomes in time $\Ocal_{r, k, t,\ell}(|E(G)|^{2} |V(G)|)$.
\end{corollary}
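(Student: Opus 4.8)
The plan is to simply chain \autoref{thm_local_structure} with \autoref{lem:colorful_grid}, after fixing the free parameters appropriately. First I would set $b \coloneqq t^{2}$ and $s \coloneqq 1$, and define the four functions of the corollary by composing the corresponding functions of \autoref{thm_local_structure} with the resulting substitutions; concretely, $\wall_{\ref{cor_bidim_structure}}(r, k, t, \ell) \coloneqq \wall_{\ref{thm_local_structure}}(r, k, t^{2}, 1, \ell, t^{2})$ (note that the grid-representing model needs the walloid to $t^{2}$-represent $\iota$, so we must request $t$-flap segments of order at least $t^{2}$, which is why the ``$t$'' fed to \autoref{thm_local_structure} is actually $t^{2}$), $\mathsf{apex}_{\ref{cor_bidim_structure}}(k, t, \ell) \coloneqq \mathsf{apex}_{\ref{thm_local_structure}}(k, t^{2}, 1, \ell, t^{2})$, $\mathsf{breadth}_{\ref{cor_bidim_structure}}(k, \ell) \coloneqq \mathsf{breadth}_{\ref{thm_local_structure}}(k, \ell, t^{2}) + x$ where $x$ is the breadth bound, and $\mathsf{depth}_{\ref{cor_bidim_structure}}(k, t, \ell) \coloneqq \mathsf{depth}_{\ref{thm_local_structure}}(k, t^{2}, 1, \ell, t^{2})$. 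With these choices, invoking \autoref{thm_local_structure} on $G$ and the given wall $W$ produces either the first outcome (a minor model of $H$ controlled by $\Tcal_{W}$, which is exactly the first outcome of the corollary), or a set $A$ and an $(r, t^{2}, 1, z, p)$-$\Sigma$-schema $(G - A, \delta', W'')$ controlled by $\Tcal_{W}$ that $t^{2}$-represents $\iota$, with $\Sigma$ a surface where $H$ does not embed, and with the stated breadth and depth bounds on $\delta'$ and on the vortex segment societies.

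Next I would extract the cell-coloring implicitly carried by the index representation and feed the schema into \autoref{lem:colorful_grid}. By definition, $(G - A, \delta', W'')$ $b$-representing $\iota$ means that the cell-coloring $\chi \coloneqq \iota_{\delta'}$ (restricted suitably to the $\Sigma$-equivalence classes, exactly as in the proof of \autoref{thm_local_structure}) is $b$-represented by $W''$ in $\delta'$; since $b = t^{2}$, the hypothesis ``$W$ $t^{2}$-represents $\chi$ in $\delta$'' of \autoref{lem:colorful_grid} is satisfied. Applying \autoref{lem:colorful_grid} then yields a minor model $\mu$ of the $(t \times t)$-grid in $G - A$ that represents $\chi = \iota_{\delta'}$ in $\delta'$ and is controlled by $\Tcal_{W''}$. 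Unfolding the definition of a grid model representing $\iota$ in a decomposition: the first bullet (disjointness from vortex-cell closures) is exactly the disjointness condition delivered by \autoref{lem:colorful_grid}, because the $\mathsf{nodes}_{\delta'}(C^{\mathsf{si}}(W''))$-avoiding disk of the trace of the outer cycle of each vortex segment contains the closure of the corresponding vortex cell by property $\mathbf{S_{7}}$ of the schema; and the second bullet (every colour appears in every bag) is exactly what representation of $\chi$ gives, after translating $\chi$-colours back to index values $\iota_{\delta'}(c)$ and observing that $\Sigma$-equivalent cells with equal $\iota_{\delta'}$-value were merged into a single colour. Finally, to get the $(r+t)$-wall $W'$ that is grounded in and controlled by the appropriate tangle, I would use the base annulus / wall-segment infrastructure of the walloid $W''$: since $W''$ is an $(r, t^{2}, 1, \cdot, \cdot)$-$\Sigma$-walloid with base annulus of order $r + t^{2} \geq r + t$, and $\overline{W''}$ is grounded in $\delta'$ by $\mathbf{S_{1}}$, one finds an $(r+t)$-subwall $W'$ inside the base annulus that is grounded in $\delta'$; it is controlled by $\Tcal_{W''} \subseteq \Tcal_{W}$ since $\Tcal_{W'} \subseteq \Tcal_{W''}$. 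The tangle $\Tcal_{W'}$ also controls the grid model $\mu$, since $\Tcal_{W''}$ does and $\Tcal_{W'}$ is a truncation.

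The bounds claimed in the statement follow by substituting $b = t^{2}$ and $s = 1$ into the bounds of \autoref{thm_local_structure}: the factor $(\ell b + 1)^{(k^2 + b) \cdot 2^{\Ocal(\ell)}} \cdot (t+s)^{2^{\Ocal(\ell)}}$ becomes $(\ell t^2 + 1)^{(k^2 + t^2) \cdot 2^{\Ocal(\ell)}} \cdot (t+1)^{2^{\Ocal(\ell)}}$, which lies in $2^{(k^2 + t^2) \cdot \log t \cdot 2^{\Ocal(\ell)}}$; the $r$-dependent part $r^{2^{\Ocal(\ell)}} \cdot k^{2^{\Ocal(\ell)}}$ is unchanged and absorbs into the stated $\wall_{\ref{cor_bidim_structure}}$ bound, and the breadth bound $k^2 + b \cdot 2^{\Ocal(\ell)} = k^2 + t^2 \cdot 2^{\Ocal(\ell)}$ matches verbatim. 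The running time is $\Ocal_{r,k,t,\ell}(|E(G)||V(G)|^2)$: \autoref{thm_local_structure} runs in this time (with $s, b$ now constants depending on $t$), and \autoref{lem:colorful_grid} is stated via proofs-by-picture that are clearly polynomial (indeed linear) in the size of the walloid, hence dominated.

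The main obstacle I anticipate is not in the logical chaining itself — which is routine — but in verifying that the disjointness condition in the definition of ``grid model represents $\iota$ in $\delta$'' (disjoint from closures of \emph{vortex cells}) genuinely coincides with the condition produced by \autoref{lem:colorful_grid} (disjoint from the $\mathsf{nodes}_{\delta}(C^{\mathsf{si}})$-avoiding disks of traces of \emph{outer cycles of vortex segments}). These are the same set precisely because of how a $\Sigma$-schema with vortex segments is set up (the walloid has exactly as many vortex segments as $\delta'$ has vortex cells, and $\mathbf{S_{7}}$ pins the vortex-cell disk to the corresponding vortex-segment hyperedge), but writing this identification carefully, together with the bookkeeping that merges $\Sigma$-equivalence classes with equal index value into one colour so that the colourful-grid output translates back into an index-representing grid, is where the real care is needed.
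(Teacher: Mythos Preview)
Your proposal is correct and follows essentially the same approach as the paper, which simply states that the corollary ``follows in a straightforward manner by applying \autoref{lem:colorful_grid} to the outcome of \autoref{thm_local_structure} applied with $b$ being $t^{2}$ and $s$ being $1$.'' Your write-up has a couple of cosmetic slips (the ``$+\,x$'' in your definition of $\mathsf{breadth}_{\ref{cor_bidim_structure}}$ is spurious, and the $(t+1)^{2^{\Ocal(\ell)}}$ factor should read $(t^{2}+1)^{2^{\Ocal(\ell)}}$ since you feed $t^{2}$ as the walloid order to \autoref{thm_local_structure}), but these do not affect the argument or the final asymptotic bounds.
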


As a final remark, we should stress that our LSTs are presented with all the parameters that are necessary to extract GSTs with standard techniques (see \cite{KawarabayashiTW2021Quickly,RobertsonS03GraphMinorsXVI}).
However, we think that this goes beyond the scope of this work and should be handled on a per application basis.

\section{Remarks and extensions}

As already mentioned, the representation of a boundary index $\iota$ can be further refined when we consider graphs with more enhanced features as edge/vertex labelings.

In this paper, we considered indices of boundaried graphs whose boundary has at most $3$ vertices. This restriction arises from the fact that indices are computed in $G-A$ rather than in $G$, i.e., the apices in $A$ are excluded from the definition of the index.  

We remark that versions of both \cref{thm_local_structure} and \cref{cor_bidim_structure} could also be proven—with worse parametric dependencies—by considering indices of boundaried graphs whose boundary is computed in $G$ (instead of $G-A$) and has at most $3+\alpha(H)-1=\alpha(H)+2$ vertices, where $\alpha(H)$ denotes the apex number\footnote{The \emph{apex number} of $H$, denoted by $\alpha(H)$, is the minimum number of vertices whose removal makes $H$ planar.} of the excluded graph.  
For this purpose, one could use as a starting point the version of the LST proved by Dvořák and Thomas in \cite[Theorem 12]{DvorakT14List}, in place of \cref{prop_lst}, where they distinguish between two types of apices: The \emph{major} ones, which may have neighbors anywhere in $G$, and the rest—let us call them \emph{pseudo-apices}—which have neighbors only inside the vortices.
The LST in \cite[Theorem 12]{DvorakT14List} asserts that the number of major apices can be restricted to at most $\alpha(H) - 1$.
The property that pseudo-apices have all their neighbors confined to the vortices is preserved throughout the constructions in our proofs.

This yields versions of \cref{thm_local_structure} and \cref{cor_bidim_structure} in which pseudo-apices are bounded by some function of $k,t$, and $l$, while the major apices remain bounded by $\alpha(H)-1$.
Furthermore, instead of considering only the $(\leq 3)$-boundaried graphs defined by the cells of $G-A$, one can now consider $(\leq \alpha(H) + 2)$-boundaried graphs defined by the cells of $G$, together with the at most $\alpha(H) - 1$ major apices and the edges between them.
This version is strictly stronger, as it incorporates the apices into the representation, which we expect to be useful in future applications.

Since the parametric dependencies of the LST in \cite{DvorakT14List} are not specified, we chose to use as a starting point of our proofs the more recent LST of \cite[Theorem 15.1]{GorskySW2025Polynomial}, i.e., \cref{prop_lst}, which guarantees polynomial dependencies and thus optimizes the parameters in our results. We expect that a future version of \cite[Theorem 15.1]{GorskySW2025Polynomial} will preserve polynomial dependencies while also distinguishing between the at most $\alpha(H)-1$ major apices and polynomially many pseudo-apices.

\paragraph{Acknowledgments.} The second and third author wish to thank Professor Panos Rondogiannis for his pivotal influence,
which contributed to the realization of this paper.

\newpage
\phantomsection
\addcontentsline{toc}{section}{References}
\bibliographystyle{alphaurl}

\end{document}